\documentclass[11pt]{article}
\usepackage{paper,asb}

\usepackage{amsmath}
\usepackage{amsfonts}
\usepackage{amssymb}
\usepackage{amsthm}
\usepackage{hyperref}

\newtheorem{theorem}{Theorem}
\newtheorem{lemma}[theorem]{Lemma}
\newtheorem{assumption}[theorem]{Assumption}
\newtheorem{remark}[theorem]{Remark}

\newtheorem{condition}[theorem]{Condition}

\usepackage{subcaption}
\captionsetup{compatibility=false}

\newcommand{\asb}[1]{{\color{black}#1}}
\newcommand{\bz}[1]{{\color{black}#1}}

\newcommand{\change}[1]{{\color{black}#1}}

\newcommand{\PAISSQP}{\texttt{PAIS-SQP}}

\newcommand{\papertitle}{An Adaptive Sampling Sequential Quadratic Programming Method for Equality Constrained Stochastic Optimization}
\newcommand{\paperauthora}{Albert~S.~Berahas}
\newcommand{\paperauthoraaffiliation}{Dept. of Industrial and Operations Engineering, University of Michigan}

\newcommand{\paperauthorb}{Raghu~Bollapragada}
\newcommand{\paperauthorbaffiliation}{Operations Research and Industrial Engineering Prog., UT Austin}

\newcommand{\paperauthorc}{Baoyu~Zhou}
\newcommand{\paperauthorcaffiliation}{Booth School of Business, University of Chicago}

\usepackage{booktabs}
\usepackage{caption}
\usepackage{float}
\usepackage{titlesec}
\usepackage{capt-of}

\usepackage{array}
\usepackage{arydshln}
\setlength\dashlinedash{0.2pt}
\setlength\dashlinegap{1.5pt}
\setlength\arrayrulewidth{0.3pt}

\begin{document}
\title{\papertitle}

\author{\paperauthora\footnotemark[1]\ \footnotemark[2]
   \and \paperauthorb\footnotemark[3]
   \and \paperauthorc\footnotemark[4]}

\maketitle

\renewcommand{\thefootnote}{\fnsymbol{footnote}}
\footnotetext[1]{Corresponding author.}
\footnotetext[2]{\paperauthoraaffiliation. (\url{\paperauthoraemail})}
\footnotetext[3]{\paperauthorbaffiliation. (\url{\paperauthorbemail})}
\footnotetext[4]{\paperauthorcaffiliation. (\url{\paperauthorcemail})}
\renewcommand{\thefootnote}{\arabic{footnote}}

\begin{abstract}{
This paper presents a methodology for using varying sample sizes in sequential quadratic programming (SQP) methods for solving equality constrained stochastic optimization problems. The first part of the paper deals with the delicate issue of dynamic sample selection in the evaluation of the gradient in conjunction with inexact solutions to the SQP subproblems. Under reasonable assumptions on the quality of the employed gradient approximations and the accuracy of the solutions to the SQP subproblems, we establish global convergence results for the proposed method. Motivated by these results, the second part of the paper describes a practical adaptive inexact stochastic sequential quadratic programming (PAIS-SQP) method. We propose criteria for controlling the sample size and the accuracy in the solutions of the SQP subproblems based on estimates of the variance in the stochastic gradient approximations  obtained as the optimization progresses. Finally, we demonstrate the performance of the practical method on a subset of the CUTE problems and constrained classification tasks.  

}
\end{abstract}

\numberwithin{equation}{section}
\numberwithin{theorem}{section}


\section{Introduction}\label{sec.introduction}

We consider stochastic optimization problems with deterministic equality constraints. We design, analyze and implement a stochastic adaptive sampling algorithm based on the sequential quadratic programming (SQP) paradigm. Optimization problems of this form arise in a plethora of real-world applications, including but not limited to computer vision \cite{RaviDinhLokhSing19}, optimal control \cite{Bett10}, network optimization \cite{Bert98}, partial differential equation optimization \cite{ReesDollWath10}
, deep learning \cite{zhu2019physics}, and reinforcement learning \cite{AchiHeldTamaAbbe17}.

The majority of the methods developed for solving constrained stochastic optimization problems, i.e., optimization problems with deterministic constraints and stochastic objective functions, are based on the penalty method approach \cite{CottGuptPfei16,MahdYangJinZhuYi12,NandPathSingSing19,RaviDinhLokhSing19}. Such methods transform the given constrained problem into an unconstrained problem by penalizing constraint violation in the objective function, and then apply classical unconstrained stochastic optimization methods on the penalized objective. While these methods are well studied, recently a number of algorithms endowed with sound theoretical guarantees and superior empirical performance have been proposed \cite{BeraCurtRobiZhou21,NaAnitKola21}. In \cite{BeraCurtRobiZhou21}, a stochastic SQP method with adaptive step size selection for the fully stochastic regime is proposed, and, in \cite{NaAnitKola21}, a stochastic line search SQP method with adaptive gradient accuracy is proposed. Several other extensions of these methods have been developed, e.g., relaxing constraint assumptions \cite{BeraCurtOneiRobi21}, employing inexact computations  \cite{CurtRobiZhou21} and employing variance reduction \cite{berahas2022accelerating}. 

Adaptive sampling is a powerful technique used in stochastic optimization to control the variance in the approximations employed as the optimization progresses. The idea is simple yet powerful; far from the solution inaccurate and cheap (gradient) information can be employed while near the solution accurate (gradient) information is required for both theory and practice. Of course, the key to such methods is the mechanism by which the sample size (or accuracy of the approximation) is selected. In \cite{FrieSchm12, ByrdChinNoceWu12}, algorithms that increase the samples sizes employed with prescribed (geometric) rules are proposed, and in \cite{ByrdChinNoceWu12} the authors showed that these methods achieve optimal worst-case first-order complexity for unconstrained problems. Moreover, these adaptive methods have additional advantages over fixed sample stochastic approximation approaches with regards to ability to exploit parallelism and produce more stable (less variance) iterates due to the increasing sample sizes. Other algorithms utilize gradient approximation tests to control the accuracy in the approximations; e.g., norm test \cite{carter1991global,ByrdChinNoceWu12}, inner product test \cite{BollByrdNoce18,BollNoceMudiShiTang18}, and others \cite{CartSche18, berahas2021global,JinScheXie21}. Adaptive sampling algorithms have also been developed for simulation-based optimization problems \cite{HashGhosPasu14,PasuGlynGhosHash18}. Finally,  \cite{BeisKeitUrbaWohl20,XieBollByrdNoce20} apply adaptive sampling methods to constrained stochastic optimization problems with convex feasible sets. For a detailed review of adaptive sampling methods  see~\cite{CurtSche20}.

\subsection{Contributions}
In this paper, motivated by the successes of adaptive sampling methods (unconstrained stochastic optimization problems) and SQP methods (constrained deterministic optimization problems), we propose an adaptive sampling stochastic sequential quadratic programming algorithm for solving optimization problems with deterministic constraints and stochastic objective functions. This is far from a trivial task as complication arises when incorporating existing adaptive sampling strategies into the SQP paradigm primarily due to the multi-objective nature of constrained optimization. We propose a novel mechanism to control the accuracy in gradient approximations employed and the search directions calculated that balances the goals of achieving feasibility and reducing the objective function value. The main ingredients of our proposed method are: (1) the extension of the norm condition \cite{carter1991global,ByrdChinNoceWu12} for equality constrained stochastic optimization problems, and (2) the adaptation of the stochastic SQP method proposed in \cite{BeraCurtRobiZhou21}. 

We prove convergence guarantees for two different inexactness conditions, predetermined and adaptive. Specifically, under standard  stochastic conditions on the accuracy of the gradient approximations and deterministic conditions on the quality of the solutions of the linear system, we prove that a measure of first-order stationarity evaluated at the iterates generated by our proposed algorithm converges to zero in expectation from arbitrary starting points. In addition, we establish improved iteration complexity, $\mathcal{O}(\epsilon^{-2})$ versus $\mathcal{O}(\epsilon^{-4})$, as compared to the stochastic SQP method \cite{CurtOneiRobi21}. We also derive worst-case sample complexity results when the sample sizes and linear system solutions are controlled using predetermined rates instead of adaptive tests. Our results show that even though the work per iteration is increasing, the overall sample complexity is still $\mathcal{O}(\epsilon^{-2(1 + \nu)})$, for any $\nu>1$, which is arbitrarily close to $\mathcal{O}(\epsilon^{-4})$ achieved by the stochastic SQP method \cite{CurtOneiRobi21}. Moreover, we then propose a practical variant of our algorithm, inspired by \cite{ByrdChinNoceWu12,BollByrdNoce18}. Finally, we present numerical results on binary classification tasks with equality constraints and on equality constrained problems from the CUTE collection that demonstrate the efficiency and efficacy of our proposed practical method.


\subsection{Notation}

The set of natural numbers is denoted by $\N{}:= \{0, 1, 2, \dots \}$. The set of real numbers (i.e., scalars) is denoted by $\R{}$, and $\R{}_{>r}$ ($\R{}_{\geq r}$) denotes the set of real numbers greater than (greater than or equal to) $r \in \R{}$. The set of $n$-dimensional vectors is denoted by $\R{n}$, the set of $m$-by-$n$ matrices is denoted by $\R{m \times n}$, and the set of $n$-by-$n$ symmetric matrices is denoted by $\mathbb{S}^n$. Our proposed algorithms are iterative, and generate a sequence of iterates $\{x_k\}$ with $x_k \in \R{n}$. Let $f_k := f(x_k)$, $g_k := \nabla f(x_k)$, $c_k := c(x_k)$, and $J_k := \nabla c(x_k)^T$ for all $k \in \N{}$.

\subsection{Organization}

The paper is organized as follows. In Section~\ref{sec.prob_and_ass} we formalize the problem statement and main assumptions. The general algorithmic framework of our proposed method is presented in Section~\ref{sec.gen_algs}. Convergence and complexity guarantees are stated and proven in Section~\ref{sec.analysis}. We present a practical adaptive sampling SQP method in Section~\ref{sec.practical}. In Section~\ref{sec.num_res}, we demonstrate the empirical performance of of the proposed method on classification tasks. Finally, in Section~\ref{sec.final_rem}, we make some final remarks and discuss avenues for future research.

\section{Problem Statement}\label{sec.prob_and_ass}

We consider the following potentially nonlinear and/or nonconvex equality constrained optimization problem
\bequation\label{prob.opt}
  \min_{x\in\R{n}}\ f(x)\ \ \st\ \ c(x) = 0,\ \ \text{with}\ \ f(x) = \E[F(x,\xi)],
\eequation
where the objective function $f : \R{n} \to \R{}$ and constraint function $c : \R{n} \to \R{m}$ are smooth, $\xi$ is a random variable with associated probability space $(\Xi,\Fcal,P)$, $F : \R{n} \times \Xi \to \R{}$, and $\E[\cdot]$ denotes the expectation taken with respect to~$P$. Throughout the paper, we assume that the constraint function and its associated  
derivatives can be computed exactly, and that the objective function and its associated derivatives are expensive to compute, but  accurate approximations can be obtained as desired. 
We formalize the notions of accuracy in subsequent sections of the paper. 

We make the following main assumption with regards to \eqref{prob.opt} and the iterates $\{x_k\}$ generated by our proposed algorithms.
\bassumption\label{ass.main}
  Let $\Xcal \subseteq \R{n}$ be an open convex set containing the sequence~$\{x_k\}$ generated by any run of the algorithm.  The objective function $f : \R{n} \to \R{}$ is continuously differentiable and bounded below over~$\Xcal$ and its gradient function $\nabla f : \R{n} \to \R{n}$ is Lipschitz continuous with constant $L \in \R{}_{>0}$  
  and bounded over $\Xcal$.  The constraint function $c : \R{n} \to \R{m}$ $($with $m \leq n)$ is continuously differentiable and bounded over $\Xcal$ and its Jacobian function $J := \nabla c^T : \R{n} \to \R{m \times n}$ is Lipschitz continuous with constant $\Gamma \in \R{}_{>0}$ 
  and bounded over~$\Xcal$.  In addition, for all $x \in \Xcal$, the Jacobian $J(x)$ has singular values that are bounded uniformly below by $\kappa_\sigma \in \R{}_{>0}$.
\eassumption
\bremark 
Under Assumption~\ref{ass.main}, there exist constants $(f_{\inf},\kappa_{g},\kappa_c,\kappa_\sigma,\kappa_J) \in \R{} \times \R{}_{>0} \times \R{}_{>0} \times \R{}_{>0}\times \R{}_{>0}$ such that, for all $k \in \N{}$,
\bequationNN
  f_{\inf} \leq f_k ,\    \|g_k\|_2 \leq \kappa_{g}, \    \|c_k\|_1 \leq \kappa_c, \    \|J_k\|_2 \leq \kappa_J, \ \text{and } \   \|(J_kJ_k^T)^{-1}\|_2\leq \kappa_{\sigma}^{-2}.
\eequationNN
The components pertaining to the objective and constraint functions are standard assumptions in the equality constrained optimization literature. With regards to the algorithmic components, we do not assume that the iterate sequence itself is bounded, however, we do assume that the objective function and constraints, function values and derivatives, are 
bounded over the set $\mathcal{X}$ containing the iterates. While this assumption is reasonable in the deterministic setting, it is not ideal in the stochastic setting. That being said, it is a common assumption in the equality constrained stochastic optimization literature \cite{BeraCurtRobiZhou21,NaAnitKola21}. The justification is that in this constrained setting, iterates are presumably converging towards a determinsitic feasible region. Furthermore, we assume that the accuracy in the gradient approximations can be controlled, and as such  claim that the assumption above is reasonable.
\eremark

Let $\ell : \R{n} \times \R{m} \to \R{}$ be the Lagrangian function corresponding to \eqref{prob.opt},  $\ell(x,y) = f(x) + c(x)^Ty$,
where $y \in \R{m}$ is the vector of Lagrange multipliers. Under Assumption~\ref{ass.main}, the necessary conditions for first-order stationarity for \eqref{prob.opt} are
\bequationNN
  0 = \bbmatrix \nabla_x \ell(x,y) \\ \nabla_y \ell(x,y) \ebmatrix = \bbmatrix \nabla f(x) + J(x)^Ty \\ c(x) \ebmatrix.
\eequationNN

\section{A General Algorithmic Framework}\label{sec.gen_algs}

Our proposed algorithms can be characterized as \emph{adaptive} (selection of the step size and merit parameter/accuracy of approximations), \emph{inexact} (linear system solutions), and \emph{stochastic} (gradient approximation employed) sequential quadratic optimization (SQP)
methods. Specifically, given $x_k$ for all $k \in \N{}$, a search direction $\dbar_k \in \R{n}$ is computed by inexactly solving a quadratic optimization subproblem based on a local quadratic model of the objective function 
\bequation\label{eq.subprob}
  \min_{d\in\R{n}}\ f_k + \gbar_k^Td + \thalf d^TH_kd\ \ \st\ \ c_k + J_kd = 0,
\eequation
where $\gbar_k\in \R{n}$ is a stochastic approximation of the gradient of the objective function and the matrix $H_k \in \R{n\times n}$ satisfies  Assumption~\ref{ass.H} (below). 
\bassumption\label{ass.H}
  The sequence of symmetric matrices $\{H_k\}\subset \mathbb{S}^n$ is bounded in norm by $\kappa_H \in \R{}_{>0}$ such that $\|H_k\|_2\leq \kappa_H$ for all $k\in\N{}$.  In addition, there exists a constant $\zeta \in \R{}_{>0}$ such that, for all $k \in \N{}$, the matrix $H_k$ has the property that $u^TH_ku \geq \zeta \|u\|_2^2$ for all $u \in \R{n}$ where $J_k u = 0$.
\eassumption

Under Assumptions~\ref{ass.main} and~\ref{ass.H}, the optimal solution $\widetilde{d}_k \in \R{n}$ of the subproblem \eqref{eq.subprob}, and an associated displacement in the Lagrange multiplier $\widetilde{\delta}_k \in \R{m}$, can be obtained by solving the linear system of equations given by
\bequation\label{eq.system}
  \bbmatrix H_k & J_k^T \\ J_k & 0 \ebmatrix \bbmatrix \widetilde{d}_k \\ \widetilde{\delta}_k \ebmatrix = - \bbmatrix \gbar_k + J_k^Ty_k \\ c_k \ebmatrix.
\eequation
Solving such linear systems can be expensive, so our algorithms employ inexact solutions to the above linear systems given by $(\dbar_k,\bar{\delta}_k) \approx (\widetilde{d}_k,\widetilde{\delta}_k)$, i.e., 
\begin{equation}\label{eq.system_stochastic}
  \begin{bmatrix} H_k & J_k^T \\ J_k & 0 \end{bmatrix} \begin{bmatrix} \bar{d}_k \\ \bar{\delta}_k \end{bmatrix} = - \begin{bmatrix} \bar{g}_k + J_k^T y_k \\ c_k \end{bmatrix} + \begin{bmatrix}
    \bar{\rho}_k \\ \bar{r}_k
  \end{bmatrix},
\end{equation}
where the tuple $(\bar\rho_k,\bar{r}_k)$ \change{defines} the residuals, and 
$\|(\bar\rho_k,\bar{r}_k)\|$ can be controlled as required. Our algorithms  impose conditions on the norm of the residuals.

\looseness=-1
Given a pair $(\dbar_k,\bar{\delta}_k)$, our algorithms proceed to compute a positive step size in order to update the primal variables $x_k$ and Lagrange multipliers $y_k$. The step size selection strategy is similar to that in \cite{BeraCurtRobiZhou21}. To this end, the algorithms employ a \emph{merit} function $\phi : \R{n} \times \R{}_{> 0} \to \R{}$, parameterized by a \emph{merit parameter} $\tau_k \in \R{}_{> 0}$
, defined as
\bequation\label{eq.merit}
  \phi(x_k,\tau_k) = \tau_k f_k + \|c_k\|_1.
\eequation
The merit parameter is dynamically adjusted as the optimization progresses. We employ \change{a local} model $l : \R{n} \times \R{}_{> 0} \times \R{n} \times \R{n} \to \R{}$ of the merit function \eqref{eq.merit} 
\bequationNN
  l(x_k,\bar{\tau}_k,\gbar_k,\dbar_k) = \bar{\tau}_k (f_k + \gbar_k^T\dbar_k) + \|c_k + J_k\dbar_k\|_1 = \bar{\tau}_k (f_k + \gbar_k^T\dbar_k) + \|\rbar_k\|_1,
\eequationNN
and\change{, to guide the selection of the merit parameter,} its associated reduction function $\Delta l : \R{n} \times \R{}_{> 0} \times \R{n} \times \R{n} \to \R{}$ defined by
\bequation\label{eq.model_reduction}
\baligned
  \Delta l(x_k,\bar{\tau}_k,\gbar_k,\dbar_k) &= l(x_k,\bar{\tau}_k,\gbar_k,0) - l(x_k,\bar{\tau}_k,\gbar_k,\dbar_k) \\
  &= -\bar{\tau}_k \gbar_k^T\dbar_k + \|c_k\|_1 - \|c_k + J_k\dbar_k\|_1\\
  &= -\bar{\tau}_k \gbar_k^T\dbar_k + \|c_k\|_1 - \|\rbar_k\|_1.
\ealigned
\eequation
Note, $\Delta l(x_k,\bar{\tau}_k,\gbar_k,\widetilde{d}_k) = -\bar{\tau}_k \gbar_k^T\widetilde{d}_k + \|c_k\|_1$, where $(\widetilde{d}_k,\widetilde{\delta}_k)$ is the solution of \eqref{eq.system}. 
The mechanism for updating the merit parameter $\bar{\tau}_k$ is similar to that proposed in  \cite{BeraCurtRobiZhou21}, and is motivated by \change{popular} SQP methods \cite{CurtNoceWach10,ByrdCurtNoce10}. \change{The idea is to update (potentially decrease) the merit parameter in order to ensure that the computed search direction is a descent direction for the merit function. }
First, a trial merit parameter $\bar{\tau}_k^{trial}$ is computed. Given user-defined parameters $(\omega_1,\omega_2,\omega_b) \in (0,1) \times (0,1) \times \R{}_{>0}$, if $\|\rbar_k\|_1 \geq (1-\omega_1)\omega_2\|c_k\|_1$ and/or $\|\bar\rho_k\|_1 \geq \omega_b\|c_k\|_1$, we set $\bar\tau_k^{trial}\leftarrow\infty$. Otherwise, we set 
\bequation\label{eq.merit_parameter_trial}
  \bar{\tau}_k^{trial} \gets \bcases \infty & \text{if $\gbar_k^T\dbar_k + \max\{\bar{d}_k^TH_k\bar{d}_k,\epsilon_d\|\bar{d}_k\|_2^2\} \leq 0$} \\[5pt] \displaystyle \tfrac{(1 - \omega_1)(1-\omega_2)\|c_k\|_1 }{\gbar_k^T\dbar_k + \max\{\bar{d}_k^TH_k\bar{d}_k,\epsilon_d\|\bar{d}_k\|_2^2\}} & \text{otherwise,} \ecases
\eequation
where $\epsilon_d \in \left(0,\sfrac{\zeta}{2}\right)$ is user-defined. 
Then, the merit parameter value $\bar{\tau}_k$ is updated via
\bequation\label{eq.tau_update}
  \bar{\tau}_k \gets \bcases \bar{\tau}_{k-1} & \text{if $\bar{\tau}_{k-1} \leq (1 - \epsilon_\tau)\bar{\tau}^{trial}_k$} \\ (1 - \epsilon_\tau) \bar{\tau}^{trial}_k & \text{otherwise,} \ecases
\eequation
where $\epsilon_\tau \in (0,1)$ is user-defined. This rule ensures that $\{\bar{\tau}_k\}$ is a monotonically non-increasing positive sequence with $\bar{\tau}_k \leq (1 - \epsilon_\tau)\bar{\tau}^{trial}_k$ for all $k \in \N{}$. Moreover, this rule aims to ensure that the reduction function \eqref{eq.model_reduction} is non-negative and satisfies \change{(as proved in Lemma~\ref{lem.model_reduction_cond} below)},
\bequation\label{eq.merit_model_reduction_lower_stochastic}
\baligned
  \Delta l(x_k,\bar\tau_k,\bar{g}_k,\bar{d}_k) &\geq \bar\tau_k \omega_1 \max\{\bar{d}_k^TH_k\bar{d}_k,\epsilon_d \| \bar{d}_k\|_2^2\} + \omega_1 \max\{\|c_k\|_1,\|\bar{r}_k\|_1 - \|c_k\|_1\}.
\ealigned
\eequation

Finally, at the $k$th iteration, the step size $\bar\alpha_k \in \mathbb{R}_{>0}$ is set as follows, 
\begin{equation}\label{eq:alpha_min_stoch}
    \bar\alpha_{k} \gets \min\left\{ \tfrac{2(1-\eta)\beta^{(\change{\sigma}-1)}\Delta l(x_k,\bar\tau_k,\bar{g}_k,\bar{d}_k)}{(\bar\tau_kL_{k}+\Gamma_k)\|\bar{d}_k\|_2^2},\bar\alpha_{k}^{opt}, \alpha_u\beta^{(2-\change{\sigma})}, 1 \right\},
\end{equation}
where $\eta\in (0,1)$, $\beta \in (0,1]$, $\change{\sigma\in [1,2]}$ and $\alpha_u\in\mathbb{R}_{>0}$ are user-defined parameters, \change{$L_k$ and $\Gamma_k$ are estimates of the Lipschitz constants of the gradients of the objective and constraint functions, respectively,} and
\begin{equation}\label{eq:alpha_opt_stoch}
    \bar\alpha_{k}^{opt} \gets  \max\left\{\min\left\{ \tfrac{\Delta l(x_k,\bar\tau_k,\bar{g}_k,\bar{d}_k)}{(\bar\tau_kL_{k}+\Gamma_k)\|\bar{d}_k\|_2^2} , 1 \right\} , \tfrac{\Delta l(x_k,\bar\tau_k,\bar{g}_k,\bar{d}_k) - 2\|c_k\|_1 }{(\bar\tau_kL_{k}+\Gamma_k)\|\bar{d}_k\|_2^2} \right\}.
\end{equation}
The step size choice is motivated by that proposed in \cite{BeraCurtRobiZhou21}, and adapted for our specific setting. \change{The goal of the step size procedure is to ensure sufficient decrease in the merit function across iterations (related to the first two terms in $\bar\alpha_{k}$). The third and fourth terms provide flexibility in the step size choice and a natural upper bound (that simplifies the analysis), respectively.} 
The user-defined parameters $\beta$ and $\sigma$ that influence the step size are also related to the accuracy of gradient approximations employed and the accuracy of the solutions of the linear system \eqref{eq.system}, which are introduced and discussed in Section~\ref{sec.analysis}.

Our algorithmic framework, \emph{ Adaptive, Inexact, Stochastic SQP} (\texttt{AIS-SQP}), is presented in Algorithm~\ref{alg.adaptiveSQP}. 
\begin{algorithm}[ht]
  \caption{(\texttt{AIS-SQP}) Adaptive, Inexact, Stochastic SQP Algorithm\label{alg.adaptiveSQP}}
  \begin{algorithmic}[1]
  \Require $x_0\in\mathbb{R}^n$; $y_0\in\mathbb{R}^m$; $\{H_k\}\subset\mathbb{S}^n$; $\bar\tau_{-1}\in\mathbb{R}_{>0}$;  $\{\omega_1,\omega_2,\eta,\epsilon_{\tau}\} \subset (0,1)$; $\omega_a\in\mathbb{R}_{>0}$;  $\omega_b\in\R{}_{>0}$; $\beta\in (0,1]$; $\alpha_u \in \mathbb{R}_{>0}$; $\epsilon_d\in (0,\sfrac{\zeta}{2})$; \change{$\sigma\in [1,2]$}
  \For{\textbf{all} $k \in \mathbb{N}$}
  \State Compute \emph{some} gradient approximation $\bar{g}_k\in\R{n}$ \label{line:comp_g}
  \State Solve \eqref{eq.system} iteratively; compute a step $(\bar{d}_k,\bar{\delta}_k)$ that satisfies \emph{(a)} or \emph{(b)}: \label{line:conditions}
  
  \emph{(a)}  \eqref{eq.merit_model_reduction_lower_stochastic} and $\| \bar{r}_k\|_1 \leq \omega_a\beta^{\change{\sigma}} \Delta l (x_k,\bar\tau_{k},\bar{g}_k,\bar{d}_k)$, \textbf{with} $\bar\tau_k = \bar\tau_{k-1}$, 
  
  \hspace{0.75cm} and, \textbf{additionally},  $\bar{g}_k^T\bar{d}_k + \max\{\bar{d}_k^TH_k\bar{d}_k,\epsilon_d\|\bar{d}_k\|_2^2\} \leq 0$ \textbf{if} $\|c_k\|_1 > 0$, 
  
  \emph{(b)}  $\|\bar{r}_k\|_1 < \min\{(1-\omega_1)\omega_2, \omega_1\omega_a\beta^{\change{\sigma}}\} \|c_k\|_1$
  and $\|\bar\rho_k\|_1 < \omega_b\|c_k\|_1$
    \State \change{Update $\bar\tau_k$: \textbf{if} $(a)$ is satisfied \textbf{then} $\bar\tau_k \leftarrow \bar\tau_{k-1}$, \textbf{else} update $\bar\tau_k$ via \eqref{eq.merit_parameter_trial}--\eqref{eq.tau_update}}
  \State Compute a step size $\bar\alpha_k$ via \eqref{eq:alpha_min_stoch}--\eqref{eq:alpha_opt_stoch}\label{step.alpha_stochastic}
  \State Update $x_{k+1}\leftarrow x_k + \bar{\alpha}_k \bar{d}_k$, and  $y_{k+1}\leftarrow y_k + \bar{\alpha}_k\bar\delta_k$
  \EndFor
  \end{algorithmic}
\end{algorithm}
\begin{remark}
We make the following remarks about Algorithm~\ref{alg.adaptiveSQP}.
\begin{itemize}[leftmargin=0.25cm]
    \item \textbf{Lines \ref{line:comp_g} and \ref{line:conditions}:} For ease of exposition, we leave these two steps arbitrary and specify them later in the paper (Section~\ref{sec.analysis}). We assume that gradient approximations of arbitrary accuracy can be computed, and that the linear system~\change{\eqref{eq.system}} can be solved iteratively and to arbitrary accuracy. 
    In Section~\ref{sec.practical}, we present a practical adaptive sampling strategy and a mechanism for solving the linear system inexactly.  
    \item \textbf{Lines~\ref{line:conditions}(a) and \ref{line:conditions}(b):} Lemma~\ref{lem.algorithm_well_defined} (below) shows that the algorithm is well defined, and that Line~\ref{line:conditions} will terminate in either $(a)$ or $(b)$. One could enforce a simpler condition on the solution to the linear system, however, this simpler condition would come at the cost of having to solve the linear system exactly if/when the iterates are feasible. In the special case where $\|c_k\|_1=0$, by Assumptions~\ref{ass.H} and~\ref{ass.residual} (presented below; pertaining to the linear system solutions) and \eqref{eq.system_stochastic}, Line~\ref{line:conditions} of Algorithm~\ref{alg.adaptiveSQP} is guaranteed to terminate in case $(a)$. The additional condition in case $(a)$ is added for technical reasons discussed in Section~\ref{sec.analysis}.  
    \item \textbf{Merit parameter update:} \change{If Line \ref{line:conditions} terminates and condition $(a)$ is satisfied, then the merit parameter is not updated. Otherwise, the merit parameter value is  updated via \eqref{eq.merit_parameter_trial}--\eqref{eq.tau_update} to ensure  \eqref{eq.merit_model_reduction_lower_stochastic} is satisfied; see Lemma~\ref{lem.model_reduction_cond}.}
    \item \textbf{Step size selection:} The step size selection strategy \eqref{eq:alpha_min_stoch}--\eqref{eq:alpha_opt_stoch} depends on Lipschtiz constants ($L$ and $\Gamma$), or estimates of these quantities. If one knows the Lipschitz constants, one could simply set $L_k = L$ and $\Gamma_k = \Gamma$ for all $k\in\mathbb{N}$. If such Lipschitz constants are unknown, as is the case more often than not, one can approximate these constants following the approaches proposed in \cite{BeraCurtRobiZhou21,FrieSchm12,BollByrdNoce18}. \change{To simplify the analysis (Section~\ref{sec.analysis}) we assume the Lipschitz constants are known.} 
    \item \textbf{Comparison to other algorithms:} 
    The step computation, merit parameter update, and step size selection mechanisms are similar to those proposed in \cite{BeraCurtRobiZhou21,CurtRobiZhou21,BeraCurtOneiRobi21}. However, there are some key differences, primarily due to the fact that in this work we assume that the accuracy in the gradient approximations can be controlled as the optimization progresses. Similar to \cite{CurtRobiZhou21}, the linear system is solved inexactly, but with a simpler approach that does not require an explicit step decomposition.
\end{itemize}
\end{remark}

Before we proceed, we state and prove a few results that hold throughout the paper. For the analysis in subsequent sections we introduce the following notation. (Note, these quantities are never explicitly computed in our algorithms.) Given the iterate $x_k$ and multiplier $y_k$, let the tuple $(d_k,\delta_k)$ denote the solution to 
\begin{equation}\label{eq.system_deterministic}
  \begin{bmatrix} H_k & J_k^T \\ J_k & 0 \end{bmatrix} \begin{bmatrix} d_k \\ \delta_k \end{bmatrix} = - \begin{bmatrix} g_k + J_k^Ty_k \\ c_k \end{bmatrix},
\end{equation}
the deterministic counter-part of \eqref{eq.system}, where $\gbar_k$ is replaced by the true gradient of the objective function. 
Moreover, let $\{\tau_k\}$ and  $\{\alpha_k\}$ be the sequences of merit parameters and step sizes, respectively,  computed at $x_k$ for all $k\in\mathbb{N}$ by the deterministic variant of the algorithm with $\tau_{k-1} = \bar{\tau}_{k-1}$.  
We make the following additional assumption.
\begin{assumption}\label{ass.residual}
  \change{For any $k \in \mathbb{N}$, a sequence of inexact solutions $\{(\bar{d}_{k,t},\bar\delta_{k,t})\}_{t\in\mathbb{N}}$ is generated by some iterative linear system solver $(t$ denotes the iteration counter of the linear system solver$)$, where $\lim_{t\to\infty}\{(\bar{d}_{k,t},\bar\delta_{k,t})\} = (\tilde{d}_k,\tilde\delta_k)$ and $(\bar{d}_k,\bar\delta_k):= (\bar{d}_{k,t},\bar\delta_{k,t})$ for some $t\in\mathbb{N}$. Furthermore, for technical reasons, we also assume that either $\|c_k\|_1 \neq 0$ or $\bar{g}_k\notin\Range(J_k^T)$ for all $k\in\mathbb{N}$.}
\end{assumption}

\begin{remark}
\change{We make the following remarks about Assumption~\ref{ass.residual}. Assumption~\ref{ass.residual} pertains to properties of two main components: $(i)$ the iterative solver, and $(ii)$ the gradient estimates. First, we require that the iterative solver is able to return the exact solution of \eqref{eq.system} in the limit. Second, we assume that the stochastic gradients $(\gbar_k)$ computed do not lie exactly in the range space of the Jacobian of the constraints $(J_k^T)$ for iterates that are feasible. In general, this is not a strong assumption in the stochastic setting.
For details about practical linear system solvers see \cite{trefethen1997numerical} and references therein, and, for details about our implemented linear system solver see Sections~\ref{sec.system} and \ref{sec.num_res}.}
\end{remark}

The first result shows that Algorithm~\ref{alg.adaptiveSQP} is well-defined.
\begin{lemma}\label{lem.algorithm_well_defined}
   \change{Suppose Assumptions~\ref{ass.H} and \ref{ass.residual} hold.} Line~\ref{line:conditions} of Algorithm~\ref{alg.adaptiveSQP} terminates finitely.
\end{lemma}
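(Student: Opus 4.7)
The plan is to leverage the convergence guarantee in Assumption~\ref{ass.residual}: as $t \to \infty$, $(\bar{d}_{k,t},\bar\delta_{k,t}) \to (\tilde{d}_k,\tilde\delta_k)$ and the associated residuals $(\bar\rho_{k,t},\bar{r}_{k,t}) \to (0,0)$. I will show that in this limit one of the two conditions (a) or (b) holds with \emph{strict} inequality; then continuity of each relevant quantity in $(\bar{d}_{k,t},\bar\rho_{k,t},\bar{r}_{k,t})$ immediately yields that the same condition is satisfied at some finite $t$, which is exactly what is required. Two cases are handled separately, split according to whether $\|c_k\|_1$ is positive or zero.

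\textbf{Case $\|c_k\|_1 > 0$.} At the exact solution $(\tilde{d}_k,\tilde\delta_k)$ the residuals vanish, while the right-hand sides $\min\{(1-\omega_1)\omega_2,\omega_1\omega_a\beta^{\sfrac{\sigma}{2}}\}\|c_k\|_1$ and $\omega_b\|c_k\|_1$ in condition (b) are strictly positive. Hence (b) is satisfied with slack at $t=\infty$, and by continuity it is also satisfied for every sufficiently large $t$.

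\textbf{Case $\|c_k\|_1 = 0$.} Condition (b) is infeasible here because its right-hand side is zero, so I must verify (a) in the limit. From $c_k=0$ and $\tilde{r}_k=0$ one obtains $J_k\tilde{d}_k = 0$; left-multiplying the top block of \eqref{eq.system} by $\tilde{d}_k^T$ then yields $\tilde{d}_k^T H_k \tilde{d}_k = -\bar{g}_k^T\tilde{d}_k$. The range-space clause of Assumption~\ref{ass.residual}, $\bar{g}_k \notin \Range(J_k^T)$, forces $\tilde{d}_k \neq 0$ (otherwise the top block of \eqref{eq.system} would read $J_k^T\tilde\delta_k = -(\bar{g}_k+J_k^Ty_k)$, placing $\bar{g}_k$ in $\Range(J_k^T)$). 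Combining this with Assumption~\ref{ass.H} gives $\tilde{d}_k^T H_k\tilde{d}_k \geq \zeta \|\tilde{d}_k\|_2^2 > 0$, so $\Delta l(x_k,\bar\tau_{k-1},\bar{g}_k,\tilde{d}_k) = \bar\tau_{k-1}\tilde{d}_k^T H_k\tilde{d}_k > 0$. Because $\epsilon_d < \sfrac{\zeta}{2}$, the maximum in \eqref{eq.merit_model_reduction_lower_stochastic} is attained by $\tilde{d}_k^T H_k\tilde{d}_k$, and the bound reduces to $\bar\tau_{k-1}\tilde{d}_k^T H_k\tilde{d}_k \geq \bar\tau_{k-1}\omega_1 \tilde{d}_k^T H_k\tilde{d}_k$, which is strict since $\omega_1<1$. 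The residual bound $\|\tilde{r}_k\|_1 = 0 \leq \omega_a\beta^{\sfrac{\sigma}{2}}\Delta l$ is strict for the same reason, and the ``additional'' requirement in (a) is vacuous because $\|c_k\|_1 = 0$. Continuity now gives (a) for all sufficiently large $t$.

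\textbf{Main obstacle.} The delicate step is the feasible case $\|c_k\|_1=0$: condition (b) is unavailable, and establishing strict positivity of $\Delta l$ at the exact solution demands ruling out the degenerate possibility $\tilde{d}_k = 0$, which is precisely where the technical range-space condition $\bar{g}_k \notin \Range(J_k^T)$ in Assumption~\ref{ass.residual} is essential. Once $\tilde{d}_k \neq 0$ is secured, every remaining inequality is either structural (from Assumption~\ref{ass.H}) or strict by the parameter constraints $\omega_1 < 1$ and $\epsilon_d < \sfrac{\zeta}{2}$, so a routine continuity argument on the finitely many affine/quadratic expressions involved completes the proof.
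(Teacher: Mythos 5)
Your proposal is correct and follows essentially the same route as the paper's proof: the same case split on $\|c_k\|_1 > 0$ versus $\|c_k\|_1 = 0$, with condition (b) holding in the limit in the first case and condition (a) in the second, where the clause $\bar{g}_k \notin \Range(J_k^T)$ of Assumption~\ref{ass.residual} is used to force $\tilde{d}_k \neq 0$ and hence strict positivity of $\Delta l$ at the exact solution. Your explicit derivation of why $\tilde{d}_k = 0$ would place $\bar{g}_k$ in $\Range(J_k^T)$ is a slightly more detailed justification of a step the paper only asserts, but the argument is otherwise the same.
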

\begin{proof} 
We consider two cases: $(i)$ $\|c_k\|_1 > 0$ and $(ii)$ $\|c_k\|_1 = 0$. If $\|c_k\|_1 > 0$, by Assumption~\ref{ass.residual} it follows that $\{\max\{\|\bar{r}_{k,t}\|_1,\|\bar\rho_{k,t}\|_2\}\}\to 0$. Therefore, for sufficiently large $t\in\mathbb{N}$, $\|\bar{r}_{k,t}\|_1 < \min\{(1-\omega_1)\omega_2, \omega_1\omega_a\beta^{\change{\sigma}}\}\|c_k\|_1$ and $\|\bar\rho_{k,t}\|_1 < \omega_b\|c_k\|_1$ are  satisfied, and Line~\ref{line:conditions} of Algorithm~\ref{alg.adaptiveSQP} terminates finitely in case $(b)$. On the other hand, if $\|c_k\|_1 = 0$, by \eqref{eq.system} and \eqref{eq.model_reduction}, it follows that
\begin{equation*}
    \Delta l(x_k,\bar\tau_{k-1},\bar{g}_k,\tilde{d}_k) = -\bar\tau_{k-1}\bar{g}_k^T\tilde{d}_k + \|c_k\|_1 - \|c_k + J_k\tilde{d}_k\|_1 = -\bar\tau_{k-1}\bar{g}_k^T\tilde{d}_k.
\end{equation*}
By \eqref{eq.system} and Assumption~\ref{ass.residual} it follows that $\bar{g}_k\notin\Range(J_k^T)$ and $\|\tilde{d}_k\|_2>0$. By Assumption~\ref{ass.H}, \eqref{eq.system}, $\|\tilde{d}_k\|_2 > 0$ and $\epsilon_d\in \left(0,\sfrac{\zeta}{2}\right)$, it follows that $\tilde{d}_k^TH_k\tilde{d}_k > \epsilon_d\|\tilde{d}_k\|_2^2 > 0$. Moreover, by \eqref{eq.system}, $\bar{g}_k^T\tilde{d}_k + \tilde{d}_k^TH_k\tilde{d}_k = 0$. Combining the above and  $\{\bar\tau_k\}\subset\R{}_{>0}$,
\begin{equation*}
\begin{aligned}
    & \Delta l(x_k,\bar\tau_{k-1},\bar{g}_k,\tilde{d}_k) -  \bar\tau_{k-1}\omega_1\max\{\tilde{d}_k^TH_k\tilde{d}_k,\epsilon_d\|\tilde{d}_k\|_2^2\} \\
     = & \ -\bar\tau_{k-1}\bar{g}_k^T\tilde{d}_k - \bar\tau_{k-1}\omega_1\tilde{d}_k^TH_k\tilde{d}_k \\
     = & \ -\bar\tau_{k-1}(\bar{g}_k^T\tilde{d}_k + \tilde{d}_k^TH_k\tilde{d}_k) + \bar\tau_{k-1}(1-\omega_1)\tilde{d}_k^TH_k\tilde{d}_k \\
     = & \ \bar\tau_{k-1}(1-\omega_1)\tilde{d}_k^TH_k\tilde{d}_k > 0.
\end{aligned}
\end{equation*}
Therefore, by $\{\bar{d}_{k,t}\}\to\tilde{d}_k$ and $\|\tilde{d}_k\| > 0$ (Assumption~\ref{ass.residual}), for sufficiently large $t\in\mathbb{N}$ and sufficiently small $\|\bar{r}_{k,t}\|$, it follows that $\Delta l(x_k,\bar\tau_{k-1},\bar{g}_k,\bar{d}_{k,t}) > 0$ and
\begin{equation*}
\baligned
  \Delta l(x_k,\bar\tau_{k-1},\bar{g}_k,\bar{d}_{k,t}) &\geq \bar\tau_{k-1} \omega_1 \max\{\bar{d}_{k,t}^TH_k\bar{d}_{k,t},\epsilon_d \| \bar{d}_{k,t}\|_2^2\} \\
  & \qquad + \omega_1 \max\{\|c_k\|_1,\|\bar{r}_{k,t}\|_1 - \|c_k\|_1\},
\ealigned
\end{equation*}
with $\Delta l(x_k,\bar\tau_{k-1},\bar{g}_k,\bar{d}_{k,t}) \geq \frac{\|\bar{r}_{k,t}\|_1}{\omega_a\beta^{\change{\sigma}}}$.  So Line~\ref{line:conditions}, Algorithm~\ref{alg.adaptiveSQP} terminates finitely. 
\end{proof}

Next, we prove that \eqref{eq.merit_model_reduction_lower_stochastic} is satisfied at every iteration of Algorithm~\ref{alg.adaptiveSQP}. As mentioned above, this component of the algorithm is central in the analysis.

\begin{lemma}\label{lem.model_reduction_cond}
   \change{Suppose Assumptions~\ref{ass.H} and \ref{ass.residual} hold.} The sequence of iterates generated by Algorithm~\ref{alg.adaptiveSQP} satisfy \eqref{eq.merit_model_reduction_lower_stochastic}.
\end{lemma}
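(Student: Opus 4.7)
The plan is to split on the branch by which Line~\ref{line:conditions} terminates, since the lemma is an immediate consequence of the algorithmic construction in case $(a)$ and requires work only in case $(b)$. For case $(a)$, the bound \eqref{eq.merit_model_reduction_lower_stochastic} is imposed explicitly as part of the acceptance criterion, and the merit parameter is not updated ($\bar\tau_k = \bar\tau_{k-1}$), so there is nothing further to show.

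For case $(b)$, I would first exploit the residual condition $\|\bar r_k\|_1 < (1-\omega_1)\omega_2 \|c_k\|_1 < \|c_k\|_1$ to conclude that $\max\{\|c_k\|_1, \|\bar r_k\|_1 - \|c_k\|_1\} = \|c_k\|_1$, so that \eqref{eq.merit_model_reduction_lower_stochastic} reduces (after using the definition \eqref{eq.model_reduction} of $\Delta l$) to the single inequality
\begin{equation*}
  \bar\tau_k \bar g_k^T \bar d_k + \bar\tau_k \omega_1 \max\{\bar d_k^T H_k \bar d_k, \epsilon_d \|\bar d_k\|_2^2\} \;\leq\; (1-\omega_1)\|c_k\|_1 - \|\bar r_k\|_1.
\end{equation*}
Writing $M_k := \max\{\bar d_k^T H_k \bar d_k, \epsilon_d \|\bar d_k\|_2^2\} \geq 0$, I would then split on the two branches of the trial rule \eqref{eq.merit_parameter_trial}. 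If $\bar g_k^T \bar d_k + M_k \leq 0$, then the left-hand side is bounded above by $-(1-\omega_1)\bar\tau_k M_k \leq 0$, while the right-hand side is at least $(1-\omega_1)(1-\omega_2)\|c_k\|_1 \geq 0$ by the residual condition, so the inequality holds trivially.

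Otherwise, \eqref{eq.merit_parameter_trial}--\eqref{eq.tau_update} together with the fact that the merit parameter is actually updated in case $(b)$ give $\bar\tau_k \leq \bar\tau_k^{\mathrm{trial}}$, which rearranges to $\bar\tau_k(\bar g_k^T \bar d_k + M_k) \leq (1-\omega_1)(1-\omega_2)\|c_k\|_1$. Subtracting $\bar\tau_k(1-\omega_1) M_k$ from both sides turns the left-hand side into exactly the quantity we need to bound, so it suffices to verify $(1-\omega_1)(1-\omega_2)\|c_k\|_1 - \bar\tau_k(1-\omega_1)M_k \leq (1-\omega_1)\|c_k\|_1 - \|\bar r_k\|_1$, i.e., $\|\bar r_k\|_1 \leq (1-\omega_1)\omega_2 \|c_k\|_1 + \bar\tau_k(1-\omega_1)M_k$, which follows from the case $(b)$ residual condition together with $M_k \geq 0$ and $\bar\tau_k > 0$.

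The main obstacle is really just bookkeeping: keeping careful track of the two sub-branches of the trial rule, correctly using the strict inequality $\|\bar r_k\|_1 < (1-\omega_1)\omega_2\|c_k\|_1$ to drop the $\max$ in \eqref{eq.merit_model_reduction_lower_stochastic}, and invoking the update rule \eqref{eq.tau_update} only in case $(b)$, since in case $(a)$ one cannot generally assume $\bar\tau_k \leq \bar\tau_k^{\mathrm{trial}}$ and the bound must come entirely from the acceptance criterion.
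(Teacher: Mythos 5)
Your proof is correct and follows essentially the same route as the paper: reduce \eqref{eq.merit_model_reduction_lower_stochastic} in case $(b)$ to $\bar\tau_k\bigl(\bar g_k^T\bar d_k + \omega_1\max\{\bar d_k^TH_k\bar d_k,\epsilon_d\|\bar d_k\|_2^2\}\bigr) \leq (1-\omega_1)\|c_k\|_1 - \|\bar r_k\|_1$ using $\|\bar r_k\|_1 < (1-\omega_1)\omega_2\|c_k\|_1$, then invoke $\bar\tau_k \leq (1-\epsilon_\tau)\bar\tau_k^{trial}$ in the nontrivial sub-case. The only cosmetic difference is that you split on the sign of $\bar g_k^T\bar d_k + \max\{\cdot\}$ (matching the branches of \eqref{eq.merit_parameter_trial}) while the paper splits on the sign of $\bar g_k^T\bar d_k + \omega_1\max\{\cdot\}$; both case analyses close the same way.
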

\begin{proof}
  If the first condition on Line~\ref{line:conditions} of Algorithm~\ref{alg.adaptiveSQP} is triggered, i.e., case \emph{(a)}, then the result holds trivially. Thus, we focus on the case where the second condition is triggered, i.e., case \emph{(b)}. In this case, the residual vectors satisfy $\|\bar{r}_k\|_1 < \min\{(1-\omega_1)\omega_2, \omega_1\omega_a\beta^{\change{\sigma}}\}\|c_k\|_1$ and $\|\bar\rho_k\|_1 < \omega_b\|c_k\|_1$, and the merit parameter is updated via \eqref{eq.merit_parameter_trial}--\eqref{eq.tau_update}. By the residual conditions, it follows that $\|c_k\|_1 > 0$. (Note,  Lemma~\ref{lem.algorithm_well_defined} showed that if $\|c_k\|=0$, Line~\ref{line:conditions} of Algorithm~\ref{alg.adaptiveSQP} terminates in case $(a)$.) 
  By \eqref{eq.model_reduction}, \eqref{eq.merit_model_reduction_lower_stochastic} and $\|\bar{r}_k\|_1 < (1-\omega_1)\omega_2\|c_k\|_1$, to complete the proof, it is equivalent to show that 
  \begin{equation}\label{eq.model_reduction_equivalent}
      \bar\tau_k\left( \bar{g}_k^T\bar{d}_k + \omega_1\max\{\bar{d}_k^TH_k\bar{d}_k,\epsilon_d\|\bar{d}_k\|_2^2\} \right) \leq (1-\omega_1)\|c_k\|_1 - \|\bar{r}_k\|_1.
  \end{equation}
  \looseness=-1
  Since $\|\bar{r}_k\|_1 < (1-\omega_1)\omega_2\|c_k\|_1 < (1-\omega_1)\|c_k\|_1$, \eqref{eq.model_reduction_equivalent} directly holds if $\bar{g}_k^T\bar{d}_k + \omega_1\max\{\bar{d}_k^TH_k\bar{d}_k,\epsilon_d\|\bar{d}_k\|_2^2\} \leq 0$. If $\bar{g}_k^T\bar{d}_k + \omega_1\max\{\bar{d}_k^TH_k\bar{d}_k,\epsilon_d\|\bar{d}_k\|_2^2\} > 0$, which also implies $\gbar_k^T\dbar_k + \max\{\bar{d}_k^TH_k\bar{d}_k,\epsilon_d\|\bar{d}_k\|_2^2\}>0$, by \eqref{eq.merit_parameter_trial}--\eqref{eq.tau_update}, 
  \begin{equation*}
  \begin{aligned}
      \bar\tau_k \leq (1-\epsilon_{\tau})\bar\tau_k^{trial} = &\tfrac{(1-\epsilon_{\tau})(1 - \omega_1)(1-\omega_2)\|c_k\|_1}{\bar{g}_k^T\bar{d}_k + \max\{\bar{d}_k^TH_k\bar{d}_k,\epsilon_d \| \bar{d}_k\|_2^2\}} 
      < \tfrac{(1 - \omega_1)\|c_k\|_1 - \|\bar{r}_k\|_1}{\bar{g}_k^T\bar{d}_k + \omega_1 \max\{\bar{d}_k^TH_k\bar{d}_k,\epsilon_d \| \bar{d}_k\|_2^2\}},
  \end{aligned}
  \end{equation*}
  which implies that \eqref{eq.model_reduction_equivalent} and \eqref{eq.merit_model_reduction_lower_stochastic} both hold. 
\end{proof}

The next lemma provides an upper bound on the primal residuals~\eqref{eq.system_stochastic}.
\begin{lemma}\label{lem.residual}
   \change{Suppose Assumptions~\ref{ass.H} and \ref{ass.residual} hold.} For all $k \in \N{}$, the residual vector $\bar{r}_k \in \mathbb{R}^m$ \eqref{eq.system_stochastic}
   satisfies, $\| \bar{r}_k \|_1 \leq \omega_a \beta^{\change{\sigma}} \Delta l (x_k,\bar{\tau}_k,\bar{g}_k,\bar{d}_k)$, where  $\omega_a\in\mathbb{R}_{>0}$, $\beta\in (0,1]$ and $\change{\sigma\in [1,2]}$.
\end{lemma}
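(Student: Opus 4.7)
The plan is to split the argument according to which of the two conditions in Line~\ref{line:conditions} of Algorithm~\ref{alg.adaptiveSQP} triggered termination at iteration~$k$. The main observation is that case $(a)$ yields the desired bound by construction, while case $(b)$ gives a strong bound on $\|\bar{r}_k\|_1$ relative to $\|c_k\|_1$, and Lemma~\ref{lem.model_reduction_cond} gives a lower bound on $\Delta l$ relative to the same quantity.

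First I would handle case $(a)$. By the statement of Line~\ref{line:conditions}$(a)$, the step $(\bar{d}_k,\bar\delta_k)$ satisfies $\|\bar{r}_k\|_1 \leq \omega_a \beta^{\sigma/2}\Delta l(x_k,\bar\tau_k,\bar{g}_k,\bar{d}_k)$ directly, so the conclusion is immediate.

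Next I would handle case $(b)$. Here, by the condition imposed in Line~\ref{line:conditions}$(b)$, we have
\begin{equation*}
\|\bar{r}_k\|_1 \;<\; \min\{(1-\omega_1)\omega_2,\ \omega_1\omega_a\beta^{\sigma/2}\}\,\|c_k\|_1 \;\leq\; \omega_1\omega_a\beta^{\sigma/2}\|c_k\|_1.
\end{equation*}
On the other hand, Lemma~\ref{lem.model_reduction_cond} guarantees that \eqref{eq.merit_model_reduction_lower_stochastic} holds for every iterate generated by the algorithm; in particular, keeping only the second term on the right-hand side of \eqref{eq.merit_model_reduction_lower_stochastic} gives $\Delta l(x_k,\bar\tau_k,\bar{g}_k,\bar{d}_k) \geq \omega_1 \max\{\|c_k\|_1,\|\bar{r}_k\|_1-\|c_k\|_1\} \geq \omega_1 \|c_k\|_1$, so that $\|c_k\|_1 \leq \omega_1^{-1}\Delta l(x_k,\bar\tau_k,\bar{g}_k,\bar{d}_k)$. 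Combining the two displayed inequalities yields
\begin{equation*}
\|\bar{r}_k\|_1 \;<\; \omega_1\omega_a\beta^{\sigma/2}\|c_k\|_1 \;\leq\; \omega_1\omega_a\beta^{\sigma/2}\cdot \omega_1^{-1}\Delta l(x_k,\bar\tau_k,\bar{g}_k,\bar{d}_k) \;=\; \omega_a\beta^{\sigma/2}\Delta l(x_k,\bar\tau_k,\bar{g}_k,\bar{d}_k),
\end{equation*}
which is the claim.

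There is essentially no main obstacle here: the proof is bookkeeping once one recognizes that Line~\ref{line:conditions}$(b)$ was deliberately designed with the factor $\omega_1\omega_a\beta^{\sigma/2}$ so that the $\omega_1$ cancels against the $\omega_1$ appearing in the lower bound on $\Delta l$ provided by Lemma~\ref{lem.model_reduction_cond}. The only subtle point worth noting is that in case $(b)$ we must have $\|c_k\|_1>0$ (otherwise the strict inequality $\|\bar{r}_k\|_1<\omega_1\omega_a\beta^{\sigma/2}\|c_k\|_1$ would force $\|\bar{r}_k\|_1<0$), so division by $\omega_1$ and use of the lower bound $\omega_1\|c_k\|_1$ on $\Delta l$ are justified.
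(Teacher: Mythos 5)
Your proof is correct and follows essentially the same route as the paper's: case $(a)$ is immediate from the termination condition, and case $(b)$ chains the residual bound $\|\bar{r}_k\|_1 < \omega_1\omega_a\beta^{\sigma/2}\|c_k\|_1$ with the lower bound $\Delta l(x_k,\bar\tau_k,\bar{g}_k,\bar{d}_k) \geq \omega_1\|c_k\|_1$ from Lemma~\ref{lem.model_reduction_cond} and \eqref{eq.merit_model_reduction_lower_stochastic}. The added observation that $\|c_k\|_1>0$ in case $(b)$ is a fine (if unnecessary) bit of extra care.
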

\begin{proof}
  There are two cases to consider with regards to 
  Line~\ref{line:conditions} of Algorithm~\ref{alg.adaptiveSQP}. If condition \emph{(a)} is triggered, the result holds trivially.  If condition \emph{(b)} is triggered, by Lemma~\ref{lem.model_reduction_cond} and \eqref{eq.merit_model_reduction_lower_stochastic}, it follows that
  \begin{align*}
    \|\bar{r}_k\|_1 < \min\{(1-\omega_1)\omega_2, \omega_1\omega_a\beta^{\change{\sigma}}\}\|c_k\|_1 &\leq \omega_1\omega_a\beta^{\change{\sigma}}\|c_k\|_1 \leq \omega_a\beta^{\change{\sigma}}\Delta l(x_k,\bar\tau_k,\bar{g}_k,\bar{d}_k).
  \end{align*}
  Combining the two cases yields the desired result. 
\end{proof}

The next lemma provides an upper bound on the deterministic dual variable update, $\|y_k + \delta_k\|_{\infty}$, required for the analysis in Section~\ref{sec.adaptive}. We should note that we never require to compute $\delta_k$ in Algorithm~\ref{alg.adaptiveSQP}.
\begin{lemma}\label{lem.bound_ydelta}
\change{Suppose Assumptions~\ref{ass.main} and~\ref{ass.H} hold.} Then, there exists some constant $\kappa_{y\delta}\in\mathbb{R}_{>0}$, such that for all $k\in\mathbb{N}$,  $\|y_k + \delta_k\|_{\infty} \leq \kappa_{y\delta}$.
\end{lemma}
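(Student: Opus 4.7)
The plan is to derive a closed-form expression for $y_k+\delta_k$ from the deterministic KKT system \eqref{eq.system_deterministic} and bound each of its ingredients using Assumptions~\ref{ass.main} and~\ref{ass.H}. From the first block of \eqref{eq.system_deterministic}, I would rearrange to obtain $J_k^T(y_k+\delta_k) = -g_k - H_k d_k$, left-multiply by $J_k$, and invert the Schur-complement-like matrix $J_kJ_k^T$ (which is nonsingular under Assumption~\ref{ass.main}) to arrive at
\begin{equation*}
y_k + \delta_k = -(J_kJ_k^T)^{-1}\bigl(J_kg_k + J_k H_k d_k\bigr).
\end{equation*}
Using $\|(J_kJ_k^T)^{-1}\|_2 \leq \kappa_\sigma^{-2}$, $\|J_k\|_2 \leq \kappa_J$, $\|g_k\|_2 \leq \kappa_g$ and $\|H_k\|_2 \leq \kappa_H$, it then suffices to produce a uniform bound on $\|d_k\|_2$ and conclude by $\|\cdot\|_\infty \leq \|\cdot\|_2$.

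The main step therefore is to show $\|d_k\|_2$ is bounded uniformly in $k$. I plan to decompose $d_k = u_k + v_k$ with $v_k\in\Range(J_k^T)$ (the normal component) and $u_k\in\Null(J_k)$ (the tangential component). For the normal component, $J_kv_k=-c_k$ together with $v_k=J_k^T(J_kJ_k^T)^{-1}(-c_k)$ yields $\|v_k\|_2 \leq \kappa_\sigma^{-1}\kappa_c$ (after passing from $\|c_k\|_1$ to $\|c_k\|_2$). For the tangential component, I would left-multiply the first block of \eqref{eq.system_deterministic} by $u_k^T$; since $J_ku_k=0$ the Lagrange-multiplier contribution vanishes, leaving $u_k^T H_k u_k = -u_k^T(g_k + H_k v_k)$. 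Applying the second part of Assumption~\ref{ass.H} gives $\zeta\|u_k\|_2^2 \leq \|u_k\|_2(\kappa_g + \kappa_H\|v_k\|_2)$, hence $\|u_k\|_2 \leq \zeta^{-1}(\kappa_g + \kappa_H\kappa_\sigma^{-1}\kappa_c)$. Combining the two bounds gives a constant bound $\|d_k\|_2 \leq \kappa_d$ independent of $k$.

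Plugging $\|d_k\|_2 \leq \kappa_d$ into the displayed expression for $y_k+\delta_k$ and taking norms yields
\begin{equation*}
\|y_k+\delta_k\|_\infty \leq \|y_k+\delta_k\|_2 \leq \kappa_\sigma^{-2}\kappa_J\bigl(\kappa_g + \kappa_H\kappa_d\bigr) =: \kappa_{y\delta},
\end{equation*}
which gives the desired constant. The only mild obstacle I anticipate is the tangential-component bound: the use of Assumption~\ref{ass.H} here is essential because $H_k$ is only assumed positive definite on $\Null(J_k)$, so I must be careful to apply the coercivity only to $u_k$ (not to $d_k$ itself) and to handle the cross-term $u_k^T H_k v_k$ by Cauchy--Schwarz rather than by any definiteness argument. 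Once this is done, the rest is direct substitution and norm-equivalence.
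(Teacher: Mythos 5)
Your proposal is correct and follows essentially the same route as the paper: both start from $y_k+\delta_k = -(J_kJ_kT)^{-1}J_k(g_k+H_kd_k)$ (the paper writes $J_kJ_k^T$, as you do) and reduce the problem to uniformly bounding $\|d_k\|_2$ via Assumptions~\ref{ass.main} and~\ref{ass.H}. The only cosmetic difference is in how $d_k$ is bounded — the paper writes the explicit reduced-Hessian solution using an orthonormal null-space basis $Z_k$ with $Z_k^TH_kZ_k\succeq\zeta I$, whereas you test the first block equation against the tangential component $u_k$ and invoke coercivity of $H_k$ on $\Null(J_k)$ directly; these are equivalent in substance, and your handling of the cross-term $u_k^TH_kv_k$ by Cauchy--Schwarz is exactly right.
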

\begin{proof}
For all $k \in \N{}$, let $Z_k\in\mathbb{R}^{n\times(n-m)}$ be an orthonormal basis for the null space of the Jacobian of the constraints $J_k$, i.e., $J_kZ_k = 0$. Then under Assumption~\ref{ass.H}, it follows that $Z_k^TH_kZ_k \succeq \zeta I$. By \eqref{eq.system_deterministic}, it follows that  
\begin{equation*}
\begin{aligned}
    d_k =\ &-J_k^T(J_kJ_k^T)^{-1}c_k - Z_k(Z_k^TH_kZ_k)^{-1}Z_k^T(g_k - H_kJ_k^T(J_kJ_k^T)^{-1}c_k), \\
    \text{and} \ \ y_k + \delta_k = \ &-(J_kJ_k^T)^{-1}J_k(g_k + H_kd_k) \\
    = \ &- (J_kJ_k^T)^{-1}J_k(I - H_kZ_k(Z_k^TH_kZ_k)^{-1}Z_k^T)g_k  \\
    &+ (J_kJ_k^T)^{-1}J_kH_k(I - Z_k(Z_k^TH_kZ_k)^{-1}Z_k^TH_k)J_k^T(J_kJ_k^T)^{-1}c_k .
\end{aligned}
\end{equation*}
By the Cauchy–Schwarz inequality, and Assumptions~\ref{ass.main} and~\ref{ass.H}, it follows that $\|y_k+\delta_k\|_{\infty} \leq \kappa_{\sigma}^{-4}\kappa_J^2\kappa_H\kappa_c + \kappa_{\sigma}^{-2}\kappa_J\kappa_g$. 
Thus, selecting a sufficiently large constant $\kappa_{y\delta} \in \mathbb{R}_{>0}$, completes the proof.  
\end{proof}

Finally, we show that the model reduction function based on deterministic quantities, i.e., $\Delta l(x_k,\tau_k,g_k,d_k)$, is non-negative and bounded above. For non-optimal points, one can show the model reduction function is strictly positive.
\begin{lemma}\label{lem.bound_Delta_l}
\change{Suppose Assumptions~\ref{ass.main} and~\ref{ass.H} hold.} Then, there exists some fixed constant $\kappa_{\Delta l} \in \R{}_{>0}$ such that for all $k\in\mathbb{N}$, $\Delta l(x_k,\tau_k,g_k,d_k) \in [0,\kappa_{\Delta l})$. 
\end{lemma}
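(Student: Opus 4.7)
The plan is to reduce the expression for $\Delta l(x_k,\tau_k,g_k,d_k)$ using the fact that $(d_k,\delta_k)$ solves \eqref{eq.system_deterministic} \emph{exactly}, and then bound the two remaining terms using the uniform bounds provided by Assumption~\ref{ass.main}, Assumption~\ref{ass.H}, the non-increasing property of the merit parameter sequence, and the closed-form expression for $d_k$ derived in the proof of Lemma~\ref{lem.bound_ydelta}. Since $(d_k,\delta_k)$ satisfies $J_kd_k+c_k=0$, the reduction function in \eqref{eq.model_reduction} collapses to
\begin{equation*}
\Delta l(x_k,\tau_k,g_k,d_k) = -\tau_k g_k^T d_k + \|c_k\|_1.
\end{equation*}

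For the non-negativity, I would argue directly from the merit parameter update rule \eqref{eq.merit_parameter_trial}--\eqref{eq.tau_update} applied to the deterministic quantities. In the case $\tau_k^{trial}=\infty$, one has $g_k^T d_k + \max\{d_k^T H_k d_k,\epsilon_d\|d_k\|_2^2\}\leq 0$, which gives $-\tau_k g_k^T d_k\geq 0$ and hence $\Delta l\geq 0$. In the case $\tau_k^{trial}<\infty$, the inequality $\tau_k\leq \tau_k^{trial}$ can be rearranged to yield $\tau_k g_k^T d_k\leq (1-\omega_1)(1-\omega_2)\|c_k\|_1\leq \|c_k\|_1$, again implying $\Delta l\geq 0$. (Equivalently, one may invoke the deterministic analog of Lemma~\ref{lem.model_reduction_cond} together with $\tau_k\geq\tau_{\min}>0$.)

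For the upper bound, I would apply the Cauchy--Schwarz inequality to obtain $\Delta l(x_k,\tau_k,g_k,d_k)\leq \tau_k\|g_k\|_2\|d_k\|_2+\|c_k\|_1$, then use the explicit representation
\begin{equation*}
d_k = -J_k^T(J_kJ_k^T)^{-1}c_k - Z_k(Z_k^TH_kZ_k)^{-1}Z_k^T\bigl(g_k - H_kJ_k^T(J_kJ_k^T)^{-1}c_k\bigr)
\end{equation*}
established inside the proof of Lemma~\ref{lem.bound_ydelta}. Assumptions~\ref{ass.main} and~\ref{ass.H} provide the uniform bounds $\|g_k\|_2\leq \kappa_g$, $\|J_k\|_2\leq \kappa_J$, $\|(J_kJ_k^T)^{-1}\|_2\leq \kappa_\sigma^{-2}$, $\|H_k\|_2\leq \kappa_H$, $\|c_k\|_2\leq \|c_k\|_1\leq \kappa_c$, together with $\|Z_k\|_2=1$ and $\|(Z_k^TH_kZ_k)^{-1}\|_2\leq \zeta^{-1}$. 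These combine to yield a constant $\kappa_d\in\R{}_{>0}$ with $\|d_k\|_2\leq \kappa_d$ for all $k\in\N{}$. Finally, since the update rule \eqref{eq.tau_update} guarantees that $\{\tau_k\}$ is monotonically non-increasing and $\tau_{-1}=\bar\tau_{-1}$, we have $\tau_k\leq \bar\tau_{-1}$ for all $k$, so that
\begin{equation*}
\Delta l(x_k,\tau_k,g_k,d_k)\leq \bar\tau_{-1}\kappa_g\kappa_d + \kappa_c.
\end{equation*}
Choosing $\kappa_{\Delta l}:=\bar\tau_{-1}\kappa_g\kappa_d+\kappa_c+1$ (or any strictly larger constant) yields the strict upper bound $\Delta l(x_k,\tau_k,g_k,d_k)<\kappa_{\Delta l}$, completing the proof.

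There is no real obstacle here; the argument is essentially a book-keeping exercise that chains together the bounds already proved. The most delicate point is merely being careful to use the \emph{deterministic} versions of the merit parameter and direction throughout, and to recognize that the explicit formula for $d_k$ needed in the upper bound is already available within the proof of Lemma~\ref{lem.bound_ydelta} and does not need to be re-derived from scratch.
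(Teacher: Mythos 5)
Your proposal is correct and follows essentially the same route as the paper: non-negativity via the same case split on the sign of $g_k^Td_k + \max\{d_k^TH_kd_k,\epsilon_d\|d_k\|_2^2\}$ combined with the merit-parameter update inequality (falling back on the deterministic analogue of Lemma~\ref{lem.model_reduction_cond} when the update is skipped), and the upper bound via the explicit null-space representation of $d_k$ with the uniform bounds of Assumptions~\ref{ass.main} and~\ref{ass.H}. The only cosmetic difference is that you bound $\|d_k\|_2$ and apply Cauchy--Schwarz, whereas the paper bounds $|g_k^Td_k|$ directly from its closed-form expression; both yield a constant of the form $\tau_{-1}(\cdot)+\kappa_c$.
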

\begin{proof}
  Notice that in this lemma we consider only deterministic quantities. First, we show that $\Delta l(x_k,\tau_k,g_k,d_k) \geq 0$ for all $k\in\mathbb{N}$. We consider two cases for the outcome of Line~\ref{line:conditions} of Algorithm~\ref{alg.adaptiveSQP}. If  \eqref{eq.merit_model_reduction_lower_stochastic} is satisfied with $\tau_k = \bar\tau_{k-1}$, then by \eqref{eq.model_reduction} and \eqref{eq.system_deterministic}, we have
  \begin{equation*}
      \Delta l(x_k,\tau_k,g_k,d_k) \geq \tau_k\omega_1\max\left\{d_k^TH_kd_k,\epsilon_d\|d_k\|_2^2\right\} + \omega_1\|c_k\|_1 \geq 0.
  \end{equation*}
  Otherwise, we have $0 = \|c_k+J_kd_k\|_1 < (1-\omega_1)\omega_2\|c_k\|_1$. We consider two subcases. If $g_k^Td_k + \max\{d_k^TH_kd_k,\epsilon_d\|d_k\|_2^2\} \leq 0$, then it follows that $g_k^Td_k \leq 0$, and by $\tau_k > 0$, \eqref{eq.model_reduction} and \eqref{eq.system_deterministic}, we have
  \begin{equation*}
      \Delta l(x_k,\tau_k,g_k,d_k) = -\tau_kg_k^Td_k + \|c_k\|_1 \geq 0.
  \end{equation*}
  On the other hand, if $g_k^Td_k + \max\{d_k^TH_kd_k,\epsilon_d\|d_k\|_2^2\} > 0$, by \eqref{eq.merit_parameter_trial}--\eqref{eq.tau_update} and the fact that $\tau_k \leq (1-\epsilon_{\tau})\tau_k^{trial} < \tau_k^{trial}$, we have
  \begin{equation}\label{eq.Delta_l_positive_middle}
      \tau_kg_k^Td_k < (1-\omega_1)(1-\omega_2)\|c_k\|_1 - \tau_k\max\{d_k^TH_kd_k,\epsilon_d\|d_k\|_2^2\}.
  \end{equation}
  Combining \eqref{eq.model_reduction}, \eqref{eq.system_deterministic} and \eqref{eq.Delta_l_positive_middle}, it follows that
  \begin{equation*}
  \begin{aligned}
      \Delta l(x_k,\tau_k,g_k,d_k) = & \ -\tau_kg_k^Td_k + \|c_k\|_1 \\
      > & \ \tau_k\max\{d_k^TH_kd_k,\epsilon_d\|d_k\|_2^2\} + \left(1-(1-\omega_1)(1-\omega_2)\right)\|c_k\|_1 \geq 0.
  \end{aligned}
  \end{equation*}
  Thus, we have shown that $\Delta l(x_k,\tau_k,g_k,d_k) \geq 0$ for all $k\in\mathbb{N}$.
  
  Next, we show that $\Delta l(x_k,\tau_k,g_k,d_k) \leq \kappa_{\Delta l}$ for all $k\in\mathbb{N}$. For all $k \in \N{}$, let $Z_k\in\R{n\times (n-m)}$ be an orthonormal basis for the null space of the Jacobian of the constraints $J_k$, i.e., $J_kZ_k = 0$, and Assumption~\ref{ass.H} further implies that $Z_k^TH_kZ_k \succeq \zeta I$.
  From \eqref{eq.system_deterministic} we have
  \begin{equation}\label{eq.deterministic_gd}
      g_k^Td_k = -g_k^T(I - Z_k(Z_k^TH_kZ_k)^{-1}Z_k^TH_k)J_k^T(J_kJ_k^T)^{-1}c_k - g_k^TZ_k(Z_k^TH_kZ_k)^{-1}Z_k^Tg_k.
  \end{equation}
  By Assumption~\ref{ass.main} and \eqref{eq.deterministic_gd}, we have
  \begin{equation*}
     \|g_k^Td_k\|_2 \leq \|g_k^TJ_k^T(J_kJ_k^T)^{-1}c_k\|_2 + \|g_k^TZ_k(Z_k^TH_kZ_k)^{-1}Z_k^Tg_k\|_2 \leq \kappa_g\kappa_J\kappa_{\sigma}^{-2}\kappa_c + \kappa_g^2\zeta^{-1}.
  \end{equation*}
  Moreover, by \eqref{eq.model_reduction} and \eqref{eq.system_deterministic}, it follows that for all $k\in\mathbb{N}$,
  \begin{equation*}
      \Delta l(x_k,\tau_k,g_k,d_k) = -\tau_k g_k^Td_k + \|c_k\|_1 \leq \tau_{-1}\left(\kappa_g\kappa_J\kappa_{\sigma}^{-2}\kappa_c + \kappa_g^2\zeta^{-1}\right) + \kappa_c .
  \end{equation*}
  Selecting a sufficiently large constant $\kappa_{\Delta l} \in \mathbb{R}_{>0}$
  completes the proof. 
\end{proof}

\section{\change{Theoretical Analysis}
}\label{sec.analysis}

In this section, we prove under different conditions on the gradient and linear system solution accuracies, that Algorithm~\ref{alg.adaptiveSQP} has convergence properties that match those from the deterministic setting in expectation. First, we consider adaptive error bounds (Section~\ref{sec.adaptive})\change{,} and then consider predetermined sublinear \change{error bounds} (Section~\ref{sec.pred}).

\subsection{Adaptive Iteration-Dependent Errors}\label{sec.adaptive}
In this section, we provide a comprehensive convergence analysis for Algorithm~\ref{alg.adaptiveSQP} under stochastic conditions on the error in the  gradient approximations (\change{Condition}~\ref{ass.stoch_g}), and inexact solutions to the SQP subproblems \eqref{eq.system_stochastic} (\change{Conditions}~\ref{ass.stoch_linear_system_old} and~\ref{assum:stoch_redcond_old}). 
The following two assumptions are central to the analysis presented in this section. 

\begin{condition}\label{ass.stoch_g}
  For all $k \in \mathbb{N}$, the stochastic gradient estimate $\bar{g}_k \in \mathbb{R}^n$ satisfies
  \begin{equation}\label{eq.norm_cond_stoch}
    \mathbb{E}_k\left[\|\bar{g}_k - g_k\|_2^2\right] \leq \theta_1 \beta^{\change{2\sigma}}\Delta l(x_k,\tau_k,g_k,d_k),
  \end{equation}
where $\theta_1\in\mathbb{R}_{>0}$, $\beta\in (0,1)$, and \change{$\sigma \in [1,2]$}. Additionally, for all $k \in \N{}$, the stochastic gradient estimate $\gbar_k \in \R{n}$ is an unbiased estimator of the gradient of $f$ at $x_k$, i.e., $\mathbb{E}_k \left[ \gbar_k \right] = g_k$, 
where $\mathbb{E}_k[\cdot ]$ denotes the expectation with respect to the distribution of $\xi$ conditioned on the event that the algorithm has reached $x_k \in \R{n}$ in iteration $k \in \N{}$.
\end{condition}
\begin{condition}\label{ass.stoch_linear_system_old}
  For all $k \in \mathbb{N}$, the search directions $(\bar{d}_k,\bar{\delta}_k) \in \mathbb{R}^n \times \mathbb{R}^m$ in \eqref{eq.system_stochastic} $($inexact solutions to \eqref{eq.system}$)$ satisfy
  \begin{equation}\label{eq.inexact_stoch_old}
    \left\|\begin{bmatrix} \tilde{d}_k \\ \tilde{\delta}_k \end{bmatrix} - \begin{bmatrix} \bar{d}_k \\ \bar{\delta}_k \end{bmatrix}\right\|_2^2 \leq \theta_2 \beta^{\change{2\sigma}} \Delta l(x_k,\tau_k,g_k,d_k),
  \end{equation}
where $\theta_2\in\mathbb{R}_{>0}$, $\beta\in (0,1)$, and \change{$\sigma \in [1,2]$}. Note, $ (\widetilde{d}_k,\widetilde{\delta}_k)$ and $(\dbar_k,\bar{\delta}_k)$ are the exact and inexact solutions of \eqref{eq.system}
, respectively.
\end{condition}

\begin{remark}
We note upfront that the conditions given in \change{Conditions}~\ref{ass.stoch_g} and \ref{ass.stoch_linear_system_old} are not implementable in our stochastic setting, as the right-hand-side of the inequalities depend on deterministic quantities. That being said, we use these \change{conditions} as this allows us to gain insights into the errors permitted in the algorithm while still retaining  strong convergence guarantees, and will guide the development of our practical algorithm. An important choice in conditions \eqref{eq.norm_cond_stoch}--\eqref{eq.inexact_stoch_old} is the (deterministic) model reduction function \eqref{eq.model_reduction} on the right-hand-side of the \change{inequalities}. As we show in the analysis, and similar to \cite{BeraCurtRobiZhou21,BeraCurtOneiRobi21,CurtRobiZhou21}, we use this quantity as a proxy of convergence, and as such it is an appropriate measure of the accuracy in the gradient approximations. Another interesting question pertains to the analogue of \eqref{eq.norm_cond_stoch} in the unconstrained setting, i.e., no equality constraints. One can show that for appropriately chosen constants $\theta_1$ and $\beta$, in the unconstrained setting \eqref{eq.norm_cond_stoch} is the well-known ``norm'' condition (in expectation) \cite{carter1991global,ByrdChinNoceWu12}. With regards to \eqref{eq.inexact_stoch_old}, under Assumption~\ref{ass.residual} the inequality is well-defined. Finally, we emphasize that the constants $\beta$ and $\sigma$ are the same constants that appear in Algorithm~\ref{alg.adaptiveSQP} and that are used in the step size selection. Thus, the gradient accuracy, \change{the accuracy in the solution of the linear system~\eqref{eq.system}} 
and the step size selection are inherently connected.  The precise permissible ranges of the constants in \change{Conditions}~\ref{ass.stoch_g} and \ref{ass.stoch_linear_system_old} are made explicit in subsequent lemmas and theorems.
\end{remark}

\change{
Next we introduce a technical condition required for the analysis. Specifically, the condition allows us to  establish an integral and useful upper bound for the difference between the stochastic and deterministic \change{merit parameter values}, and as a result is of vital importance in establishing   complexity result.

\begin{condition}\label{assum:stoch_redcond_old}
For all $k \in \mathbb{N}$, the stochastic gradient estimate $\bar{g}_k \in \mathbb{R}^n$ and the search direction $\bar{d}_k \in \mathbb{R}^n$ $($inexact solution of \eqref{eq.system}
$)$ satisfy
\begin{equation*}
\begin{aligned}
  |(\bar{g}_k^T\bar{d}_k + \max\{\bar{d}_k^TH_k\bar{d}_k,\epsilon_d\|\bar{d}_k\|_2^2\}) &- (g_k^Td_k + \max\{d_k^TH_kd_k,\epsilon_d\|d_k\|_2^2\})| \\
  &\quad \leq \theta_3 \beta^{\change{\sigma}} |g_k^Td_k + \max\{d_k^TH_kd_k,\epsilon_d\|d_k\|_2^2\}|,
\end{aligned}
\end{equation*}  
where $\theta_3\in\mathbb{R}_{>0}$, $\beta\in (0,1)$, \change{$\sigma \in [1,2]$}, and $\theta_3\beta^{\change{\sigma}} \in (0,1)$.
\end{condition}

\begin{remark}
  We make a few remarks about \change{Condition}~\ref{assum:stoch_redcond_old}. This is a strong condition, nonetheless, it is necessary in order to establish the strong non-asymptotic convergence and  complexity results presented in this paper. In the unconstrained setting, \change{Condition}~\ref{assum:stoch_redcond_old} does not add any additional restrictions. That is, in the unconstrained setting, consider any method with $\bar{d}_k = -H_k^{-1}\bar{g}_k$ and $d_k = -H_k^{-1}g_k$, where $ \zeta I \preceq H_k \preceq \kappa_H I $ with $\{\kappa_H,\zeta\} \subset \mathbb{R}_{>0}$ defined in Assumption~\ref{ass.H}. Clearly, $\bar{g}_k^T\bar{d}_k + \max\{\bar{d}_k^TH_k\bar{d}_k,\epsilon_d\|\bar{d}_k\|_2^2\} = 0$ and $ g_k^Td_k + \max\{d_k^TH_kd_k,\epsilon_d\|d_k\|_2^2\} = 0$, thus, no additional restrictions are imposed.
  Thus, in the unconstrained setting, \change{Conditions}~\ref{ass.stoch_g}, \ref{ass.stoch_linear_system_old} and \ref{assum:stoch_redcond_old} reduce to the well known ``norm-condition''.
  Several difficulties arise in the setting with constraints. The primary difficulty pertains to the fact that the merit parameter is possibly adjusted across iterations making the merit function a moving target. Thus, in order to establish convergence and complexity results for all iterations, as compared to other papers that only consider the iterations after the merit parameter has stabilized at a sufficiently small constant value, additional control on the permissible differences is required.
  We should note that if one happened to know a sufficiently small merit parameter value, then \change{Condition}~\ref{assum:stoch_redcond_old} would no longer be required for the theory.
  Finally, again we point out the connection between the accuracy in the gradient approximations, the quality of the solution to the linear system, and the step size through user-defined parameters $\beta$ and $\sigma$.
\end{remark}
}

We prove convergence guarantees for Algorithm \ref{alg.adaptiveSQP}, where the stochastic gradients employed satisfy \change{Condition}~\ref{ass.stoch_g} and the search directions employed satisfy \change{Condition}~\ref{ass.stoch_linear_system_old}. Before we delve into the analysis, we discuss the behavior of the merit parameter sequence $\{\bar{\tau}_k\}$, a key component of our algorithmic framework. In the deterministic setting (e.g., Algorithm~\ref{alg.adaptiveSQP} with $\gbar_k = g_k$ and $\dbar_k = d_k$ for all $k\in\mathbb{N}$), under Assumptions~\ref{ass.main} \change{and~\ref{ass.H}} the merit parameter sequence is bounded away from zero\change{; see  \cite{BeraCurtRobiZhou21,ByrdCurtNoce08}}. In the stochastic setting, where the \change{gradient approximations} employed satisfy \change{Condition}~\ref{ass.stoch_g}, boundedness (away from zero) of the merit parameter cannot be guaranteed.
However, \change{if the iterates generated by Algorithm \ref{alg.adaptiveSQP} converge to a stationary point of \eqref{prob.opt}, then, by Conditions~\ref{ass.stoch_g} and \ref{ass.stoch_linear_system_old}, the gradient approximation eventually become sufficient accurate, as do the solutions to the linear system.} That being said, this is not sufficient to prove strong convergence and complexity guarantees for Algorithm \ref{alg.adaptiveSQP} \change{across all iterations}. To this end, we impose \change{Condition}~\ref{assum:stoch_redcond_old}, an additional technical condition on the gradient approximation and the search direction employed, and, to the best of our knowledge, prove the first complexity guarantees in this setting.



We build up to our main results through a series of lemmas. Our first set of lemmas show that the stochastic search directions computed by Algorithm~\ref{alg.adaptiveSQP} are well-behaved. To this end, we invoke the following orthogonal decomposition of the (stochastic) search direction: $\bar{d}_k = \bar{u}_k + \bar{v}_k$ where $\bar{u}_k \in Null(J_k)$ and $\bar{v}_k \in Range(J_k^T)$ for all $k \in \N{}$.

\begin{lemma}\label{lem.bound_v}
  \change{Suppose Assumptions~\ref{ass.main} and Condition~\ref{ass.stoch_linear_system_old} hold.} Then, there exists $\kappa_{\bar{v}} \in \mathbb{R}_{>0}$ such that, for all $k \in \mathbb{N}$, the normal component $\bar{v}_k$ satisfies $\max\{\|\bar{v}_k\|_2,\|\bar{v}_k\|_2^2\} \leq  \kappa_{\vbar} \max\{\|c_k\|_2,\|\bar{r}_k\|_2\}$.
\end{lemma}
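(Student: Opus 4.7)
The plan is to exploit the decomposition $\bar{d}_k = \bar{u}_k + \bar{v}_k$ together with the second block row of the perturbed linear system \eqref{eq.system_stochastic} to obtain a closed-form expression for $\bar{v}_k$, then to apply the uniform bounds supplied by Assumptions~\ref{ass.main} and~\ref{ass.residual}.

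First I would extract from \eqref{eq.system_stochastic} the identity $J_k \bar{d}_k = -c_k + \bar{r}_k$. Since $\bar{u}_k \in \mathrm{Null}(J_k)$, this collapses to $J_k \bar{v}_k = -c_k + \bar{r}_k$. Because $\bar{v}_k \in \mathrm{Range}(J_k^T)$, there exists some $w_k \in \mathbb{R}^m$ with $\bar{v}_k = J_k^T w_k$, and hence $J_k J_k^T w_k = -c_k + \bar{r}_k$. The singular value bound in Assumption~\ref{ass.main} guarantees that $J_k J_k^T$ is invertible with $\|(J_k J_k^T)^{-1}\|_2 \leq \kappa_\sigma^{-2}$, so $\bar{v}_k = J_k^T (J_k J_k^T)^{-1}(-c_k + \bar{r}_k)$.

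Applying the Cauchy-Schwarz inequality together with $\|J_k\|_2 \leq \kappa_J$ yields the linear bound
\[
\|\bar{v}_k\|_2 \ \leq\ \kappa_J \kappa_\sigma^{-2} (\|c_k\|_2 + \|\bar{r}_k\|_2) \ \leq\ 2\kappa_J \kappa_\sigma^{-2} \max\{\|c_k\|_2, \|\bar{r}_k\|_2\},
\]
which already accounts for the $\|\bar{v}_k\|_2$ piece of the $\max$ on the left-hand side of the claimed inequality.

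For the quadratic piece, I would invoke the uniform boundedness of $\|c_k\|_2$ and $\|\bar{r}_k\|_2$: Assumption~\ref{ass.main} gives $\|c_k\|_2 \leq \|c_k\|_1 \leq \kappa_c$, and Assumption~\ref{ass.residual} gives $\|\bar{r}_k\|_2 \leq \|\bar{r}_k\|_1 \leq \kappa_{\bar{r}}$. Combining these with the linear bound above shows that $\|\bar{v}_k\|_2 \leq \kappa_J \kappa_\sigma^{-2}(\kappa_c + \kappa_{\bar{r}}) =: M$, so that $\|\bar{v}_k\|_2^2 \leq M \|\bar{v}_k\|_2 \leq 2 M \kappa_J \kappa_\sigma^{-2} \max\{\|c_k\|_2, \|\bar{r}_k\|_2\}$. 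Setting $\kappa_{\bar{v}} := \max\{2\kappa_J \kappa_\sigma^{-2},\ 2M\kappa_J \kappa_\sigma^{-2}\}$ then yields the desired bound on $\max\{\|\bar{v}_k\|_2, \|\bar{v}_k\|_2^2\}$.

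There is no real obstacle here; the argument is essentially algebraic manipulation of the perturbed KKT system followed by invocation of already-assumed uniform bounds. The only subtle point is ensuring that the squared norm is controlled linearly in $\max\{\|c_k\|_2, \|\bar{r}_k\|_2\}$ rather than quadratically, which is precisely why the uniform upper bound $M$ from Assumptions~\ref{ass.main} and~\ref{ass.residual} is required in the second step.
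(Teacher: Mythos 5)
Your proof is correct and follows essentially the same route as the paper's: both derive the closed form $\bar{v}_k = J_k^T(J_kJ_k^T)^{-1}(\bar{r}_k - c_k)$ from the second block row of \eqref{eq.system_stochastic}, bound $\|\bar{v}_k\|_2$ linearly by $2\|J_k^T(J_kJ_kT)^{-1}\|_2\max\{\|c_k\|_2,\|\bar{r}_k\|_2\}$, and then handle the squared term by invoking the uniform boundedness of $\|c_k\|$ and $\|\bar{r}_k\|$ from Assumptions~\ref{ass.main} and~\ref{ass.residual}. The only cosmetic difference is that you introduce $w_k$ with $\bar{v}_k = J_k^Tw_k$ where the paper uses the projection identity $\bar{v}_k = J_k^T(J_kJ_k^T)^{-1}J_k\bar{v}_k$; these are equivalent.
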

\begin{proof}
  Since $\bar{u}_k\in Null(J_k)$ and $\bar{v}_k\in Range(J_k^T)$, 
  \begin{equation*}
      \bar{v}_k = J_k^T(J_kJ_k^T)^{-1}J_k\bar{v}_k = J_k^T(J_kJ_k^T)^{-1}J_k\bar{d}_k = J_k^T(J_kJ_k^T)^{-1}(\bar{r}_k - c_k).
  \end{equation*}
  Thus, by the Cauchy inequality
  \begin{equation*}
      \|\bar{v}_k\|_2 \leq \|J_k^T(J_kJ_k^T)^{-1}\|_2(\|\bar{r}_k\|_2 + \|c_k\|_2) \leq 2\|J_k^T(J_kJ_k^T)^{-1}\|_2\max\{\|c_k\|_2,\|\bar{r}_k\|_2\}.
  \end{equation*}
  Moreover, it follows that
  \change{\begin{equation*}
  \begin{aligned}
      \|\bar{v}_k\|_2^2 &\leq \left(4\|J_k^T(J_kJ_k^T)^{-1}\|_2^2\max\{\|c_k\|_2,\|\bar{r}_k\|_2\}\right) \max\{\|c_k\|_2,\|\bar{r}_k\|_2\} \\
      &\leq \left(4\|J_k^T(J_kJ_k^T)^{-1}\|_2^2\max\{\|c_k\|_2,\|J_k\|_2\|\bar{d}_k - \tilde{d}_k\|_2\}\right) \max\{\|c_k\|_2,\|\bar{r}_k\|_2\}.
  \end{aligned}
  \end{equation*}}
  \change{By Assumption~\ref{ass.main}, Lemma~\ref{lem.bound_Delta_l}, and Condition~\ref{ass.stoch_linear_system_old}, we have that $\|c_k\|_2$, $\|J_k\|_2$, $\|\bar{d}_k - \tilde{d}_k\|_2$, and $\|J_k^T(J_kJ_k^T)^{-1}\|_2$ are uniformly bounded above for all $k \in \N{}$}, 
  which completes the proof.  
\end{proof}

The next lemma shows that if the step $\bar{d}_k$ is tangentially dominated, i.e.,  $\|\bar{u}_k\|_2$ \emph{dominates} $\|\bar{v}_k\|_2$, then $H_k$ is sufficiently positive definite along the computed stochastic search direction.

\begin{lemma}\label{lem.tangential_big}
  \change{Suppose Assumption~\ref{ass.H} holds.} Then, there exists $\kappa_{\overline{uv}} \in \R{}_{>0}$ such that, for any $k \in \mathbb{N}$, if $\|\bar{u}_k\|_2^2 \geq \kappa_{\overline{uv}} \|\bar{v}_k\|_2^2$, then $\bar{d}_k^TH_k\bar{d}_k \geq \sfrac{\zeta}{2} \|\bar{u}_k\|_2^2$ and $\bar{d}_k^TH_k\bar{d}_k \geq \epsilon_d\|\bar{d}_k\|_2^2$, where $\zeta \in \R{}_{>0}$ (Assumption~\ref{ass.H}) and $\epsilon_d \in (0,\sfrac{\zeta}{2})$.
\end{lemma}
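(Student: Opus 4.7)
The plan is to exploit the orthogonal decomposition $\bar{d}_k = \bar{u}_k + \bar{v}_k$, with $\bar{u}_k \in \mathrm{Null}(J_k)$ and $\bar{v}_k \in \mathrm{Range}(J_k^T)$, and expand the quadratic form
$$\bar{d}_k^T H_k \bar{d}_k \;=\; \bar{u}_k^T H_k \bar{u}_k \;+\; 2\, \bar{u}_k^T H_k \bar{v}_k \;+\; \bar{v}_k^T H_k \bar{v}_k.$$
Since $J_k \bar{u}_k = 0$, the null-space curvature bound in Assumption~\ref{ass.H} gives $\bar{u}_k^T H_k \bar{u}_k \geq \zeta \|\bar{u}_k\|_2^2$. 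The remaining two terms are controlled by the norm bound $\|H_k\|_2 \leq \kappa_H$ together with the Cauchy–Schwarz inequality, yielding
$$\bar{d}_k^T H_k \bar{d}_k \;\geq\; \zeta \|\bar{u}_k\|_2^2 \;-\; 2\kappa_H \|\bar{u}_k\|_2 \|\bar{v}_k\|_2 \;-\; \kappa_H \|\bar{v}_k\|_2^2.$$

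Next I would plug the tangential dominance hypothesis $\|\bar{v}_k\|_2 \leq \kappa_{\overline{uv}}^{-1/2} \|\bar{u}_k\|_2$ (and therefore $\|\bar{v}_k\|_2^2 \leq \kappa_{\overline{uv}}^{-1}\|\bar{u}_k\|_2^2$) into each of the two indefinite terms. This produces a lower bound of the form
$$\bar{d}_k^T H_k \bar{d}_k \;\geq\; \Bigl(\zeta - 2\kappa_H \kappa_{\overline{uv}}^{-1/2} - \kappa_H \kappa_{\overline{uv}}^{-1}\Bigr) \|\bar{u}_k\|_2^2,$$
and the first claim $\bar{d}_k^T H_k \bar{d}_k \geq \tfrac{1}{2}\zeta \|\bar{u}_k\|_2^2$ follows by choosing $\kappa_{\overline{uv}}$ large enough that the parenthesized constant exceeds $\tfrac{1}{2}\zeta$. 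Note that this reduces to solving a one-variable inequality in $\kappa_{\overline{uv}}^{-1/2}$, so an explicit threshold is easy to write down.

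For the second claim, I would use orthogonality of the decomposition, $\|\bar{d}_k\|_2^2 = \|\bar{u}_k\|_2^2 + \|\bar{v}_k\|_2^2$, combined once more with $\|\bar{v}_k\|_2^2 \leq \kappa_{\overline{uv}}^{-1}\|\bar{u}_k\|_2^2$ to obtain $\|\bar{d}_k\|_2^2 \leq (1 + \kappa_{\overline{uv}}^{-1})\|\bar{u}_k\|_2^2$. Chaining this with the first claim yields
$$\bar{d}_k^T H_k \bar{d}_k \;\geq\; \frac{\zeta/2}{1 + \kappa_{\overline{uv}}^{-1}}\, \|\bar{d}_k\|_2^2,$$
so it suffices to further enlarge $\kappa_{\overline{uv}}$ so that the prefactor is at least $\epsilon_d$; such a choice exists because $\epsilon_d < \zeta/2$ by construction, and the prefactor approaches $\zeta/2$ as $\kappa_{\overline{uv}} \to \infty$.

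Neither step poses a real obstacle: the computation is an elementary curvature-splitting argument familiar from composite-step SQP analysis. The only mildly delicate point is that the constant $\kappa_{\overline{uv}}$ must be chosen to simultaneously satisfy two thresholds (one for $\tfrac{1}{2}\zeta \|\bar{u}_k\|_2^2$ and one for $\epsilon_d \|\bar{d}_k\|_2^2$), so I would define $\kappa_{\overline{uv}}$ at the outset as the maximum of the two closed-form thresholds derived above. The constraint $\epsilon_d < \zeta/2$ from Assumption~\ref{ass.H} and \eqref{eq.merit_parameter_trial} is what makes the second threshold finite, and it is the only place where the strict inequality on $\epsilon_d$ is used.
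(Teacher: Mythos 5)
Your proposal is correct and follows essentially the same argument as the paper: expand the quadratic form over the orthogonal decomposition, bound the tangential term below by $\zeta\|\bar{u}_k\|_2^2$ via the null-space curvature condition, absorb the cross and normal terms using $\|H_k\|_2 \le \kappa_H$ together with the dominance hypothesis, and then deduce the second inequality from the first via $\|\bar{d}_k\|_2^2 \le (1+\kappa_{\overline{uv}}^{-1})\|\bar{u}_k\|_2^2$, choosing $\kappa_{\overline{uv}}$ to satisfy both thresholds (the paper additionally separates out the trivial case $\|\bar{u}_k\|_2=0$, which your bounds handle implicitly).
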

\begin{proof}
  When $\|\bar{u}_k\| = 0$, to satisfy the condition in the statement, we require $\|\bar{u}_k\| = \|\bar{v}_k\| = 0$ which implies $\|\bar{d}_k\| = 0$, and the statement holds trivially.
  
  When $\|\bar{u}_k\| > 0$, by Assumption~\ref{ass.H} it follows that
  \begin{equation*}
  \begin{aligned}
      \bar{d}_k^TH_k\bar{d}_k = & \ \bar{u}_k^TH_k\bar{u}_k + 2\bar{u}_k^TH_k\bar{v}_k + \bar{v}_k^TH_k\bar{v}_k \\
      \geq  & \ \zeta\|\bar{u}_k\|_2^2 - 2\kappa_H\|\bar{u}_k\|_2\|\bar{v}_k\|_2 - \kappa_H\|\bar{v}_k\|_2^2 \\
      \geq & \ \left(\zeta - 2\tfrac{\kappa_H}{\sqrt{\kappa_{\overline{uv}}}} - \tfrac{\kappa_H}{\kappa_{\overline{uv}}}\right)\|\bar{u}_k\|_2^2 \geq \tfrac{\zeta}{2}\|\bar{u}_k\|_2^2
  \end{aligned}
  \end{equation*}
  for sufficiently large $\kappa_{\overline{uv}}$. \change{Moreover, for sufficiently large $\kappa_{\overline{uv}}$, it follows that}
  \begin{equation*}
      \bar{d}_k^TH_k\bar{d}_k \geq \tfrac{\zeta}{2}\|\bar{u}_k\|_2^2 \geq \epsilon_d(1 + \tfrac{1}{\kappa_{\overline{uv}}})\|\bar{u}_k\|_2^2 \geq \epsilon_d(\|\bar{u}_k\|_2^2 + \|\bar{v}_k\|_2^2) = \epsilon_d\|\bar{d}_k\|_2^2,
  \end{equation*}
  \change{which completes the proof.} 
\end{proof}

For $\kappa_{\overline{uv}} \in \R{}_{>0}$ (defined in Lemma~\ref{lem.tangential_big}), let $\mathcal{K}_{\ubar} := \{k \in \mathbb{N} : \|\bar{u}_k\|_2^2 \geq \kappa_{\overline{uv}} \|\bar{v}_k\|_2^2\}$ and $\mathcal{K}_{\vbar} := \{ k \in \mathbb{N} : \|\bar{u}_k\|_2^2 < \kappa_{\overline{uv}} \|\bar{v}_k\|_2^2\}$ denote index sets that form a partition of $\mathbb{N}$, and let 
\begin{equation*}
\begin{aligned}
  \bar{\Psi}_k := \begin{cases} \|\bar{u}_k\|_2^2 + \|c_k\|_2 & \text{if $k \in \mathcal{K}_{\ubar}$;} \\ \max\{\|c_k\|_2,\|\rbar_k\|_2\} & \text{if $k \in \mathcal{K}_{\vbar}$.} \end{cases}
\end{aligned}
\end{equation*}
Our next result shows that the squared norms of the stochastic search directions and the constraint violations for all $k \in \N{}$ are bounded above by $\bar{\Psi}_k$.

\begin{lemma}\label{lem.Psi_1}
  \change{Suppose Assumptions~\ref{ass.main} and Condition~\ref{ass.stoch_linear_system_old} hold.} Then, there exists $\kappa_{\bar\Psi} \in \R{}_{>0}$ such that, for all $k \in \mathbb{N}$, the search direction and constraint violation satisfy $\|\bar{d}_k\|_2^2 \leq \kappa_{\bar\Psi} \bar{\Psi}_k$ and $\|\bar{d}_k\|_2^2 + \|c_k\|_2 \leq  (\kappa_{\bar\Psi} + 1) \bar{\Psi}_k$.
\end{lemma}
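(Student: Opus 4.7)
The plan is straightforward: leverage the orthogonal decomposition $\bar{d}_k = \bar{u}_k + \bar{v}_k$ with $\bar{u}_k \in \mathrm{Null}(J_k)$ and $\bar{v}_k \in \mathrm{Range}(J_k^T)$, which yields the Pythagorean identity $\|\bar{d}_k\|_2^2 = \|\bar{u}_k\|_2^2 + \|\bar{v}_k\|_2^2$. Then split the analysis into the two cases defining $\bar{\Psi}_k$, and in each case bound the dominant component of $\bar{d}_k$ directly by $\bar{\Psi}_k$ using either the definition of the index sets $\mathcal{K}_{\bar u}, \mathcal{K}_{\bar v}$ or Lemma~\ref{lem.bound_v}.

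For $k \in \mathcal{K}_{\bar u}$, the defining inequality gives $\|\bar{v}_k\|_2^2 \leq \kappa_{\overline{uv}}^{-1}\|\bar{u}_k\|_2^2$, so $\|\bar{d}_k\|_2^2 \leq (1 + \kappa_{\overline{uv}}^{-1})\|\bar{u}_k\|_2^2 \leq (1 + \kappa_{\overline{uv}}^{-1})\bar{\Psi}_k$, since $\bar{\Psi}_k = \|\bar{u}_k\|_2^2 + \|c_k\|_2 \geq \|\bar{u}_k\|_2^2$. For $k \in \mathcal{K}_{\bar v}$, symmetrically $\|\bar{u}_k\|_2^2 < \kappa_{\overline{uv}}\|\bar{v}_k\|_2^2$, giving $\|\bar{d}_k\|_2^2 < (1 + \kappa_{\overline{uv}})\|\bar{v}_k\|_2^2$; then invoking Lemma~\ref{lem.bound_v} yields $\|\bar{v}_k\|_2^2 \leq \kappa_{\bar v}\max\{\|c_k\|_2,\|\bar{r}_k\|_2\} = \kappa_{\bar v}\bar{\Psi}_k$. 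Setting $\kappa_{\bar\Psi} := \max\{1 + \kappa_{\overline{uv}}^{-1},\,(1 + \kappa_{\overline{uv}})\kappa_{\bar v}\}$ then delivers the first inequality in both cases.

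For the second inequality, observe that in both cases $\|c_k\|_2 \leq \bar{\Psi}_k$: directly from $\bar{\Psi}_k = \|\bar{u}_k\|_2^2 + \|c_k\|_2$ on $\mathcal{K}_{\bar u}$, and from $\bar{\Psi}_k = \max\{\|c_k\|_2,\|\bar{r}_k\|_2\}$ on $\mathcal{K}_{\bar v}$. Adding this to the already-established bound on $\|\bar{d}_k\|_2^2$ produces $\|\bar{d}_k\|_2^2 + \|c_k\|_2 \leq (\kappa_{\bar\Psi} + 1)\bar{\Psi}_k$, as required.

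There is no real obstacle here; the lemma is essentially an algebraic bookkeeping step that packages orthogonality, the threshold $\kappa_{\overline{uv}}$ defining the partition, and the normal-component bound from Lemma~\ref{lem.bound_v} into a single convenient estimate that will feed subsequent Lyapunov-style arguments. The only minor care needed is to verify the degenerate subcase $\|\bar{u}_k\|_2 = \|\bar{v}_k\|_2 = 0$ (hence $\|\bar{d}_k\|_2 = 0$), in which the bound holds trivially.
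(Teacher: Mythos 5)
Your proof is correct and follows essentially the same route as the paper's: the same orthogonal split $\|\bar{d}_k\|_2^2 = \|\bar{u}_k\|_2^2 + \|\bar{v}_k\|_2^2$, the same case analysis over $\mathcal{K}_{\bar u}$ and $\mathcal{K}_{\bar v}$ invoking Lemma~\ref{lem.bound_v}, the same constant $\kappa_{\bar\Psi} = \max\{1 + \kappa_{\overline{uv}}^{-1}, (1+\kappa_{\overline{uv}})\kappa_{\bar v}\}$, and the same final observation that $\|c_k\|_2 \leq \bar{\Psi}_k$ in both cases.
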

\begin{proof}
  For all $k\in\mathcal{K}_{\ubar}$, 
  \begin{equation*}
      \|\bar{d}_k\|_2^2 = \|\bar{u}_k\|_2^2 + \|\bar{v}_k\|_2^2 \leq \left(1 + \tfrac{1}{\kappa_{\overline{uv}}}\right)\|\bar{u}_k\|_2^2 \leq \left(1 + \tfrac{1}{\kappa_{\overline{uv}}}\right)\bar{\Psi}_k.
  \end{equation*}
  For all $k\in\mathcal{K}_{\vbar}$, by Lemma~\ref{lem.bound_v}, 
  \begin{equation*}
      \|\bar{d}_k\|_2^2 = \|\bar{u}_k\|_2^2 + \|\bar{v}_k\|_2^2 < \left(1+\kappa_{\overline{uv}}\right)\|\bar{v}_k\|_2^2 \leq \left(1+\kappa_{\overline{uv}}\right) \kappa_{\vbar}\bar{\Psi}_k.
  \end{equation*}
  Therefore, we set $\kappa_{\bar\Psi} := \max\left\{1 + \tfrac{1}{\kappa_{\overline{uv}}}, \left(1+\kappa_{\overline{uv}}\right)\kappa_{\vbar}\right\}$ to satisfy $\|\bar{d}_k\|_2^2 \leq \kappa_{\bar\Psi}\bar{\Psi}_k$.  Finally, since $\|c_k\|_2\leq \bar{\Psi}_k$ trivially, this concludes the proof. 
\end{proof}

The next lemma shows that the stochastic model reduction,  $\Delta l(x_k,\bar\tau_k,\bar{g}_k,\bar{d}_k)$, is  bounded below by a non-negative quantity.

\begin{lemma}\label{lem.Psi}
  \change{Suppose Assumptions~\ref{ass.H} and \ref{ass.residual} hold.} Then, there exists $\kappa_{\lbar} \in \R{}_{>0}$ such that, for all $k \in \mathbb{N}$, the reduction in the model of the merit function satisfies, $\Delta l(x_k,\bar{\tau}_k,\bar{g}_k,\bar{d}_k) \geq \kappa_{\lbar} \bar\tau_k \bar{\Psi}_k$.
\end{lemma}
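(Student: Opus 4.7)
The plan is to invoke Lemma~\ref{lem.model_reduction_cond}, which gives
\[
  \Delta l(x_k,\bar\tau_k,\bar g_k,\bar d_k) \geq \bar\tau_k\omega_1 \max\{\bar d_k^T H_k\bar d_k,\epsilon_d\|\bar d_k\|_2^2\} + \omega_1\max\{\|c_k\|_1,\|\bar r_k\|_1-\|c_k\|_1\},
\]
and then extract, in each of the two cases defining $\bar\Psi_k$, a lower bound of the required form. Throughout, I will repeatedly use the two elementary facts $\|\cdot\|_2 \leq \|\cdot\|_1$ and the monotonicity $\bar\tau_k \leq \bar\tau_{-1}$, which allows a bare constant to be absorbed into $\bar\tau_k/\bar\tau_{-1}$.

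\textbf{Case $k\in\mathcal{K}_{\ubar}$.} Here $\bar\Psi_k=\|\bar u_k\|_2^2+\|c_k\|_2$. Lemma~\ref{lem.tangential_big} applies, yielding $\bar d_k^T H_k\bar d_k \geq \tfrac12\zeta\|\bar u_k\|_2^2$. Plugging this into the bound from Lemma~\ref{lem.model_reduction_cond} and dropping the extra $\max$ in the constraint term gives
\[
  \Delta l(x_k,\bar\tau_k,\bar g_k,\bar d_k) \geq \tfrac12 \bar\tau_k\omega_1\zeta\,\|\bar u_k\|_2^2 + \omega_1\|c_k\|_2.
\]
Using $\omega_1\|c_k\|_2\geq (\omega_1/\bar\tau_{-1})\bar\tau_k\|c_k\|_2$, this is bounded below by a constant times $\bar\tau_k\bar\Psi_k$.

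\textbf{Case $k\in\mathcal{K}_{\vbar}$.} Here $\bar\Psi_k = \max\{\|c_k\|_2,\|\bar r_k\|_2\}$, and I split further according to which branch of Line~\ref{line:conditions} fired.

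If case \emph{(b)} fired, then $\|\bar r_k\|_1 < (1-\omega_1)\omega_2\|c_k\|_1 \leq \|c_k\|_1$, so both $\|\bar r_k\|_2\leq \|\bar r_k\|_1$ and $\|c_k\|_2\leq \|c_k\|_1$ are at most $\|c_k\|_1$, hence $\bar\Psi_k\leq \|c_k\|_1$. Combined with the $\omega_1\|c_k\|_1$ term from Lemma~\ref{lem.model_reduction_cond} and $\bar\tau_k\leq\bar\tau_{-1}$, this yields the bound.

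If case \emph{(a)} fired, then $\|\bar r_k\|_1 \leq \omega_a\beta^{\sigma/2}\Delta l(x_k,\bar\tau_k,\bar g_k,\bar d_k)$, which rearranges (using $\|\bar r_k\|_2\leq \|\bar r_k\|_1$ and $\bar\tau_k\leq\bar\tau_{-1}$) to
\[
  \Delta l(x_k,\bar\tau_k,\bar g_k,\bar d_k) \geq \frac{\bar\tau_k}{\bar\tau_{-1}\omega_a\beta^{\sigma/2}}\,\|\bar r_k\|_2,
\]
while Lemma~\ref{lem.model_reduction_cond} also gives $\Delta l \geq \omega_1\|c_k\|_1\geq (\omega_1/\bar\tau_{-1})\bar\tau_k\|c_k\|_2$. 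Averaging these two bounds (or taking the minimum of the two prefactors) produces a constant multiple of $\bar\tau_k\max\{\|c_k\|_2,\|\bar r_k\|_2\} = \bar\tau_k\bar\Psi_k$.

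\textbf{Conclusion.} Taking $\kappa_{\lbar}$ to be the minimum of the four prefactors obtained above (depending on $\omega_1,\zeta,\bar\tau_{-1},\omega_a,\beta,\sigma$) delivers the claim uniformly in $k$. The only nontrivial step is the $\mathcal{K}_{\vbar}$ sub-case where the tangential component does not dominate; there, the key is recognizing that the residual condition in Line~\ref{line:conditions}\emph{(a)} — a seemingly technical design choice in the algorithm — is precisely what allows the residual norm to be absorbed on the right-hand side of the lower bound.
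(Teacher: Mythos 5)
Your proof is correct. The $\mathcal{K}_{\ubar}$ case is identical to the paper's. In the $\mathcal{K}_{\vbar}$ case you diverge: the paper stays entirely inside Lemma~\ref{lem.model_reduction_cond} and uses the elementary inequality $\max\{a,b-a\}\geq\tfrac12\max\{a,b\}$ (with $a=\|c_k\|_1$, $b=\|\bar r_k\|_1$) to get $\Delta l\geq\tfrac{\omega_1}{2}\max\{\|c_k\|_1,\|\bar r_k\|_1\}\geq\tfrac{\omega_1}{2}\bar\Psi_k$ in one line, with no reference to which branch of Line~\ref{line:conditions} fired. You instead split on the termination branch and, in branch \emph{(a)}, invoke the residual test $\|\bar r_k\|_1\leq\omega_a\beta^{\sigma/2}\Delta l$ to absorb $\|\bar r_k\|_2$; combining the two lower bounds via the minimum of the prefactors (your parenthetical is the right formulation, not the "averaging") is valid. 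The paper's route is shorter and yields a constant $\omega_1\min\{\sfrac{\zeta}{2},\sfrac{1}{2\tau_{-1}}\}$ depending only on $\omega_1,\zeta,\tau_{-1}$; yours, as stated, carries a factor $\sfrac{1}{(\omega_a\beta^{\sigma/2})}$ and hence a nominal dependence of $\kappa_{\lbar}$ on $\beta$. That dependence would be undesirable downstream (through Lemmas~\ref{lem.stoch_model_reduction_step_squared} and~\ref{lem.stoch_stepsize_bound_old}, $\kappa_{\lbar}$ feeds into $\underline{\alpha}$, which in turn constrains the admissible $\beta$ in Theorem~\ref{thm.main_stochastic_old}, creating a circularity), but it is harmlessly removed by noting $\beta\in(0,1]$ so $\sfrac{1}{(\omega_a\beta^{\sigma/2})}\geq\sfrac{1}{\omega_a}$; you should make that replacement explicit so $\kappa_{\lbar}$ is $\beta$-independent.
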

\begin{proof} For all $k\in\mathcal{K}_{\ubar}$, by Lemmas~\ref{lem.model_reduction_cond} and~\ref{lem.tangential_big},
\begin{equation*}
\begin{aligned}
    \Delta l(x_k,\bar\tau_k,\bar{g}_k,\bar{d}_k) 
    \geq & \ \bar\tau_k \omega_1 \max\{\bar{d}_k^TH_k\bar{d}_k,\epsilon_d \| \bar{d}_k\|_2^2\} + \omega_1 \max\{\|c_k\|_1,\|\bar{r}_k\|_1 - \|c_k\|_1\} \\
    \geq & \ \bar\tau_k \omega_1 \max\left\{\tfrac{\zeta}{2},\epsilon_d\right\}\|\bar{u}_k\|_2^2 + \omega_1\|c_k\|_1
    \geq \ \bar\tau_k \omega_1\min\left\{\tfrac{\zeta}{2},\tfrac{1}{\tau_{-1}}\right\}\bar{\Psi}_k.
\end{aligned}
\end{equation*}
For all $k\in\mathcal{K}_{\vbar}$, by Lemma~\ref{lem.model_reduction_cond}, 
\begin{equation*}
\begin{aligned}
    \Delta l(x_k,\bar\tau_k,\bar{g}_k,\bar{d}_k) 
    \geq & \ \bar\tau_k \omega_1 \max\{\bar{d}_k^TH_k\bar{d}_k,\epsilon_d \| \bar{d}_k\|_2^2\} + \omega_1 \max\{\|c_k\|_1,\|\bar{r}_k\|_1 - \|c_k\|_1\} \\
    \geq & \ \omega_1 \max\{\|c_k\|_1,\|\bar{r}_k\|_1 - \|c_k\|_1\} \geq   \ \tfrac{\omega_1}{2}\max\{\|c_k\|_1,\|\bar{r}_k\|_1\} \geq \tfrac{\omega_1}{2}\bar{\Psi}_k 
\end{aligned}
\end{equation*}
Setting $\kappa_{\lbar} := \omega_1\min\left\{\sfrac{\zeta}{2},\sfrac{1}{2\tau_{-1}}\right\} \in \mathbb{R}_{>0}$ completes the proof. 

\end{proof}

\change{Our next lemma shows that merit parameter sequences $\{\bar\tau_k\}$ and $\{\tau_k\}$ are  bounded away from zero.}

\begin{lemma}\label{lem.stoch_tau_lb}
\change{Under Assumptions~\change{\ref{ass.main} and~\ref{ass.H} and Condition~\ref{assum:stoch_redcond_old}}, there exist constants $\{\bar\tau_{\min},\tau_{\min}\}\subset\mathbb{R}_{>0}$ \change{such that } $\bar\tau_k \geq \bar\tau_{\min}$ and $\tau_k \geq \tau_{\min}$ for all $k\in\mathbb{N}$.}
\end{lemma}

\begin{proof}
Under \change{Assumptions~\ref{ass.main} and~\ref{ass.H}},  
it is \change{well-known that $\{\tau_k^{trial}\}$, the deterministic variant of the sequence of \change{trial} merit parameter values \eqref{eq.merit_parameter_trial}, is always positive and bounded away from zero; see e.g.,~\cite[Lemma~4.7]{ByrdCurtNoce08}}.
By \change{Condition}~\ref{assum:stoch_redcond_old}, it follows that either  $(g_k^Td_k + \max\{d_k^TH_kd_k,\epsilon_d\|d_k\|^2\})$ and $(\bar{g}_k^T\bar{d}_k + \max\{\bar{d}_k^TH_k\bar{d}_k,\epsilon_d\|\bar{d}_k\|^2\})$ are both \change{non-positive} or both \change{non-negative}. 

If $g_k^Td_k + \max\{d_k^TH_kd_k,\epsilon_d\|d_k\|^2\} \leq 0$, by~\eqref{eq.merit_parameter_trial}, $\tau_k^{trial} = \infty$. By \change{Condition}~\ref{assum:stoch_redcond_old} it follows that $\bar{g}_k^T\bar{d}_k + \max\{\bar{d}_k^TH_k\bar{d}_k,\epsilon_d\|\bar{d}_k\|^2\} \leq 0$ and $\bar\tau_k^{trial} = \infty$.


If $g_k^Td_k + \max\{d_k^TH_kd_k,\epsilon_d\|d_k\|^2\} > 0$, by \change{Condition}~\ref{assum:stoch_redcond_old}, it follows that
\begin{equation*}
    0 < (1-\theta_3\beta^{\change{\sigma}}) \leq \tfrac{\bar{g}_k^T\bar{d}_k + \max\{\bar{d}_k^TH_k\bar{d}_k,\epsilon_d\|\bar{d}_k\|^2\}}{g_k^Td_k + \max\{d_k^TH_kd_k,\epsilon_d\|d_k\|^2\}} \leq (1+\theta_3\beta^{\change{\sigma}}).
\end{equation*}
By~\eqref{eq.merit_parameter_trial}, the above inequality implies $\tfrac{\tau_k^{trial}}{(1+\theta_3\beta^{\change{\sigma}})} \leq \bar\tau_k^{trial} \leq \tfrac{\tau_k^{trial}}{(1-\theta_3\beta^{\change{\sigma}})}$.

\change{In both cases considered above, given the fact that $\{\tau_k^{trial}\}$ is  positive and bounded away from zero, one can conclude that there exist constants $\{\bar\tau_{\min}^{trial},\tau_{\min}^{trial}\}\subset\mathbb{R}_{>0}$ such that $\bar\tau_k^{trial} \geq \bar\tau_{\min}^{trial}$ and $\tau_k^{trial} \geq \tau_{\min}^{trial}$ for all $k\in\mathbb{N}$. By \eqref{eq.tau_update}, there exists $\bar\tau_{\min}$ such that $\bar\tau_k \geq \bar\tau_{\min} :=  \min\{\bar\tau_{-1}, (1-\epsilon_{\tau})\bar\tau_{\min}^{trial}\}$. Finally, choosing $\tau_{\min} = \min\{\bar\tau_{\min},(1-\epsilon_{\tau})\tau_{\min}^{trial}\}$ completes the proof.} 
\end{proof}


The next lemma provides a useful lower bound for the reduction in the merit function that is proportional to the stochastic search direction computed.

\begin{lemma}\label{lem.stoch_model_reduction_step_squared}
\change{Suppose Assumptions~\ref{ass.main},~\ref{ass.H} and~\ref{ass.residual} and Conditions~\ref{ass.stoch_linear_system_old} and~\ref{assum:stoch_redcond_old} hold}. Then, there exist constants $\{\kappa_{\bar\alpha},\kappa_{\Delta \bar{l},\bar{d}}\}\subset\mathbb{R}_{>0}$ such that for all $k\in\mathbb{N}$, $\Delta l(x_k,\bar\tau_k,\bar{g}_k,\bar{d}_k) \geq \kappa_{\bar\alpha}(\bar\tau_k\change{L} + \change{\Gamma})\|\bar{d}_k\|_2^2 \geq \kappa_{\Delta \bar{l},\bar{d}}\|\bar{d}_k\|_2^2$.
\end{lemma}
\begin{proof}
By \change{Lemmas~\ref{lem.Psi_1},~\ref{lem.Psi}, and~\ref{lem.stoch_tau_lb}}, it follows that
\begin{equation*}
\begin{aligned}
\Delta l(x_k,\bar\tau_k,\bar{g}_k,\bar{d}_k) &\geq \kappa_{\bar{l}}\bar\tau_k\bar\Psi_k \geq \tfrac{\kappa_{\bar{l}}\bar\tau_k}{\kappa_{\bar\Psi}}\|\bar{d}_k\|_2^2 = \tfrac{\kappa_{\bar{l}}\bar\tau_k}{\kappa_{\bar\Psi}(\bar\tau_k \change{L} + \change{\Gamma})}(\bar\tau_k \change{L} + \change{\Gamma})\|\bar{d}_k\|_2^2 \\
&\geq \tfrac{\kappa_{\bar{l}}\bar\tau_{\min}}{\kappa_{\bar\Psi}(\bar\tau_{\min}\change{L} + \change{\Gamma})}(\bar\tau_k \change{L} + \change{\Gamma})\|\bar{d}_k\|_2^2 \geq \tfrac{\kappa_{\bar{l}}\bar\tau_{\min}}{\kappa_{\bar\Psi}}\|\bar{d}_k\|_2^2.
\end{aligned}
\end{equation*}
The result follows by choosing appropriate constants $\kappa_{\bar\alpha}$ and $\kappa_{\Delta \bar{l},\bar{d}}$. 
\end{proof}

\begin{remark}\label{rm:deter_ld}
    Corollaries of the above lemmas can be derived for the special case under which all quantities employed are deterministic and \eqref{eq.system_deterministic} is solved exactly. 
    Specifically, under the same logic as in Lemmas~\ref{lem.Psi_1}-\ref{lem.stoch_model_reduction_step_squared}, it follows that there exist constants $\{\kappa_{\alpha},\kappa_{\Delta l,d}\}\subset\R{}_{>0}$ such that for all $k\in\N{}$
  \begin{equation}\label{eq.deltal_d}
      \Delta l(x_k,\tau_k,g_k,d_k) \geq \kappa_{\alpha}(\tau_k \change{L} + \change{\Gamma})\|d_k\|_2^2 \geq \kappa_{\Delta l,d}\|d_k\|_2^2.
  \end{equation}
\end{remark}

The next lemma provides upper and lower bounds on the adaptive step sizes employed by Algorithm~\ref{alg.adaptiveSQP}.


\begin{lemma}\label{lem.stoch_stepsize_bound_old}
\change{Suppose Assumptions~\ref{ass.main},~\ref{ass.H} and~\ref{ass.residual} and Conditions~\ref{ass.stoch_linear_system_old} and~\ref{assum:stoch_redcond_old} hold}. Let $\bar{\alpha}_k$ be defined via \eqref{eq:alpha_min_stoch}--\eqref{eq:alpha_opt_stoch}. For all $k\in\mathbb{N}$, there exists a constant $\underline{\alpha}\in\mathbb{R}_{>0}$ such that $\underline{\alpha} \beta \leq \bar{\alpha}_k \leq \alpha_u\beta^{(2-\change{\sigma})}$.
\end{lemma}
\begin{proof}
The upper bound follows from \eqref{eq:alpha_min_stoch}. \change{We note that combining \eqref{eq:alpha_opt_stoch} and Lemma~\ref{lem.stoch_model_reduction_step_squared}, it follows that $\bar\alpha_k^{opt} \geq \min\left\{\tfrac{\Delta l(x_k,\bar\tau_k,\bar{g}_k,\bar{d}_k)}{(\bar\tau_kL + \Gamma)\|\bar{d}_k\|^2},1\right\} \geq \min\{\kappa_{\bar\alpha},1\}$, where $\kappa_{\bar\alpha}\in\mathbb{R}_{>0}$ is defined in Lemma~\ref{lem.stoch_model_reduction_step_squared}. To derive the lower bound, we consider two cases. If $\bar\alpha_k = \min\left\{\alpha_u\beta^{(2-\change{\sigma})}, \bar\alpha_{k}^{opt},1\right\}$, under the  conditions $\beta\in(0,1]$ and \change{$\sigma\in [1,2]$}, it follows that $\bar\alpha_k \geq \min\{\alpha_u,\kappa_{\bar\alpha},1\}\beta$. Otherwise, if $\bar\alpha_k = \tfrac{2(1-\eta)\beta^{(\change{\sigma}-1)}\Delta l(x_k,\bar\tau_k,\bar{g}_k,\bar{d}_k)}{(\bar\tau_kL +\Gamma)\|\bar{d}_k\|_2^2}$, by Lemma~\ref{lem.stoch_model_reduction_step_squared}, $\beta\in (0,1]$ and \change{$\sigma \in [1,2]$}, it follows that $\bar\alpha_k \geq 2(1-\eta)\kappa_{\bar\alpha}\beta$.} Setting $\underline{\alpha} := \min\{\alpha_u,1,\kappa_{\bar\alpha},2(1-\eta)\kappa_{\bar\alpha}\}$ completes the proof. 
\end{proof}

The next result provides an upper bound on the change of merit function value after a step. Central to the proof of this lemma is the step size strategy \eqref{eq:alpha_min_stoch}-\eqref{eq:alpha_opt_stoch}.

\begin{lemma}\label{lem.key_decrease}
  \change{Suppose Assumption~\ref{ass.main} holds.} For all $k\in\mathbb{N}$, it follows that
  \bequationNN
  \baligned
      &\ \phi(x_k + \bar\alpha_k \bar{d}_k, \bar\tau_k) - \phi(x_k, \bar\tau_k) \\
      \leq&\ -\bar\alpha_k \Delta l(x_k,\tau_k,g_k,d_k) + (1-\eta) \bar\alpha_k \beta^{(\change{\sigma}-1)}  \Delta l(x_k,\bar\tau_k,\bar{g}_k,\bar{d}_k) \\
      & \ + \bar\alpha_k \bar\tau_k g_k^T (\bar{d}_k - d_k) + \bar\alpha_k (\bar\tau_k - \tau_k) g_k^T d_k + \bar\alpha_k\|J_k(\bar{d}_k - \tilde{d}_k)\|_1,
  \ealigned
  \eequationNN
  where $\bar{d}_k$ and $d_k$ are defined in \eqref{eq.system_stochastic} and \eqref{eq.system_deterministic}, respectively, $\bar\tau_k$ is updated via \eqref{eq.merit_parameter_trial}--\eqref{eq.tau_update} and $\tau_k$ is its deterministic counterpart, and $\bar\alpha_k$ is computed via \eqref{eq:alpha_min_stoch}-\eqref{eq:alpha_opt_stoch}.
\end{lemma}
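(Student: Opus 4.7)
The plan is to start from the Lipschitz smoothness bounds on the objective and constraint and reduce everything, by careful algebraic bookkeeping, to the identity $\Delta l(x_k,\tau_k,g_k,d_k) = -\tau_k g_k^T d_k + \|c_k\|_1$ (which holds because the exact deterministic step satisfies $J_k d_k = -c_k$). The step size selection rule \eqref{eq:alpha_min_stoch}--\eqref{eq:alpha_opt_stoch} will then absorb the quadratic remainder.

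Concretely, I first apply Assumption~\ref{ass.main} (Lipschitz $\nabla f$ with constant $L_k$ and Lipschitz $J$ with constant $\Gamma_k$) along the segment from $x_k$ to $x_k+\bar\alpha_k\bar{d}_k$ to obtain
\begin{equation*}
f(x_k+\bar\alpha_k\bar{d}_k) \leq f_k + \bar\alpha_k g_k^T\bar{d}_k + \tfrac{L_k}{2}\bar\alpha_k^2\|\bar{d}_k\|_2^2,
\end{equation*}
and
\begin{equation*}
\|c(x_k+\bar\alpha_k\bar{d}_k)\|_1 \leq \|c_k+\bar\alpha_k J_k\bar{d}_k\|_1 + \tfrac{\Gamma_k}{2}\bar\alpha_k^2\|\bar{d}_k\|_2^2.
\end{equation*}
Multiplying the first by $\bar\tau_k$ and adding these yields a bound on $\phi(x_k+\bar\alpha_k\bar{d}_k,\bar\tau_k) - \phi(x_k,\bar\tau_k)$. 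Next, Lemma~\ref{lem.stoch_stepsize_bound_old} guarantees $\bar\alpha_k \leq 1$, so by convexity of $\|\cdot\|_1$, $\|c_k+\bar\alpha_k J_k\bar{d}_k\|_1 \leq (1-\bar\alpha_k)\|c_k\|_1 + \bar\alpha_k\|c_k+J_k\bar{d}_k\|_1 = (1-\bar\alpha_k)\|c_k\|_1 + \bar\alpha_k\|\bar{r}_k\|_1$, where I use the definition $\bar{r}_k = c_k + J_k\bar{d}_k$ from \eqref{eq.system_stochastic}. Since $J_k\widetilde{d}_k = -c_k$ by \eqref{eq.system}, $\bar{r}_k = J_k(\bar{d}_k-\widetilde{d}_k)$, so $\|\bar{r}_k\|_1 = \|J_k(\bar{d}_k-\widetilde{d}_k)\|_1$.

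Next, I split the linear term to isolate deterministic quantities:
\begin{equation*}
\bar\tau_k\bar\alpha_k g_k^T\bar{d}_k = \tau_k\bar\alpha_k g_k^T d_k + (\bar\tau_k-\tau_k)\bar\alpha_k g_k^T d_k + \bar\tau_k\bar\alpha_k g_k^T(\bar{d}_k-d_k),
\end{equation*}
and substitute $\tau_k g_k^T d_k = -\Delta l(x_k,\tau_k,g_k,d_k) + \|c_k\|_1$, which follows from \eqref{eq.model_reduction} combined with $J_k d_k = -c_k$ (so that $\|c_k+J_k d_k\|_1 = 0$). After this substitution the $\bar\alpha_k\|c_k\|_1$ contributions cancel, leaving
\begin{equation*}
\phi(x_k+\bar\alpha_k\bar{d}_k,\bar\tau_k) - \phi(x_k,\bar\tau_k) \leq -\bar\alpha_k\Delta l(x_k,\tau_k,g_k,d_k) + (\bar\tau_k-\tau_k)\bar\alpha_k g_k^T d_k + \bar\tau_k\bar\alpha_k g_k^T(\bar{d}_k-d_k) + \bar\alpha_k\|\bar{r}_k\|_1 + \tfrac{\bar\tau_k L_k+\Gamma_k}{2}\bar\alpha_k^2\|\bar{d}_k\|_2^2.
\end{equation*}

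The final step handles the quadratic remainder using the step-size rule. From \eqref{eq:alpha_min_stoch}, $\bar\alpha_k \leq \tfrac{2(1-\eta)\beta^{(\sigma/2-1)}\Delta l(x_k,\bar\tau_k,\bar{g}_k,\bar{d}_k)}{(\bar\tau_k L_k+\Gamma_k)\|\bar{d}_k\|_2^2}$, which immediately implies $\tfrac{\bar\tau_k L_k+\Gamma_k}{2}\bar\alpha_k^2\|\bar{d}_k\|_2^2 \leq (1-\eta)\bar\alpha_k\beta^{(\sigma/2-1)}\Delta l(x_k,\bar\tau_k,\bar{g}_k,\bar{d}_k)$. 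Substituting this bound and rewriting $\|\bar{r}_k\|_1$ as $\|J_k(\bar{d}_k-\widetilde{d}_k)\|_1$ produces exactly the claimed inequality.

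There is no single ``hard step'' here; the proof is essentially a careful bookkeeping exercise. The subtlety lies in (i) recognizing that the $+\bar\alpha_k\|c_k\|_1$ from the convex-combination bound on $\|c_k+\bar\alpha_k J_k\bar{d}_k\|_1$ precisely cancels with the $-\bar\alpha_k\|c_k\|_1$ produced by the substitution $\tau_k g_k^T d_k = -\Delta l + \|c_k\|_1$, and (ii) invoking the step size rule in the exact form that leaves the factor $(1-\eta)\beta^{(\sigma/2-1)}$ in front of $\Delta l(x_k,\bar\tau_k,\bar{g}_k,\bar{d}_k)$. The restriction $\bar\alpha_k\leq 1$ supplied by Lemma~\ref{lem.stoch_stepsize_bound_old} is what makes the convex combination argument valid, so that result is essential.
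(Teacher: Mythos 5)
Your proof is correct and follows essentially the same route as the paper's: Lipschitz/Taylor bounds on $f$ and $\|c\|_1$, the convex-combination bound on $\|c_k+\bar\alpha_k J_k\bar{d}_k\|_1$ valid because $\bar\alpha_k\in[0,1]$, the split of $\bar\tau_k g_k^T\bar{d}_k$ into deterministic and difference terms with $\tau_k g_k^Td_k-\|c_k\|_1=-\Delta l(x_k,\tau_k,g_k,d_k)$, the identification $\|c_k+J_k\bar{d}_k\|_1=\|\bar{r}_k\|_1=\|J_k(\bar{d}_k-\tilde{d}_k)\|_1$, and the step-size rule to absorb the quadratic remainder. One cosmetic nit: $\bar\alpha_k\leq 1$ follows directly from the $\min\{\cdot,1\}$ in \eqref{eq:alpha_min_stoch} rather than from Lemma~\ref{lem.stoch_stepsize_bound_old}, but this does not affect the argument.
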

\begin{proof}
  By the step size selection strategy \eqref{eq:alpha_min_stoch}-\eqref{eq:alpha_opt_stoch}, for all $k \in \N{}$, we have that $\bar\alpha_k \leq 1$. By the triangle inequality, 
  Assumption~\ref{ass.main}, and \eqref{eq.system}, 
  \begin{align*}
      &\phi(x_k + \bar\alpha_k \bar{d}_k, \bar\tau_k) - \phi(x_k, \bar\tau_k) \\
      =\ &\bar\tau_k(f(x_k + \bar\alpha_k\bar{d}_k) - f(x_k)) + \|c(x_k + \bar\alpha_k\bar{d}_k)\|_1 - \|c_k\|_1 \\
      \leq\ &\bar\tau_k(\bar\alpha_kg_k^T\bar{d}_k + \tfrac{L}{2}\bar\alpha_k^2\|\bar{d}_k\|_2^2) + \|c_k + \bar\alpha_kJ_k\bar{d}_k\|_1 + \tfrac{\Gamma}{2}\bar\alpha_k^2\|\bar{d}_k\|_2^2 - \|c_k\|_1 \\
      \leq\ &\bar\tau_k(\bar\alpha_kg_k^T\bar{d}_k + \tfrac{L}{2}\bar\alpha_k^2\|\bar{d}_k\|_2^2) + |1-\bar\alpha_k|\|c_k\|_1 + \bar\alpha_k\|c_k + J_k\bar{d}_k\|_1 + \tfrac{\Gamma}{2}\bar\alpha_k^2\|\bar{d}_k\|_2^2 - \|c_k\|_1 \\
      =\ &\bar\alpha_k (\bar\tau_k g_k^T \bar{d}_k - \|c_k\|_1) + \tfrac{1}{2} (\bar\tau_k L + \Gamma) \bar\alpha_k^2 \|\bar{d}_k\|_2^2 + \bar\alpha_k\|c_k + J_k\bar{d}_k\|_1 \\
      =\ &\bar\alpha_k (\tau_k g_k^T d_k - \|c_k\|_1) + \tfrac{1}{2} (\bar\tau_k L + \Gamma) \bar\alpha_k^2 \|\bar{d}_k\|_2^2 \\
      &+ \bar\alpha_k \bar\tau_k g_k^T (\bar{d}_k - d_k) + \bar\alpha_k (\bar\tau_k - \tau_k) g_k^T d_k  + \bar\alpha_k\|c_k + J_k\tilde{d}_k + J_k(\bar{d}_k - \tilde{d}_k)\|_1 \\
      =\ &-\bar\alpha_k \Delta l(x_k,\tau_k,g_k,d_k) + \tfrac{1}{2} (\bar\tau_k L + \Gamma) \bar\alpha_k^2 \|\bar{d}_k\|_2^2 \\
      &+ \bar\alpha_k \bar\tau_k g_k^T (\bar{d}_k - d_k) + \bar\alpha_k (\bar\tau_k - \tau_k) g_k^T d_k + \bar\alpha_k\|J_k(\bar{d}_k - \tilde{d}_k)\|_1.
  \end{align*}
  By \eqref{eq:alpha_min_stoch}, we have for all $k \in \N{}$, that $ \bar{\alpha}_k \leq \tfrac{2(1-\eta)\beta^{(\change{\sigma}-1)}\Delta l(x_k,\bar\tau_k,\bar{g}_k,\bar{d}_k)}{(\bar\tau_k L + \Gamma)\|\bar{d}_k\|_2^2}$, and 
  \begin{align*}
      &\ \phi(x_k + \bar\alpha_k \bar{d}_k, \bar\tau_k) - \phi(x_k, \bar\tau_k) \\
      \leq&\ -\bar\alpha_k \Delta l(x_k,\tau_k,g_k,d_k) + \tfrac{1}{2} \bar\alpha_k (\bar\tau_k L + \Gamma) \left(\tfrac{2(1-\eta)\beta^{(\change{\sigma}-1)} \Delta l(x_k,\bar\tau_k,\bar{g}_k,\bar{d}_k)}{(\bar\tau_k L + \Gamma) \|\bar{d}_k\|_2^2}\right) \|\bar{d}_k\|_2^2\\
      &\quad + \bar\alpha_k \bar\tau_k g_k^T (\bar{d}_k - d_k) + \bar\alpha_k (\bar\tau_k - \tau_k) g_k^T d_k + \bar\alpha_k\|J_k(\bar{d}_k - \tilde{d}_k)\|_1 \\
      =&\ -\bar\alpha_k \Delta l(x_k,\tau_k,g_k,d_k) + (1-\eta) \bar\alpha_k \beta^{(\change{\sigma}-1)} \Delta l(x_k,\bar\tau_k,\bar{g}_k,\bar{d}_k) \\
      &\quad + \bar\alpha_k \bar\tau_k g_k^T (\bar{d}_k - d_k) + \bar\alpha_k (\bar\tau_k - \tau_k) g_k^T d_k + \bar\alpha_k\|J_k(\bar{d}_k - \tilde{d}_k)\|_1,
  \end{align*}
  which is the desired result. 
\end{proof}

We now proceed to state and prove a series of lemmas (Lemmas~\ref{lem.stoch_g_diff_old}--\ref{lem.gd_deltal}) that provide bounds for the differences between the deterministic and stochastic gradients, search directions, and their inner products.

\begin{lemma}\label{lem.stoch_g_diff_old}
   \change{Suppose Condition~\ref{ass.stoch_g} holds.} For all $k\in\mathbb{N}$, $\mathbb{E}_k\left[\|\bar{g}_k - g_k\|_2\right] \leq \sqrt{\theta_1}\beta^{\change{\sigma}} \sqrt{\Delta l(x_k,\tau_k,g_k,d_k)}$.
\end{lemma}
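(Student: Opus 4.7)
The plan is to obtain this bound as a direct consequence of Assumption~\ref{ass.stoch_g} combined with Jensen's inequality. The right-hand-side quantity $\Delta l(x_k,\tau_k,g_k,d_k)$ is guaranteed to be nonnegative by Lemma~\ref{lem.bound_Delta_l}, so the square root appearing in the desired inequality is well-defined; I will note this at the outset to justify taking the square root of the bound in Assumption~\ref{ass.stoch_g}.

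The main step is then the following chain: since $\sqrt{\cdot}$ is concave on $\R{}_{\geq 0}$, Jensen's inequality gives
\begin{equation*}
   \mathbb{E}_k[\|\bar{g}_k - g_k\|_2] \;=\; \mathbb{E}_k\!\left[\sqrt{\|\bar{g}_k - g_k\|_2^2}\right] \;\leq\; \sqrt{\mathbb{E}_k[\|\bar{g}_k - g_k\|_2^2]}.
\end{equation*}
Applying the bound \eqref{eq.norm_cond_stoch} from Assumption~\ref{ass.stoch_g} to the right-hand side, together with the nonnegativity of $\Delta l(x_k,\tau_k,g_k,d_k)$, yields
\begin{equation*}
   \mathbb{E}_k[\|\bar{g}_k - g_k\|_2] \;\leq\; \sqrt{\theta_1 \beta^{\sigma}\,\Delta l(x_k,\tau_k,g_k,d_k)} \;=\; \sqrt{\theta_1}\,\beta^{\sigma/2}\sqrt{\Delta l(x_k,\tau_k,g_k,d_k)},
\end{equation*}
where in the last equality I use $\beta \in (0,1) \subset \R{}_{\geq 0}$ and $\sigma \in [2,4]$ to pull the deterministic factors out of the square root.

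There is no serious obstacle here; the only subtlety is ensuring the quantity under the square root is nonnegative, which is handled by Lemma~\ref{lem.bound_Delta_l} as noted above. The result follows immediately.
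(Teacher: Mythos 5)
Your proof is correct and follows exactly the same route as the paper's: Jensen's inequality (concavity of the square root) applied to $\mathbb{E}_k[\|\bar{g}_k - g_k\|_2]$, followed by the bound \eqref{eq.norm_cond_stoch} from Assumption~\ref{ass.stoch_g}. The extra remark invoking Lemma~\ref{lem.bound_Delta_l} to justify nonnegativity of the quantity under the square root is a harmless addition that the paper leaves implicit.
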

\begin{proof}
  By \change{Condition}~\ref{ass.stoch_g} and Jensen's inequality, it follows that 

\noindent  
  $\mathbb{E}_k\left[\|\bar{g}_k - g_k\|_2\right] \leq \sqrt{\mathbb{E}_k\left[\|\bar{g}_k - g_k\|_2^2\right]} \leq \sqrt{\theta_1\beta^{\change{2\sigma}} \Delta l(x_k,\tau_k,g_k,d_k)}$. 
\end{proof}

\begin{lemma}\label{lem.dtilde_stochastic_old}
  \change{Suppose Assumptions~\ref{ass.main}and~\ref{ass.H} and Condition~\ref{ass.stoch_g} hold.} For all $k\in\mathbb{N}$, $\mathbb{E}_k[\tilde{d}_k] = d_k$, $\mathbb{E}_k[\tilde{u}_k] = u_k$, and $\mathbb{E}_k[\tilde{\delta}_k] = \delta_k$.  Moreover, there exists $ \kappa_{\tilde{d}} \in \R{}_{>0}$, such that $\mathbb{E}_k\left[\|\tilde{d}_k - d_k\|_2\right]  \leq \kappa_{\tilde{d}} \beta^{\change{\sigma}} \sqrt{\Delta l(x_k,\tau_k,g_k,d_k)}$. 
\end{lemma}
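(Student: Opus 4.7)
My plan is to exploit the linearity of the saddle-point systems \eqref{eq.system} and \eqref{eq.system_deterministic} together with the unbiasedness of $\bar{g}_k$. Subtracting \eqref{eq.system_deterministic} from \eqref{eq.system} yields
\begin{equation*}
\begin{bmatrix} H_k & J_k^T \\ J_k & 0 \end{bmatrix} \begin{bmatrix} \tilde{d}_k - d_k \\ \tilde{\delta}_k - \delta_k \end{bmatrix} = -\begin{bmatrix} \bar{g}_k - g_k \\ 0 \end{bmatrix}.
\end{equation*}
Under Assumptions~\ref{ass.main} and~\ref{ass.H} the KKT matrix on the left is invertible and is deterministic conditional on the iterate $x_k$, so applying $\mathbb{E}_k[\cdot]$ to both sides and invoking $\mathbb{E}_k[\bar{g}_k] = g_k$ from Assumption~\ref{ass.stoch_g} immediately yields $\mathbb{E}_k[\tilde{d}_k] = d_k$ and $\mathbb{E}_k[\tilde{\delta}_k] = \delta_k$. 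For the tangential component, I would let $Z_k$ be an orthonormal basis for the null space of $J_k$; then $Z_k Z_k^T$ is a deterministic projector (it depends only on $x_k$), and by definition $\tilde{u}_k = Z_k Z_k^T \tilde{d}_k$ and $u_k = Z_k Z_k^T d_k$, so linearity of expectation gives $\mathbb{E}_k[\tilde{u}_k] = Z_k Z_k^T \mathbb{E}_k[\tilde{d}_k] = u_k$.

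For the norm bound, I would read off from the second block row of the difference system that $J_k(\tilde{d}_k - d_k) = 0$, so $\tilde{d}_k - d_k = Z_k w_k$ for some $w_k \in \R{n-m}$. Multiplying the first block row on the left by $Z_k^T$ annihilates the $J_k^T$ contribution and produces $(Z_k^T H_k Z_k) w_k = -Z_k^T (\bar{g}_k - g_k)$. Assumption~\ref{ass.H} gives $Z_k^T H_k Z_k \succeq \zeta I$, whence $\|w_k\|_2 \leq \zeta^{-1} \|\bar{g}_k - g_k\|_2$; combined with $\|Z_k\|_2 = 1$ this delivers the pathwise bound $\|\tilde{d}_k - d_k\|_2 = \|Z_k w_k\|_2 \leq \zeta^{-1} \|\bar{g}_k - g_k\|_2$.

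Taking conditional expectations and invoking Lemma~\ref{lem.stoch_g_diff_old} then gives
\begin{equation*}
\mathbb{E}_k[\|\tilde{d}_k - d_k\|_2] \leq \zeta^{-1} \mathbb{E}_k[\|\bar{g}_k - g_k\|_2] \leq \zeta^{-1}\sqrt{\theta_1}\,\beta^{\sigma/2}\sqrt{\Delta l(x_k,\tau_k,g_k,d_k)},
\end{equation*}
so choosing $\kappa_{\tilde{d}} := \zeta^{-1}\sqrt{\theta_1}$ suffices. There is no real conceptual obstacle; the only subtle point worth checking is that $J_k$, $H_k$, and the null-space basis $Z_k$ are all deterministic conditional on having reached $x_k$, which is precisely what allows the unbiasedness of $\bar{g}_k$ to propagate cleanly through both the KKT solve and the projection onto $Null(J_k)$.
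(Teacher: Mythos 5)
Your proposal is correct and follows essentially the same route as the paper: subtract the two KKT systems, use that the coefficient matrix is deterministic and invertible conditional on $x_k$ together with $\mathbb{E}_k[\bar{g}_k]=g_k$ for the unbiasedness claims, bound $\|\tilde{d}_k-d_k\|_2$ by a constant multiple of $\|\bar{g}_k-g_k\|_2$, and then invoke Lemma~\ref{lem.stoch_g_diff_old}. The only difference is cosmetic: the paper posits a uniform bound $\kappa_L$ on the norm of the inverse KKT matrix, whereas you derive the explicit constant $\zeta^{-1}$ by observing that $\tilde{d}_k-d_k$ lies in $Null(J_k)$ and using $Z_k^TH_kZ_k\succeq\zeta I$ --- a slightly sharper but equivalent argument.
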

\begin{proof}
The first statement follows from the facts that, the matrix on the left-hand side of \eqref{eq.system} is invertible and deterministic under \change{Assumptions~\ref{ass.main} and~\ref{ass.H}} and  conditioned on Algorithm~\ref{alg.adaptiveSQP} having reached iterate~$x_k$ at iteration $k$, and due to the fact that expectation is a linear operator. 
For the second statement, for any realization of $\bar{g}_k$,
  \begin{equation*}
    \begin{bmatrix} \tilde{d}_k - d_k \\ \tilde{\delta}_k - \delta_k \end{bmatrix} = -\begin{bmatrix} H_k & J_k^T \\ J_k & 0 \end{bmatrix}^{-1} \begin{bmatrix} \bar{g}_k - g_k \\ 0 \end{bmatrix} \implies \|\tilde{d}_k - d_k\|_2 \leq \kappa_L \|\bar{g}_k - g_k\|_2,
  \end{equation*}
  where (under \change{Assumptions~\ref{ass.main} and~\ref{ass.H}}) $\kappa_L \in \R{}_{>0}$ is an upper bound on the norm of the matrix shown above.  Thus, it follows by Lemma~\ref{lem.stoch_g_diff_old} that
  \begin{equation*}
      \mathbb{E}_k\left[\|\tilde{d}_k - d_k \|_2\right] \leq \mathbb{E}_k\left[\kappa_L \|\bar{g}_k - g_k \|_2 \right] \leq \kappa_L \sqrt{\theta_1} \beta^{\change{\sigma}} \sqrt{\Delta l(x_k,\tau_k,g_k,d_k)}.
  \end{equation*}
  
\end{proof}

\begin{lemma}\label{lem.gT(d_diff)_stoch_old}
\change{Suppose Assumptions~\ref{ass.main},~\ref{ass.H} and~\ref{ass.residual} and Condition~\ref{ass.stoch_linear_system_old} hold.} For all $k\in\N{}$,
\begin{align*}
    |g_k^T(d_k - \tilde{d}_k)| \leq \ &\kappa_{g,d\tilde{d}} \|g_k - \bar{g}_k\|_2\sqrt{\Delta l(x_k,\tau_k,g_k,d_k)},\\
    |\bar{g}_k^T\tilde{d}_k - g_k^Td_k| \leq \ &\kappa_{\bar{g}g,\tilde{d}d}\|g_k - \bar{g}_k\|_2\sqrt{\Delta l(x_k,\tau_k,g_k,d_k)} + \kappa_L\|\bar{g}_k - g_k\|_2^2,\\
    |\bar{g}_k^T(\bar{d}_k - \tilde{d}_k)| \leq \ &\kappa_{\bar{g},\bar{d}\tilde{d}} \beta^{\change{\sigma}} \Delta l(x_k,\tau_k,g_k,d_k) + \bar{\kappa}_{\bar{g},\bar{d}\tilde{d}} \beta^{\change{\sigma}} \Delta l(x_k,\bar{\tau}_k,\bar{g}_k,\bar{d}_k) \\
    &\change{+ \sqrt{\theta_2}\beta^{\change{\sigma}}\|\bar{g}_k - g_k\|_2\sqrt{\Delta l(x_k,\tau_k,g_k,d_k)},} \\ 
    |g_k^T (\bar{d}_k - d_k)| \leq \ &\kappa_{g,d\tilde{d}}\|g_k - \bar{g}_k\|_2\sqrt{\Delta l(x_k,\tau_k,g_k,d_k)} \\&+ \kappa_{g,\bar{d}d}\beta^{\change{\sigma}}\Delta l(x_k,\tau_k,g_k,d_k) 
    + \bar\kappa_{g,\bar{d}d}\beta^{\change{\sigma}}\Delta l(x_k,\bar\tau_k,\bar{g}_k,\bar{d}_k), \\ 
    |g_k^Td_k - \bar{g}_k^T\bar{d}_k| \leq \ &\kappa_{\bar{g}g,\tilde{d}d}\|g_k - \bar{g}_k\|_2\sqrt{\Delta l(x_k,\tau_k,g_k,d_k)} + \change{\kappa_L\|\bar{g}_k - g_k\|_2^2} \\
    &+ \kappa_{g\bar{g},d\bar{d}} \beta^{\change{\sigma}} \Delta l(x_k,\tau_k,g_k,d_k) + \bar{\kappa}_{g\bar{g},d\bar{d}} \beta^{\change{\sigma}} \Delta l(x_k,\bar{\tau}_k,\bar{g}_k,\bar{d}_k),\\
    \text{and} \ \ \|J_k(\bar{d}_k - \tilde{d}_k)\|_1 \leq \ &  \bar{\kappa}_{J,\bar{d}\tilde{d}} \beta^{\change{\sigma}} \Delta l(x_k,\bar{\tau}_k,\bar{g}_k,\bar{d}_k), 
\end{align*}
where 
$\kappa_{g,d\tilde{d}} = \tfrac{\kappa_H\kappa_L}{\sqrt{\kappa_{\Delta l,d}}} \in \R{}_{>0}$, 
$\kappa_{\bar{g}g,\tilde{d}d} = \kappa_{g,d\tilde{d}} + \tfrac{1}{\sqrt{\kappa_{\Delta l,d}}} \change{+ \sqrt{\theta_2}} \in \R{}_{>0}$, 
$\kappa_{L} \in \R{}_{>0}$, 
$\kappa_{\bar{g},\bar{d}\tilde{d}} = \tfrac{\kappa_H\sqrt{\theta_2}}{\sqrt{\kappa_{\Delta l,d}}} \in \R{}_{>0}$, 
$\bar{\kappa}_{\bar{g},\bar{d}\tilde{d}} = \kappa_{y\delta}\omega_a \in \R{}_{>0}$, 
$\kappa_{g,\bar{d}d} = \tfrac{\kappa_H\sqrt{\theta_2}}{\sqrt{\kappa_{\Delta l,d}}} \in \R{}_{>0}$, 
$\bar{\kappa}_{g,\bar{d}d} = \kappa_{y\delta}\omega_a \in \R{}_{>0}$, $\kappa_{g\bar{g},d\bar{d}} = \kappa_{\bar{g},\bar{d}\tilde{d}} \in \R{}_{>0}$, 
$\bar{\kappa}_{g\bar{g},d\bar{d}} = \bar{\kappa}_{\bar{g},\bar{d}\tilde{d}} \in \R{}_{>0}$, and 
$\bar{\kappa}_{J,\bar{d}\tilde{d}} = \omega_a \in \R{}_{>0}$. 
\end{lemma}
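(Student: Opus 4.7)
The plan is to establish each of the six inequalities by manipulating the linear systems \eqref{eq.system}, \eqref{eq.system_stochastic} and \eqref{eq.system_deterministic} to express the relevant inner product as a quadratic form in $H_k$ plus a dual-variable-times-residual pairing, and then applying Cauchy--Schwarz together with the auxiliary bounds already in hand. Four tools will be used repeatedly: \emph{(i)} $\|\tilde d_k - d_k\|_2 \leq \kappa_L\|\bar g_k - g_k\|_2$ and the analogous $\|\tilde\delta_k - \delta_k\|_2 \leq \kappa_L\|\bar g_k - g_k\|_2$, both of which follow from the proof of Lemma~\ref{lem.dtilde_stochastic_old} applied to the block $(\tilde d_k - d_k, \tilde\delta_k - \delta_k)$; \emph{(ii)} $\|\bar d_k - \tilde d_k\|_2 \leq \sqrt{\theta_2}\beta^{\sigma/2}\sqrt{\Delta l(x_k,\tau_k,g_k,d_k)}$ from Assumption~\ref{ass.stoch_linear_system_old}; \emph{(iii)} the deterministic a priori bounds $\|d_k\|_2 \leq \sqrt{\Delta l(x_k,\tau_k,g_k,d_k)/\kappa_{\Delta l,d}}$ from \eqref{eq.deltal_d} and $\|y_k + \delta_k\|_\infty \leq \kappa_{y\delta}$ from Lemma~\ref{lem.bound_ydelta}; and \emph{(iv)} the residual bound $\|\bar r_k\|_1 \leq \omega_a\beta^{\sigma/2}\Delta l(x_k,\bar\tau_k, \bar g_k, \bar d_k)$ from Lemma~\ref{lem.residual}.

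The first and last bounds are almost immediate. For the first, eliminate $g_k$ via \eqref{eq.system_deterministic} to write $g_k = -H_k d_k - J_k^T(y_k+\delta_k)$; since $J_k(\tilde d_k - d_k) = 0$, this collapses $g_k^T(d_k - \tilde d_k)$ to $d_k^T H_k(d_k - \tilde d_k)$, which yields the bound by Cauchy--Schwarz combined with tools \emph{(i)} and \emph{(iii)}. For the last, subtracting \eqref{eq.system} from \eqref{eq.system_stochastic} gives $J_k(\bar d_k - \tilde d_k) = \bar r_k$, and Lemma~\ref{lem.residual} gives the conclusion directly. Bound (2) is obtained from the telescoping decomposition $\bar g_k^T \tilde d_k - g_k^T d_k = (\bar g_k - g_k)^T(\tilde d_k - d_k) + (\bar g_k - g_k)^T d_k + g_k^T(\tilde d_k - d_k)$ and bounding each summand via Cauchy--Schwarz together with tools \emph{(i)}, \emph{(iii)}, and the first bound just established.

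The main obstacle lies in bounds (3) and (4), as they involve stochastic dual variables. For (4), use \eqref{eq.system_deterministic} to write $g_k^T(\bar d_k - d_k) = -d_k^T H_k(\bar d_k - d_k) - (y_k + \delta_k)^T \bar r_k$ (using $J_k(\bar d_k - d_k) = \bar r_k$), then split $\bar d_k - d_k = (\bar d_k - \tilde d_k) + (\tilde d_k - d_k)$; Assumption~\ref{ass.stoch_linear_system_old} and tool \emph{(i)} handle the two pieces of the quadratic-form term, while Lemmas~\ref{lem.bound_ydelta} and \ref{lem.residual} handle the dual-residual pairing. For (3), eliminate $\bar g_k$ via \eqref{eq.system} to obtain $\bar g_k^T(\bar d_k - \tilde d_k) = -\tilde d_k^T H_k(\bar d_k - \tilde d_k) - (y_k + \tilde\delta_k)^T \bar r_k$. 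The main difficulty is that $\tilde\delta_k$ is stochastic and not uniformly bounded; I side-step this by the splittings $\tilde d_k = d_k + (\tilde d_k - d_k)$ and $y_k + \tilde\delta_k = (y_k + \delta_k) + (\tilde\delta_k - \delta_k)$, so that the deterministic pieces ($d_k$ and $y_k + \delta_k$) are controlled by tool \emph{(iii)} while the stochastic residuals $\tilde d_k - d_k$ and $\tilde\delta_k - \delta_k$ are bounded by tool \emph{(i)}; the mixed cross-terms of the form $\|\bar g_k - g_k\|_2 \cdot \beta^{\sigma/2}\sqrt{\Delta l}$ will be split via AM--GM so that one factor is absorbed into the $\kappa_L\|\bar g_k - g_k\|_2^2$ term and the other into the $\beta^{\sigma/2}\Delta l$ term appearing in the claim.

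Finally, bound (5) is obtained by the decomposition $g_k^T d_k - \bar g_k^T \bar d_k = (g_k - \bar g_k)^T d_k - \bar g_k^T(\bar d_k - \tilde d_k) - \bar g_k^T(\tilde d_k - d_k)$, where the first term is controlled directly by Cauchy--Schwarz and tool \emph{(iii)}, the second by the already-established bound (3), and the third by writing $\bar g_k^T(\tilde d_k - d_k) = g_k^T(\tilde d_k - d_k) + (\bar g_k - g_k)^T(\tilde d_k - d_k)$ and invoking bound (1) together with tool \emph{(i)}. Collecting coefficients yields the specific constants listed in the statement.
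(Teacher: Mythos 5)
Your treatment of the first, second, fourth, fifth, and sixth inequalities is essentially the paper's: the same eliminations ($g_k+J_k^T(y_k+\delta_k)=-H_kd_k$, $J_k(\bar{d}_k-\tilde{d}_k)=\bar{r}_k$), the same telescoping decompositions, and the same invocations of $\|\tilde{d}_k-d_k\|_2\leq\kappa_L\|\bar{g}_k-g_k\|_2$, $\|d_k\|_2\leq\sqrt{\Delta l(x_k,\tau_k,g_k,d_k)/\kappa_{\Delta l,d}}$, Lemma~\ref{lem.bound_ydelta}, and Lemma~\ref{lem.residual}. Those parts go through.

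The gap is in your third inequality. By eliminating $\bar{g}_k$ through the \emph{exact stochastic} system \eqref{eq.system} you write $\bar{g}_k^T(\bar{d}_k-\tilde{d}_k)=-\tilde{d}_k^TH_k(\bar{d}_k-\tilde{d}_k)-(y_k+\tilde{\delta}_k)^T\bar{r}_k$, and after your splittings you are left with the cross term $(\tilde{\delta}_k-\delta_k)^T\bar{r}_k$, bounded by $\kappa_L\omega_a\beta^{\sigma/2}\,\|\bar{g}_k-g_k\|_2\,\Delta l(x_k,\bar\tau_k,\bar{g}_k,\bar{d}_k)$. Your proposed AM--GM split turns this into a multiple of $\beta^{\sigma}\Delta l(x_k,\bar\tau_k,\bar{g}_k,\bar{d}_k)^2$, which is not one of the admissible right-hand-side terms and cannot be absorbed into $\bar{\kappa}_{\bar{g},\bar{d}\tilde{d}}\beta^{\sigma/2}\Delta l(x_k,\bar\tau_k,\bar{g}_k,\bar{d}_k)$ without a uniform (almost-sure) bound on the stochastic model reduction, which is not available at this point (only the deterministic $\Delta l$ is bounded, by Lemma~\ref{lem.bound_Delta_l}; $\|\bar{g}_k-g_k\|_2$ is controlled only in second moment). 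The paper avoids the stochastic multiplier entirely: it adds and subtracts the \emph{deterministic} dual term, writing $\bar{g}_k^T(\bar{d}_k-\tilde{d}_k)=(\bar{g}_k+J_k^T(y_k+\delta_k))^T(\bar{d}_k-\tilde{d}_k)-(y_k+\delta_k)^T\bar{r}_k$ and then $\bar{g}_k+J_k^T(y_k+\delta_k)=-H_kd_k+(\bar{g}_k-g_k)$, which yields exactly the three terms $(H_kd_k)^T(\bar{d}_k-\tilde{d}_k)$, $(\bar{g}_k-g_k)^T(\bar{d}_k-\tilde{d}_k)$, and $(y_k+\delta_k)^T\bar{r}_k$ and never touches $\tilde{\delta}_k$. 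A secondary issue: your other cross term $(\tilde{d}_k-d_k)^TH_k(\bar{d}_k-\tilde{d}_k)$ can be absorbed by AM--GM, but only at the cost of inflating the coefficients of $\|\bar{g}_k-g_k\|_2^2$ and of $\beta^{\sigma/2}\Delta l(x_k,\tau_k,g_k,d_k)$ beyond the exact values $\kappa_L$ and $\kappa_H\sqrt{\theta_2}/\sqrt{\kappa_{\Delta l,d}}$ asserted in the statement, so "collecting coefficients" does not recover the listed constants. Since your fifth inequality is proved via the third, it inherits the same problem. You should redo the third bound with the paper's deterministic-dual decomposition.
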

\begin{proof}
    \emph{(First inequality)} 
    By   
    Assumption~\ref{ass.H}, and \eqref{eq.system}, \eqref{eq.system_deterministic},  \eqref{eq.deltal_d}, 
    \begin{equation}\label{eq.res1_stoch_old}
    \begin{aligned}
        |g_k^T(d_k - \tilde{d}_k)|
        =\ & |(g_k + J_k^T(y_k + \delta_k))^T(d_k - \tilde{d}_k)| \\
        =\ & |(H_kd_k)^T(d_k - \tilde{d}_k)| 
        \leq \kappa_H\|d_k\|_2 \|d_k - \tilde{d}_k\|_2 \\ \leq\ &\kappa_H  \sqrt{\tfrac{\Delta l(x_k,\tau_k,g_k,d_k)}{\kappa_{\Delta l,d}}} \kappa_{L} \|g_k - \bar{g}_k\|_2, 
    \end{aligned}
    \end{equation}
    where the result follows using the definition of $\kappa_{g,d\tilde{d}}$.

    \emph{(Second inequality)} 
    By 
    Lemma~\ref{lem.dtilde_stochastic_old}, and
    \eqref{eq.system}, \eqref{eq.system_deterministic}, \eqref{eq.deltal_d}, \eqref{eq.res1_stoch_old}, 
  \begin{equation}\label{eq.res2_stoch_old}
    \begin{aligned}
      &|\bar{g}_k^T\tilde{d}_k - g_k^Td_k| \\
      \leq\ & |(\bar{g}_k - g_k)^Td_k| + |g_k^T(\tilde{d}_k - d_k)| + |(\bar{g}_k - g_k)^T(\tilde{d}_k - d_k)| \\
      \leq\ &\|\bar{g}_k - g_k\|_2\|d_k\|_2 + \kappa_{g,d\tilde{d}} \|g_k - \bar{g}_k\|_2\sqrt{\Delta l(x_k,\tau_k,g_k,d_k)} + \|\bar{g}_k - g_k\|_2\|\tilde{d}_k - d_k\|_2 \\
      \leq\ &\|\bar{g}_k - g_k\|_2\sqrt{\tfrac{\Delta l(x_k,\tau_k,g_k,d_k)}{\kappa_{\Delta l,d}}} + \kappa_{g,d\tilde{d}} \|g_k - \bar{g}_k\|_2\sqrt{\Delta l(x_k,\tau_k,g_k,d_k)} + \kappa_L\|\bar{g}_k - g_k\|^2_2,
  \end{aligned}
  \end{equation}
  where the result follows using the definition of $\kappa_{\bar{g}g,\tilde{d}d}$. 
  
  \emph{(Third Inequality)} By  \change{Assumption~\ref{ass.H}, Condition  \ref{ass.stoch_linear_system_old}}, Lemmas~\ref{lem.residual},~\ref{lem.bound_ydelta} and~\ref{lem.dtilde_stochastic_old}, and \eqref{eq.system}, \eqref{eq.system_stochastic}, \eqref{eq.system_deterministic}, \eqref{eq.deltal_d}, it follows that
  \begin{equation}\label{eq.res3_stoch_old}
    \begin{aligned}
      |\bar{g}_k^T(\bar{d}_k -\tilde{d}_k)| 
      =\ & |(\bar{g}_k + J_k^T(y_k + \delta_k) - J_k^T(y_k + \delta_k))^T(\bar{d}_k -\tilde{d}_k)| \\
      \leq\ &|(\bar{g}_k + J_k^T(y_k + \delta_k) )^T(\bar{d}_k -\tilde{d}_k)| + |( J_k^T(y_k + \delta_k))^T(\bar{d}_k -\tilde{d}_k)| \\
      \leq\ &|(g_k + J_k^T(y_k + \delta_k) )^T(\bar{d}_k -\tilde{d}_k)| + |(\bar{g}_k - g_k)^T(\bar{d}_k -\tilde{d}_k)| \\
      &+ |( J_k^T(y_k + \delta_k))^T(\bar{d}_k -\tilde{d}_k)| \\
      \leq\ &|(H_k d_k )^T(\bar{d}_k -\tilde{d}_k)|  
      + \change{\|\bar{g}_k - g_k\|_2\sqrt{\theta_2}\beta^{\change{\sigma}}\sqrt{\Delta l(x_k,\tau_k,g_k,d_k)}} \\
      & + \|y_k + \delta_k\|_{\infty}  \|\bar{r}_k\|_1\\
      \leq\ &\tfrac{\kappa_H\sqrt{\theta_2}}{\sqrt{\kappa_{\Delta l,d}}}\beta^{\change{\sigma}}\Delta l(x_k,\tau_k,g_k,d_k)  \change{+ \sqrt{\theta_2}\beta^{\change{\sigma}}\|\bar{g}_k - g_k\|_2\sqrt{\Delta l(x_k,\tau_k,g_k,d_k)}}\\
      & + \kappa_{y\delta}\omega_a\beta^{\change{\sigma}}\Delta l(x_k,\bar\tau_k,\bar{g}_k,\bar{d}_k), 
  \end{aligned}
  \end{equation}
  where the result follows using the definitions of $\kappa_{\bar{g},\bar{d}\tilde{d}}$ and $\bar{\kappa}_{\bar{g},\bar{d}\tilde{d}}$.

  \emph{(Fourth inequality)} By 
  \change{Assumption~\ref{ass.H}, Condition  \ref{ass.stoch_linear_system_old}}, Lemmas~\ref{lem.residual} and~\ref{lem.bound_ydelta}, and  \eqref{eq.system}, \eqref{eq.system_stochastic}, \eqref{eq.system_deterministic}, \eqref{eq.deltal_d}, it follows that
\begin{align*}
    |g_k^T(\bar{d}_k - \tilde{d}_k)| 
    = \ & |(g_k + J_k^T(y_k + \delta_k) - J_k^T(y_k + \delta_k))^T(\bar{d}_k -\tilde{d}_k)| \\
    \leq\ &|(g_k + J_k^T(y_k + \delta_k) )^T(\bar{d}_k -\tilde{d}_k)| + |( J_k^T(y_k + \delta_k))^T(\bar{d}_k -\tilde{d}_k)| \\
    \leq\ &|(H_k d_k )^T(\bar{d}_k -\tilde{d}_k)| + \|y_k + \delta_k\|_{\infty} \|\bar{r}_k\|_1 \\
    \leq\ &\tfrac{\kappa_H\sqrt{\theta_2}}{\sqrt{\kappa_{\Delta l,d}}}\beta^{\change{\sigma}}\Delta l(x_k,\tau_k,g_k,d_k) + \kappa_{y\delta}\omega_a\beta^{\change{\sigma}}\Delta l(x_k,\bar\tau_k,\bar{g}_k,\bar{d}_k).
\end{align*}
By the triangle inequality and \eqref{eq.res1_stoch_old}, 
\begin{equation*}
\begin{aligned}
    |g_k^T(\bar{d}_k - d_k)| 
    \leq\ & |g_k^T(\tilde{d}_k - d_k)| + |g_k^T(\bar{d}_k - \tilde{d}_k)| \\
    \leq\ &\kappa_H \sqrt{\tfrac{\Delta l(x_k,\tau_k,g_k,d_k)}{\kappa_{\Delta l,d}}} \kappa_{L} \|g_k - \bar{g}_k\|_2 + \tfrac{\kappa_H\sqrt{\theta_2}}{\sqrt{\kappa_{\Delta l,d}}}\beta^{\change{\sigma}} \Delta l(x_k,\tau_k,g_k,d_k) \\
    &+ \kappa_{y\delta} \omega_a \beta^{\change{\sigma}} \Delta l(x_k,\bar\tau_k,\bar{g}_k,\bar{d}_k) \\
\end{aligned}
\end{equation*}
where the result follows using the definition of $\kappa_{g,\bar{d}d}$ and $\bar{\kappa}_{g,\bar{d}d}$. 

\emph{(Fifth inequality)} By 
\eqref{eq.res2_stoch_old},  \eqref{eq.res3_stoch_old}, 
\begin{equation*}
\begin{aligned}
    |g_k^Td_k - \bar{g}_k^T\bar{d}_k| \leq\ & |g_k^Td_k - \bar{g}_k^T\tilde{d}_k| + |\bar{g}_k^T(\tilde{d}_k - \bar{d}_k)| \\
    \leq\ & \kappa_{\bar{g}g,\tilde{d}d}\|g_k - \bar{g}_k\|_2\sqrt{\Delta l(x_k,\tau_k,g_k,d_k)} + \change{\kappa_L\|\bar{g}_k - g_k\|_2^2} \\
    &+ \kappa_{\bar{g},\bar{d}\tilde{d}}\beta^{\change{\sigma}}\Delta l(x_k,\tau_k,g_k,d_k) + \bar\kappa_{\bar{g},\bar{d}\tilde{d}}\beta^{\change{\sigma}}\Delta l(x_k,\bar\tau_k,\bar{g}_k,\bar{d}_k),
\end{aligned}
\end{equation*}
where the result follows using the definitions of $\kappa_{g\bar{g},d\bar{d}}$ and $\bar{\kappa}_{g\bar{g},d\bar{d}}$. 

\emph{(Sixth inequality)} By Lemma~\ref{lem.residual}, and \eqref{eq.system},  \eqref{eq.system_stochastic}, 
\begin{equation*}
    \|J_k(\bar{d}_k - \tilde{d}_k)\|_1 = \|\bar{r}_k\|_1 \leq \omega_a\beta^{\change{\sigma}}\Delta l(x_k,\bar{\tau}_k,\bar{g}_k,\bar{d}_k),
\end{equation*}
where the result follows using the definition of $\bar{\kappa}_{J,\bar{d}\tilde{d}}$. 
\end{proof}


The next lemma provides  an upper bound for $|g_k^Td_k|$ with respect to reduction in the model of the merit function, i.e., $\Delta l(x_k,\tau_k,g_k,d_k)$.
\begin{lemma}\label{lem.gd_deltal}
\change{Suppose Assumptions~~\ref{ass.H} and \ref{ass.residual} hold.} For all $k \in \N{}$, $|g_k^Td_k| \leq \kappa_{gd,\Delta l}\Delta l(x_k,\tau_k,g_k,d_k)$, 
where $\kappa_{gd,\Delta l} = \tfrac{\kappa_H}{\kappa_{\Delta l,d}} + \tfrac{\kappa_{y\delta}}{\sqrt{m}\kappa_l\tau_{\min}} \in \mathbb{R}_{>0}$.
\end{lemma}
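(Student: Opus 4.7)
The plan is to use the deterministic KKT system~\eqref{eq.system_deterministic} to rewrite $g_k^Td_k$ in a form whose pieces are individually controlled by $\Delta l(x_k,\tau_k,g_k,d_k)$. From the top block of \eqref{eq.system_deterministic}, $g_k = -H_kd_k - J_k^T(y_k+\delta_k)$, and from the bottom block, $J_kd_k = -c_k$. Taking the inner product of the first identity with $d_k$ and using $d_k^TJ_k^T = (J_kd_k)^T = -c_k^T$ yields the identity
\begin{equation*}
  g_k^Td_k \;=\; -d_k^T H_k d_k + c_k^T(y_k+\delta_k),
\end{equation*}
so the triangle inequality gives $|g_k^Td_k| \leq |d_k^TH_kd_k| + |c_k^T(y_k+\delta_k)|$.

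I would then bound the two summands separately. For the first, Assumption~\ref{ass.H} gives $|d_k^TH_kd_k| \leq \kappa_H\|d_k\|_2^2$, and Remark~\ref{rm:deter_ld} (the deterministic counterpart of Lemma~\ref{lem.stoch_model_reduction_step_squared}) yields $\|d_k\|_2^2 \leq \Delta l(x_k,\tau_k,g_k,d_k)/\kappa_{\Delta l,d}$; this produces the first summand $\kappa_H/\kappa_{\Delta l,d}$ in $\kappa_{gd,\Delta l}$. For the second, I would apply H\"older's inequality to obtain $|c_k^T(y_k+\delta_k)| \leq \|c_k\|\,\|y_k+\delta_k\|$ in a compatible pair of norms, invoke Lemma~\ref{lem.bound_ydelta} to control the multiplier factor by $\kappa_{y\delta}$ (together with the standard equivalence bound relating $\|\cdot\|_\infty$ and $\|\cdot\|_2$ on $\mathbb{R}^m$, which is where a $\sqrt{m}$ enters), and then use the deterministic analogue of Lemma~\ref{lem.Psi}, together with $\tau_k \geq \tau_{\min}$ (guaranteed by the discussion preceding Assumption~\ref{ass.residual}), to bound the norm of $c_k$ by a constant multiple of $\Delta l(x_k,\tau_k,g_k,d_k)/(\kappa_l \tau_{\min})$.

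Adding these two bounds and choosing $\kappa_{gd,\Delta l}$ as stated completes the proof. The proof is essentially routine once the identity $g_k^Td_k = -d_k^TH_kd_k + c_k^T(y_k+\delta_k)$ is in hand; the only subtlety will be tracking the precise norm-conversion factor $\sqrt{m}$ so that it appears exactly where the statement indicates, and making sure to apply the deterministic rather than stochastic versions of Lemmas~\ref{lem.Psi} and \ref{lem.stoch_model_reduction_step_squared} uniformly throughout.
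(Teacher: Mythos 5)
Your proof is correct and follows essentially the same route as the paper: both rewrite $g_k^Td_k$ via \eqref{eq.system_deterministic} as $-d_k^TH_kd_k + c_k^T(y_k+\delta_k)$, bound the first term by $\kappa_H\|d_k\|_2^2 \leq \kappa_H\,\Delta l(x_k,\tau_k,g_k,d_k)/\kappa_{\Delta l,d}$ using \eqref{eq.deltal_d}, and bound the second by $\kappa_{y\delta}\|c_k\|_1$ via H\"older's inequality, Lemma~\ref{lem.bound_ydelta}, and the deterministic variant of Lemma~\ref{lem.Psi}. One small caution on the ``subtlety'' you flag: since Lemma~\ref{lem.Psi} controls $\|c_k\|_2$ while the H\"older pairing produces $\|c_k\|_1$, the norm-equivalence factor $\sqrt{m}$ actually lands in the \emph{numerator} of the second summand rather than the denominator shown in the stated $\kappa_{gd,\Delta l}$ --- a harmless discrepancy in the constant that affects the paper's own statement as much as your write-up.
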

\begin{proof}
By Assumption~\ref{ass.H}, Lemma \ref{lem.Psi}, and \eqref{eq.system_deterministic}, 
\eqref{eq.deltal_d},
\begin{equation*}
\begin{aligned}
    |g_k^Td_k| = |d_k^TH_kd_k + d_k^TJ_k^T(y_k + \delta_k)| &\leq \kappa_H\|d_k\|_2^2 + \kappa_{y\delta}\|c_k\|_1 \\
    &\leq \left(\tfrac{\kappa_H}{\kappa_{\Delta l,d}} + \tfrac{\kappa_{y\delta}}{\sqrt{m}\kappa_l\tau_{\min}}\right)\Delta l(x_k,\tau_k,g_k,d_k), 
\end{aligned}
\end{equation*}
where the result follows using the definition of $\kappa_{gd,\Delta l}$. 
\end{proof}

Next we state and prove an upper bound on the difference between the deterministic and stochastic merit parameters. When $\|c_k\| = 0$, by \eqref{eq.merit_parameter_trial}--\eqref{eq.tau_update},  $\tau_k = \bar\tau_k = \bar\tau_{k-1}$. When $\|c_k\|>0$, for the ease of exposition, we equivalently reformulate the merit parameter sequence $\{\bar\tau_k\}$ update \eqref{eq.merit_parameter_trial}--\eqref{eq.tau_update} as 
\begin{equation}\label{eq.tau_udpate_det_new}
    \bar\tau_k\leftarrow\begin{cases} \bar\tau_{k-1} \quad\quad\quad\quad\quad\quad\quad\quad \text{if } {\scriptstyle\bar{g}_k^T\bar{d}_k + \max\{\bar{d}_k^TH_k\bar{d}_k,\epsilon_d\|\bar{d}_k\|^2\} \leq} \tfrac{(1-\epsilon_{\tau})(1-\omega_1)(1-\omega_2)\|c_k\|_1}{\bar\tau_{k-1}}; \\
    \tfrac{(1-\epsilon_{\tau})(1-\omega_1)(1-\omega_2)\|c_k\|_1}{\bar{g}_k^T\bar{d}_k + \max\{\bar{d}_k^TH_k\bar{d}_k,\epsilon_d\|\bar{d}_k\|^2\}} \quad \text{otherwise.} \end{cases}
\end{equation}
(The update formula for the deterministic merit parameter $\tau_k$ can be defined as above with the stochastic quantities replaced by their deterministic counterparts.) It is clear that if the merit parameter is updated from its previous value, i.e., $\bar{\tau}_k \neq \bar{\tau}_{k-1}$, then 
\begin{equation}\label{eq.tau_stoch_update}
    \tfrac{(1-\epsilon_\tau)(1-\omega_1)(1 - \omega_2)\|c_k\|_1}{\gbar_k^T\dbar_k + \max\{\bar{d}_k^TH_k\bar{d}_k,\epsilon_d\|\bar{d}_k\|_2^2\}} < \bar{\tau}_{k-1}.
\end{equation}
Similarly, given $\bar{\tau}_{k-1}$ if the deterministic merit parameter is updated, 
\begin{equation}\label{eq.tau_det_update}
        \tfrac{(1-\epsilon_\tau)(1-\omega_1)(1 - \omega_2)\|c_k\|_1}{g_k^Td_k + \max\{d_k^TH_kd_k,\epsilon_d\|d_k\|_2^2\}} < \bar{\tau}_{k-1}.
\end{equation}

\begin{lemma}\label{lem.tau_bound_old}
    \change{Suppose Assumptions~~\ref{ass.H} and~\ref{ass.residual} and Condition~\ref{assum:stoch_redcond_old} hold.} For all $k\in\N{}$, $|(\bar\tau_k - \tau_k)g_k^Td_k| \leq \kappa_{\bar{\tau}}\beta^{\change{\sigma}} \Delta l(x_k,\tau_k,g_k,d_k)$, 
    where $\kappa_{\bar{\tau}} = 2\theta_3 \bar{\tau}_{-1}\kappa_{gd,\Delta l}$ and $\beta\in\left(0,\sfrac{1}{(2\theta_3)^{\change{1/\sigma}}}\right]$. 
\end{lemma}

\begin{proof} By the merit parameter updating mechanism \eqref{eq.merit_parameter_trial}--\eqref{eq.tau_update}, the merit parameter values ($\bar\tau_k$ and $\tau_k$) are only potentially updated if $\|c_k\|_1 > 0$. We divide the proof into two cases based \change{on} the outcome of Line~\ref{line:conditions} in Algorithm~\ref{alg.adaptiveSQP} ($(a)$ or case $(b)$). Let $h_k = g_k^Td_k + \max\{d_k^TH_kd_k,\epsilon_d\|d_k\|_2^2\} $ and $\bar{h}_k = \bar{g}_k^T\bar{d}_k + \max\{\bar{d}_k^TH_k\bar{d}_k,\epsilon_d\|\bar{d}_k\|_2^2\}$.

If Line~\ref{line:conditions} in Algorithm~\ref{alg.adaptiveSQP} terminates in case $(a)$, and $\|c_k\|_1 > 0$,  
it follows that $\bar{h}_k \leq 0$
, and by \change{Condition}~\ref{assum:stoch_redcond_old}, $h_k \leq 0$; 
otherwise, when $\|c_k\|_1 = 0$, by \eqref{eq.system_deterministic} and \change{Assumption}~\ref{ass.H}, it follows that $d_k\in\Null(J_k)$ and $h_k = g_k^Td_k + d_k^TH_kd_k = -d_k^TJ_k^T(y_k+\delta_k) = c_k^T(y_k+\delta_k) = 0$. Thus, neither the stochastic nor the deterministic merit parameters update, i.e., $\tau_k = \bar\tau_k =\bar\tau_{k-1}$.  
The result holds since $\Delta l(x_k,\tau_k,g_k,d_k)$ is non-negative. 

Next, we consider the case where Line~\ref{line:conditions} in Algorithm~\ref{alg.adaptiveSQP} terminates with case $(b)$. 
We divide merit parameter values $(\tau_k,\bar\tau_k)$ into four cases. 

\textbf{Case (i)}: Neither $\bar\tau_k$ or $\tau_k$ are updated, i.e., $\tau_k = \bar\tau_k =\bar\tau_{k-1}$. In this case, the result holds since $\Delta l(x_k,\tau_k,g_k,d_k)$ is non-negative.

\textbf{Case (ii)}: Both $\bar\tau_k$ and $\tau_k$ are updated, i.e., \eqref{eq.tau_stoch_update} and \eqref{eq.tau_det_update} hold. By \eqref{eq.tau_udpate_det_new}, \eqref{eq.tau_stoch_update} and  \eqref{eq.tau_det_update}, and \change{Condition}~\ref{assum:stoch_redcond_old}, it follows that
\begin{equation*}
\begin{aligned}
    |\bar\tau_k - \tau_k| &= (1 - \epsilon_\tau)(1-\omega_1)(1 - \omega_2)\|c_k\|_1\tfrac{\left|\bar{h}_k - h_k\right|}{\bar{h}_k h_k} \\
    &\leq \tfrac{(1 - \epsilon_\tau)(1-\omega_1)(1 - \omega_2)\|c_k\|_1\theta_3\beta^{\change{\sigma}}}{\bar{h}_k} \leq \theta_3\beta^{\change{\sigma}}\bar\tau_{-1}.
\end{aligned}
\end{equation*}

\textbf{Case (iii)}: The stochastic merit parameter $\bar{\tau}_k$ is updated and the deterministic merit parameter $\tau_k$ is not, i.e., $\bar{\tau}_k \leq \tau_k = \bar{\tau}_{k-1}$. Since the stochastic merit parameter is updated, $\bar\tau_k^{trial} < \infty$ and $\bar{h}_k > 0$,  
and by \change{Condition}~\ref{assum:stoch_redcond_old} it follows that $h_k > 0 $
, and $\tau_k^{trial} < \infty$. Moreover, since the deterministic merit parameter is not updated,  
\begin{equation}\label{eq.tau_det_nupdate}
        \tfrac{(1-\epsilon_\tau)(1-\omega_1)(1 - \omega_2)\|c_k\|_1}{h_k} \geq \bar{\tau}_{k-1}.
\end{equation}
By \eqref{eq.tau_udpate_det_new}, \eqref{eq.tau_stoch_update} and  \eqref{eq.tau_det_nupdate}, and \change{Condition}~\ref{assum:stoch_redcond_old}, it follows that
\begin{equation*}
\begin{aligned}
    |\bar\tau_k - \tau_k| 
    &=\bar\tau_{k-1} - \tfrac{(1- \epsilon_\tau)(1-\omega_1)(1 - \omega_2)\|c_k\|_1}{\bar{h}_k} \\
    &\leq \tfrac{(1- \epsilon_\tau)(1-\omega_1)(1 - \omega_2)\|c_k\|_1}{h_k} - \tfrac{(1- \epsilon_\tau)(1-\omega_1)(1 - \omega_2)\|c_k\|_1}{\bar{h}_k} \\
    &\leq (1 - \epsilon_\tau)(1-\omega_1)(1 - \omega_2)\|c_k\|_1\tfrac{\left|\bar{h}_k - h_k\right|}{\bar{h}_k h_k} \\
    &\leq \tfrac{(1 - \epsilon_\tau)(1-\omega_1)(1 - \omega_2)\|c_k\|_1\theta_3\beta^{\change{\sigma}}}{\bar{h}_k} < \theta_3\beta^{\change{\sigma}}\bar\tau_{-1}.
\end{aligned}
\end{equation*}

\textbf{Case (iv)}: The deterministic merit parameter $\tau_k$ is updated, while the stochastic merit parameter $\bar{\tau}_k$ is not, i.e., $\tau_k \leq \bar{\tau}_k  = \bar{\tau}_{k-1}$. Since the deterministic merit parameter is updated, $\tau_k^{trial} < \infty$ and  $h_k > 0$, and by \change{Condition}~\ref{assum:stoch_redcond_old} it follows that $\bar{h}_k > 0$ and $\bar\tau_k^{trial} < \infty$. Moreover, since the stochastic merit parameter is not updated, 
\begin{equation}\label{eq.tau_stoch_nupdate}
        \tfrac{(1-\epsilon_\tau)(1-\omega_1)(1 - \omega_2)\|c_k\|_1}{\bar{h}_k} \geq \bar{\tau}_{k-1}.
\end{equation}
By \eqref{eq.tau_udpate_det_new}, \eqref{eq.tau_det_update} and  \eqref{eq.tau_stoch_nupdate}, and \change{Condition}~\ref{assum:stoch_redcond_old}, it follows that
\begin{equation*}
\begin{aligned}
    |\bar\tau_k - \tau_k| &=\bar\tau_{k-1} - \tfrac{(1- \epsilon_\tau)(1-\omega_1)(1 - \omega_2)\|c_k\|_1}{h_k} \\
    &\change{\leq }  \tfrac{(1- \epsilon_\tau)(1-\omega_1)(1 - \omega_2)\|c_k\|_1}{\bar{h}_k} - \tfrac{(1- \epsilon_\tau)(1-\omega_1)(1 - \omega_2)\|c_k\|_1}{h_k}\\
    &\leq (1 - \epsilon_\tau)(1-\omega_1)(1 - \omega_2)\|c_k\|_1\tfrac{\left|\bar{h}_k - h_k\right|}{\bar{h}_k h_k} \\
    &\leq \tfrac{(1 - \epsilon_\tau)(1-\omega_1)(1 - \omega_2)\|c_k\|_1\theta_3\beta^{\change{\sigma}}}{\bar{h}_k}\\
    &\leq \tfrac{(1 - \epsilon_\tau)(1-\omega_1)(1 - \omega_2)\|c_k\|_1\theta_3\beta^{\change{\sigma}}}{(1-\theta_3\beta^{\change{\sigma}})h_k} \leq \tfrac{\theta_3\beta^{\change{\sigma}}}{1 - \theta_3 \beta^{\change{\sigma}}} \bar\tau_{-1}.
\end{aligned}
\end{equation*}
Combing the four cases above and Lemma~\ref{lem.gd_deltal} yields the result. 
\end{proof}
The next lemma bounds the stochastic model of the reduction of the merit function.
\begin{lemma}\label{lem.delta_l_bound_stoch_old}
   \change{Suppose Assumptions~\ref{ass.main},~\ref{ass.H} and~\ref{ass.residual} and Conditions~\ref{ass.stoch_linear_system_old} and~\ref{assum:stoch_redcond_old} hold.} For all $k \in \N{}$ and $\beta \in \left(0,\sfrac{1}{\left(2\bar\tau_{-1}\bar{\kappa}_{g\bar{g},d\bar{d}}\right)^{\change{1/\sigma}}}\right]$, 
   \begin{equation*}
   \begin{aligned}
       \Delta l(x_k,\bar\tau_k,
       \bar{g}_k,\bar{d}_k) \leq\ & \left(1 + \kappa_{\overline{\Delta l},\Delta l}\beta^{\change{\sigma}}\right)\Delta l(x_k,\tau_k,g_k,d_k) \\
       &+ 2\bar\tau_{-1}\left( \kappa_{\bar{g}g,\tilde{d}d}\|g_k - \bar{g}_k\|_2\sqrt{\Delta l(x_k,\tau_k,g_k,d_k)} + \change{\kappa_L\|\bar{g}_k - g_k\|_2^2}\right), 
   \end{aligned}
   \end{equation*}
   where $\kappa_{\overline{\Delta l},\Delta l} = 2( \kappa_{\bar{\tau}} + \bar\tau_{-1} \kappa_{g\bar{g},d\bar{d}} + \bar\tau_{-1} \bar{\kappa}_{g\bar{g},d\bar{d}} ) \in \R{}_{>0}$. Additionally, under \change{Condition}~\ref{ass.stoch_g}, for all $k \in \N{}$, 
  \begin{equation*}
      \mathbb{E}_k\left[ \Delta l(x_k,\bar\tau_k, \bar{g}_k,\bar{d}_k) \right] \leq \left(1 + \bar\kappa_{\overline{\Delta l},\Delta l}\beta^{\change{\sigma}}\right)\Delta l(x_k,\tau_k,g_k,d_k),
  \end{equation*}
   where $\bar\kappa_{\overline{\Delta l},\Delta l} = 2\left( \kappa_{\bar{\tau}} + \bar\tau_{-1} (\kappa_{g\bar{g},d\bar{d}} + \bar{\kappa}_{g\bar{g},d\bar{d}} + \kappa_{\bar{g}g,\tilde{d}d}\sqrt{\theta_1} + \change{\kappa_L\theta_1}) \right)\in\mathbb{R}_{>0}$.
\end{lemma}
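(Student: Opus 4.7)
The plan is to start from the definitions of the two model reduction functions, exploit the identity $c_k + J_k d_k = 0$ coming from the deterministic linear system \eqref{eq.system_deterministic}, and then bound the resulting gap by a combination of the inner-product discrepancy (handled by Lemma~\ref{lem.gT(d_diff)_stoch_old}) and the merit parameter discrepancy (handled by Lemma~\ref{lem.tau_bound_old}). Concretely, I would write
\begin{equation*}
\Delta l(x_k,\bar\tau_k,\bar g_k,\bar d_k) - \Delta l(x_k,\tau_k,g_k,d_k) = \tau_k g_k^T d_k - \bar\tau_k \bar g_k^T \bar d_k - \|c_k + J_k \bar d_k\|_1,
\end{equation*}
drop the last (non-positive) term, and decompose $\bar\tau_k \bar g_k^T \bar d_k - \tau_k g_k^T d_k = \bar\tau_k(\bar g_k^T \bar d_k - g_k^T d_k) + (\bar\tau_k - \tau_k) g_k^T d_k$. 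Using $\bar\tau_k \leq \bar\tau_{-1}$ together with the fifth bound in Lemma~\ref{lem.gT(d_diff)_stoch_old} and Lemma~\ref{lem.tau_bound_old} produces an inequality in which $\Delta l(x_k,\bar\tau_k,\bar g_k,\bar d_k)$ appears on both sides, with a coefficient $\bar\tau_{-1}\bar\kappa_{g\bar g,d\bar d}\beta^{\sigma/2}$ on the right.

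Next, I would move that term to the left. The restriction $\beta \in \big(0,1/(2\bar\tau_{-1}\bar\kappa_{g\bar g,d\bar d})^{2/\sigma}\big]$ is precisely what guarantees $1 - \bar\tau_{-1}\bar\kappa_{g\bar g,d\bar d}\beta^{\sigma/2} \geq 1/2$, so dividing through inflates each coefficient on the right-hand side by at most a factor of $2$. Gathering the $\beta^{\sigma/2}\Delta l(x_k,\tau_k,g_k,d_k)$ contributions from Lemma~\ref{lem.tau_bound_old} (coefficient $\kappa_{\bar\tau}$), from the inner-product bound (coefficient $\bar\tau_{-1}\kappa_{g\bar g,d\bar d}$), and from the $\bar\tau_{-1}\bar\kappa_{g\bar g,d\bar d}$ term that was moved over, yields exactly the stated constant $\kappa_{\overline{\Delta l},\Delta l} = 2(\kappa_{\bar\tau} + \bar\tau_{-1}\kappa_{g\bar g,d\bar d} + \bar\tau_{-1}\bar\kappa_{g\bar g,d\bar d})$, while the remaining terms (those linear in $\|\bar g_k - g_k\|_2$ and quadratic in $\|\bar g_k - g_k\|_2$) pick up the factor $2\bar\tau_{-1}$ exactly as written.

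For the second (expectation) inequality, I would take $\mathbb{E}_k[\cdot]$ of the pathwise bound. Since $\Delta l(x_k,\tau_k,g_k,d_k)$ is deterministic conditional on reaching $x_k$, Lemma~\ref{lem.stoch_g_diff_old} gives $\mathbb{E}_k[\|\bar g_k - g_k\|_2\sqrt{\Delta l(x_k,\tau_k,g_k,d_k)}] \leq \sqrt{\theta_1}\beta^{\sigma/2}\Delta l(x_k,\tau_k,g_k,d_k)$, and Assumption~\ref{ass.stoch_g} gives $\mathbb{E}_k[\|\bar g_k - g_k\|_2^2] \leq \theta_1 \beta^{\sigma}\Delta l(x_k,\tau_k,g_k,d_k)$. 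Using $\beta^{\sigma} \leq \beta^{\sigma/2}$ (since $\beta \in (0,1]$ and $\sigma \geq 2$), both extra pieces fold into a single multiple of $\beta^{\sigma/2}\Delta l(x_k,\tau_k,g_k,d_k)$, and summing with $\kappa_{\overline{\Delta l},\Delta l}$ produces the claimed constant $\bar\kappa_{\overline{\Delta l},\Delta l}$.

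The main obstacle is bookkeeping: one must carefully carry the factors of $\bar\tau_{-1}$ through the decomposition, recognize that the only $\Delta l(x_k,\bar\tau_k,\bar g_k,\bar d_k)$ contamination on the right-hand side comes from the $\bar\kappa_{g\bar g,d\bar d}$ term of the fifth bound in Lemma~\ref{lem.gT(d_diff)_stoch_old}, and arrange the arithmetic so that the $\beta$-threshold hypothesis is exactly what is needed to produce the stated constants. There is no substantive new estimate beyond what the earlier lemmas already provide, but the constants must be tracked precisely to recover the clean form in the statement.
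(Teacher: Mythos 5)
Your proposal is correct and follows essentially the same route as the paper's proof: the same decomposition of $\bar\tau_k\bar g_k^T\bar d_k - \tau_k g_k^Td_k$ into a merit-parameter discrepancy (Lemma~\ref{lem.tau_bound_old}) plus $\bar\tau_k$ times the inner-product discrepancy (fifth bound of Lemma~\ref{lem.gT(d_diff)_stoch_old}), dropping the non-positive $-\|c_k+J_k\bar d_k\|_1$ term, absorbing the $\Delta l(x_k,\bar\tau_k,\bar g_k,\bar d_k)$ contamination via the $\beta$-threshold, and then taking $\mathbb{E}_k$ with Lemma~\ref{lem.stoch_g_diff_old}, Assumption~\ref{ass.stoch_g}, and $\beta^{\sigma}\leq\beta^{\sigma/2}$. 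The constant tracking you describe reproduces $\kappa_{\overline{\Delta l},\Delta l}$ and $\bar\kappa_{\overline{\Delta l},\Delta l}$ exactly as in the paper.
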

\begin{proof}
    By \eqref{eq.model_reduction}, and Lemmas~\ref{lem.gT(d_diff)_stoch_old} and \ref{lem.tau_bound_old}, it follows that
  \begin{align*}
        \Delta l(x_k,\bar\tau_k,\bar{g}_k,\bar{d}_k) = \ & -\bar\tau_k\bar{g}_k^T\bar{d}_k + \|c_k\|_1 - \|c_k + J_k\bar{d}_k\|_1 \\
       =\ &-\tau_kg_k^Td_k + \|c_k\|_1 \\
       &+ (\tau_k - \bar\tau_k)g_k^Td_k + \bar\tau_k(g_k^Td_k - \bar{g}_k^T\bar{d}_k) - \|c_k + J_k\bar{d}_k\|_1 \\
       \leq\ &\Delta l(x_k,\tau_k,g_k,d_k) + |(\tau_k - \bar\tau_k)g_k^Td_k|  + \bar\tau_{-1}|g_k^Td_k - \bar{g}_k^T\bar{d}_k| \\
       \leq\ & \Delta l(x_k,\tau_k,g_k,d_k) + \kappa_{\bar\tau}\beta^{\change{\sigma}}\Delta l(x_k,\tau_k,g_k,d_k) \\
       &+ \bar\tau_{-1}\left( \kappa_{\bar{g}g,\tilde{d}d}\|g_k - \bar{g}_k\|_2\sqrt{\Delta l(x_k,\tau_k,g_k,d_k)} + \change{\kappa_L\|\bar{g}_k - g_k\|_2^2} \right. \\
       &\left. + \kappa_{g\bar{g},d\bar{d}} \beta^{\change{\sigma}} \Delta l(x_k,\tau_k,g_k,d_k) + \bar{\kappa}_{g\bar{g},d\bar{d}} \beta^{\change{\sigma}} \Delta l(x_k,\bar{\tau}_k,\bar{g}_k,\bar{d}_k) \right).
  \end{align*}
Thus, by choosing $\beta \in \left(0,\sfrac{1}{\left(2\bar\tau_{-1}\bar{\kappa}_{g\bar{g},d\bar{d}}\right)^{\change{1/\sigma}}}\right]$, 
\begin{equation*}
\begin{aligned}
    \Delta l(x_k,\bar\tau_k,\bar{g}_k,\bar{d}_k) 
    \leq \ &\left(1 + \tfrac{( \kappa_{\bar{\tau}} + \bar\tau_{-1} \kappa_{g\bar{g},d\bar{d}} + \bar\tau_{-1} \bar{\kappa}_{g\bar{g},d\bar{d}} )\beta^{\change{\sigma}}}{1 - \bar\tau_{-1} \bar{\kappa}_{g\bar{g},d\bar{d}} \beta^{\change{\sigma}}}\right)\Delta l(x_k,\tau_k,g_k,d_k) \\
    &+ \tfrac{\bar\tau_{-1}\left( \kappa_{\bar{g}g,\tilde{d}d}\|g_k - \bar{g}_k\|_2\sqrt{\Delta l(x_k,\tau_k,g_k,d_k)} + \change{\kappa_L\|\bar{g}_k - g_k\|_2^2}\right)}{1 - \bar\tau_{-1} \bar{\kappa}_{g\bar{g},d\bar{d}} \beta^{\change{\sigma}}} \\
    \leq \ &\left(1 + 2( \kappa_{\bar{\tau}} + \bar\tau_{-1} \kappa_{g\bar{g},d\bar{d}} + \bar\tau_{-1} \bar{\kappa}_{g\bar{g},d\bar{d}} ) \beta^{\change{\sigma}}\right)\Delta l(x_k,\tau_k,g_k,d_k) \\
    &+ 2\bar\tau_{-1}\left( \kappa_{\bar{g}g,\tilde{d}d}\|g_k - \bar{g}_k\|_2\sqrt{\Delta l(x_k,\tau_k,g_k,d_k)} + \change{\kappa_L\|\bar{g}_k - g_k\|_2^2}\right).
\end{aligned}
\end{equation*}
The first result follows using the definition of~$\kappa_{\overline{\Delta l},\Delta l}$. By \change{Condition}~\ref{ass.stoch_g}, 
\begin{align*}
&\mathbb{E}_k\left[\Delta l(x_k,\bar\tau_k,\bar{g}_k,\bar{d}_k)\right] \\
\leq\ &\mathbb{E}_k\left[ \left(1 + 2( \kappa_{\bar{\tau}} + \bar\tau_{-1} \kappa_{g\bar{g},d\bar{d}} + \bar\tau_{-1} \bar{\kappa}_{g\bar{g},d\bar{d}} ) \beta^{\change{\sigma}}\right)\Delta l(x_k,\tau_k,g_k,d_k) \right. \\
&\left.+ 2\bar\tau_{-1}\left( \kappa_{\bar{g}g,\tilde{d}d}\|g_k - \bar{g}_k\|_2\sqrt{\Delta l(x_k,\tau_k,g_k,d_k)} + \change{\kappa_L\|\bar{g}_k - g_k\|_2^2}\right) \right] \\
\leq \ &\left(1 + 2( \kappa_{\bar{\tau}} + \bar\tau_{-1} \kappa_{g\bar{g},d\bar{d}} + \bar\tau_{-1} \bar{\kappa}_{g\bar{g},d\bar{d}} ) \beta^{\change{\sigma}}\right)\Delta l(x_k,\tau_k,g_k,d_k) \\
&+ 2\bar\tau_{-1}\left( \kappa_{\bar{g}g,\tilde{d}d}\sqrt{\theta_1}\beta^{\change{\sigma}}\Delta l(x_k,\tau_k,g_k,d_k) + \change{\kappa_L\theta_1\beta^{\change{2\sigma}}\Delta l(x_k,\tau_k,g_k,d_k)}\right) \\
\leq\ &\left( 1 + 2\left( \kappa_{\bar{\tau}} + \bar\tau_{-1} (\kappa_{g\bar{g},d\bar{d}} + \bar{\kappa}_{g\bar{g},d\bar{d}} + \kappa_{\bar{g}g,\tilde{d}d}\sqrt{\theta_1} + \change{\kappa_L\theta_1}) \right) \beta^{\change{\sigma}} \right)\Delta l(x_k,\tau_k,g_k,d_k),
\end{align*}
where the second  result follows using the definition of $\bar\kappa_{\overline{\Delta l},\Delta l}$. 
\end{proof}

The next lemma bounds the difference in the merit function after a step.

\begin{lemma}\label{lem:merit_bnd_stoch_old}
    \change{Suppose Assumptions~\ref{ass.main},~\ref{ass.H} and~\ref{ass.residual} and Conditions~\ref{ass.stoch_g},~\ref{ass.stoch_linear_system_old} and~\ref{assum:stoch_redcond_old} hold.} For all $k\in\mathbb{N}$, there exist $\kappa_\phi \in \mathbb{R}_{>0}$ such that 
    \begin{equation*}
        \mathbb{E}_k\left[\phi(x_{k+1},\bar\tau_{k+1}) - \phi(x_k,\bar\tau_k)\right] \leq \mathbb{E}_k\left[(\bar\tau_{k+1} - \bar\tau_k)\right] f_{\inf} -  \beta (\underline{\alpha}\eta - \kappa_{\phi}\beta  ) \Delta l(x_k,\tau_k,g_k,d_k).
    \end{equation*}
\end{lemma}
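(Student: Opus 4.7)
The plan is to first decompose the merit function change to separate the contribution from the merit parameter update, then apply Lemma~\ref{lem.key_decrease} to the remaining fixed-$\bar\tau_k$ change, and finally take conditional expectation while bounding each of the resulting terms via the estimates built up in Lemmas~\ref{lem.stoch_g_diff_old}--\ref{lem.delta_l_bound_stoch_old}. I would begin from
\begin{equation*}
\phi(x_{k+1},\bar\tau_{k+1}) - \phi(x_k,\bar\tau_k) = [\phi(x_{k+1},\bar\tau_k) - \phi(x_k,\bar\tau_k)] + (\bar\tau_{k+1} - \bar\tau_k)\, f_{k+1}.
\end{equation*}
Since $\{\bar\tau_k\}$ is non-increasing by \eqref{eq.tau_update} and $f_{k+1} \geq f_{\inf}$ by Assumption~\ref{ass.main}, the second summand is at most $(\bar\tau_{k+1}-\bar\tau_k)f_{\inf}$, which produces the first term of the claim under $\mathbb{E}_k[\cdot]$. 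To the first summand I apply Lemma~\ref{lem.key_decrease}, obtaining the two ``dominant'' terms $-\bar\alpha_k\,\Delta l(x_k,\tau_k,g_k,d_k)$ and $(1-\eta)\bar\alpha_k\beta^{(\sigma/2-1)}\Delta l(x_k,\bar\tau_k,\bar g_k,\bar d_k)$, plus three cross terms.

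For the dominant pair, rather than bounding each term separately (which would forfeit a crucial cancellation), I would regroup as
\begin{equation*}
-\bar\alpha_k\bigl[1-(1-\eta)\beta^{(\sigma/2-1)}\bigr]\Delta l(x_k,\tau_k,g_k,d_k) + (1-\eta)\bar\alpha_k\beta^{(\sigma/2-1)}\bigl[\Delta l(x_k,\bar\tau_k,\bar g_k,\bar d_k)-\Delta l(x_k,\tau_k,g_k,d_k)\bigr].
\end{equation*}
Because $\beta\le 1$ and $\sigma\ge 2$ force $\beta^{(\sigma/2-1)}\le 1$, the bracket in the first piece is at least $\eta$, so combined with the lower bound $\bar\alpha_k\ge\underline{\alpha}\beta$ from Lemma~\ref{lem.stoch_stepsize_bound_old} the first piece contributes at most $-\underline{\alpha}\eta\beta\,\Delta l(x_k,\tau_k,g_k,d_k)$, the required leading term. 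For the second piece, the coefficient $(1-\eta)\bar\alpha_k\beta^{(\sigma/2-1)}$ is non-negative, so I substitute the upper bound on $\Delta l(x_k,\bar\tau_k,\bar g_k,\bar d_k)-\Delta l(x_k,\tau_k,g_k,d_k)$ from Lemma~\ref{lem.delta_l_bound_stoch_old}, use $\bar\alpha_k\beta^{(\sigma/2-1)}\le\alpha_u\beta$ (Lemma~\ref{lem.stoch_stepsize_bound_old}), and take $\mathbb{E}_k[\cdot]$; the gradient-error moments are then controlled by Lemma~\ref{lem.stoch_g_diff_old} and Assumption~\ref{ass.stoch_g}, producing a correction of order $\beta^{1+\sigma/2}\Delta l(x_k,\tau_k,g_k,d_k)\le\beta^2\Delta l(x_k,\tau_k,g_k,d_k)$ since $\sigma\ge 2$.

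For each of the three cross terms I would pull out $\bar\tau_k\le\bar\tau_{-1}$ and the step-size upper bound $\bar\alpha_k\le\alpha_u\beta^{(2-\sigma/2)}$, then invoke, respectively, the fourth inequality of Lemma~\ref{lem.gT(d_diff)_stoch_old} (for $|g_k^T(\bar d_k-d_k)|$), Lemma~\ref{lem.tau_bound_old} (for $|(\bar\tau_k-\tau_k)g_k^T d_k|$), and the sixth inequality of Lemma~\ref{lem.gT(d_diff)_stoch_old} (for $\|J_k(\bar d_k-\tilde d_k)\|_1$). After taking $\mathbb{E}_k[\cdot]$ and plugging in Lemma~\ref{lem.stoch_g_diff_old}, Assumption~\ref{ass.stoch_g}, and Lemma~\ref{lem.delta_l_bound_stoch_old} to convert $\mathbb{E}_k[\|\bar g_k-g_k\|_2]$, $\mathbb{E}_k[\|\bar g_k-g_k\|_2^2]$, and $\mathbb{E}_k[\Delta l(x_k,\bar\tau_k,\bar g_k,\bar d_k)]$ into powers of $\beta$ times $\Delta l(x_k,\tau_k,g_k,d_k)$, each absolute-value bound is of order $\beta^{\sigma/2}\Delta l(x_k,\tau_k,g_k,d_k)$; multiplication by the step-size factor $\alpha_u\beta^{(2-\sigma/2)}$ therefore yields an $O(\beta^{2})\Delta l(x_k,\tau_k,g_k,d_k)$ contribution from each cross term. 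Collecting everything produces $-\beta(\underline{\alpha}\eta-\kappa_\phi\beta)\Delta l(x_k,\tau_k,g_k,d_k)$ for a $\kappa_\phi$ absorbing the accumulated constants. The principal bookkeeping challenge is verifying that the $\beta$-exponent of every correction term is at least~$2$; this works out precisely because the step-size exponent $2-\sigma/2$ and the accuracy exponents $\sigma/2$ supplied by Assumptions~\ref{ass.stoch_g}--\ref{assum:stoch_redcond_old} (and propagated through Lemmas~\ref{lem.gT(d_diff)_stoch_old}--\ref{lem.delta_l_bound_stoch_old}) sum to exactly~$2$ whenever they multiply, so no residual term of order $\beta$ survives.
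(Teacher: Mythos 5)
Your proposal is correct and follows essentially the same route as the paper: the same decomposition isolating $(\bar\tau_{k+1}-\bar\tau_k)f_{k+1}\le(\bar\tau_{k+1}-\bar\tau_k)f_{\inf}$, the same application of Lemma~\ref{lem.key_decrease}, the same regrouping that turns $-\bar\alpha_k\Delta l+(1-\eta)\bar\alpha_k\beta^{(\sigma/2-1)}\Delta l(\cdot,\bar\tau_k,\bar g_k,\bar d_k)$ into $-\eta\bar\alpha_k\Delta l$ plus an $O(\beta^2)$ correction via Lemma~\ref{lem.delta_l_bound_stoch_old}, and the same treatment of the three cross terms through Lemmas~\ref{lem.stoch_stepsize_bound_old}, \ref{lem.gT(d_diff)_stoch_old}, \ref{lem.tau_bound_old} and the exponent cancellation $(2-\sfrac{\sigma}{2})+\sfrac{\sigma}{2}=2$. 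The exponent bookkeeping you describe matches the paper's derivation of $\kappa_\phi$.
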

\begin{proof}
    By Lemmas~\ref{lem.stoch_stepsize_bound_old}, \ref{lem.key_decrease}, and \ref{lem.delta_l_bound_stoch_old}, and \eqref{eq.merit}, it follows that for all $k \in \N{}$
\begin{align*}
      &\mathbb{E}_k\left[\phi(x_{k+1},\bar\tau_{k+1}) - \phi(x_k,\bar\tau_k)\right] \\
      =\ &\mathbb{E}_k\left[\phi(x_{k+1},\bar\tau_{k+1}) - \phi(x_{k+1},\bar\tau_k) + \phi(x_{k+1},\bar\tau_k) - \phi(x_k,\bar\tau_k)\right] \\
      =\ & \mathbb{E}_k\left[(\bar\tau_{k+1} - \bar\tau_k)f_{k+1}\right] + \mathbb{E}_k\left[\phi(x_{k+1},\bar\tau_k) - \phi(x_k,\bar\tau_k)\right] \\
      \leq\ & \mathbb{E}_k\left[(\bar\tau_{k+1} - \bar\tau_k)\right] f_{\inf} \\
      &- \mathbb{E}_k\left[\bar\alpha_k\Delta l(x_k,\tau_k,g_k,d_k) - (1-\eta) \bar\alpha_k \beta^{(\change{\sigma}-1)} \Delta l(x_k,\bar\tau_k,\bar{g}_k,\bar{d}_k) \right]\\
      &+ \mathbb{E}_k\left[\bar\alpha_k \bar\tau_k g_k^T (\bar{d}_k - d_k)\right] + \mathbb{E}_k\left[\bar\alpha_k (\bar\tau_k - \tau_k) g_k^T d_k\right] + \mathbb{E}_k\left[\bar\alpha_k\|J_k(\bar{d}_k - \tilde{d}_k)\|_1 \right]\\
      \leq\ & \mathbb{E}_k\left[(\bar\tau_{k+1} - \bar\tau_k)\right] f_{\inf} - \mathbb{E}_k\left[\bar\alpha_k \Delta l(x_k,\tau_k,g_k,d_k)\right] \\
      &+ \mathbb{E}_k\left[(1-\eta)\bar\alpha_k\beta^{(\change{\sigma}-1)}(1 + \kappa_{\overline{\Delta l},\Delta l}\beta^{\change{\sigma}}) \Delta l(x_k,\tau_k,g_k,d_k) \right]\\
      &+ \mathbb{E}_k\left[(1-\eta)2\alpha_u\beta \bar\tau_{-1}\left(\kappa_{\bar{g}g,\tilde{d}d}\|g_k - \bar{g}_k\|_2\sqrt{\Delta l(x_k,\tau_k,g_k,d_k)} + \change{\kappa_L\|\bar{g}_k - g_k\|_2^2}\right)\right] \\
      &+ \alpha_u\beta^{(2-\change{\sigma})}\mathbb{E}_k\left[\bar\tau_k |g_k^T (\bar{d}_k - d_k)|\right]  \\
      & + \alpha_u\beta^{(2-\change{\sigma})}\mathbb{E}_k\left[ |(\bar\tau_k - \tau_k) g_k^T d_k|\right] + \alpha_u\beta^{(2-\change{\sigma})}\mathbb{E}_k\left[\|J_k(\bar{d}_k - \tilde{d}_k)\|_1 \right].
    \end{align*}
    Continuing from the above, by   Lemmas~\ref{lem.stoch_stepsize_bound_old}, \ref{lem.gT(d_diff)_stoch_old}, \ref{lem.tau_bound_old} and \ref{lem.delta_l_bound_stoch_old}, 
\begin{align*}
      &\mathbb{E}_k\left[\phi(x_{k+1},\bar\tau_{k+1}) - \phi(x_k,\bar\tau_k)\right] \\
      \leq\ & \mathbb{E}_k\left[(\bar\tau_{k+1} - \bar\tau_k)\right] f_{\inf} \\
      & - \mathbb{E}_k\left[\eta\bar\alpha_k \Delta l(x_k,\tau_k,g_k,d_k) - (1-\eta)\kappa_{\overline{\Delta l},\Delta l}\bar\alpha_k\beta^{(\change{2\sigma}-1)} \Delta l(x_k,\tau_k,g_k,d_k) \right] \\
      &+ 2(1-\eta)\alpha_u\beta\bar\tau_{-1}\left( \kappa_{\bar{g}g,\tilde{d}d}\sqrt{\theta_1}\beta^{\change{\sigma}} + \change{\kappa_L\theta_1\beta^{\change{2\sigma}}} \right)\Delta l(x_k,\tau_k,g_k,d_k) \\
      &+ \alpha_u\beta^{2}\bar\tau_{-1} \left((\kappa_{g,d\tilde{d}}\sqrt{\theta_1} + \kappa_{g,\bar{d}d})\Delta l(x_k,\tau_k,g_k,d_k) + \bar\kappa_{g,\bar{d}d}\mathbb{E}_k\left[\Delta l(x_k,\bar\tau_k,\bar{g}_k,\bar{d}_k)\right] \right)  \\
      &+ \alpha_u\beta^{2}\kappa_{\bar{\tau}}\Delta l(x_k,\tau_k,g_k,d_k) + \alpha_u\beta^{2} \bar{\kappa}_{J,\bar{d}\tilde{d}} \mathbb{E}_k \left[\Delta l(x_k,\bar{\tau}_k,\bar{g}_k,\bar{d}_k)\right]\\
      \leq\ & \mathbb{E}_k\left[(\bar\tau_{k+1} - \bar\tau_k)\right] f_{\inf} \\
      & - \eta\underline{\alpha}\beta \Delta l(x_k,\tau_k,g_k,d_k) + (1-\eta)\kappa_{\overline{\Delta l},\Delta l}\alpha_u\beta^{(\change{\sigma}+1)} \Delta l(x_k,\tau_k,g_k,d_k)\\
      &+ 2(1-\eta)\alpha_u\beta^{(\change{\sigma}+1)}\bar\tau_{-1}\left( \kappa_{\bar{g}g,\tilde{d}d}\sqrt{\theta_1} + \change{\kappa_L\theta_1\beta^{\change{\sigma}}} \right)\Delta l(x_k,\tau_k,g_k,d_k) \\
      &+ \alpha_u\beta^{2}\bar\tau_{-1} \left((\kappa_{g,d\tilde{d}}\sqrt{\theta_1} + \kappa_{g,\bar{d}d}) + \bar\kappa_{g,\bar{d}d}\left(1 + \bar\kappa_{\overline{\Delta l},\Delta l}\beta^{\change{\sigma}}\right) \right)\Delta l(x_k,\tau_k,g_k,d_k)  \\
      &+ \alpha_u\beta^{2}\kappa_{\bar{\tau}}\Delta l(x_k,\tau_k,g_k,d_k) + \alpha_u\beta^{2} \bar{\kappa}_{J,\bar{d}\tilde{d}} \left(1 + \bar\kappa_{\overline{\Delta l},\Delta l}\beta^{\change{\sigma}}\right)\Delta l(x_k,\tau_k,g_k,d_k) \\
      \leq\ & \mathbb{E}_k\left[(\bar\tau_{k+1} - \bar\tau_k)\right] f_{\inf} - \beta\eta\underline{\alpha} \Delta l(x_k,\tau_k,g_k,d_k) + \beta^2 \kappa_{\phi} \Delta l(x_k,\tau_k,g_k,d_k). 
  \end{align*}
  The result follows by setting
  \begin{equation*}
    \begin{aligned}
    \kappa_{\phi} := \  &\alpha_u\left((1-\eta)\left(\kappa_{\overline{\Delta l},\Delta l} + 2\bar\tau_{-1}\left( \kappa_{\bar{g}g,\tilde{d}d}\sqrt{\theta_1} + \change{\kappa_L\theta_1} \right)\right) + \kappa_{\bar\tau} \right. \\
    &\left. + \bar\tau_{-1}(\kappa_{g,d\tilde{d}}\sqrt{\theta_1} + \kappa_{g,\bar{d}d}) + (\bar\tau_{-1}\bar\kappa_{g,\bar{d}d} + \bar\kappa_{J,\bar{d}\tilde{d}})\left(1 + \bar\kappa_{\overline{\Delta l},\Delta l}\right) \right) \in \mathbb{R}_{>0}.
    \end{aligned}
    \end{equation*} 
\end{proof}
Lemma~\ref{lem:merit_bnd_stoch_old} provides an upper bound on the change in the merit function across a step. The bound has two terms: the first term is related to the difference in the merit parameter 
across iterations and the second term is negative, conditioned on $\beta$ being sufficiently small, and proportional to the model of the reduction of the merit function. We are now ready to prove the main theorem of this section.

\begin{theorem}\label{thm.main_stochastic_old}
   \change{Suppose Assumptions~\ref{ass.main},~\ref{ass.H} and~\ref{ass.residual} and Conditions~\ref{ass.stoch_g},~\ref{ass.stoch_linear_system_old} and~\ref{assum:stoch_redcond_old} hold.} By choosing $\beta \in \left(0, \min\left\{ \tfrac{1}{(2\theta_3)^{\change{1/\sigma}}},\tfrac{(1-\gamma)\eta\underline{\alpha}}{\kappa_{\phi}},\tfrac{1}{\left(2\tau_{-1}\bar{\kappa}_{g\bar{g},d\bar{d}}\right)^{\change{1/\sigma}}}\right\}\right]$ for any $\gamma\in (0,1)$, 
   \begin{equation*}
       \lim_{k\to\infty}\mathbb{E}\left[\sum_{j=0}^{k-1}\Delta l(x_j,\tau_j,g_j,d_j)\right] < \infty,
   \end{equation*}
   from which it follows that, $\lim_{k\to\infty}\mathbb{E}\left[\Delta l(x_k,\tau_k,g_k,d_k)\right] = 0$.
\end{theorem}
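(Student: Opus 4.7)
The plan is to convert the per-step descent bound of Lemma~\ref{lem:merit_bnd_stoch_old} into a telescoping argument over the merit function. First I would verify that the $\beta$ range stated in the theorem is precisely what is needed: the condition $\beta \leq (1-\gamma)\eta\underline{\alpha}/\kappa_\phi$ forces the coefficient $\underline{\alpha}\eta - \kappa_\phi\beta \geq \gamma\underline{\alpha}\eta > 0$, while the other two bounds on $\beta$ come directly from the hypotheses of Lemmas~\ref{lem.tau_bound_old} and~\ref{lem.delta_l_bound_stoch_old} that are needed to invoke Lemma~\ref{lem:merit_bnd_stoch_old}. With this choice the one-step bound reads
\[
\mathbb{E}_k\left[\phi(x_{k+1},\bar\tau_{k+1}) - \phi(x_k,\bar\tau_k)\right] \leq \mathbb{E}_k[\bar\tau_{k+1} - \bar\tau_k]\,f_{\inf} - \beta\gamma\underline{\alpha}\eta\,\Delta l(x_k,\tau_k,g_k,d_k).
\]

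Next I would take total expectations and telescope over $k=0,\ldots,K-1$. Since the merit-parameter term telescopes as $\sum_{k=0}^{K-1}(\bar\tau_{k+1}-\bar\tau_k) = \bar\tau_K - \bar\tau_0$, we obtain
\[
\mathbb{E}[\phi(x_K,\bar\tau_K)] - \mathbb{E}[\phi(x_0,\bar\tau_0)] \leq \mathbb{E}[\bar\tau_K - \bar\tau_0]\,f_{\inf} - \beta\gamma\underline{\alpha}\eta\sum_{k=0}^{K-1}\mathbb{E}[\Delta l(x_k,\tau_k,g_k,d_k)].
\]
The key observation is that by Assumption~\ref{ass.main}, $\phi(x_K,\bar\tau_K) = \bar\tau_K f_K + \|c_K\|_1 \geq \bar\tau_K f_{\inf}$, and hence $\mathbb{E}[\phi(x_K,\bar\tau_K)] \geq \mathbb{E}[\bar\tau_K]\,f_{\inf}$. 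Substituting this lower bound on the left causes the $\mathbb{E}[\bar\tau_K]\,f_{\inf}$ contributions on the two sides to cancel cleanly, yielding
\[
\beta\gamma\underline{\alpha}\eta\sum_{k=0}^{K-1}\mathbb{E}[\Delta l(x_k,\tau_k,g_k,d_k)] \leq \mathbb{E}[\phi(x_0,\bar\tau_0)] - \mathbb{E}[\bar\tau_0]\,f_{\inf} = \mathbb{E}[\bar\tau_0(f_0 - f_{\inf}) + \|c_0\|_1],
\]
a bound that is uniform in $K$ and moreover finite since $\bar\tau_0 \leq \bar\tau_{-1}$ and $f_0 \geq f_{\inf}$.

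Finally, letting $K\to\infty$ and invoking monotone convergence (applicable since $\Delta l(x_k,\tau_k,g_k,d_k)\geq 0$ by Lemma~\ref{lem.bound_Delta_l}) produces $\sum_{k=0}^{\infty}\mathbb{E}[\Delta l(x_k,\tau_k,g_k,d_k)] < \infty$, which is the first conclusion. The second conclusion, $\mathbb{E}[\Delta l(x_k,\tau_k,g_k,d_k)]\to 0$, is then immediate from the fact that the general term of a convergent series of nonnegative numbers must tend to zero. The only delicate step is the exact cancellation of the random $\mathbb{E}[\bar\tau_K]\,f_{\inf}$ contributions; this works without appealing to any lower bound on $\bar\tau_K$, though Lemma~\ref{lem.stoch_tau_lb} (ensuring $\bar\tau_k\geq\bar\tau_{\min}>0$) is of course implicit in several of the preparatory lemmas feeding into Lemma~\ref{lem:merit_bnd_stoch_old}.
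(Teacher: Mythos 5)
Your proposal is correct and follows essentially the same route as the paper: invoke Lemma~\ref{lem:merit_bnd_stoch_old} with $\beta\le(1-\gamma)\eta\underline{\alpha}/\kappa_\phi$ to get the one-step decrease $-\underline{\alpha}\beta\gamma\eta\,\Delta l(x_k,\tau_k,g_k,d_k)$ plus the merit-parameter term, telescope, take total expectation, and conclude via nonnegativity of $\Delta l$. The only (cosmetic) difference is how the $(\bar\tau_{k+1}-\bar\tau_k)f_{\inf}$ terms are absorbed: the paper bounds the telescoped sum by $\bar\tau_{-1}|f_{\inf}|$ and uses $\phi\ge\phi_{\inf}$, whereas you cancel them exactly via $\phi(x_K,\bar\tau_K)\ge\bar\tau_K f_{\inf}$ — which is precisely the manipulation the paper itself employs in the proof of Corollary~\ref{cor.final_results}, and yields the slightly sharper constant $\bar\tau_0(f_0-f_{\inf})+\|c_0\|_1$.
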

\begin{proof}
    By Lemma~\ref{lem:merit_bnd_stoch_old} and $\beta \in \left(0, \sfrac{(1-\gamma)\eta\underline{\alpha}}{\kappa_{\phi}}\right]$, it follows that 
    \bequation\label{eq.main_dec_stoch_old}
    \baligned
      \mathbb{E}_k\left[\phi(x_{k+1},\bar\tau_{k+1}) - \phi(x_k,\bar\tau_k)\right] \leq\ & \mathbb{E}_k\left[(\bar\tau_{k+1} - \bar\tau_k)\right] f_{\inf} - \beta (\eta\underline{\alpha} - \kappa_{\phi}\beta  )\Delta l(x_k,\tau_k,g_k,d_k)\\
      \leq\ & \mathbb{E}_k\left[(\bar\tau_{k+1} - \bar\tau_k)\right] f_{\inf} - \underline{\alpha}\beta\gamma\eta \Delta l(x_k,\tau_k,g_k,d_k).
  \ealigned
    \eequation
    Applying a telescopic sum to \eqref{eq.main_dec_stoch_old} and taking the total expectation, 
    \begin{equation*}
    \begin{aligned}
        -\infty <\ &\phi_{\inf} - \phi(x_0,\bar\tau_0) \leq \mathbb{E}[\phi(x_k,\bar\tau_k) - \phi(x_0,\bar\tau_0)] \\
        =\ &\mathbb{E}\left[ \sum_{j=0}^{k-1} \left(\phi(x_{j+1},\bar\tau_{j+1}) - \phi(x_j,\bar\tau_j)\right)\right] \\
        \leq\ &\mathbb{E}\left[\sum_{j=0}^{k-1}(\bar\tau_{j+1} - \bar\tau_j)f_{\inf} - \sum_{j=0}^{k-1}\underline{\alpha}\beta\gamma\eta\Delta l(x_j,\tau_j,g_j,d_j)\right] \\
        \leq\ &\bar\tau_{-1}|f_{\inf}| - \underline{\alpha}\beta\gamma\eta\mathbb{E}\left[\sum_{j=0}^{k-1}\Delta l(x_j,\tau_j,g_j,d_j)\right],
    \end{aligned}
    \end{equation*}
    which completes the proof. 
\end{proof}

Theorem~\ref{thm.main_stochastic_old} describes the behavior of the model of the reduction of the merit function evaluated at the iterates generated by the Algorithm~\ref{alg.adaptiveSQP} in expectation. We connect the result of Theorem~\ref{thm.main_stochastic_old} to feasibility and
stationarity measures below.

\bcorollary\label{cor.stoch} 
Under the conditions of Theorem~\ref{thm.main_stochastic_old}, Algorithm~\ref{alg.adaptiveSQP} yields a sequence of iterates $\{(x_k,y_k)\}$ for which
\begin{equation*}
    \lim_{k\to\infty} \mathbb{E}\left[\|d_k\|_2^2 \right] = 0, \; \lim_{k\to\infty} \mathbb{E}\left[\|c_k\|_2 \right] = 0, \; \text{and} \; \lim_{k\to\infty} \mathbb{E}\left[\|g_k + J_k^T(y_k+\delta_k)\|_2 \right] = 0.
\end{equation*}
\ecorollary
\begin{proof} By the deterministic analogue of Lemma~\ref{lem.Psi}, $\Delta l(x_k,\tau_k,g_k,d_k) \geq \kappa_l \tau_{\min} \Psi_k$.
By Theorem~\ref{thm.main_stochastic_old} and the definitions of $\kappa_l$ and $\tau_{\min}$, we have
$\lim_{k\to\infty} \mathbb{E}\left[ \Psi_k \right] = 0$, and thus, our first two results follow from the deterministic variant of Lemma~\ref{lem.Psi_1}. The final result follows from \eqref{eq.system_deterministic}, i.e., $\| g_k + J_k^T(y_k+\delta_k)\|_2 = \|H_kd_k\|_2 \leq \|H_k\|_2 \|d_k\|_2 \leq \kappa_H \|d_k\|_2$, which completes the proof. 
\end{proof}

Corollary~\ref{cor.stoch} shows that in the limit in expectation the search direction, the constraint violation and a first-order
stationarity measure converge to zero.

\begin{remark} 
We make a few remarks about the main theoretical results (Theorem~\ref{thm.main_stochastic_old} and Corollary~\ref{cor.stoch}).
\begin{itemize}[leftmargin=0.25cm]
    \item \textbf{Comparison to determinisic results:} The result in Corollary~\ref{cor.stoch}  is similar, albeit in expectation, to what can be proven for an exact deterministic SQP method, i.e., $\gbar_k = g_k$ and $\dbar_k = d_k$,  under the same assumptions; see e.g., \cite{BeraCurtRobiZhou21,ByrdCurtNoce08}.
    \item \textbf{Comparison to \cite{BeraCurtRobiZhou21}:} 
    The main difference in the result of Corollary~\ref{cor.stoch} and similar results for the stochastic SQP algorithm proposed in \cite{BeraCurtRobiZhou21} pertain to the requirements on the $\{\beta_k\}$ sequence. In \cite{BeraCurtRobiZhou21} (and other works, e.g., \cite{BeraCurtOneiRobi21,CurtOneiRobi21,CurtRobiZhou21}) a diminishing $\{\beta_k\}$ sequence is required to guarantee convergence, whereas in this work convergence with a constant $\{\beta_k\}$ sequence is derived due to the variance reduction achieved. 
    \item \textbf{Comparison to \cite{NaAnitKola21}:} 
    In \cite{NaAnitKola21},  a stochastic line search SQP method for equality constrained problems  that utilizes an exact differentiable merit function is proposed. Under deterministic conditions on the function and derivative approximations and exact solutions to the linear systems, the authors show convergence analogous to that of a deterministic algorithm. We note that under the same deterministic conditions, similar results can be established for our proposed adaptive sampling algorithm.
    \item \textbf{Comparison to \cite{berahas2022accelerating}:} A result analogous to Theorem~\ref{thm.main_stochastic_old} is proven in \cite{berahas2022accelerating}. In both works this is possible due to variance reduction in the approximations employed; the algorithm proposed in \cite{berahas2022accelerating} makes use of predictive variance reduction via SVRG gradients, whereas in this work achieves variance reduction via adaptive sampling.
\end{itemize}
 \end{remark}

The final result we show in this section is a complexity result for our proposed algorithm, i.e., the number of iterations required to achieved an $\epsilon$-accurate solution in expectation. Specifically, we consider the following complexity metric,
\bequation\label{eq.complexity_det}
    \mathbb{E}[\| g_k + J_k^T(y_k + \delta_k)\|_2] \leq \epsilon_L, \quad \text{and} \quad  \mathbb{E}[\|c_k\|_1] \leq \epsilon_c,
\eequation
for $\epsilon_L \in (0,1)$ and $\epsilon_c \in (0,1)$. 
\bcorollary\label{cor.final_results}
Under the conditions of Theorem~\ref{thm.main_stochastic_old}, Algorithm \ref{alg.adaptiveSQP} generates an iterate $ (x_k,y_k) $ that satisfies \eqref{eq.complexity_det} in at most
\begin{equation}\label{eq.maxiters}
    K_\epsilon =  \left( \tfrac{\bar{\tau}_{-1}(f(x_0) - f_{\inf}) + \|c_0\|_1}{\underline{\alpha}\beta \eta \kappa_x}\right)  \max \left\{ \epsilon_L^{-2}, \epsilon_c^{-1}\right\}
\end{equation}
iterations. Moreover, if $\epsilon_L = \epsilon$ and $\epsilon_c = \epsilon^2$, then $K_\epsilon = \mathcal{O}(\epsilon^{-2})$.
\ecorollary
\begin{proof} 
First we show that if $\E[\|g_k + J_k^T(y_k + \delta_k)\|_2] > \epsilon_L$ or $\E[\| c_k\|_1] > \epsilon_c$, then 
\begin{equation}\label{eq.deltal_complexity}
    \E[\Delta l(x_k,\tau_k,g_k,d_k)] \geq \kappa_x \min\{ \epsilon_L^2, \epsilon_c\},
\end{equation}
where $\kappa_x = \min\left\{ \omega_1 , \tfrac{\tau_{\min}\omega_1\epsilon_d}{\kappa_H^2}\right\} \in \mathbb{R}_{>0}$. Consider arbitrary $(k,\epsilon_L,\epsilon_c) \in \N{}\times(0,1)\times(0,1)$ for which $\E[\|g_k + J_k^T(y_k + \delta_k)\|_2] > \epsilon_L$ and/or $\E[\| c_k\|_1] > \epsilon_c$. First, suppose that $\E[\|c_k\|_1] > \epsilon_c$. By the deterministic variant of \eqref{eq.merit_model_reduction_lower_stochastic}, 
\begin{equation*}
    \E[\Delta l(x_k,\tau_k,g_k,d_k)] \geq \E[\tau_k \omega_1 \max\{d_k^TH_kd_k,\epsilon_d \| d_k\|_2^2\} + \omega_1 \|c_k\|_1] \geq \E[\omega_1 \|c_k\|_1] > \omega_1 \epsilon_c.
\end{equation*}
Next, suppose that 
$\E[\|g_k + J_k^T(y_k + \delta_k)\|_2] > \epsilon_L$. By Assumption~\ref{ass.H} and \eqref{eq.system_deterministic}, 
\begin{equation*}
    \epsilon_L < \E[\|g_k + J_k^T(y_k + \delta_k)\|_2] = \E[\|H_k d_k\|_2] \leq \E[\kappa_H \|d_k\|_2],
\end{equation*}
and thus, by the deterministic variant of \eqref{eq.merit_model_reduction_lower_stochastic}, the fact that $\tau_k$ is bounded below and the definition of $\epsilon_d$, it follows that
\begin{equation*}
\begin{aligned}
    \E[\Delta l(x_k,\tau_k,g_k,d_k)] &\geq \E[\tau_k \omega_1 \max\{d_k^TH_kd_k,\epsilon_d \| d_k\|_2^2\} + \omega_1 \|c_k\|_1] \\
    &\geq \E[\tau_{\min}\omega_1\epsilon_d \| d_k\|_2^2] > \tfrac{\tau_{\min}\omega_1\epsilon_d}{\kappa_H^2} \epsilon_L^2.
\end{aligned}
\end{equation*}
Combining the results of the two cases and using the definition of $\kappa_x$ yields \eqref{eq.deltal_complexity}. If \eqref{eq.complexity_det} is violated, then by Lemma~\ref{lem.stoch_stepsize_bound_old}, \eqref{eq.merit}, \eqref{eq.main_dec_stoch_old} and \eqref{eq.deltal_complexity}
\begin{equation*}
\begin{aligned}
    &\mathbb{E}[\bar\tau_{-1} (f(x_0) - f_{\inf}) + \|c_0\|_1] 
    \geq\  \mathbb{E}[\bar\tau_0 (f(x_0) - f_{\inf}) + \|c_0\|_1] \\
    \geq\ & \E\left[\bar\tau_0 f(x_0) + \|c_0\|_1 - \bar\tau_{k} f(x_{k}) - \|c_{k}\|_1 + (\bar\tau_{k} - \bar\tau_0) f_{\inf}\right]\\
    =\ & \E\left[\sum_{j=0}^{k-1}\left(\phi(x_{j},\bar\tau_j) - \phi(x_{j+1},\bar\tau_{j+1}) + (\bar\tau_{j+1} - \bar\tau_j) f_{\inf} \right)\right] \\
    \geq \ & \E \left[\sum_{j=0}^{k-1} \bar{\alpha}_j \eta \Delta l(x_j,\tau_j,g_j,d_j)\right] \geq \E \left[\sum_{j=0}^{k-1}\underline{\alpha} \beta \eta \Delta l(x_j,\tau_j,g_j,d_j)\right] \\
    \geq \ &k\underline{\alpha} \beta \eta \kappa_x \min \left\{ \epsilon_L^2, \epsilon_c\right\},
\end{aligned}
\end{equation*}
which implies that $k$ is bounded above by \eqref{eq.maxiters}. 
\end{proof}
The result of Corollary~\ref{cor.final_results} is similar to that of determinsitc SQP methods, albeit in expectation,  under the same assumptions; see e.g., \cite[Theorem 1]{CurtOneiRobi21}. To the best of our knowledge, this is the first time that such complexity results have been derived.

\subsection{Predetermined Sublinear Errors}\label{sec.pred}
\change{In this subsection, we consider conditions that control the errors at predetermined rates and provide results similar to those in Section~\ref{sec.adaptive} and that are implementable. From Theorem~\ref{thm.main_stochastic_old} and Corollary~\ref{cor.final_results}, we have that the quantity $\Delta l (x_k, \tau_k,g_k,d_k)$ goes to zero at a sublinear rate (in expectation). As such, one can expect to be able to derive similar results by replacing this quantity in  Conditions~\ref{ass.stoch_g} and~\ref{ass.stoch_linear_system_old} with a term that goes to zero at a sublinear rate as the algorithm progresses. Motivated by this observation, we consider the setting in which the accuracy in the gradient estimation is increased at a sublinear rate and the accuracy in the linear system solves is increased at a sublinear rate, to achieve convergence to stationarity 
in expectation.}
The reason we include this result is to emphasize that predetermined sampling strategies and error sequences suffice to provide convergence guarantees without the need for \change{Conditions~\ref{ass.stoch_g} and~\ref{ass.stoch_linear_system_old} that require accessing $\Delta l(x_k,\tau_k,g_k,d_k)$}. Also, this result provides guidance on the total \change{sample complexity}. For brevity, we only introduce the assumptions and present the main theoretical results\footnote{\change{We refer interested readers to an online pre-print of the manuscript for the full technical results and proofs: \url{https://arxiv.org/pdf/2206.00712.pdf}.}}. 

The two assumptions below are analogues of \change{Conditions}~\ref{ass.stoch_g} and \ref{ass.stoch_linear_system_old}.
\begin{condition}\label{ass.stoch_g_sublin}
  For all $k \in \mathbb{N}$, the stochastic gradient estimate $\bar{g}_k \in \mathbb{R}^n$ satisfies, $\mathbb{E}_k\left[\|\bar{g}_k - g_k\|_2^2\right] \leq \tfrac{\theta_1 \beta^{\change{2\sigma}}}{(k+1)^{\nu}}$,
  where $\theta_1\in\mathbb{R}_{>0}$, $\beta\in (0,1)$, $\nu \in \mathbb{R}_{>1}$ and \change{$\sigma \in [1,2]$}. 
  Additionally, for all $k \in \N{}$, the stochastic gradient estimate $\gbar_k \in \R{n}$ is an unbiased estimator of the gradient of $f$ at $x_k$, i.e., $\mathbb{E}_k \left[ \gbar_k \right] = g_k$.
\end{condition}

\begin{condition}\label{ass.stoch_linear_system}
  For all $k \in \mathbb{N}$, the search directions $(\bar{d}_k,\bar{\delta}_k) \in \mathbb{R}^n \times \mathbb{R}^m$ in \eqref{eq.system_stochastic} $($inexact solutions to \eqref{eq.system}
  $)$ satisfy, $\left\|\begin{bmatrix} \tilde{d}_k \\ \tilde{\delta}_k \end{bmatrix} - \begin{bmatrix} \bar{d}_k \\ \bar{\delta}_k \end{bmatrix}\right\|_2^2 \leq \tfrac{\theta_2 \beta^{\change{2\sigma}}}{(k+1)^{\nu}}$, 
  where $\theta_2\in\mathbb{R}_{>0}$, $\beta\in (0,1)$, $\nu \in \mathbb{R}_{>1}$ and \change{$\sigma \in [1,2]$}. Note, $ (\widetilde{d}_k,\widetilde{\delta}_k)$ and $(\dbar_k,\bar{\delta}_k)$ are the exact and inexact solutions of \eqref{eq.system}
, respectively.
\end{condition}

Next, we state the main results of this subsection. Lemma~\ref{lem:merit_bnd_stoch_mp} (analogue of Lemma~\ref{lem:merit_bnd_stoch_old}) bounds the difference of the merit function across iterations. 
\begin{lemma}\label{lem:merit_bnd_stoch_mp}
    \change{Suppose Assumptions~\ref{ass.main},~\ref{ass.H} and~\ref{ass.residual} and Conditions~\ref{ass.stoch_g_sublin},~\ref{ass.stoch_linear_system} and~\ref{assum:stoch_redcond_old} hold.} For all $k\in\mathbb{N}$, there exist  $(\bar{\kappa}_\phi,\bar{\kappa}_{\phi,\nu}) \in \mathbb{R}_{>0} \times  \mathbb{R}_{>0}$ 
    such that
    \begin{equation*}
    \begin{aligned}
        &\mathbb{E}_k\left[\phi(x_{k+1},\bar\tau_{k+1}) - \phi(x_k,\bar\tau_k)\right] \\
        \leq\ & \mathbb{E}_k\left[(\bar\tau_{k+1} - \bar\tau_k)\right] f_{\inf} 
         -  \beta (\underline{\alpha}\eta - \bar{\kappa}_\phi\beta  ) \Delta l(x_k,\tau_k,g_k,d_k) 
          + \beta^2 \tfrac{\bar{\kappa}_{\phi,\nu}}{(k+1)^{\nu}}.
    \end{aligned}
    \end{equation*}
\end{lemma}
Lemma~\ref{lem:merit_bnd_stoch_mp} has an additional \change{term} as compared to Lemma~\ref{lem:merit_bnd_stoch_old}. This is due to the fact that we control the gradient error and linear system accuracy at a sublinear rate instead of controlling it relative to the algorithmic progress, as measured in terms of $\Delta l(x_k,\tau_k,g_k,d_k)$. That being said, the additional term is proportional to $\beta^2$ and whose accumulation is finite in the limit.

Theorem~\ref{thm.main_stochastic_mp} and Corollary~\ref{cor.iter_complexity_raghu} (analogues of Theorem~\ref{thm.main_stochastic_old} and Corollary~\ref{cor.final_results}, respectively) provide  convergence and iteration complexity results, respectively. Corollary~\ref{cor.iter_complexity_raghu} also provides sample complexity results.
\begin{theorem}\label{thm.main_stochastic_mp}
   \change{Suppose Assumptions~\ref{ass.main},~\ref{ass.H} and~\ref{ass.residual} and Conditions~\ref{ass.stoch_g_sublin},~\ref{ass.stoch_linear_system} and~\ref{assum:stoch_redcond_old} hold.} By choosing $\beta \in \left(0, \min\left\{\sfrac{1}{(2\theta_3)^{\change{\tfrac{1}{\sigma}}}},\sfrac{(1-\gamma)\eta\underline{\alpha}}{\bar{\kappa}_{\phi}},\sfrac{1}{\left(2\bar\tau_{-1}\bar\kappa_{\bar{g},\bar{d}\tilde{d}}\right)^{\change{\tfrac{1}{\sigma}}}}\right\}\right]$ for any $\gamma\in (0,1)$ and $\nu \in \mathbb{R}_{>1}$,  $\lim_{k\to\infty}\mathbb{E}\left[\sum_{j=0}^{k-1}\Delta l(x_j,\tau_j,g_j,d_j)\right] < \infty$, 
   from which it follows that,  $\lim_{k\to\infty}\mathbb{E}\left[\Delta l(x_k,\tau_k,g_k,d_k)\right] = 0$.
\end{theorem}

The conclusion of Theorem~\ref{thm.main_stochastic_mp} is the same as that of Theorem~\ref{thm.main_stochastic_old}, up to constants, and, as a result, Corollary~\ref{cor.stoch} holds for Theorem~\ref{thm.main_stochastic_mp}.

\bcorollary\label{cor.iter_complexity_raghu}
Under the conditions of Theorem~\ref{thm.main_stochastic_mp}, Algorithm~\ref{alg.adaptiveSQP} generates an iterate $ (x_k,y_k) $ that satisfies
\eqref{eq.complexity_det} in at most $K_\epsilon =\mathcal{O}\left(\max \left\{ \epsilon_L^{-2}, \epsilon_c^{-1}\right\}\right)$ iterations and $W_\epsilon = \mathcal{O}\left(\left(\max \left\{ \epsilon_L^{-2}, \epsilon_c^{-1}\right\}\right)^{(\nu +1)}\right)$  stochastic gradient evaluations $(\nu \in \mathbb{R}_{>1})$. Moreover, if $\epsilon_L = \epsilon$ and $\epsilon_c = \epsilon^2$, then $K_\epsilon = \mathcal{O}\left(\epsilon^{-2}\right)$ and $W_\epsilon = \mathcal{O}(\epsilon^{-2(\nu +1)})$.
\ecorollary
Corollary~\ref{cor.iter_complexity_raghu} shows that one can achieve the same iteration complexity as the deterministic variant of the algorithm under \change{Conditions}~\ref{ass.stoch_g_sublin} and ~\ref{ass.stoch_linear_system} (and other assumptions stated earlier) at an increased overall sample complexity. 

\section{\change{A} Practical \change{Adaptive, Inexact and Stochastic SQP} Method}\label{sec.practical}

\looseness=-1

\change{In this section, we present our proposed practical adaptive  inexact stochastic SQP method (\PAISSQP). We describe the sample size selection mechanism, iterative linear system solver and early termination conditions employed.}


\subsection{Sample Size Selection}
We describe the mechanism by which the sample size is selected at every iteration. 
Condition \eqref{eq.norm_cond_stoch} involves computing population variances and deterministic quantities which are not available in our setting, and possibly requires solving multiple linear systems. 
That being said, one can approximate these quantities with sample variances and sampled stochastic counterparts of the deterministic quantities required following the ideas proposed in \cite{ByrdChinNoceWu12,BollByrdNoce18}. 



Condition~\eqref{eq.norm_cond_stoch} is approximated as follows. Let $\bar{g}_k \in \R{n}$ be defined as
\bequation\label{eq.grad_s_k}
    \bar{g}_k := \tfrac{1}{|\mathcal{S}_k|} \sum_{i \in \mathcal{S}_k} \nabla F(x_k,\xi_i),
\eequation
where $\mathcal{S}_k$ is  a set consisting of indices drawn at random from the distribution of $\xi$, and $\xi_i$ is a realization of $\xi$. The left-and-side of  \eqref{eq.norm_cond_stoch} can be expressed as\footnote{\change{We should note that if $f$ is of finite sum structure, i.e., $f(x) = \tfrac{1}{N}\sum f_i(x)$, and the component functions are sampled without replacement, this results in a lower variance compared to \eqref{eq.grad_approx} by a factor of $1 - \tfrac{|S_k|}{N}$. This factor goes to zero as $|S_k| \rightarrow N$; see \cite{ByrdChinNoceWu12}.}}
\bequation\label{eq.grad_approx}
    \E_k\left[\| \bar{g}_k - \nabla f(x_k)\|_2^2\right] = \tfrac{\E_k\left[ \|\nabla F(x_k,\xi) - \nabla f(x_k)\|_2^2\right]}{|S_k|}.
\eequation
Computing the population variance on the right-hand-side of \eqref{eq.grad_approx} is prohibitively expensive in our setting, and thus we approximate it with the sample variance (see left-hand-side of \eqref{eq.practical2}). Moreover, the right-hand-side of  \eqref{eq.norm_cond_stoch} is approximated with its stochastic counter-part. This results in the following approximation to condition~\eqref{eq.norm_cond_stoch},
\bequation\label{eq.practical2}
   \tfrac{\text{Var}_{i \in \mathcal{S}_k}[\nabla F(x_k,\xi_i)]}{|S_k|} \leq \theta_1 \beta^{\change{2\sigma}}\Delta l(x_k,\bar\tau_k,\bar g_k,\bar d_k),
\eequation
where $\text{Var}_{i \in \mathcal{S}_k}[\nabla F(x_k,\xi_i)] = \frac{1}{|\mathcal{S}_k| - 1} \sum_{i \in \mathcal{S}_k} \|\nabla F(x_k,\xi_i) - \bar{g}_k\|_2^2$, used in \PAISSQP{}. 

In our practical algorithm (Algorithm~\ref{alg.adaptiveSQP_practical}), if inequality \eqref{eq.practical2} is not satisfied (given the set $\mathcal{S}_k$), we \change{choose a new sample $\hat{\mathcal{S}}_k$ with a larger sample size with the intent of satisfying \eqref{eq.practical2}.} 
The heuristic we propose to do this is as follows. Suppose we wish to find a \change{new} larger sample $\hat{\mathcal{S}}_k$ ($|\hat{\mathcal{S}}_k| > |\mathcal{S}_k|$) \change{that satisfies \eqref{eq.practical2} with $\hat{\mathcal{S}}_k$}, and let us assume that the change in sample size is gradual enough that for any $x_k$, such that $\text{Var}_{i \in \mathcal{S}_k}[\nabla F(x_k,\xi_i)] \approx \text{Var}_{i \in \hat{\mathcal{S}}_k}[\nabla F(x_k,\xi_i)]$ and $\Delta l(x_k,\bar{\tau}_k,\bar{g}_k,\bar{d}_k) \approx \Delta l(x_k,\hat{\tau}_k,\hat{g}_k,\hat{d}_k)$, 
where $\hat{\tau}_k,\hat{g}_k,\hat{d}_k$ are stochastic realizations of these quantities computed based on the sample $\hat{\mathcal{S}}_k$. Under this assumption, it is clear that \eqref{eq.practical2} is satisfied if 
\bequation\label{eq.increase_cond}
    |\hat{\mathcal{S}}_k| \geq \left\lceil \tfrac{\text{Var}_{i \in \mathcal{S}_k}[\nabla F(x_k,\xi_i)]}{\theta_1 \beta^{\change{2\sigma}}\Delta l(x_k,\bar\tau_k,\bar g_k,\bar d_k)} \right\rceil.
\eequation
\change{
Finally, to further reduce the computational efforts, we use $\hat{\mathcal{S}}_k$ for the next iteration instead of the current iterate. That is, we set $\mathcal{S}_{k+1} = \hat{\mathcal{S}}_k$.}
The ideas above are used in Algorithm~\ref{alg.adaptiveSQP_practical}.

\subsection{Inexact Linear System Solutions}\label{sec.system}

Our proposed practical algorithm makes use of an iterative solver with \emph{early termination tests} to solve the Newton-SQP linear system  \eqref{eq.system_stochastic}. We use the minimum residual (MINRES) method \cite{ChoiPaigSaun11,PaigSaun75}, with early termination conditions, for solving the system inexactly, but note that other iterative algorithms could also be used. For all $k \in \N{}$, let $\{(\bar{d}_{k,t},\bar\delta_{k,t},\bar\rho_{k,t},\bar{r}_{k,t})\}_{t\in\mathbb{N}}$ denote the steps (and residuals) generated in iteration  $t\in\mathbb{N}$ of MINRES, and $(\bar{d}_k,\bar{r}_k,\bar\rho_{k},\bar{r}_{k}) \leftarrow (\bar{d}_{k,t'},\bar{r}_{k,t'},\bar\rho_{k,t'},\bar{r}_{k,t'})$ where $t'$ is the last MINRES iteration. 
The MINRES method \change{is} terminated for the minimum $t$ such that either condition $(a)$
\begin{equation}\label{cond1}
    \text{\eqref{eq.merit_model_reduction_lower_stochastic}} \; \text{ and } \; 
  \| \bar{r}_k\|_1 \leq \omega_a\beta^{\change{\sigma}} \Delta l (x_k,\bar\tau_{k},\bar{g}_k,\bar{d}_k) \;  \textbf{ with } \; \bar\tau_k = \bar\tau_{k-1}
\end{equation}
or, condition $(b)$ 
\begin{equation}\label{cond2}
    \|\bar{r}_k\|_1 < \min\{(1-\omega_1)\omega_2, \omega_1\omega_a\beta^{\change{\sigma}}\} \|c_k\|_1 \;\; \textbf{  and } \;\; \|\bar\rho_k\|_1 < \omega_b\|c_k\|_1
\end{equation}  
hold. These conditions are inspired by the theory, but relaxed for practicality. Specifically, the additional condition in case $(a)$ is not checked and neither is \change{Condition}~\ref{assum:stoch_redcond_old}.





\subsection{\PAISSQP{}} In this section, we present our practical algorithm \PAISSQP.

\begin{algorithm}[H]
  \caption{(\PAISSQP) Practical, Adaptive, Inexact, Stochastic SQP Algorithm\label{alg.adaptiveSQP_practical}}
  \begin{algorithmic}[1]
  \Require $x_0\in\mathbb{R}^n$; $y_0\in\mathbb{R}^m$; $\{H_k\}\subset\mathbb{S}^n$; $\bar\tau_{-1}\in\mathbb{R}_{>0}$; $\hat{\mathcal{S}}_{-1}\subset\mathbb{N}$;  $\{\omega_1,\omega_2,\eta,\epsilon_{\tau}\} \subset (0,1)$; $\omega_a\in\mathbb{R}_{>0}$;  $\omega_b\in\R{}_{>0}$; $\beta\in (0,1]$; $\alpha_u \in \mathbb{R}_{>0}$; $\epsilon_d\in (0,\sfrac{\zeta}{2})$; \change{$\sigma\in [1,2]$}
  \For{\textbf{all} $k \in \mathbb{N}$}
  \State Compute  $\bar{g}_k$ via \eqref{eq.grad_s_k} with $\mathcal{S}_k = \hat{\mathcal{S}}_{k-1}$
  \State Solve \eqref{eq.system} iteratively using MINRES; 
  
  Compute a step $(\bar{d}_k,\bar{\delta}_k)$ that satisfies either \eqref{cond1} or \eqref{cond2} \label{line:conditions_practical}
  \State Update $\bar\tau_k$ via \eqref{eq.merit_parameter_trial}--\eqref{eq.tau_update}\label{line.tau_update_practical}
  \State Compute a step size $\bar\alpha_k$ via \eqref{eq:alpha_min_stoch}--\eqref{eq:alpha_opt_stoch}\label{step.alpha_stochastic1}
  \State Update $x_{k+1}\leftarrow x_k + \bar{\alpha}_k \bar{d}_k$, and  $y_{k+1}\leftarrow y_k + \bar{\alpha}_k\bar\delta_k$ 
  \State Choose a \change{new} sample $\hat{\mathcal{S}}_{k}$ such that $|\hat{\mathcal{S}}_{k}| = |\mathcal{S}_{k}|$
  \State  \textbf{If} condition \eqref{eq.practical2} is not satisfied, augment $\hat{\mathcal{S}}_{k}$ using formula \eqref{eq.increase_cond}
  \EndFor
  \end{algorithmic}
\end{algorithm}


\section{Numerical Results}\label{sec.num_res}

The main goal of this section is to illustrate the efficiency of our proposed practical algorithm (Algorithm~\ref{alg.adaptiveSQP_practical}). We characterize efficiency in terms of two metrics that capture the major costs in solving \eqref{prob.opt}. The first metric is the number of objective function gradient evaluations (or accessed data points in the context of the machine learning problems presented below). The second metric is the number of iterations of an iterative solver used to solve the linear system \eqref{eq.system}. To illustrate the efficiency of Algorithm~\ref{alg.adaptiveSQP_practical}, we present results on two classes of problems, constrained classification problems that arise in machine learning (Section~\ref{sec.log_reg}) and standard CUTE collection of nonlinear optimization problems (Section~\ref{sec.cute}), and compare exact and inexact variants (all implementation details are given in Section~\ref{sec.impl_dets}).

In both Sections~\ref{sec.log_reg} and \ref{sec.cute}, results are given in terms of feasibility and stationarity errors defined as, $\|c_k\|_{\infty}$ and $\| g_k + J_k^Ty_k \|_{\infty}$, respectively, where the vector $y_k \in \mathbb{R}^m$ is computed as a least-squares multiplier \change{i.e., $y_k = \arg\min_{y\in\mathbb{R}^m}\|g_k + J_k^Ty\|_2^2$}. Moreover, in both sections we terminate all algorithms solely due to iteration, sampled gradient evaluation or linear system iteration budgets. 

\subsection{Implementation Details}
\label{sec.impl_dets}

We compare different variants of Algorithm~\ref{alg.adaptiveSQP_practical}. Specifically, we compare variants with different levels of accuracy in the gradient approximations employed, as well as variants with and without the early termination conditions. Precise \change{characterizations} of the accuracy levels are given in Sections~\ref{sec.log_reg} and~\ref{sec.cute}. For all variants MINRES was used to solve the linear systems, and all variants employed the same adaptive step size selection strategy described by~\eqref{eq:alpha_min_stoch}--\eqref{eq:alpha_opt_stoch}. For all problems estimates of $L$ and $\Gamma$ were computed using gradient differences around the initial point, and kept constant throughout the course of optimization. This procedure was performed in such a way that, for each problem instance, all algorithms used the same values for these
estimates. For all algorithms, $H_k \gets I$ for all $k \in \mathbb{N}$, $\bar{\tau}_{-1} \gets 1$, $\beta \gets 1$, $\alpha_u \gets 10^2$, $\eta \gets \sfrac{1}{2}$, $\omega_1 \gets \sfrac{1}{2}$, $\omega_2 \gets \sfrac{1}{2}$, $\omega_a \gets 10^2$, $\omega_b \gets 10^2$, $\epsilon_{\tau} \gets 10^{-4}$, and \change{$\sigma \gets 1$}. For \PAISSQP{}, we additionally set $\theta_1 \gets 0.99$. (Note, $\theta_2$ and $\theta_3$ are not required by the \PAISSQP{} algorithm.) The initial sample size $|\hat{\mathcal{S}}_{-1}|$ is defined in Sections~\ref{sec.log_reg} and \ref{sec.cute}. For variants that solve the linear system without the early termination conditions, the termination tolerance used in MINRES was $\epsilon_{\text{MINRES}} \gets 10^{-8}$, in order to obtain accurate solutions.

\subsection{Constrained Logistic Regression}
\label{sec.log_reg}

In our first set of experiments, we consider constrained logistic regression problems,
\bequation\label{eq.logistic}
  \min_{x \in \R{n}}\ f(x) = \frac{1}{N} \sum_{i=1}^N \log\left(1 + e^{-y_i(X_i^Tx)} \right)\ \ \st\ \ Ax = b_1,\ \ \|x\|_2^2 = b_2,
\eequation
where \change{$N$ denotes the number of data points (samples),} $X \in \mathbb{R}^{n \times N}$ is the data matrix, $X_i$ is the $i$th column of matrix $X$, $y \in \{ -1,1\}^N$ contains corresponding label data, $A \in \mathbb{R}^{m \times n}$, $b_1 \in \mathbb{R}^{m}$ and $b_2 \in \mathbb{R}$. We present results on two data sets from the LIBSVM collection~\cite{chang2011libsvm}; \texttt{australian} and \texttt{mushroom}.  
For the linear constraints, the data was generated as follows: the entries of the matrix $A$ and the vector $b_1$ were drawn from a standard normal distribution (for each data set the same $A$ and $b_1$ were used for all methods)\change{, with $m=10$}. For the $\ell_2$-norm constraint, $b_2 =1$. For all problems and algorithms, the initial primal iterate was set to the vector of all ones of appropriate dimension, and the initial dual variables $y_0$ were set as the least-squares multipliers. 

For each data set, we consider exact and inexact (linear system solutions) variants, i.e., variants with and without the early termination conditions, and three different sample sizes ($|\mathcal{S}_k| = |\mathcal{S}| \in \{2,128,N\}$ for all $k\in\mathbb{N}$) for a total of 6 variants, and compare against \PAISSQP{}. \change{For all methods, sampling was done without replacement.} A budget of $50$ epochs was given to every method. The results for the two data sets are presented in Figures~\ref{fig.australian} and \ref{fig.mushroom}. For every method, we report the feasibility and stationarity errors in terms of iterations, epochs (gradient evaluations) and linear system iterations. The results indicate that our proposed practical inexact SQP method \PAISSQP{} strikes a good balances between reducing constraint violation while attempting to find a point that satisfies approximate  first-order stationarity across all three evaluation metrics. In Figure~\ref{fig.step_batch} we show the sample size and step size selected by the different variants. While the sample sizes increase relatively quickly, there are significant savings that can be achieved by employing inexact information. The step size figures illustrate that the adaptive step size mechanism \eqref{eq:alpha_min_stoch}--\eqref{eq:alpha_opt_stoch} is stable.

\begin{figure}[]
    \centering
    \begin{subfigure}[b]{0.32\textwidth}
    \includegraphics[width=\textwidth,clip=true,trim=30 180 50 200]{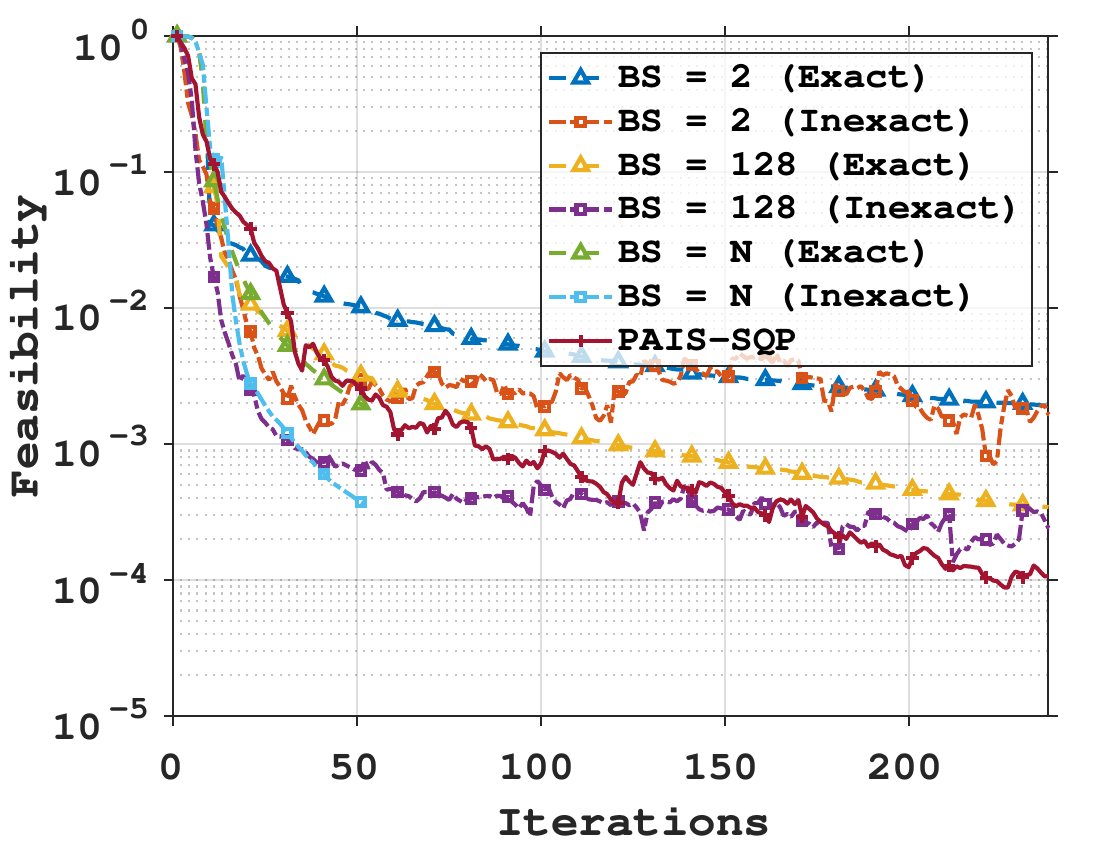}
    \caption{Feasibility vs. Iterations}
    \end{subfigure}
    \begin{subfigure}[b]{0.32\textwidth}
    \includegraphics[width=\textwidth,clip=true,trim=30 180 50 200]{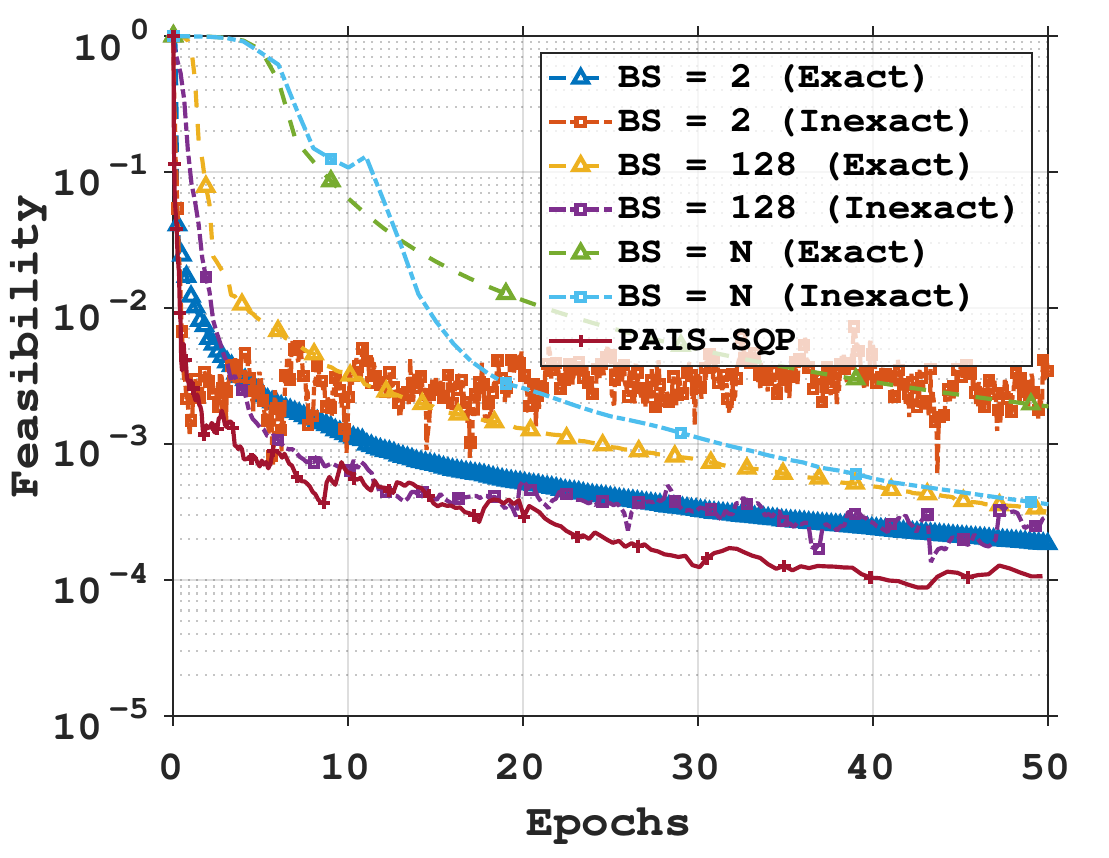}
    \caption{ Feasibility vs. Epochs} 
    \end{subfigure}
    \begin{subfigure}[b]{0.32\textwidth}
    \includegraphics[width=\textwidth,clip=true,trim=30 180 50 200]{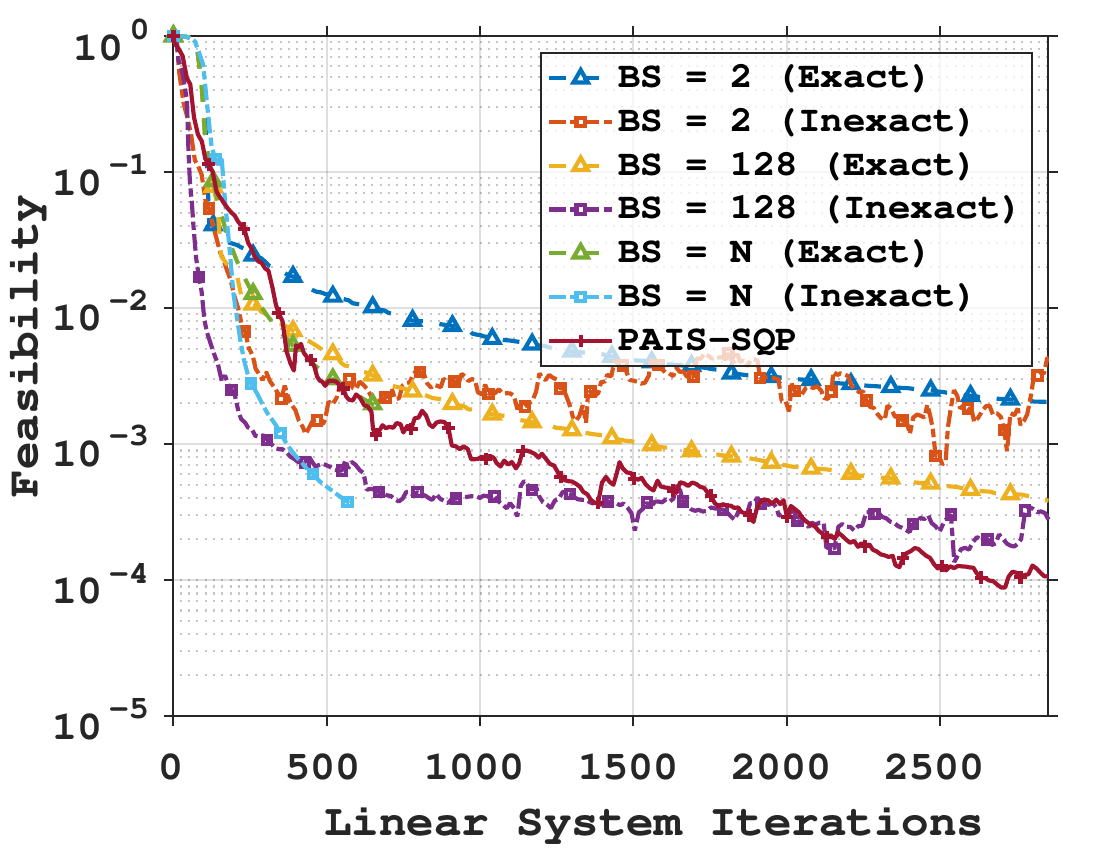}
    \caption{ Feasibility vs. LS  Iters} 
    \end{subfigure}
  
    \begin{subfigure}[b]{0.32\textwidth}
    \includegraphics[width=\textwidth,clip=true,trim=30 180 50 200]{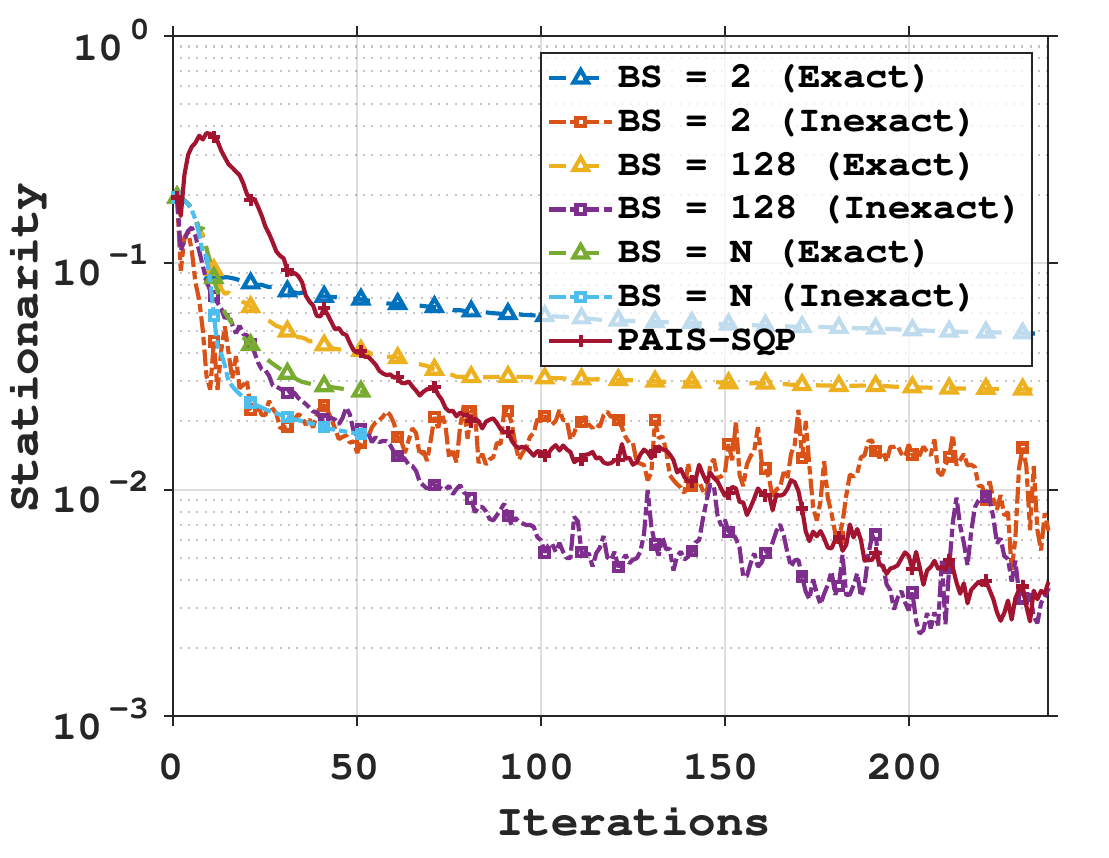}
    \caption{Stationarity vs. Iterations}
    \end{subfigure}
    \begin{subfigure}[b]{0.32\textwidth}
    \includegraphics[width=\textwidth,clip=true,trim=30 180 50 200]{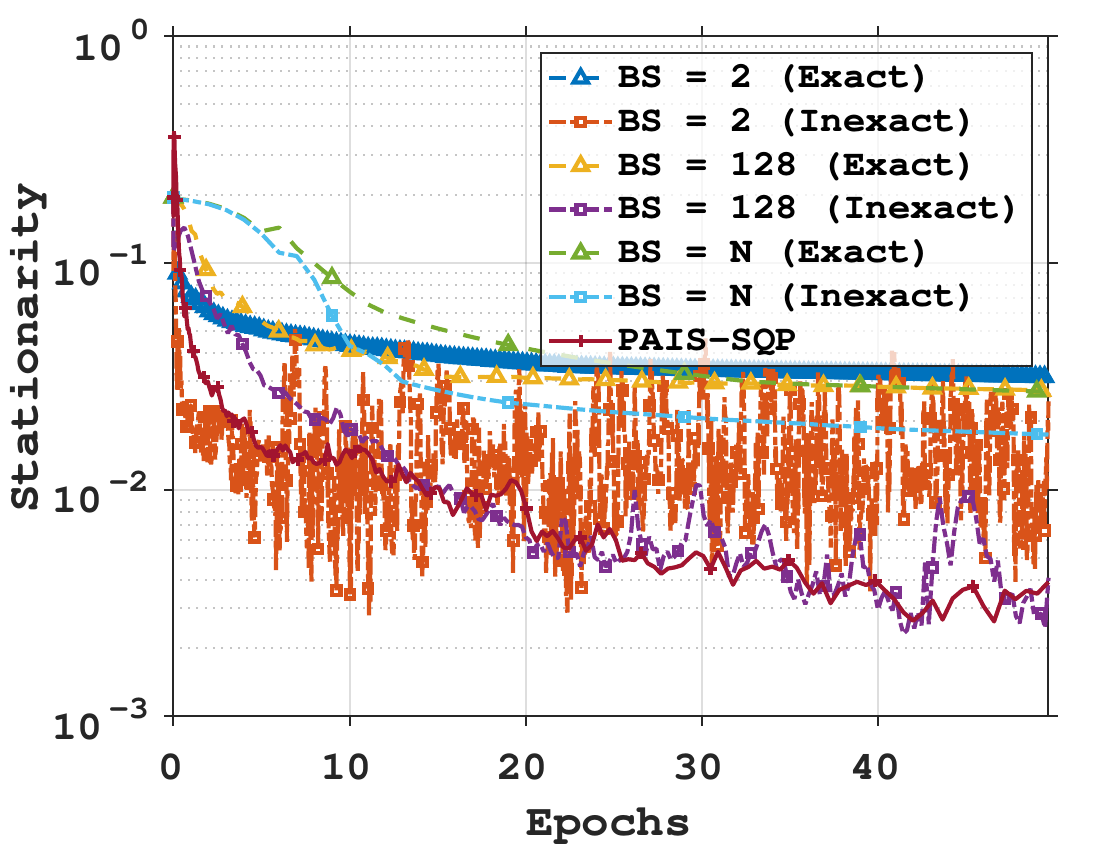}
    \caption{ Stationarity vs. Epochs}
    \end{subfigure}
    \begin{subfigure}[b]{0.32\textwidth}
    \includegraphics[width=\textwidth,clip=true,trim=30 180 50 200]{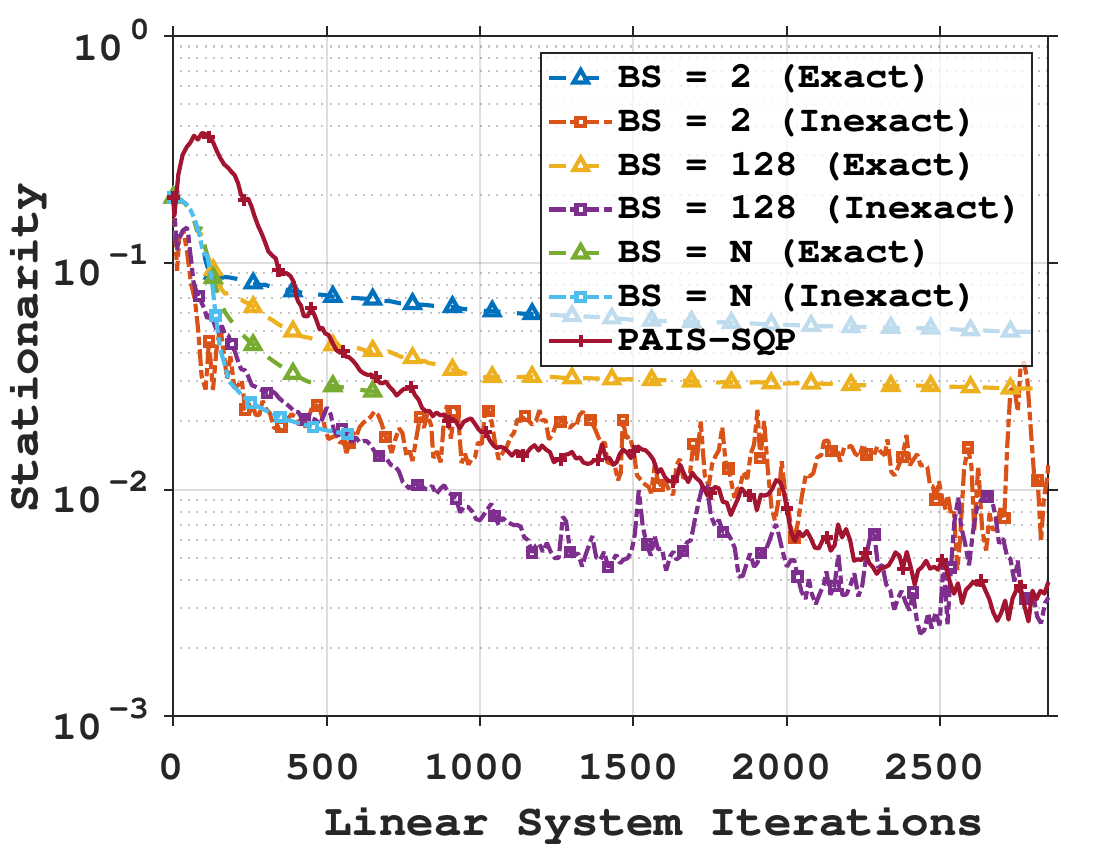}
    \caption{ Stationarity vs. LS  Iters} 
    \end{subfigure}
    \caption{\texttt{australian}: Feasibility \& stationarity errors versus iterations/epochs/linear system iterations for exact and inexact variants of Algorithm~\ref{alg.adaptiveSQP_practical} on \eqref{eq.logistic}.}\label{fig.australian}
\end{figure}

\begin{figure}[]
    \centering
    \begin{subfigure}[b]{0.32\textwidth}
    \includegraphics[width=\textwidth,clip=true,trim=30 180 50 200]{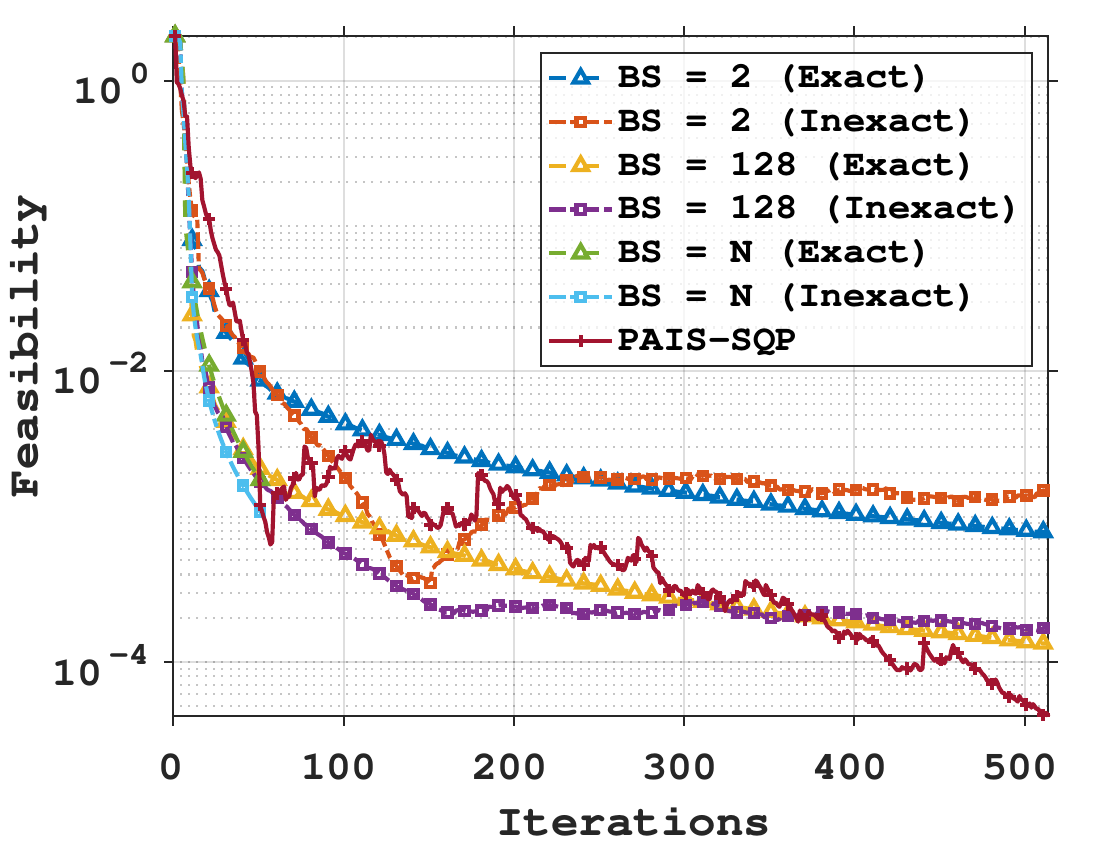}
    \caption{Feasibility vs. Iterations}
    \end{subfigure}
    \begin{subfigure}[b]{0.32\textwidth}
    \includegraphics[width=\textwidth,clip=true,trim=30 180 50 200]{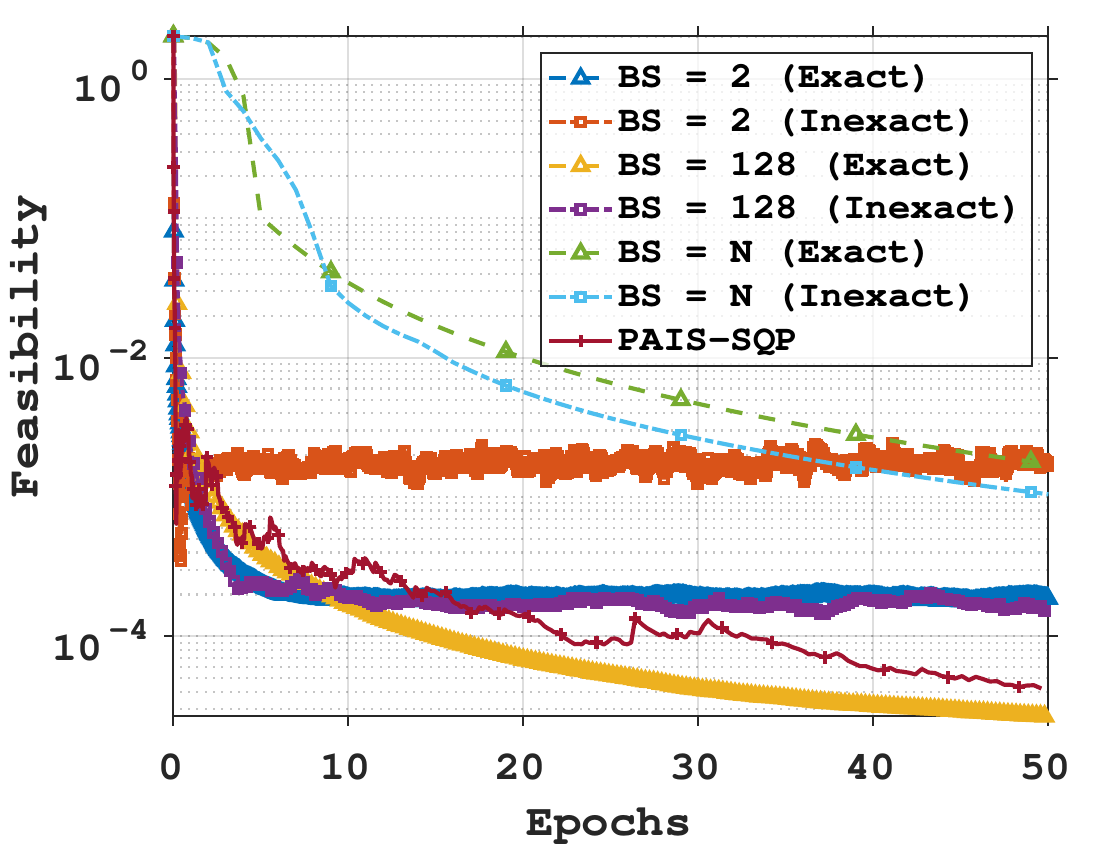}
    \caption{ Feasibility vs. Epochs} 
    \end{subfigure}
    \begin{subfigure}[b]{0.32\textwidth}
    \includegraphics[width=\textwidth,clip=true,trim=30 180 50 200]{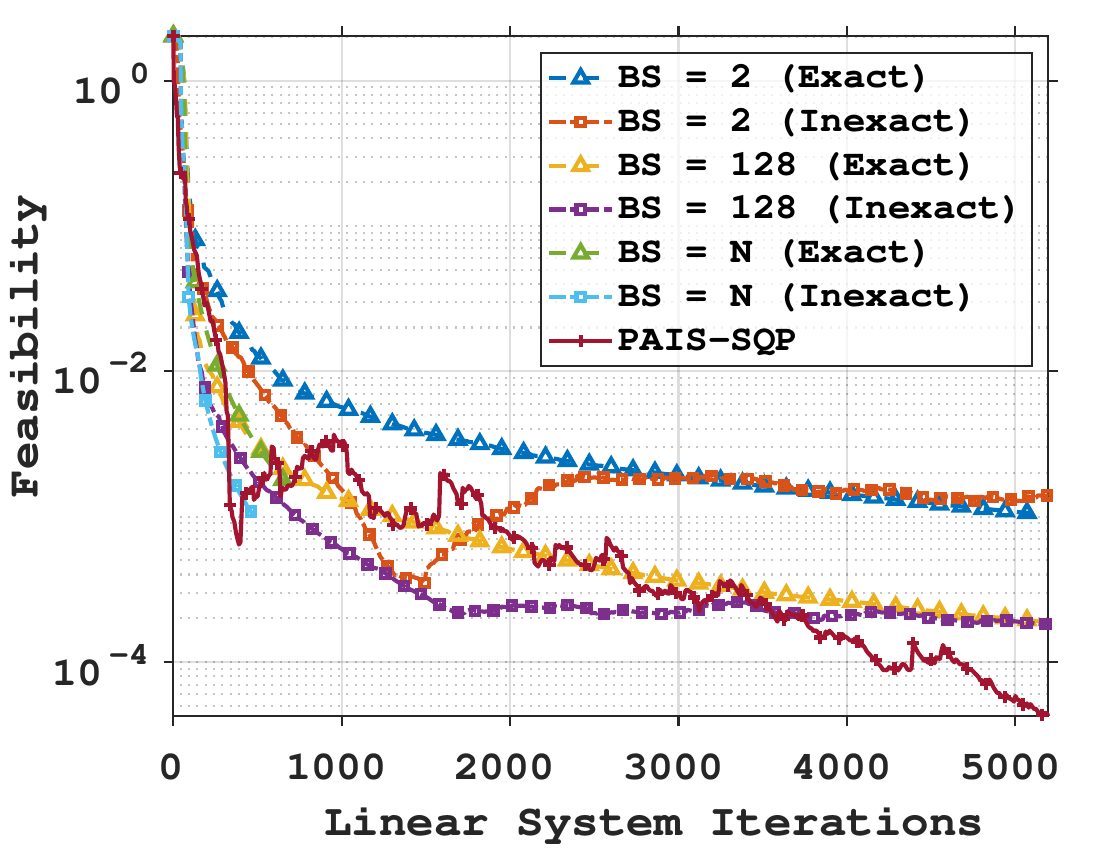}
    \caption{ Feasibility vs. LS  Iters} 
    \end{subfigure}
  
    \begin{subfigure}[b]{0.32\textwidth}
    \includegraphics[width=\textwidth,clip=true,trim=30 180 50 200]{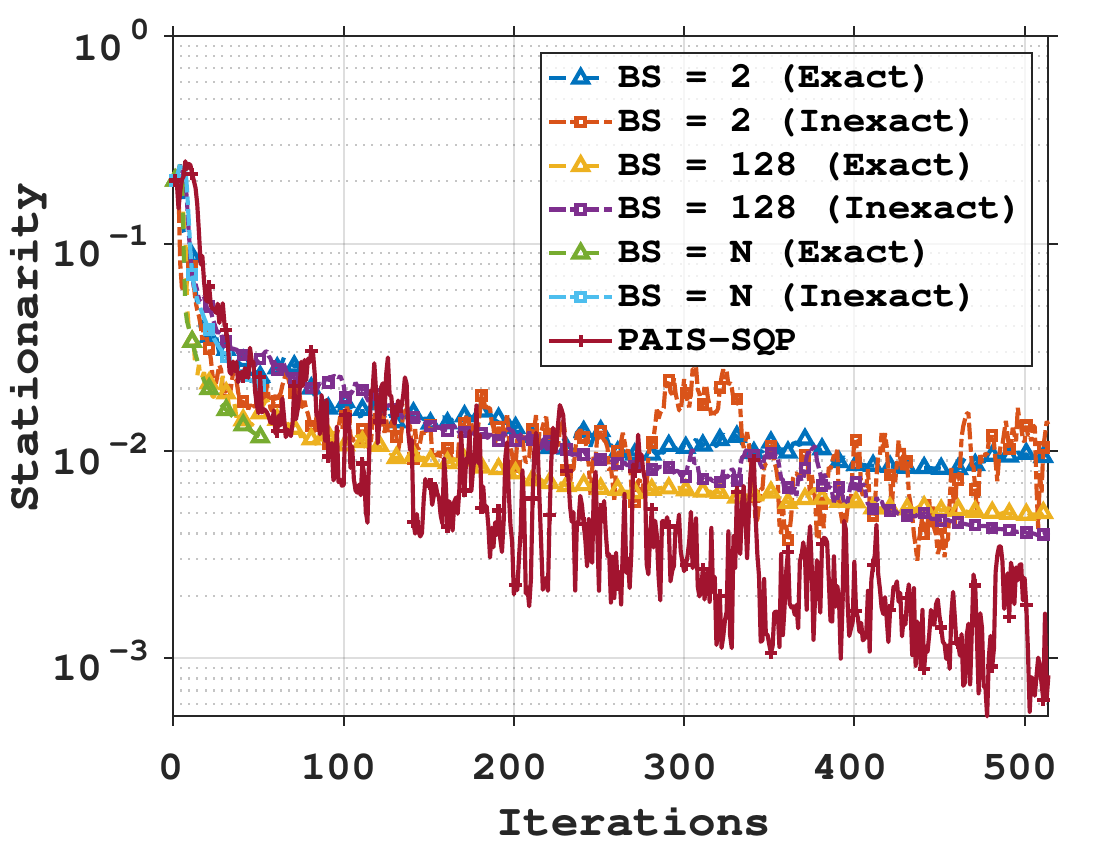}
    \caption{Stationarity vs. Iterations}
    \end{subfigure}
    \begin{subfigure}[b]{0.32\textwidth}
    \includegraphics[width=\textwidth,clip=true,trim=30 180 50 200]{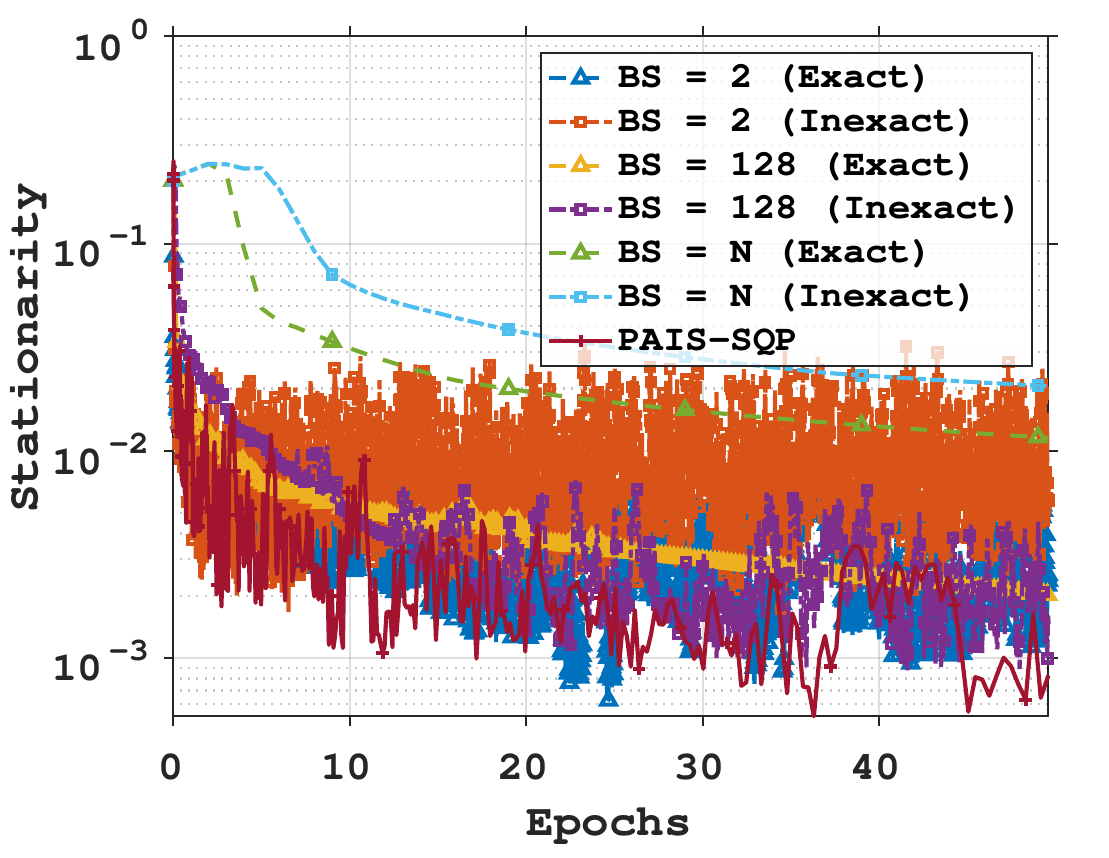}
    \caption{ Stationarity vs. Epochs}
    \end{subfigure}
    \begin{subfigure}[b]{0.32\textwidth}
    \includegraphics[width=\textwidth,clip=true,trim=30 180 50 200]{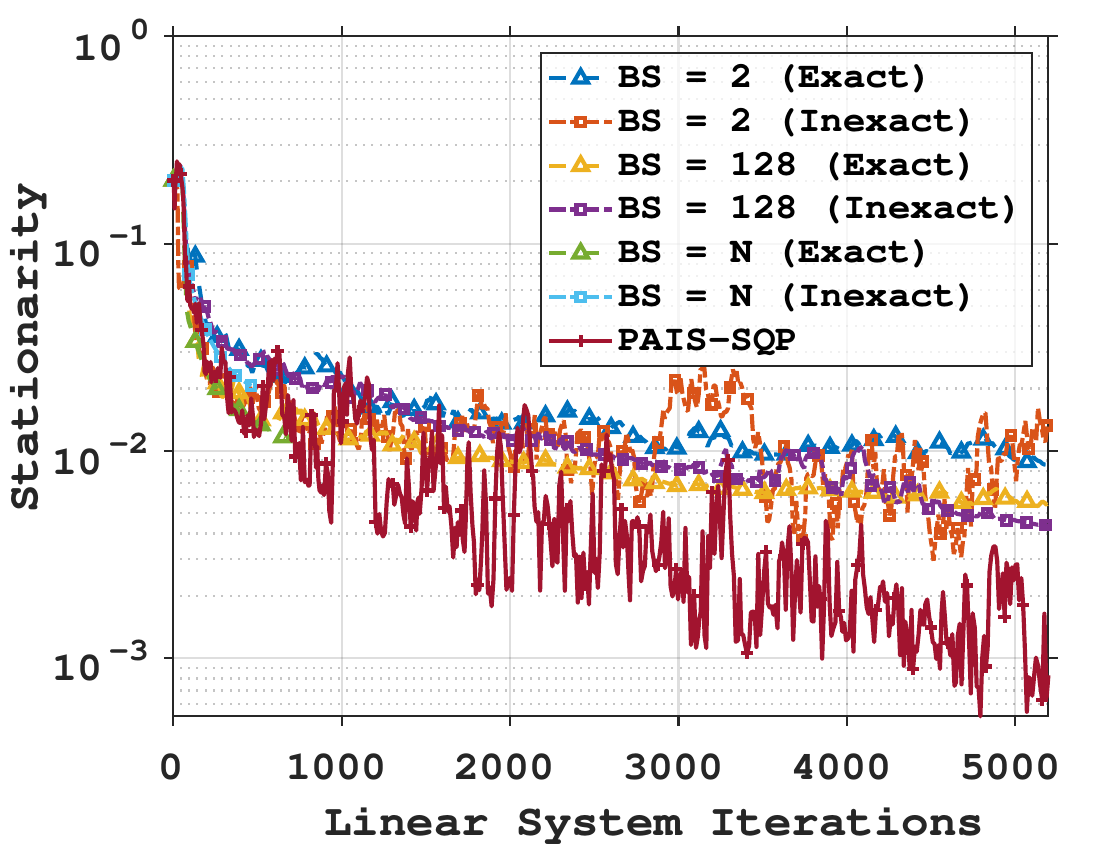}
    \caption{ Stationarity vs. LS  Iters} 
    \end{subfigure}
    \caption{\texttt{mushroom}: Feasibility/stationarity errors versus iterations/epochs/linear system iterations for exact and inexact variants of Algorithm~\ref{alg.adaptiveSQP_practical} on \eqref{eq.logistic}.}\label{fig.mushroom}
\end{figure}

\begin{figure}[]
    \centering
    \begin{subfigure}[b]{0.24\textwidth}
    \includegraphics[width=\textwidth,clip=true,trim=30 180 50 200]{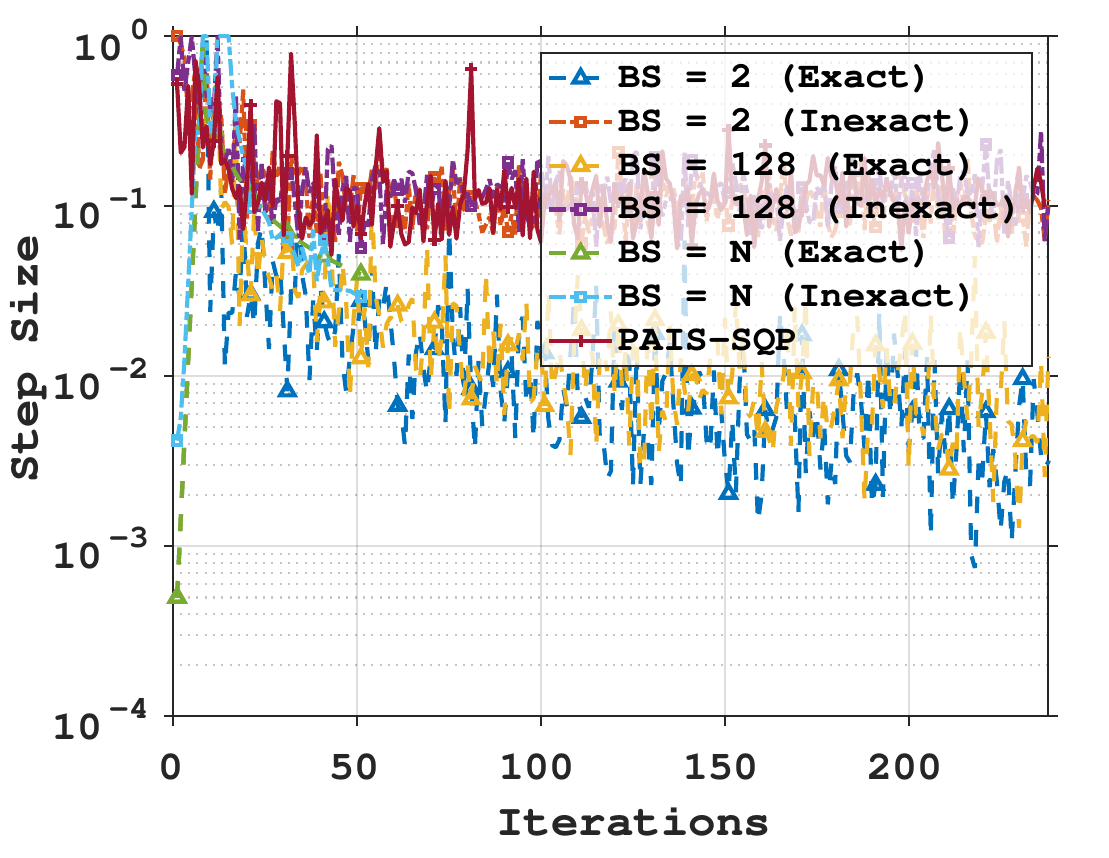}
    \caption{ Step Size vs. Iterations} 
    \end{subfigure}
    \begin{subfigure}[b]{0.24\textwidth}
    \includegraphics[width=\textwidth,clip=true,trim=30 180 50 200]{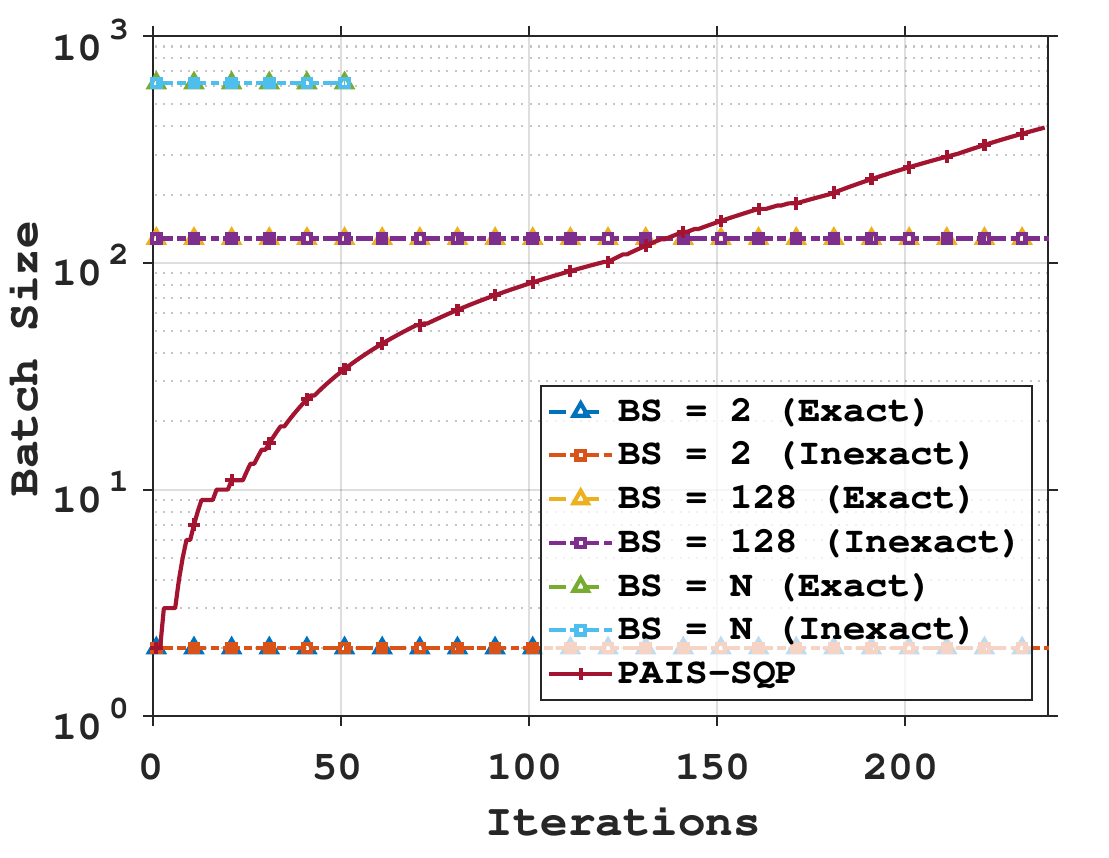}
    \caption{ Batch Size vs. Iterations} 
    \end{subfigure}
    \begin{subfigure}[b]{0.24\textwidth}
    \includegraphics[width=\textwidth,clip=true,trim=30 180 50 200]{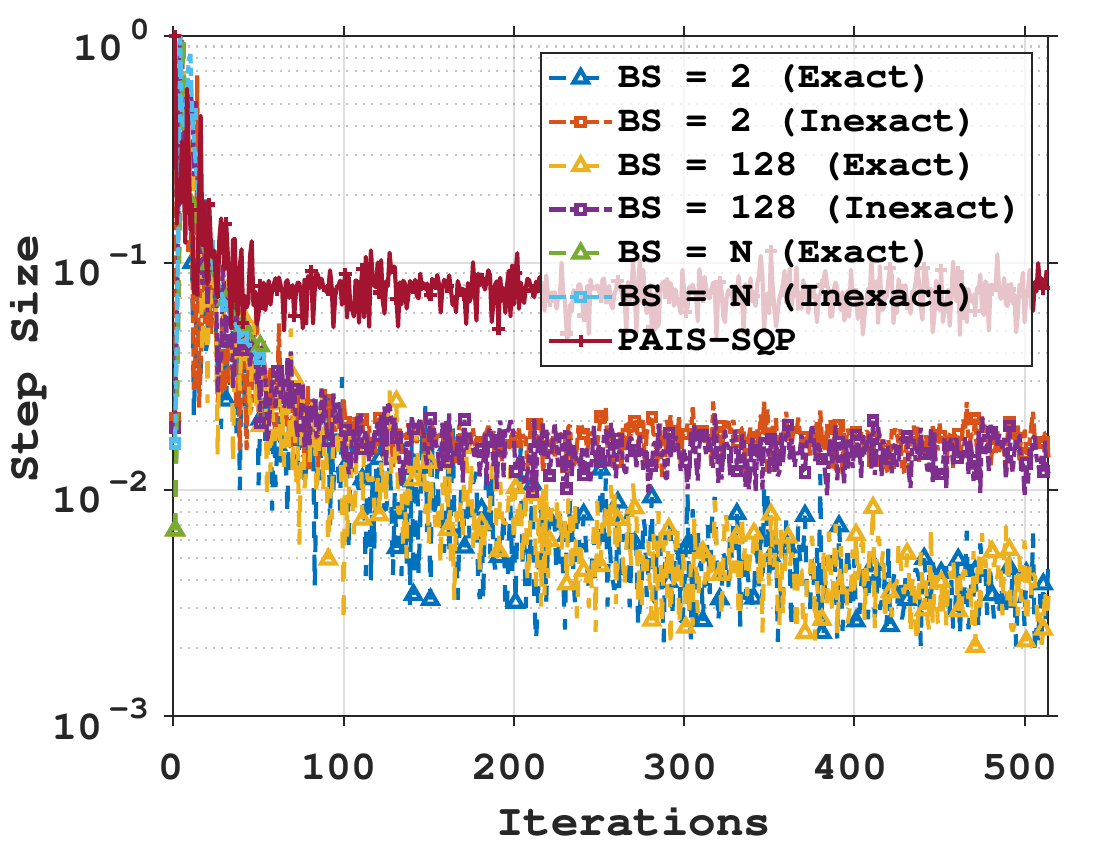}
    \caption{ Step Size vs. Iterations} 
    \end{subfigure}
    \begin{subfigure}[b]{0.24\textwidth}
    \includegraphics[width=\textwidth,clip=true,trim=30 180 50 200]{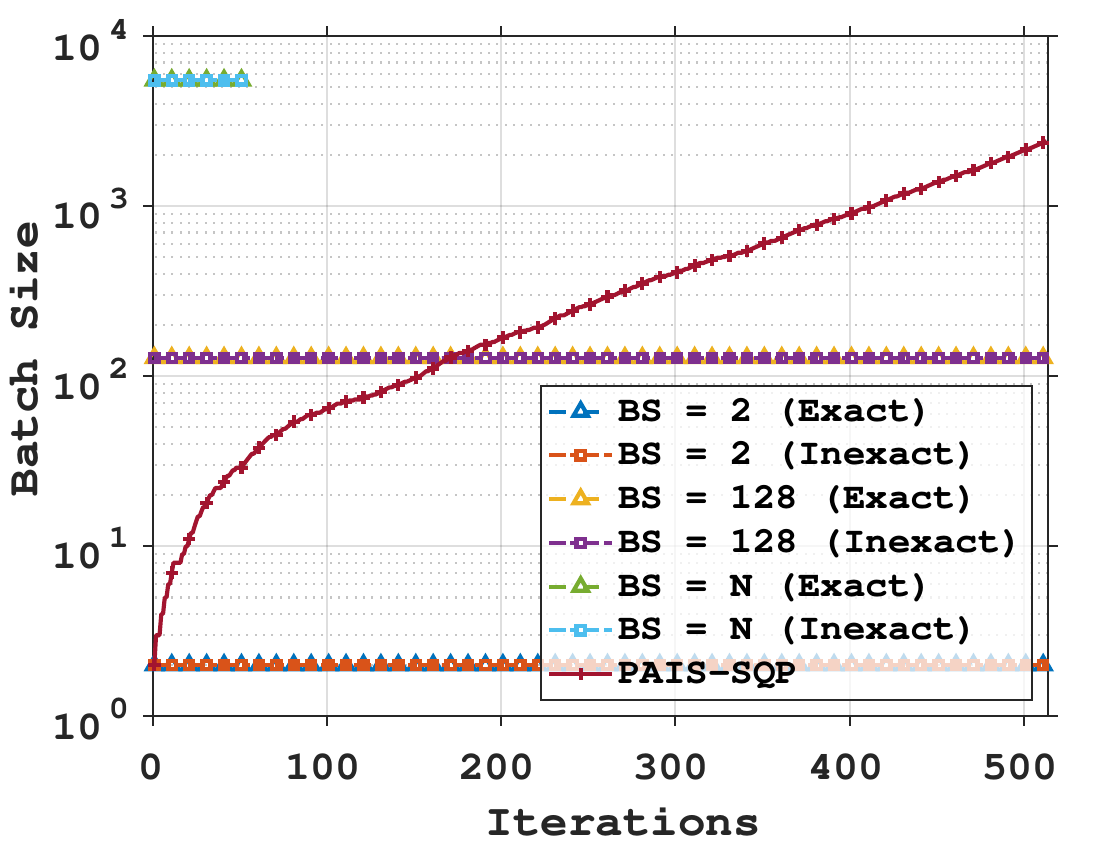}
    \caption{ Batch Size vs. Iterations} 
    \end{subfigure}
    \caption{\texttt{australian} ((a), (b)); \texttt{mushroom} ((c), (d)): Step sizes and batch sizes versus iterations.}\label{fig.step_batch}
\end{figure}

\newpage

\subsection{CUTE Problems}
\label{sec.cute}

 Next, we consider equality constrained problems from the CUTE collection of nonlinear optimization problems  \cite{bongartz1995cute}. Specifically, of
the $123$ such problems in the collection we considered $49$ problems. (We only used those for which: \change{$(i)$ the linear independence constraint qualification (LICQ)} held for all iterations of all algorithms, $(ii)$ $f$ is not a constant function, and $(iii)$ $n+m \leq 1000$.) We used the prescribed starting point for all problems and all algorithms. The CUTE problems are deterministic, so we added noise to the gradient computations to make the problems stochastic. Specifically, we consider additive noise where the gradient was computed as
\begin{equation*}
    \bar{g}_k = \tfrac{1}{|\mathcal{S}_k|} \sum_{i \in \mathcal{S}_k} (\nabla f(x_k) + \mathcal{N}(0,\epsilon_{N,i}I)),
\end{equation*}
where $\epsilon_{N,i} \gets 10^{-1}$ for all $i$, and $\mathcal{S}_k$ is prescribed by the variant and determines the level of noise.

For each problem, we again consider variants that compute exact and inexact (early termination conditions) linear system solutions. We compare \PAISSQP{}, to non-adaptive sampling variants with ($|\mathcal{S}_k| = |\mathcal{S}| \in \{2,128,1024\}$ for all $k\in\mathbb{N}$), and limit the maximum sample size employed by \PAISSQP{} to $1024$. For each problem, we ran $10$ instances with different random seeds. This led to a total of $490$ runs of each algorithm for each noise level. We terminated the methods on the following budget: $1024\cdot10^3$ gradient evaluations or $1024\cdot10^2$ linear system iterations (whichever comes first).


The results of these experiments are reported in Figure~\ref{fig.perf} in the form of performance profiles \cite{more2009benchmarking}. We present results in terms of feasibility and stationarity with respect to gradient evaluations and linear system iterations. The performance profiles were constructed as follows. For each problem, method and seed, the iterate used in the performance profile $x_{pp}$ was chosen as: either the point with minimum $\|g_k + J_k^Ty_k\|_{\infty}$ among all points with $\|c_k\|_{\infty} \leq 10^{-6}$, or if no such point exists, then the point with minimum $\|c_k\|_{\infty}$. Following 
\cite{more2009benchmarking}, for the two metrics an algorithm was deemed to have solved a given problem for a given seed if $m(x_0) - m(x_{pp}) \geq (1-\epsilon_{pp})(m(x_0) - m(x_{b}))$, where $m(x_l)$ is $\|g_l + J_l^Ty_l\|_{\infty}$ (for stationarity) and $\|c_l\|_{\infty}$ (for feasibility), respectively,  $m(x_{b})$ denotes the best possible value of either metric for each problem and seed, and tolerance $\epsilon_{pp} \in (0,1)$. Overall, across all tolerances and metrics, the \PAISSQP{} method appears to be the most robust (as seen by the right-most points on the figures). The ability of \PAISSQP{} to make sufficient progress with inexact information in the intial stages of the optimization, combined with its ability to increase accuracy of the approximations employed, as needed, as the optimization progresses allows the algorithm to balance convergence and cost. As a result, \PAISSQP{} is efficient and robust in \change{terms} of all metrics. 


\begin{figure}[ht]
    \centering
    \begin{subfigure}[b]{0.24\textwidth}
    \includegraphics[width=\textwidth,clip=true,trim=30 180 50 200]{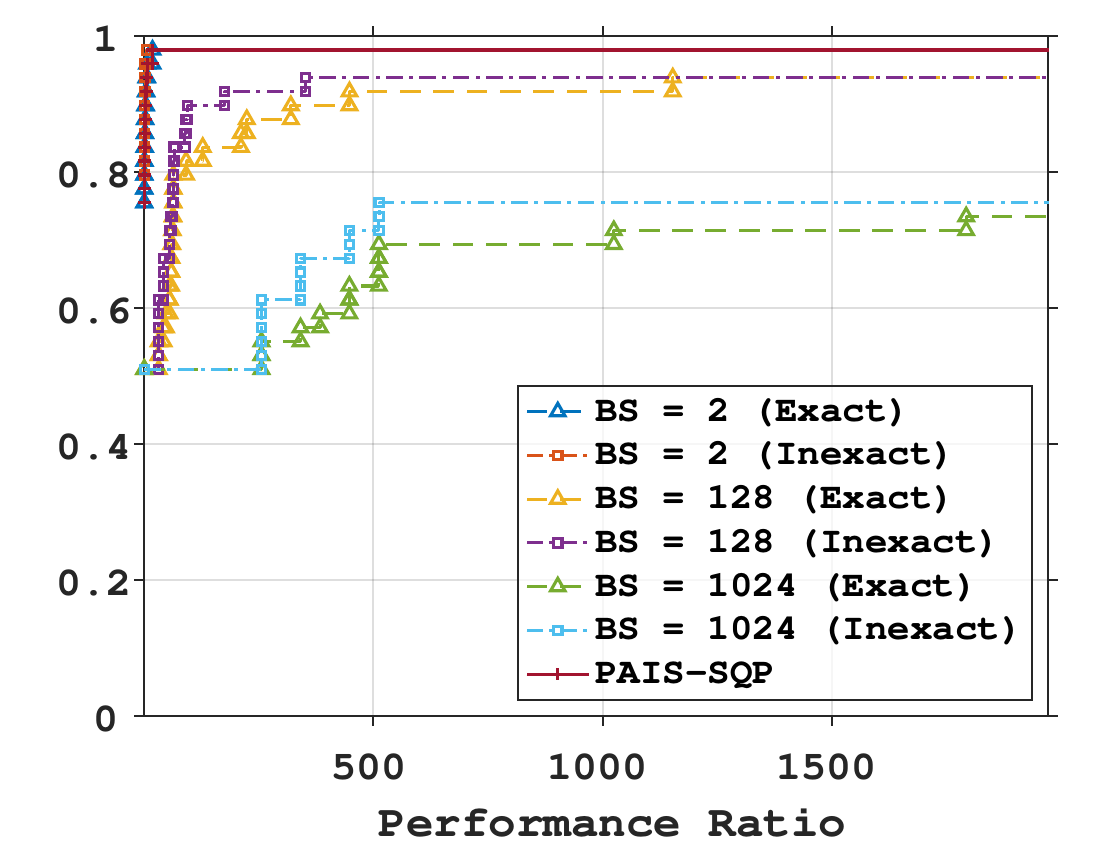}
    \caption{Feas. vs. Grad.}
    \end{subfigure}
    \begin{subfigure}[b]{0.24\textwidth}
    \includegraphics[width=\textwidth,clip=true,trim=30 180 50 200]{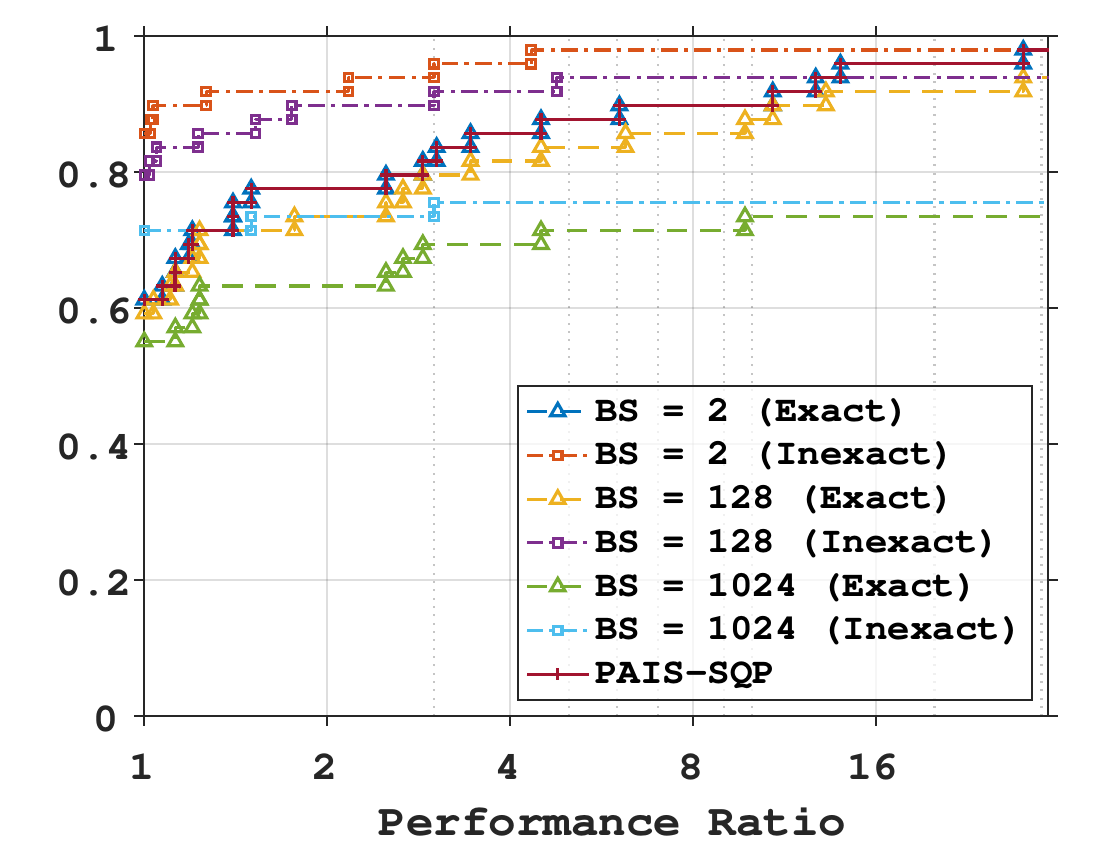}
    \caption{Feas. vs. LS Iters}
    \end{subfigure}
    \begin{subfigure}[b]{0.24\textwidth}
    \includegraphics[width=\textwidth,clip=true,trim=30 180 50 200]{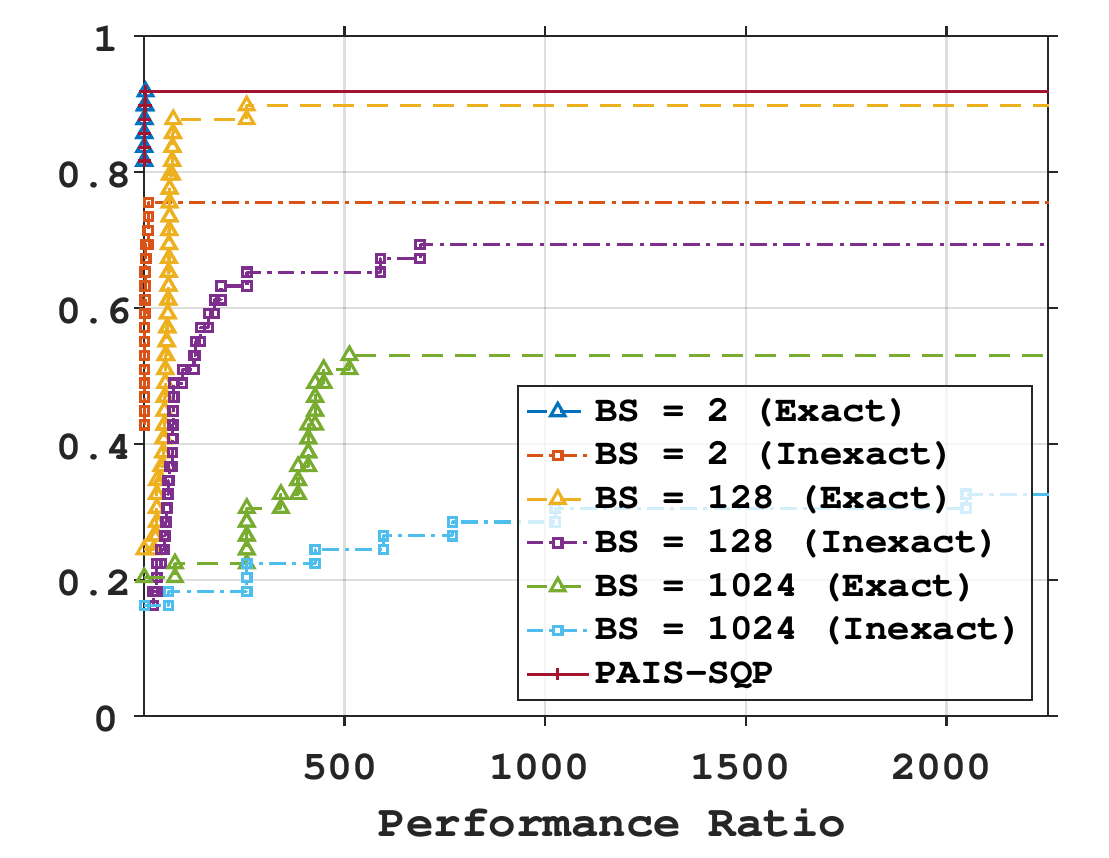}
    \caption{Stat. vs. Grad.} 
    \end{subfigure}
    \begin{subfigure}[b]{0.24\textwidth}
    \includegraphics[width=\textwidth,clip=true,trim=30 180 50 200]{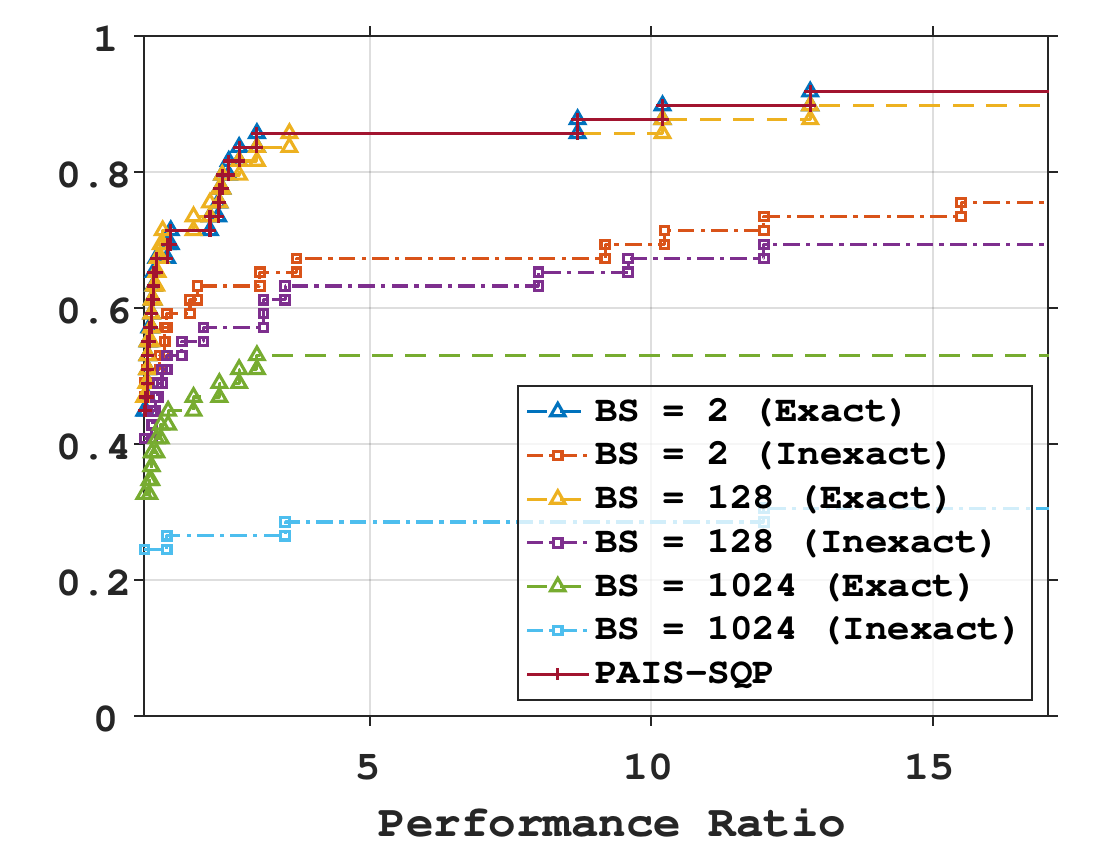}
    \caption{Stat. vs. LS Iters}
    \end{subfigure}
  
    \begin{subfigure}[b]{0.24\textwidth}
    \includegraphics[width=\textwidth,clip=true,trim=30 180 50 200]{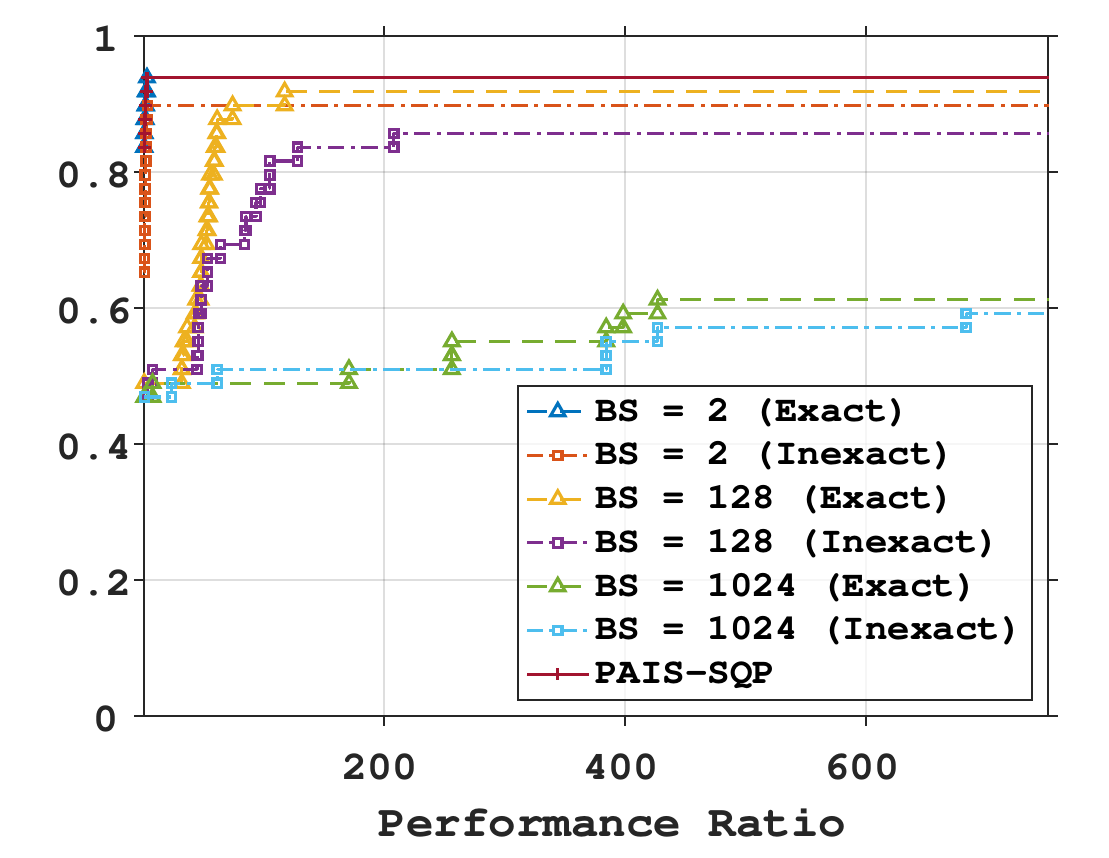}
    \caption{Feas. vs. Grad.}
    \end{subfigure}
    \begin{subfigure}[b]{0.24\textwidth}
    \includegraphics[width=\textwidth,clip=true,trim=30 180 50 200]{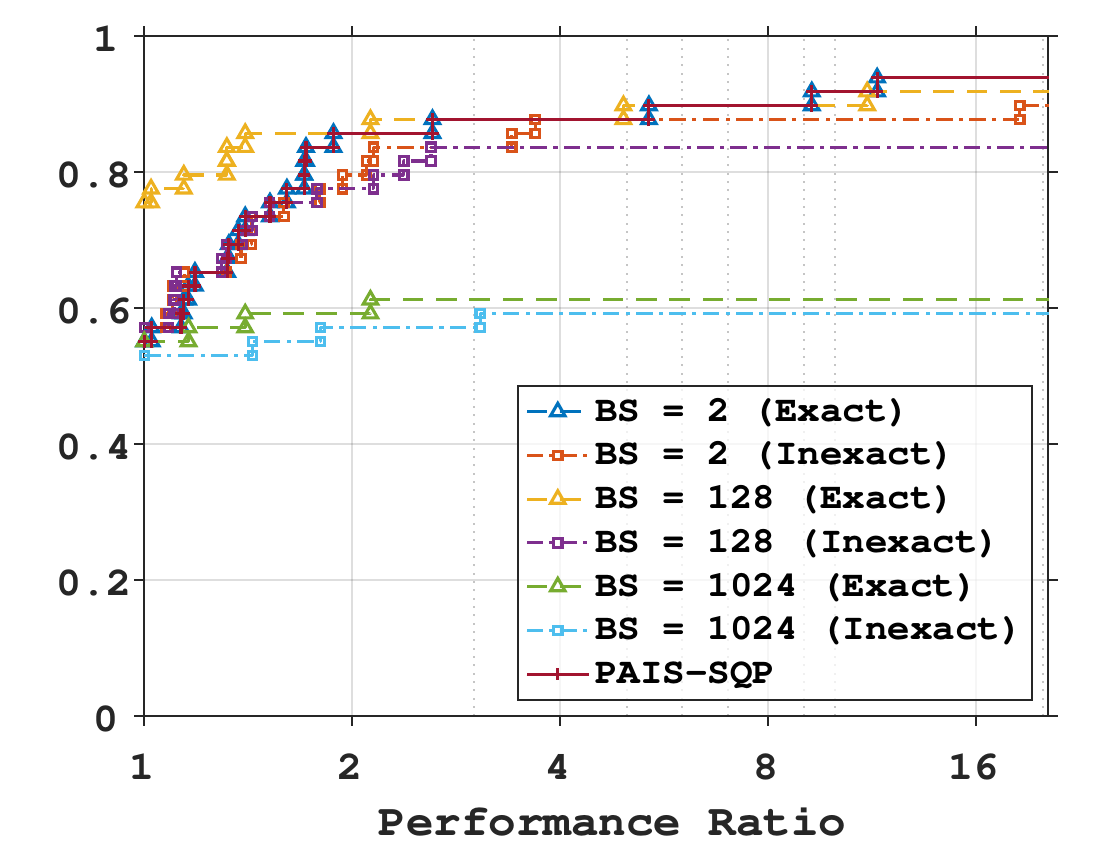}
    \caption{Feas. vs. LS Iters}
    \end{subfigure}
    \begin{subfigure}[b]{0.24\textwidth}
    \includegraphics[width=\textwidth,clip=true,trim=30 180 50 200]{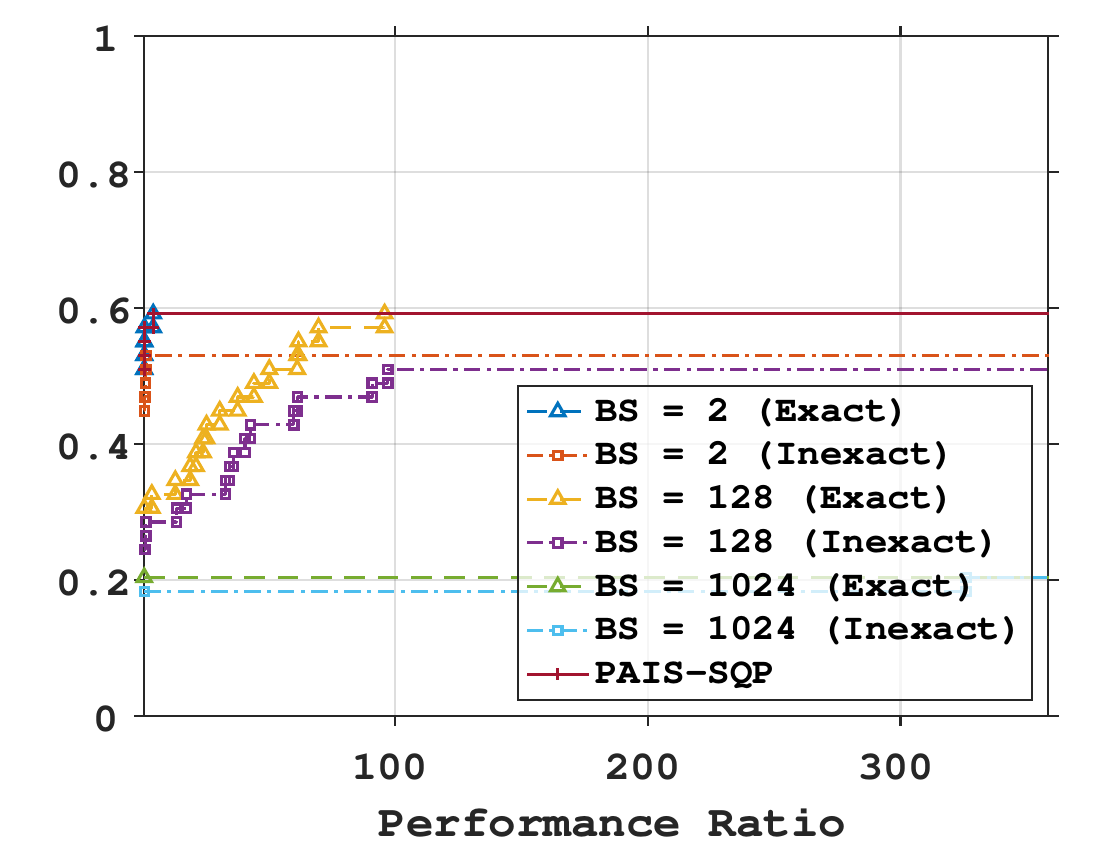}
    \caption{Stat. vs. Grad.} 
    \end{subfigure}
    \begin{subfigure}[b]{0.24\textwidth}
    \includegraphics[width=\textwidth,clip=true,trim=30 180 50 200]{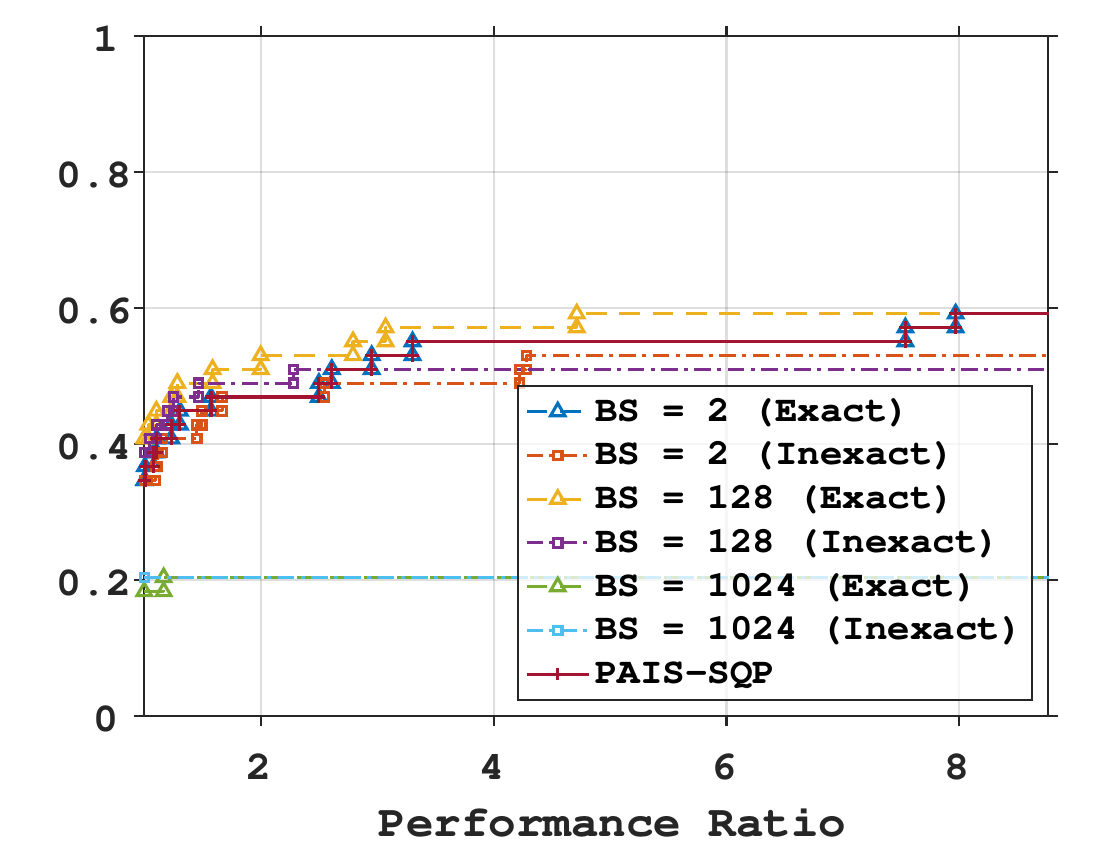}
    \caption{Stat. vs. LS Iters}
    \end{subfigure}
    
    \begin{subfigure}[b]{0.24\textwidth}
    \includegraphics[width=\textwidth,clip=true,trim=30 180 50 200]{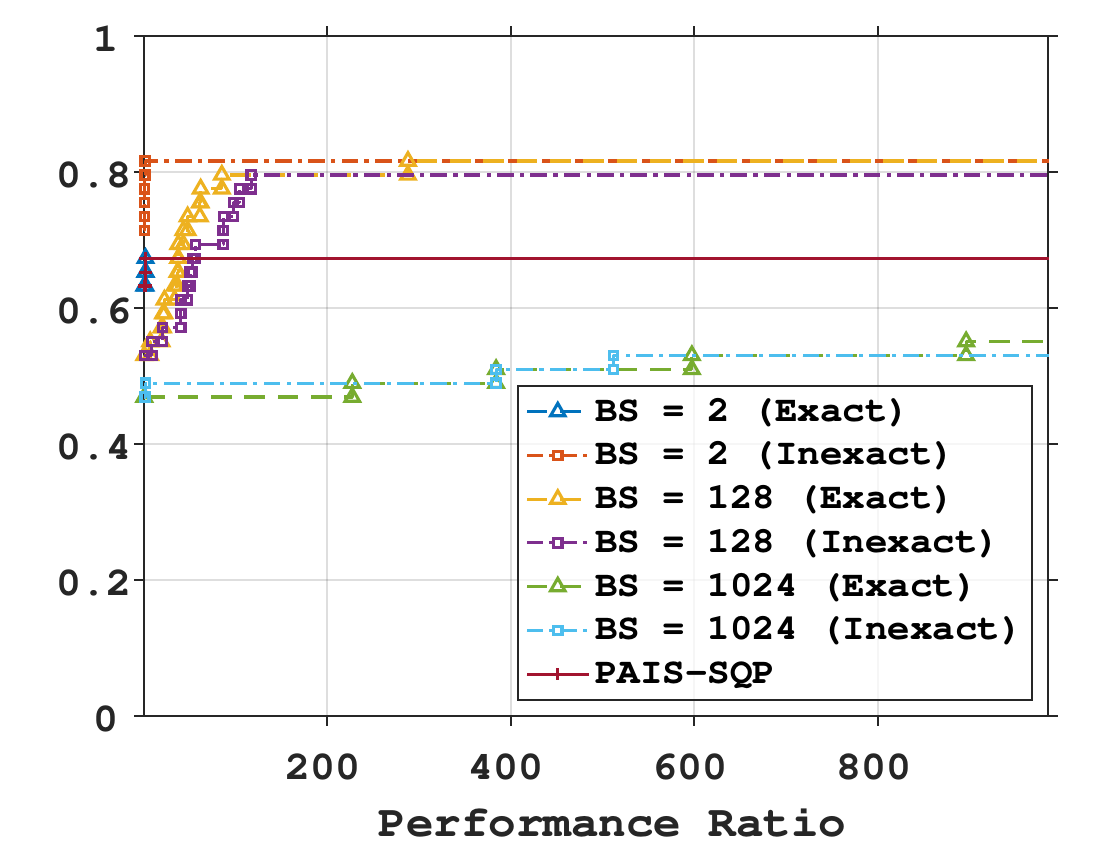}
    \caption{Feas. vs. Grad.}
    \end{subfigure}
    \begin{subfigure}[b]{0.24\textwidth}
    \includegraphics[width=\textwidth,clip=true,trim=30 180 50 200]{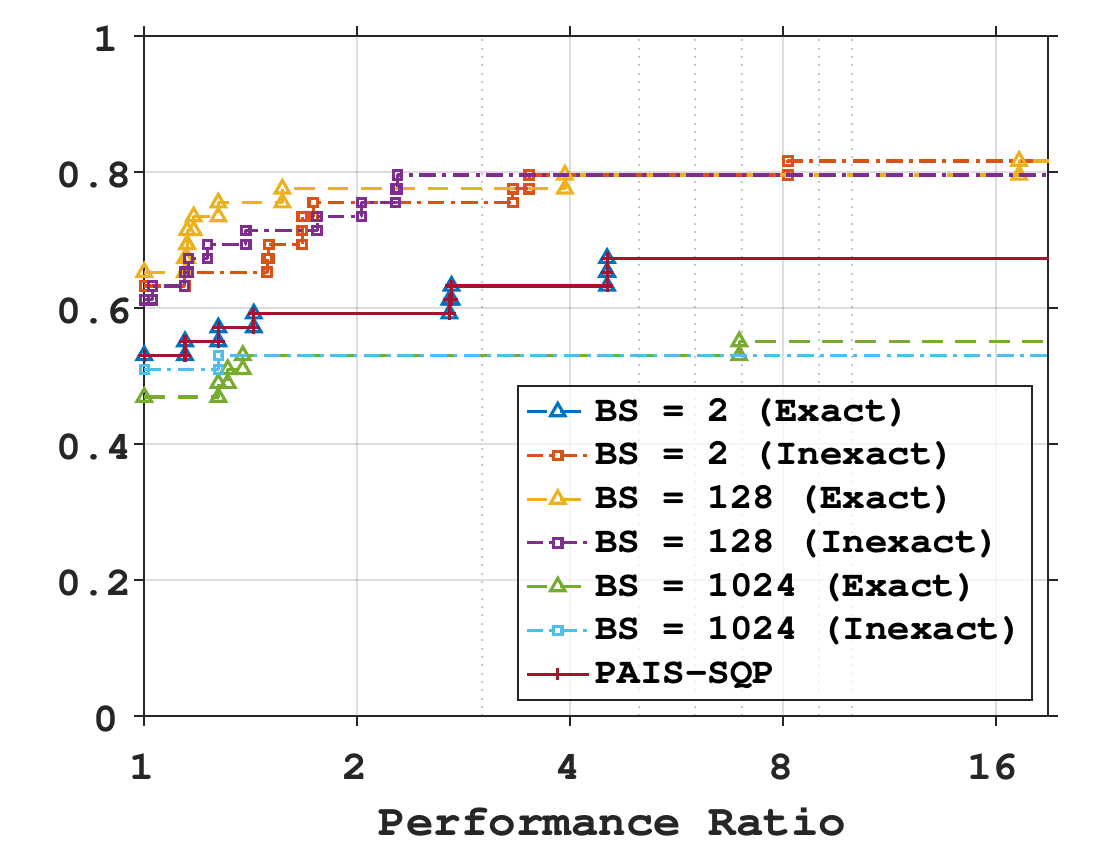}
    \caption{Feas. vs. LS Iters}
    \end{subfigure}
    \begin{subfigure}[b]{0.24\textwidth}
    \includegraphics[width=\textwidth,clip=true,trim=30 180 50 200]{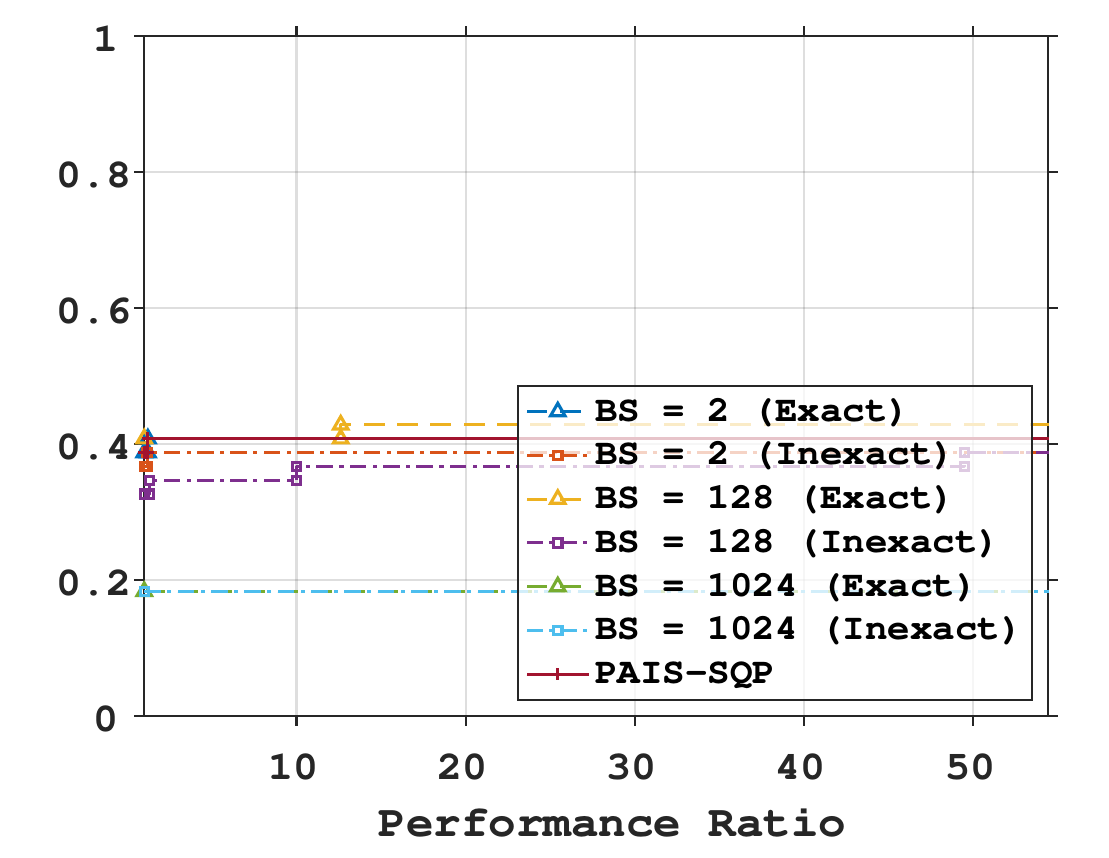}
    \caption{Stat. vs. Grad.} 
    \end{subfigure}
    \begin{subfigure}[b]{0.24\textwidth}
    \includegraphics[width=\textwidth,clip=true,trim=30 180 50 200]{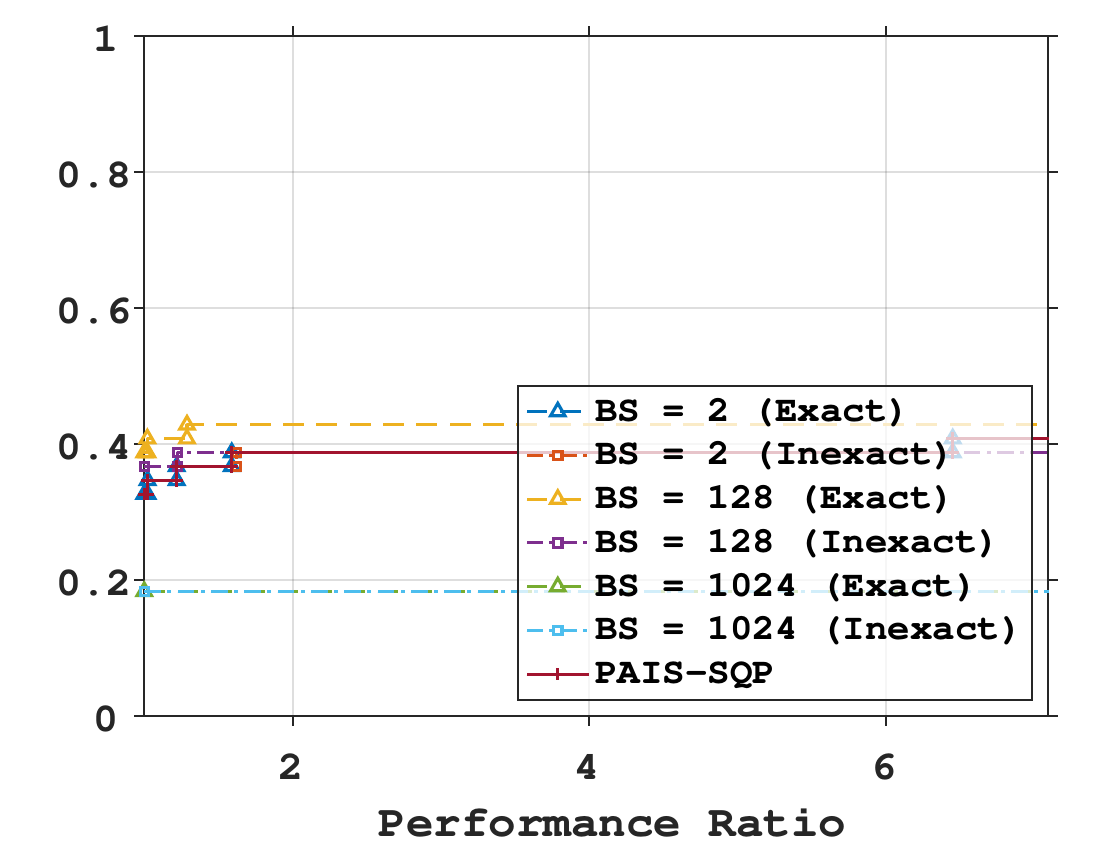}
    \caption{Stat. vs. LS Iters}
    \end{subfigure}
    \caption{\texttt{CUTE}: Performance profiles for exact and inexact variants of Algorithm~\ref{alg.adaptiveSQP_practical} on CUTE collection. 
    First row accuracy $\epsilon_{pp} = 10^{-1}$; Second row accuracy $\epsilon_{pp} = 10^{-3}$; Third row accuracy $\epsilon_{pp} = 10^{-5}$.}\label{fig.perf}
\end{figure}

\section{Final Remarks}\label{sec.final_rem}
In this paper, we have designed and analyzed a stochastic SQP algorithm (\texttt{AIS-SQP}) for solving optimization problems involving deterministic nonlinear equality constraints and a stochastic objective function. At each iteration, the \texttt{AIS-SQP} method computes a stochastic approximation of the gradient of the objective function, computes a step by solving a stochastic Newton-SQP linear system inexactly, potentially updates the merit parameter, and adaptively selects a step size and updates the iterate. Our algorithm is adaptive in several ways. We have proposed accuracy conditions for the stochastic gradient approximation and the quality of the linear system solutions. Moreover, we have proposed adaptive update rules for the merit and step size parameters. Our algorithmic development and analysis \change{have} revealed an intrinsic relationship between the accuracy of stochastic objective gradient realizations, the quality of inexact solutions to the Newton-SQP linear systems, and the adaptive step sizes selected. That is, higher accuracy in the gradient estimation and linear system solution can potentially lead to the acceptance of larger step sizes as in the deterministic counterparts.

We have proved that our algorithm generates a sequence of iterates whose first-order stationarity measure converges to zero in expectation. While similar results have been established in the literature, e.g., \cite{BeraCurtRobiZhou21,CurtOneiRobi21}, these works only consider asymptotic regimes after which the penalty parameter is sufficiently small and has stabilized. In this work, we have analyzed the complete behavior of the algorithm across all potential merit parameter changes, under \change{reasonable assumptions}, and have provided iteration complexity analysis for \texttt{AIS-SQP}, which matches that of deterministic SQP methods \cite{CurtOneiRobi21}, in expectation. We have also established sublinear (gradient) sample complexity results of the proposed algorithm when the gradient and linear system accuracies are controlled at predetermined sublinear rates.

Inspired by \texttt{AIS-SQP}, we have developed, implemented and tested a practical variant \texttt{PAIS-SQP} of the adaptive stochastic SQP method. Our results on two different sets of experiments, constrained logistic regression and standard nonlinear optimization test problems, suggest that our practical algorithm strikes a good balance between minimizing constraint violation while also minimizing the objective function in terms of importance evaluation metrics such as iterations, gradient evaluations and linear system iterations.

\section*{Acknowledgements}
We would like to thank Dr. Frank E. Curtis for all his useful suggestions and feedback. Moreover, we would like to thank the Office of Naval Research (award number: N00014-21-1-2532) and Lawrence Livermore National Laboratory for their support of this project.

\newpage
\bibliographystyle{plain}
\bibliography{references}

\begin{thebibliography}{10}

\bibitem{AchiHeldTamaAbbe17}
Joshua Achiam, David Held, Aviv Tamar, and Pieter Abbeel.
\newblock Constrained policy optimization.
\newblock In {\em International Conference on Machine Learning}, pages 22--31.
  PMLR, 2017.

\bibitem{BeisKeitUrbaWohl20}
Florian Beiser, Brendan Keith, Simon Urbainczyk, and Barbara Wohlmuth.
\newblock Adaptive sampling strategies for risk-averse stochastic optimization
  with constraints.
\newblock {\em arXiv preprint arXiv:2012.03844}, 2020.

\bibitem{berahas2021global}
Albert~S Berahas, Liyuan Cao, and Katya Scheinberg.
\newblock Global convergence rate analysis of a generic line search algorithm
  with noise.
\newblock {\em SIAM Journal on Optimization}, 31(2):1489--1518, 2021.

\bibitem{BeraCurtOneiRobi21}
Albert~S Berahas, Frank~E Curtis, Michael~J O'Neill, and Daniel~P Robinson.
\newblock A stochastic sequential quadratic optimization algorithm for
  nonlinear equality constrained optimization with rank-deficient jacobians.
\newblock {\em arXiv preprint arXiv:2106.13015}, 2021.

\bibitem{BeraCurtRobiZhou21}
Albert~S Berahas, Frank~E Curtis, Daniel Robinson, and Baoyu Zhou.
\newblock Sequential quadratic optimization for nonlinear equality constrained
  stochastic optimization.
\newblock {\em SIAM Journal on Optimization}, 31(2):1352--1379, 2021.

\bibitem{berahas2022accelerating}
Albert~S Berahas, Jiahao Shi, Zihong Yi, and Baoyu Zhou.
\newblock {Accelerating Stochastic Sequential Quadratic Programming for
  Equality Constrained Optimization using Predictive Variance Reduction}.
\newblock {\em arXiv preprint arXiv:2204.04161}, 2022.

\bibitem{Bert98}
Dimitri Bertsekas.
\newblock {\em Network optimization: continuous and discrete models}.
\newblock Athena Scientific, 1998.

\bibitem{Bett10}
John~T Betts.
\newblock {\em Practical methods for optimal control and estimation using
  nonlinear programming}.
\newblock SIAM, 2010.

\bibitem{BollByrdNoce18}
Raghu Bollapragada, Richard Byrd, and Jorge Nocedal.
\newblock Adaptive sampling strategies for stochastic optimization.
\newblock {\em SIAM Journal on Optimization}, 28(4):3312--3343, 2018.

\bibitem{BollNoceMudiShiTang18}
Raghu Bollapragada, Jorge Nocedal, Dheevatsa Mudigere, Hao-Jun Shi, and Ping
  Tak~Peter Tang.
\newblock A progressive batching l-bfgs method for machine learning.
\newblock In {\em International Conference on Machine Learning}, pages
  620--629. PMLR, 2018.

\bibitem{bongartz1995cute}
Ingrid Bongartz, Andrew~R Conn, Nick Gould, and Ph~L Toint.
\newblock Cute: Constrained and unconstrained testing environment.
\newblock {\em ACM Transactions on Mathematical Software (TOMS)},
  21(1):123--160, 1995.

\bibitem{ByrdChinNoceWu12}
Richard~H Byrd, Gillian~M Chin, Jorge Nocedal, and Yuchen Wu.
\newblock Sample size selection in optimization methods for machine learning.
\newblock {\em Mathematical programming}, 134(1):127--155, 2012.

\bibitem{ByrdCurtNoce08}
Richard~H Byrd, Frank~E Curtis, and Jorge Nocedal.
\newblock An inexact sqp method for equality constrained optimization.
\newblock {\em SIAM Journal on Optimization}, 19(1):351--369, 2008.

\bibitem{ByrdCurtNoce10}
Richard~H Byrd, Frank~E Curtis, and Jorge Nocedal.
\newblock An inexact newton method for nonconvex equality constrained
  optimization.
\newblock {\em Mathematical programming}, 122(2):273--299, 2010.

\bibitem{carter1991global}
Richard~G Carter.
\newblock On the global convergence of trust region algorithms using inexact
  gradient information.
\newblock {\em SIAM Journal on Numerical Analysis}, 28(1):251--265, 1991.

\bibitem{CartSche18}
Coralia Cartis and Katya Scheinberg.
\newblock Global convergence rate analysis of unconstrained optimization
  methods based on probabilistic models.
\newblock {\em Mathematical Programming}, 169(2):337--375, 2018.

\bibitem{chang2011libsvm}
Chih-Chung Chang and Chih-Jen Lin.
\newblock {LIBSVM}: a library for support vector machines.
\newblock {\em ACM Transactions on Intelligent Systems and Technology (TIST)},
  2(3):1--27, 2011.

\bibitem{ChoiPaigSaun11}
Sou-Cheng~T Choi, Christopher~C Paige, and Michael~A Saunders.
\newblock Minres-qlp: A krylov subspace method for indefinite or singular
  symmetric systems.
\newblock {\em SIAM Journal on Scientific Computing}, 33(4):1810--1836, 2011.

\bibitem{CottGuptPfei16}
Andrew Cotter, Maya Gupta, and Jan Pfeifer.
\newblock A light touch for heavily constrained sgd.
\newblock In {\em Conference on Learning Theory}, pages 729--771. PMLR, 2016.

\bibitem{CurtNoceWach10}
Frank~E Curtis, Jorge Nocedal, and Andreas W{\"a}chter.
\newblock A matrix-free algorithm for equality constrained optimization
  problems with rank-deficient jacobians.
\newblock {\em SIAM Journal on Optimization}, 20(3):1224--1249, 2010.

\bibitem{CurtOneiRobi21}
Frank~E Curtis, Michael~J O'Neill, and Daniel~P Robinson.
\newblock Worst-case complexity of an sqp method for nonlinear equality
  constrained stochastic optimization.
\newblock {\em arXiv preprint arXiv:2112.14799}, 2021.

\bibitem{CurtRobiZhou21}
Frank~E Curtis, Daniel~P Robinson, and Baoyu Zhou.
\newblock Inexact sequential quadratic optimization for minimizing a stochastic
  objective function subject to deterministic nonlinear equality constraints.
\newblock {\em arXiv preprint arXiv:2107.03512}, 2021.

\bibitem{CurtSche20}
Frank~E Curtis and Katya Scheinberg.
\newblock Adaptive stochastic optimization: A framework for analyzing
  stochastic optimization algorithms.
\newblock {\em IEEE Signal Processing Magazine}, 37(5):32--42, 2020.

\bibitem{FrieSchm12}
Michael~P Friedlander and Mark Schmidt.
\newblock Hybrid deterministic-stochastic methods for data fitting.
\newblock {\em SIAM Journal on Scientific Computing}, 34(3):A1380--A1405, 2012.

\bibitem{HashGhosPasu14}
Fatemeh~S Hashemi, Soumyadip Ghosh, and Raghu Pasupathy.
\newblock On adaptive sampling rules for stochastic recursions.
\newblock In {\em Proceedings of the Winter Simulation Conference 2014}, pages
  3959--3970. IEEE, 2014.

\bibitem{JinScheXie21}
Billy Jin, Katya Scheinberg, and Miaolan Xie.
\newblock High probability complexity bounds for line search based on
  stochastic oracles.
\newblock {\em arXiv preprint arXiv:2106.06454}, 2021.

\bibitem{MahdYangJinZhuYi12}
Mehrdad Mahdavi, Tianbao Yang, Rong Jin, Shenghuo Zhu, and Jinfeng Yi.
\newblock Stochastic gradient descent with only one projection.
\newblock {\em Advances in neural information processing systems}, 25:494--502,
  2012.

\bibitem{more2009benchmarking}
Jorge~J Mor{\'e} and Stefan~M Wild.
\newblock Benchmarking derivative-free optimization algorithms.
\newblock {\em SIAM Journal on Optimization}, 20(1):172--191, 2009.

\bibitem{NaAnitKola21}
Sen Na, Mihai Anitescu, and Mladen Kolar.
\newblock An adaptive stochastic sequential quadratic programming with
  differentiable exact augmented lagrangians.
\newblock {\em arXiv preprint arXiv:2102.05320}, 2021.

\bibitem{NandPathSingSing19}
Yatin Nandwani, Abhishek Pathak, Mausam Singla, and Parag Singla.
\newblock A primal dual formulation for deep learning with constraints.
\newblock In {\em Advances in Neural Information Processing Systems}, pages
  12157--12168, 2019.

\bibitem{PaigSaun75}
Christopher~C Paige and Michael~A Saunders.
\newblock Solution of sparse indefinite systems of linear equations.
\newblock {\em SIAM journal on numerical analysis}, 12(4):617--629, 1975.

\bibitem{PasuGlynGhosHash18}
Raghu Pasupathy, Peter Glynn, Soumyadip Ghosh, and Fatemeh~S Hashemi.
\newblock On sampling rates in simulation-based recursions.
\newblock {\em SIAM Journal on Optimization}, 28(1):45--73, 2018.

\bibitem{RaviDinhLokhSing19}
Sathya~N Ravi, Tuan Dinh, Vishnu~Suresh Lokhande, and Vikas Singh.
\newblock Explicitly imposing constraints in deep networks via conditional
  gradients gives improved generalization and faster convergence.
\newblock In {\em Proceedings of the AAAI Conference on Artificial
  Intelligence}, volume~33, pages 4772--4779, 2019.

\bibitem{ReesDollWath10}
Tyrone Rees, H~Sue Dollar, and Andrew~J Wathen.
\newblock Optimal solvers for pde-constrained optimization.
\newblock {\em SIAM Journal on Scientific Computing}, 32(1):271--298, 2010.

\bibitem{trefethen1997numerical}
Lloyd~N Trefethen and David Bau~III.
\newblock {\em Numerical linear algebra}, volume~50.
\newblock Siam, 1997.

\bibitem{XieBollByrdNoce20}
Yuchen Xie, Raghu Bollapragada, Richard Byrd, and Jorge Nocedal.
\newblock Constrained and composite optimization via adaptive sampling methods.
\newblock {\em arXiv preprint arXiv:2012.15411}, 2020.

\bibitem{zhu2019physics}
Yinhao Zhu, Nicholas Zabaras, Phaedon-Stelios Koutsourelakis, and Paris
  Perdikaris.
\newblock Physics-constrained deep learning for high-dimensional surrogate
  modeling and uncertainty quantification without labeled data.
\newblock {\em Journal of Computational Physics}, 394:56--81, 2019.

\end{thebibliography}

\newpage
\appendix
\newpage
\clearpage

\section{Technical Results (Section~\ref{sec.pred})}\label{sec.stoch_sublin}

In this appendix, we present the complete theoretical results for Algorithm~\ref{alg.adaptiveSQP} under Conditions~\ref{ass.stoch_g_sublin} and \ref{ass.stoch_linear_system}. We present all the technical lemmas required to establish the main theoretical results presented in Section~\ref{sec.pred} (Theorem~\ref{thm.main_stochastic_mp} and Corollary~\ref{cor.iter_complexity_raghu}).


Lemmas~\ref{lem.bound_v}--\ref{lem.Psi} from Section~\ref{sec.adaptive} hold without change. For completeness, we unify those results in the following lemma.

\blemma\label{lem.det2stoch} Under Assumption~\ref{ass.main}, for all $k \in \N{}$, \eqref{eq.system} has a unique solution. In addition, under Assumptions~\ref{ass.main},~\ref{ass.H} and~\ref{ass.residual} and Condition~\ref{ass.stoch_linear_system}, for the same constants $(\kappa_{\vbar}, \kappa_{\overline{uv}}, \kappa_{\bar\Psi}, \kappa_{\bar{l}}) \in \R{}_{>0} \times \R{}_{>0} \times \R{}_{>0} \times \R{}_{>0}$
that appear in the respective lemmas (listed in parenthesis below), the following statements hold for all
$k \in \N{}$:
\benumerate
    \item[a.] (Lemma \ref{lem.bound_v}) The normal component $\bar{v}_k$ is bounded as $\max\{\|\bar{v}_k\|_2,\|\bar{v}_k\|_2^2\} \leq \kappa_{\vbar} \max\{\|c_k\|_2,\|\bar{r}_k\|_2\}$.
    \item[b.] (Lemma \ref{lem.tangential_big}) If $\|\bar{u}_k\|_2^2 \geq \kappa_{\overline{uv}} \|\bar{v}_k\|_2^2$, then $\bar{d}_k^TH_k\bar{d}_k \geq \tfrac{1}{2} \zeta \|\bar{u}_k\|_2^2$ and $\bar{d}_k^TH_k\bar{d}_k \geq \epsilon_d\|\bar{d}_k\|_2^2$ where $\epsilon_d\in (0,\tfrac{\zeta}{2})$ is an user-defined parameter in Algorithm~\ref{alg.adaptiveSQP}.
    \item[c.] (Lemma \ref{lem.Psi_1}) The search direction satisfies $\|\bar{d}_k\|_2^2 \leq \kappa_{\bar\Psi} \bar{\Psi}_k$ and $\|\bar{d}_k\|_2^2 + \|c_k\|_2 \leq  (\kappa_{\bar\Psi} + 1) \bar{\Psi}_k$.
    \item[d.] (Lemma \ref{lem.Psi}) The model reduction satisfies $\Delta l(x_k,\bar{\tau}_k,\bar{g}_k,\bar{d}_k) \geq \kappa_{\bar{l}} \bar\tau_k \bar{\Psi}_k$.
\eenumerate
\elemma

Additionally, if \asb{Condition}~\ref{assum:stoch_redcond_old} holds, then the following results also hold \asb{(Lemmas~\ref{lem.stoch_tau_lb}, \ref{lem.stoch_model_reduction_step_squared}, \ref{lem.stoch_stepsize_bound_old} and~\ref{lem.key_decrease})}.
\begin{lemma}\label{lemma.combo}
    Under \asb{Assumptions~\ref{ass.main},~\ref{ass.H} and~\ref{ass.residual} and Conditions~\ref{assum:stoch_redcond_old} and~\ref{ass.stoch_linear_system}}, \asb{for the same constants that appear in the respective lemmas (listed in parenthesis below), the following statements hold for all $k\in\mathbb{N}$:} 
    \benumerate
        \item [a.] (Lemma \ref{lem.stoch_tau_lb}) There exists a constant $\bar\tau_{\min}\in\mathbb{R}_{>0}$ such that Algorithm~\ref{alg.adaptiveSQP} generates a sequence of $\{\bar\tau_k\}$, where $\bar\tau_k \geq \bar\tau_{\min}$.
        
        \item [b.] (Lemma \ref{lem.stoch_model_reduction_step_squared}) There exist constants $\{\kappa_{\bar\alpha},\kappa_{\Delta \bar{l},\bar{d}}\}\subset\mathbb{R}_{>0}$ such that for all $k\in\mathbb{N}$, $\Delta l(x_k,\bar\tau_k,\bar{g}_k,\bar{d}_k) \geq \kappa_{\bar\alpha}(\bar\tau_kL_k + \Gamma_k)\|\bar{d}_k\|_2^2 \geq \kappa_{\Delta \bar{l},\bar{d}}\|\bar{d}_k\|_2^2$.
    \item[c.] (Lemma \ref{lem.stoch_stepsize_bound_old}) For all $k \in \N{}$, there exists a constant $\underline{\alpha} \in \mathbb{R}_{>0}$ such that, $\underline{\alpha} \beta \leq \bar{\alpha}_k \leq \alpha_u\beta^{(2-\bz{\sigma})}$.
    \item [d.] (Lemma \ref{lem.key_decrease}) Finally, for all $k \in \N{}$, it follows that
\bequation
  \baligned\label{eq.merit_red_det_stoch}
      &\ \phi(x_k + \bar\alpha_k \bar{d}_k, \bar\tau_k) - \phi(x_k, \bar\tau_k) \\
      \leq&\ -\bar\alpha_k \Delta l(x_k,\tau_k,g_k,d_k) + (1-\eta) \bar\alpha_k \beta^{(\bz{\sigma}-1)}  \Delta l(x_k,\bar\tau_k,\bar{g}_k,\bar{d}_k) \\
      & \ + \bar\alpha_k \bar\tau_k g_k^T (\bar{d}_k - d_k) + \bar\alpha_k (\bar\tau_k - \tau_k) g_k^T d_k + \bar\alpha_k\|J_k(\bar{d}_k - \tilde{d}_k)\|_1.
  \ealigned
  \eequation
    \eenumerate
\end{lemma}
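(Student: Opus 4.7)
My plan is to prove Lemma~\ref{lemma.combo} by direct reference to the four lemmas cited in parentheses, after observing that none of those proofs actually uses the form of the right-hand side in Assumptions~\ref{ass.stoch_g} or~\ref{ass.stoch_linear_system_old}. They only rely on (i) the deterministic-flavored facts collected in Lemma~\ref{lem.det2stoch}, which are available under Assumption~\ref{ass.stochastic_all}; (ii) Assumptions~\ref{ass.main}, \ref{ass.H}, and~\ref{ass.residual}, which appear in both Assumption~\ref{ass.stochastic_all_old} and Assumption~\ref{ass.stochastic_all}; and (iii) Assumption~\ref{assum:stoch_redcond_old}, which is imposed again here. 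Since the four earlier proofs never insert the specific upper bounds $\theta_1\beta^\sigma \Delta l(x_k,\tau_k,g_k,d_k)$ or $\theta_2\beta^\sigma \Delta l(x_k,\tau_k,g_k,d_k)$ into any inequality, they transfer without modification, with the same constants.

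For part (a), the proof of Lemma~\ref{lem.stoch_tau_lb} uses only the fact that $\{\tau_k^{trial}\}$ is uniformly bounded away from zero, which follows from Assumptions~\ref{ass.main} and~\ref{ass.H}, together with the ratio bound between $\bar h_k$ and $h_k$ provided by Assumption~\ref{assum:stoch_redcond_old}. Both ingredients are available here, and I would reuse the case split verbatim to conclude $\bar\tau_k\geq \bar\tau_{\min}$ with the same $\bar\tau_{\min}$. For part (b), I would chain Lemma~\ref{lem.det2stoch}(c)--(d) and part (a) exactly as in the proof of Lemma~\ref{lem.stoch_model_reduction_step_squared}: using $\Delta l(x_k,\bar\tau_k,\bar g_k,\bar d_k) \geq \kappa_{\bar l}\bar\tau_k \bar\Psi_k \geq (\kappa_{\bar l}\bar\tau_k/\kappa_{\bar\Psi})\|\bar d_k\|_2^2$ and replacing $\bar\tau_k$ by $\bar\tau_{\min}$ in the denominator yields the constants $\kappa_{\bar\alpha}$ and $\kappa_{\Delta\bar l,\bar d}$ unchanged.

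For part (c), the argument in Lemma~\ref{lem.stoch_stepsize_bound_old} is a case analysis over the min defining $\bar\alpha_k$ in \eqref{eq:alpha_min_stoch}; the only nontrivial lower bound is on the first term, and it is obtained by invoking part (b) together with the range $\beta\in(0,1]$, $\sigma\in[2,4]$. Substituting the already-established part (b) into that derivation reproduces the lower bound $\bar\alpha_k\geq \underline{\alpha}\beta$ with $\underline{\alpha}:=\min\{\alpha_u,1,\kappa_{\bar\alpha},2(1-\eta)\kappa_{\bar\alpha}\}$. For part (d), Lemma~\ref{lem.key_decrease} is a fully deterministic inequality: it combines Lipschitz continuity of $\nabla f$ and $J$ (Assumption~\ref{ass.main}), the triangle inequality applied to $\|c(x_k+\bar\alpha_k\bar d_k)\|_1$, the linear system identity \eqref{eq.system_stochastic}, and the explicit step-size rule \eqref{eq:alpha_min_stoch}. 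None of these depend on how the stochastic errors are controlled, so inequality \eqref{eq.merit_red_det_stoch} holds verbatim.

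The main (and essentially only) obstacle is to certify the claim that no step of the four earlier proofs secretly uses the $\Delta l$-form of the error bound. A line-by-line inspection confirms this: the adaptive bounds from Assumptions~\ref{ass.stoch_g} and~\ref{ass.stoch_linear_system_old} are first invoked in Section~\ref{sec.adaptive} only at Lemmas~\ref{lem.stoch_g_diff_old}--\ref{lem.dtilde_stochastic_old} and onwards, i.e., strictly after the four lemmas bundled here. Once this verification is made, the proof of Lemma~\ref{lemma.combo} reduces to the single sentence indicated in the statement, namely that (a)--(d) follow by the same arguments as their Section~\ref{sec.adaptive} counterparts, with identical constants.
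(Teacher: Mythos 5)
Your proposal is correct and matches the paper's own treatment: the paper gives no explicit proof of this lemma, simply asserting (via the parenthetical references and the companion Lemma~\ref{lem.det2stoch}) that the arguments of Lemmas~\ref{lem.stoch_tau_lb}, \ref{lem.stoch_model_reduction_step_squared}, \ref{lem.stoch_stepsize_bound_old} and \ref{lem.key_decrease} carry over unchanged, and your line-by-line check that those four proofs rely only on Assumptions~\ref{ass.main}, \ref{ass.H}, \ref{ass.residual} and \ref{assum:stoch_redcond_old} (never on the $\Delta l$-form of the error bounds, which first enter at Lemma~\ref{lem.stoch_g_diff_old}) is exactly the justification the paper leaves implicit. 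The constants are indeed unchanged, so nothing further is needed.
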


We now proceed to state and prove a series of lemmas that are analogues to those proven in Section \ref{sec.adaptive}. 
We begin by bounding the variance in the stochastic gradient approximations and the variance in the search directions computed in Lemmas~\ref{lem.new} and \ref{lem.dtilde_stochastic}.

\begin{lemma}\label{lem.new}
      \asb{Suppose Condition~\ref{ass.stoch_g_sublin} holds.} For all $k\in\mathbb{N}$, $\mathbb{E}_k\left[\|\bar{g}_k - g_k\|_2\right] \leq \tfrac{\sqrt{\theta_1} \beta^{\bz{\sigma}}}{(k+1)^{\nu/2}}$.
\end{lemma}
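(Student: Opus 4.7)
The plan is to prove this by a direct application of Jensen's inequality to the concave map $t \mapsto \sqrt{t}$ on the nonnegative random variable $\|\bar{g}_k - g_k\|_2^2$, followed immediately by invoking Assumption~\ref{ass.stoch_g_sublin}. This is the exact same strategy used in Lemma~\ref{lem.stoch_g_diff_old}; the only change is that the right-hand side bound on the second moment is now the predetermined sublinear quantity $\theta_1 \beta^{\sigma}/(k+1)^{\nu}$ rather than a multiple of $\Delta l(x_k,\tau_k,g_k,d_k)$.

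Concretely, I would first write
\[
  \mathbb{E}_k\left[\|\bar{g}_k - g_k\|_2\right] = \mathbb{E}_k\left[\sqrt{\|\bar{g}_k - g_k\|_2^2}\right] \leq \sqrt{\mathbb{E}_k\left[\|\bar{g}_k - g_k\|_2^2\right]},
\]
where the inequality is Jensen's inequality applied to the concave function $\sqrt{\cdot}$ on $\mathbb{R}_{\geq 0}$ (the conditional expectation is well-defined since $\mathbb{E}_k[\|\bar{g}_k - g_k\|_2^2]$ is finite by Assumption~\ref{ass.stoch_g_sublin}). Then I would substitute the bound from Assumption~\ref{ass.stoch_g_sublin},
\[
  \sqrt{\mathbb{E}_k\left[\|\bar{g}_k - g_k\|_2^2\right]} \leq \sqrt{\tfrac{\theta_1 \beta^{\sigma}}{(k+1)^{\nu}}} = \tfrac{\sqrt{\theta_1}\,\beta^{\sigma/2}}{(k+1)^{\nu/2}},
\]
using that $\theta_1 > 0$, $\beta \in (0,1)$, and $k+1 > 0$ so all quantities under the square root are positive. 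Chaining these two inequalities gives the claim.

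There is no significant obstacle: the lemma is a routine moment-to-first-moment conversion and contains no additional structural or algorithmic content beyond Assumption~\ref{ass.stoch_g_sublin}. The only thing worth being careful about is making sure the square root / Jensen step is applied to the conditional expectation $\mathbb{E}_k[\cdot]$ (not the unconditional one), which is fine since Jensen's inequality holds for conditional expectations against any concave function whenever the relevant integrability holds, which it does here by Assumption~\ref{ass.stoch_g_sublin}. The unbiasedness assumption on $\bar{g}_k$ is not needed for this particular bound.
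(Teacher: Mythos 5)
Your proof is correct and follows exactly the same route as the paper: Jensen's inequality applied to the conditional expectation to pass from the first moment to the square root of the second moment, followed by substitution of the bound in Assumption~\ref{ass.stoch_g_sublin}. Nothing further is needed.
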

\begin{proof}
By \asb{Condition}~\ref{ass.stoch_g_sublin} and Jensen's inequality, we have
\begin{equation*}
      \mathbb{E}_k\left[\|\bar{g}_k - g_k\|_2\right] \leq \sqrt{\mathbb{E}_k\left[\|\bar{g}_k - g_k\|_2^2\right]} \leq \tfrac{\sqrt{\theta_1} \beta^{\bz{\sigma}}}{(k+1)^{\nu/2}},
\end{equation*}
which completes the proof.
\end{proof}
\begin{lemma}\label{lem.dtilde_stochastic}
  \asb{Suppose Assumptions~\ref{ass.main}and~\ref{ass.H} and Condition~\ref{ass.stoch_g_sublin} hold.} For all $k$, $\mathbb{E}_k[\tilde{d}_k] = d_k$, $\mathbb{E}_k[\tilde{u}_k] = u_k$, and $\mathbb{E}_k[\tilde{\delta}_k] = \delta_k$.  Moreover, there exists $\kappa_L \in \R{}_{>0}$, 
  such that $\|\tilde{d}_k - d_k\|_2 \leq \kappa_L \|\bar{g}_k - g_k\|_2$ and $\mathbb{E}_k\left[\|\tilde{d}_k - d_k\|_2\right]  \leq  \tfrac{\kappa_{\tilde{d}}\beta^{\bz{\sigma}}}{(k+1)^{\nu/2}}$, 
where $\kappa_{\tilde{d}} = \kappa_L \sqrt{\theta_1} \in \mathbb{R}_{>0}$.
\end{lemma}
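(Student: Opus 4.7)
The plan is to mirror the proof of Lemma~\ref{lem.dtilde_stochastic_old} almost verbatim, substituting the sublinear gradient error bound from Lemma~\ref{lem.new} wherever Lemma~\ref{lem.stoch_g_diff_old} was previously invoked. No genuinely new idea is needed; the key structural observations from the adaptive setting carry over because they depend only on the linearity of the KKT system \eqref{eq.system} and on the boundedness of the inverse of its coefficient matrix, neither of which depends on which gradient-accuracy regime we are in.

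First, I would dispatch the three unbiasedness claims. Conditioned on the algorithm having reached $x_k$, the coefficient matrix in \eqref{eq.system} and the right-hand-side term $c_k$ are deterministic, and by Assumptions~\ref{ass.main} and~\ref{ass.H} the matrix is invertible. Since $\bar{g}_k$ is an unbiased estimator of $g_k$ by Assumption~\ref{ass.stoch_g_sublin}, linearity of conditional expectation applied to \eqref{eq.system} minus \eqref{eq.system_deterministic} gives $\mathbb{E}_k[\tilde{d}_k] = d_k$ and $\mathbb{E}_k[\tilde{\delta}_k] = \delta_k$. For $\tilde{u}_k$, I note that $\tilde{u}_k = (I - J_k^T(J_kJ_k^T)^{-1}J_k)\tilde{d}_k$ is the image of $\tilde{d}_k$ under a linear operator that depends only on the deterministic $J_k$, so $\mathbb{E}_k[\tilde{u}_k] = (I - J_k^T(J_kJ_k^T)^{-1}J_k)d_k = u_k$.

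Next, for the deterministic residual bound, subtracting \eqref{eq.system_deterministic} from \eqref{eq.system} yields
\[
\begin{bmatrix} \tilde{d}_k - d_k \\ \tilde{\delta}_k - \delta_k \end{bmatrix} = -\begin{bmatrix} H_k & J_k^T \\ J_k & 0 \end{bmatrix}^{-1} \begin{bmatrix} \bar{g}_k - g_k \\ 0 \end{bmatrix}.
\]
Under Assumption~\ref{ass.stochastic_all} (specifically the bounds on $H_k$, $J_k$, and the singular values of $J_k$), the operator norm of the inverse KKT matrix is uniformly bounded above by some constant $\kappa_L \in \R{}_{>0}$ that does not depend on $k$. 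Taking 2-norms on both sides therefore produces the pointwise bound $\|\tilde{d}_k - d_k\|_2 \leq \kappa_L\|\bar{g}_k - g_k\|_2$.

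Finally, taking conditional expectations of this pointwise inequality and applying Lemma~\ref{lem.new} gives
\[
\mathbb{E}_k\left[\|\tilde{d}_k - d_k\|_2\right] \leq \kappa_L \, \mathbb{E}_k\left[\|\bar{g}_k - g_k\|_2\right] \leq \kappa_L \cdot \tfrac{\sqrt{\theta_1}\beta^{\sigma/2}}{(k+1)^{\nu/2}} = \tfrac{\kappa_{\tilde{d}}\beta^{\sigma/2}}{(k+1)^{\nu/2}},
\]
with $\kappa_{\tilde{d}} := \kappa_L\sqrt{\theta_1}$ as stated. I do not anticipate any real obstacle here: the only moving part is the replacement of the adaptive variance bound by the sublinear one, and the constant $\kappa_L$ comes from the already-established uniform boundedness of the inverse KKT matrix under Assumption~\ref{ass.stochastic_all}.
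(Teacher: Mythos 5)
Your proposal is correct and follows essentially the same route as the paper: subtract \eqref{eq.system_deterministic} from \eqref{eq.system}, bound the inverse KKT matrix uniformly by $\kappa_L$ to get the pointwise inequality, then take conditional expectations and invoke Lemma~\ref{lem.new}. Your explicit projection argument for $\mathbb{E}_k[\tilde{u}_k]=u_k$ is a slightly more detailed version of what the paper leaves implicit, but it is the same idea.
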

\begin{proof}
The first two statements follow the same arguments as in the proof of Lemma~\ref{lem.dtilde_stochastic_old}. By \eqref{eq.system} and Lemma~\ref{lem.new},
\begin{equation*}
    \mathbb{E}_k\left[\|\tilde{d}_k - d_k \|_2\right] \leq \mathbb{E}_k\left[\kappa_L \|\bar{g}_k - g_k \|_2 \right] = \kappa_L \mathbb{E}_k\left[\|\bar{g}_k - g_k \|_2 \right] \leq \kappa_L \beta^{\bz{\sigma}}\tfrac{\sqrt{\theta_1}}{(k+1)^{\nu/2}},
\end{equation*}
which proves the last statement.
\end{proof}

Similar to Lemma~\ref{lem.gT(d_diff)_stoch_old}, the next lemma provides bounds on the differences between stochastic and deterministic gradient approximations and exact and inexact search directions.
\begin{lemma}\label{lem.gT(d_diff)_stoch}
\asb{Suppose Assumptions~\ref{ass.main},~\ref{ass.H} and~\ref{ass.residual} and Condition~\ref{ass.stoch_linear_system} hold.} For all $k\in\N{}$,
\bequationNN
  \baligned
    |g_k^T(d_k - \tilde{d}_k)| \leq\ &\kappa_{g,d\tilde{d},\sqrt{\nu}} \|g_k - \bar{g}_k\|_2 \sqrt{\Delta l(x_k,\tau_k,g_k,d_k)},\\
    |\bar{g}_k^T\tilde{d}_k - g_k^Td_k| \leq\  &\kappa_{\bar{g}g,\tilde{d}d,\sqrt{\nu}}\|g_k - \bar{g}_k\|_2\sqrt{\Delta l(x_k,\tau_k,g_k,d_k)} + \kappa_L\|\bar{g}_k - g_k\|_2^2,\\
    |\bar{g}_k^T(\bar{d}_k - \tilde{d}_k)| \leq\     &\kappa_{\bar{g},\bar{d}\tilde{d},\sqrt{\nu}}\beta^{\bz{\sigma}}\tfrac{\sqrt{\Delta l(x_k,\tau_k,g_k,d_k)}}{\sqrt{(k+1)^{\nu}}} + \tfrac{\sqrt{\theta_2}\beta^{\bz{\sigma}}}{\sqrt{(k+1)^{\nu}}}\|\bar{g}_k - g_k\|_2 \\
    &+ \bar{\kappa}_{\bar{g},\bar{d}\tilde{d}} \beta^{\bz{\sigma}} \Delta l(x_k,\bar{\tau}_k,\bar{g}_k,\bar{d}_k),\\
    |g_k^T (\bar{d}_k - d_k)| \leq\ &\kappa_{g,d\tilde{d},\sqrt{\nu}} \|g_k - \bar{g}_k\|_2 \sqrt{\Delta l(x_k,\tau_k,g_k,d_k)} \\
    &+ \kappa_{g,\bar{d}d,\sqrt{\nu}}\beta^{\bz{\sigma}}\tfrac{\sqrt{\Delta l(x_k,\tau_k,g_k,d_k)}}{\sqrt{(k+1)^{\nu}}} + \bar{\kappa}_{g,\bar{d}d} \beta^{\bz{\sigma}} \Delta l(x_k,\bar{\tau}_k,\bar{g}_k,\bar{d}_k),\\
    |g_k^Td_k - \bar{g}_k^T\bar{d}_k| \leq\  &\kappa_{\bar{g}g,\tilde{d}d,\sqrt{\nu}} \|g_k - \bar{g}_k\|_2 \sqrt{\Delta l(x_k,\tau_k,g_k,d_k)} + \kappa_L\|\bar{g}_k - g_k\|^2_2 \\
    &+ \kappa_{\bar{g},\bar{d}\tilde{d},\sqrt{\nu}}\beta^{\bz{\sigma}}\tfrac{\sqrt{\Delta l(x_k,\tau_k,g_k,d_k)}}{\sqrt{(k+1)^{\nu}}} + \tfrac{\sqrt{\theta_2}\beta^{\bz{\sigma}}}{\sqrt{(k+1)^{\nu}}}\|\bar{g}_k - g_k\|_2 \\
    &+ \bar{\kappa}_{\bar{g},\bar{d}\tilde{d}} \beta^{\bz{\sigma}} \Delta l(x_k,\bar{\tau}_k,\bar{g}_k,\bar{d}_k),\\
    \text{and} \ \ \|J_k(\bar{d}_k - \tilde{d}_k)\|_1 \leq\  &\bar{\kappa}_{J,\bar{d}\tilde{d}} \beta^{\bz{\sigma}} \Delta l(x_k,\bar{\tau}_k,\bar{g}_k,\bar{d}_k), 
  \ealigned
\eequationNN
 where $\kappa_{g,d\tilde{d},\sqrt{\nu}} = \tfrac{\kappa_H\kappa_L}{\sqrt{\kappa_{\Delta l,d}}} \in \R{}_{>0}$,
 $\kappa_{\bar{g}g,\tilde{d}d,\sqrt{\nu}} = \kappa_{g,d\tilde{d},\sqrt{\nu}} + \tfrac{1}{\sqrt{\kappa_{\Delta l,d}}}  \in \R{}_{>0}$,
 $\kappa_{\bar{g},\bar{d}\tilde{d},\sqrt{\nu}} = \kappa_H \sqrt{\tfrac{\theta_2}{\kappa_{\Delta l,d}}} \in \R{}_{>0}$,
 $\bar{\kappa}_{\bar{g},\bar{d}\tilde{d}} = \kappa_{y\delta}\omega_a \in \R{}_{>0}$,
 $\kappa_{g,\bar{d}d,\sqrt{\nu}} = \kappa_H\sqrt{\tfrac{\theta_2}{\kappa_{\Delta l,d}}}\in \R{}_{>0}$,
 $\bar{\kappa}_{g,\bar{d}d} = \kappa_{y\delta}\omega_a \in \R{}_{>0}$,
 and $\bar{\kappa}_{J,\bar{d}\tilde{d}} = \omega_a \in \R{}_{>0}$. 
\end{lemma}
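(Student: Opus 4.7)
The plan is to prove the six bounds in Lemma~\ref{lem.gT(d_diff)_stoch} by closely mimicking the proof of its adaptive counterpart (Lemma~\ref{lem.gT(d_diff)_stoch_old}), with the key substitution being that Assumption~\ref{ass.stoch_linear_system} provides a predetermined bound $\|\tilde{d}_k - \bar{d}_k\|_2 \leq \sqrt{\theta_2}\beta^{\sigma/2}/(k+1)^{\nu/2}$ in place of the adaptive $\sqrt{\theta_2}\beta^{\sigma/2}\sqrt{\Delta l(x_k,\tau_k,g_k,d_k)}$ from Assumption~\ref{ass.stoch_linear_system_old}. The common workhorses are: the identity $g_k + J_k^T(y_k + \delta_k) = -H_k d_k$ from \eqref{eq.system_deterministic}; the inequality $\|d_k\|_2 \leq \sqrt{\Delta l(x_k,\tau_k,g_k,d_k)/\kappa_{\Delta l,d}}$ from Remark~\ref{rm:deter_ld}; Lemma~\ref{lem.dtilde_stochastic} for $\|\tilde{d}_k - d_k\|_2 \leq \kappa_L \|\bar{g}_k - g_k\|_2$; Lemma~\ref{lem.bound_ydelta} for $\|y_k + \delta_k\|_\infty \leq \kappa_{y\delta}$; Lemma~\ref{lem.residual} for $\|\bar{r}_k\|_1 \leq \omega_a \beta^{\sigma/2} \Delta l(x_k,\bar\tau_k,\bar{g}_k,\bar{d}_k)$; and the identity $J_k(\bar{d}_k - \tilde{d}_k) = \bar{r}_k$ from \eqref{eq.system}--\eqref{eq.system_stochastic}.

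For the first, second, and sixth inequalities, the proof is literally identical to that of Lemma~\ref{lem.gT(d_diff)_stoch_old}, since Assumption~\ref{ass.stoch_linear_system} is not invoked there; only properties of $d_k$, $\tilde{d}_k$, and $\bar{r}_k$ are used. Thus I would simply replicate those three arguments, relabeling the constants $\kappa_{g,d\tilde{d}}$, $\kappa_{\bar{g}g,\tilde{d}d}$, $\bar{\kappa}_{J,\bar{d}\tilde{d}}$ as $\kappa_{g,d\tilde{d},\sqrt{\nu}}$, $\kappa_{\bar{g}g,\tilde{d}d,\sqrt{\nu}}$, $\bar{\kappa}_{J,\bar{d}\tilde{d}}$, respectively (they are defined by the same formulas).

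For the third inequality, I would write $|\bar{g}_k^T(\bar{d}_k - \tilde{d}_k)| \leq |(g_k + J_k^T(y_k+\delta_k))^T(\bar{d}_k - \tilde{d}_k)| + |(\bar{g}_k - g_k)^T(\bar{d}_k - \tilde{d}_k)| + |(J_k^T(y_k + \delta_k))^T(\bar{d}_k - \tilde{d}_k)|$, then bound the three terms using, respectively, the identity from \eqref{eq.system_deterministic} with Cauchy--Schwarz and Assumption~\ref{ass.stoch_linear_system}, a direct Cauchy--Schwarz with Assumption~\ref{ass.stoch_linear_system}, and $|\langle J_k^T(y_k+\delta_k),\bar{d}_k - \tilde{d}_k\rangle| \leq \|y_k+\delta_k\|_\infty \|\bar{r}_k\|_1$. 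The three resulting contributions are exactly $\kappa_H \sqrt{\Delta l(x_k,\tau_k,g_k,d_k)/\kappa_{\Delta l,d}} \cdot \sqrt{\theta_2}\beta^{\sigma/2}/(k+1)^{\nu/2}$, $\sqrt{\theta_2}\beta^{\sigma/2}/(k+1)^{\nu/2}\|\bar{g}_k - g_k\|_2$, and $\kappa_{y\delta}\omega_a\beta^{\sigma/2}\Delta l(x_k,\bar\tau_k,\bar{g}_k,\bar{d}_k)$, which collapse into the advertised bound once $\kappa_{\bar{g},\bar{d}\tilde{d},\sqrt{\nu}}$ and $\bar{\kappa}_{\bar{g},\bar{d}\tilde{d}}$ are defined as in the statement. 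The fourth inequality is handled analogously but without the $(\bar{g}_k - g_k)^T(\bar{d}_k - \tilde{d}_k)$ cross term, after first writing $g_k^T(\bar{d}_k - d_k) = g_k^T(\tilde{d}_k - d_k) + g_k^T(\bar{d}_k - \tilde{d}_k)$ and invoking the first inequality on the first piece. The fifth inequality then follows from the triangle inequality $|g_k^Td_k - \bar{g}_k^T\bar{d}_k| \leq |g_k^Td_k - \bar{g}_k^T\tilde{d}_k| + |\bar{g}_k^T(\tilde{d}_k - \bar{d}_k)|$ combined with the second and third inequalities.

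I do not anticipate a genuinely hard step: the main subtlety is purely bookkeeping, namely tracking where the factor $\sqrt{\Delta l(x_k,\tau_k,g_k,d_k)}$ originates (always from $\|d_k\|_2$ via Remark~\ref{rm:deter_ld}) versus the factor $1/(k+1)^{\nu/2}$ (always from Assumption~\ref{ass.stoch_linear_system} or Lemma~\ref{lem.new}) and confirming that no cross term spawns a product of two $1/(k+1)^{\nu/2}$ factors that would spoil the linear-in-$1/(k+1)^{\nu/2}$ structure claimed. The bookkeeping is automatic here because every occurrence of $\|\bar{d}_k - \tilde{d}_k\|_2$ is paired with a deterministic quantity (either $\|d_k\|_2$, $1$, or $\|\bar{g}_k - g_k\|_2$, the last of which is left explicit in the statement), and every occurrence of $\|\bar{r}_k\|_1$ is paired with the uniformly bounded $\|y_k + \delta_k\|_\infty$ through Lemma~\ref{lem.bound_ydelta}.
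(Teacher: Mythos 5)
Your proposal matches the paper's proof essentially step for step: the same three-term decomposition via $\pm J_k^T(y_k+\delta_k)$ for the third and fourth inequalities, the same use of $g_k+J_k^T(y_k+\delta_k)=-H_kd_k$ with $\|d_k\|_2\leq\sqrt{\Delta l(x_k,\tau_k,g_k,d_k)/\kappa_{\Delta l,d}}$, the same handling of $\|\bar{r}_k\|_1$ through Lemmas~\ref{lem.bound_ydelta} and~\ref{lem.residual}, and the same triangle-inequality assembly of the fifth bound from the second and third. The observation that the first, second, and sixth inequalities carry over verbatim from Lemma~\ref{lem.gT(d_diff)_stoch_old} is also exactly how the paper proceeds, so no further comment is needed.
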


\begin{proof}
    \emph{(First inequality)} 
    By Assumption~\ref{ass.H}, Lemma~\ref{lem.dtilde_stochastic} and \eqref{eq.system}, \eqref{eq.system_deterministic}, \eqref{eq.deltal_d}, 
    \begin{equation}\label{eq.res1_stoch}
    \begin{aligned}
        |g_k^T(d_k - \tilde{d}_k)| =\ & |(g_k + J_k^T(y_k + \delta_k))^T(d_k - \tilde{d}_k)| \\
        =\ & |(H_kd_k)^T(d_k - \tilde{d}_k)|
        \leq\ \kappa_H\|d_k\|_2 \|d_k - \tilde{d}_k\|_2 \\
        \leq\ & \kappa_H \tfrac{\sqrt{\Delta l(x_k,\tau_k,g_k,d_k)}}{\sqrt{\kappa_{\Delta l,d}}} \kappa_L\|g_k - \bar{g}_k\|_2,
    \end{aligned}
    \end{equation}
    where the result follows using the definition of $\kappa_{g,d\tilde{d},\sqrt{\nu}}$.
    
    \emph{(Second inequality)} 
    By the Cauchy–Schwarz inequality, the triangle inequality, Assumption~\ref{ass.H}, \eqref{eq.system}, \eqref{eq.system_deterministic}, \eqref{eq.deltal_d} and \eqref{eq.res1_stoch}, and Lemma~\ref{lem.dtilde_stochastic}, it follows that
  \begin{equation}\label{eq.res2_stoch}
    \begin{aligned}
      &|\bar{g}_k^T\tilde{d}_k - g_k^Td_k| \\
      \leq\ & |(\bar{g}_k - g_k)^Td_k| + |g_k^T(\tilde{d}_k - d_k)| + |(\bar{g}_k - g_k)^T(\tilde{d}_k - d_k)| \\
      \leq\ &\|\bar{g}_k - g_k\|_2\|d_k\|_2 + |g_k^T(\tilde{d}_k - d_k)| + \|\bar{g}_k - g_k\|_2\|\tilde{d}_k - d_k\|_2 \\
      \leq\ &\left(\kappa_{g,d\tilde{d},\sqrt{\nu}} + \tfrac{1}{\sqrt{\kappa_{\Delta l,d}}} \right) \|g_k - \bar{g}_k\|_2 \sqrt{\Delta l(x_k,\tau_k,g_k,d_k)} + \kappa_L\|\bar{g}_k - g_k\|^2_2,
  \end{aligned}
  \end{equation}
  where the result follows using the definitions of $\kappa_{\bar{g}g,\tilde{d}d,\sqrt{\nu}}$. 
  
  \emph{(Third inequality)} 
  By Assumption~\ref{ass.H}, Lemmas~\ref{lem.residual},~\ref{lem.bound_ydelta} and~\ref{lem.dtilde_stochastic}, \eqref{eq.system}, \eqref{eq.system_stochastic}, \eqref{eq.system_deterministic}, and \eqref{eq.deltal_d}, we have for all $k\in\mathbb{N}$ that
  \begin{equation}\label{eq.res3_stoch}
    \begin{aligned}
      |\bar{g}_k^T(\bar{d}_k -\tilde{d}_k)| =\ & |(\bar{g}_k + J_k^T(y_k + \delta_k) - J_k^T(y_k + \delta_k))^T(\bar{d}_k -\tilde{d}_k)| \\
      \leq\ & |(\bar{g}_k + J_k^T(y_k + \delta_k) )^T(\bar{d}_k -\tilde{d}_k)| + |( J_k^T(y_k + \delta_k))^T(\bar{d}_k -\tilde{d}_k)| \\
      \leq\ & |(g_k + J_k^T(y_k + \delta_k) )^T(\bar{d}_k -\tilde{d}_k)| + |(\bar{g}_k - g_k)^T(\bar{d}_k -\tilde{d}_k)| \\
      &+ |( J_k^T(y_k + \delta_k))^T(\bar{d}_k -\tilde{d}_k)| \\
      =\ & |(H_k d_k )^T(\bar{d}_k -\tilde{d}_k)| + |(\bar{g}_k - g_k)^T(\bar{d}_k -\tilde{d}_k)| + |(y_k + \delta_k)^T \bar{r}_k| \\
      \leq\ & |(H_k d_k )^T(\bar{d}_k -\tilde{d}_k)| + \|\bar{g}_k - g_k\|_2\|\bar{d}_k - \tilde{d}_k\|_2 + \|y_k + \delta_k\|_{\infty}  \|\bar{r}_k\|_1 \\
      \leq\ & \kappa_H  \sqrt{\tfrac{\Delta l(x_k,\tau_k,g_k,d_k)}{\kappa_{\Delta l,d}}} \tfrac{\beta^{\bz{\sigma}}\sqrt{\theta_2}}{\sqrt{(k+1)^{\nu}}} + \tfrac{\beta^{\bz{\sigma}}\sqrt{\theta_2}}{\sqrt{(k+1)^{\nu}}}\|\bar{g}_k - g_k\|_2 \\
      & + \kappa_{y\delta} \omega_a \beta^{\bz{\sigma}} \Delta l(x_k,\bar\tau_k,\bar{g}_k,\bar{d}_k), 
  \end{aligned}
  \end{equation}
  where the result follows using the definitions of $\kappa_{\bar{g},\bar{d}\tilde{d},\sqrt{\nu}}$ and $\bar{\kappa}_{\bar{g},\bar{d}\tilde{d}}$. 
  
  \emph{(Fourth inequality)} 
  By Assumption~\ref{ass.H}, Lemmas~\ref{lem.residual} and~\ref{lem.bound_ydelta}, \eqref{eq.system}, \eqref{eq.system_stochastic}, \eqref{eq.system_deterministic} and \eqref{eq.deltal_d}, it follows that
\begin{equation*}
\begin{aligned}
    |g_k^T(\bar{d}_k - \tilde{d}_k)| =\ & |(g_k + J_k^T(y_k + \delta_k) - J_k^T(y_k + \delta_k))^T(\bar{d}_k -\tilde{d}_k)| \\
    \leq\ & |(g_k + J_k^T(y_k + \delta_k) )^T(\bar{d}_k -\tilde{d}_k)| + |( J_k^T(y_k + \delta_k))^T(\bar{d}_k -\tilde{d}_k)| \\
    \leq\ & |(H_k d_k )^T(\bar{d}_k -\tilde{d}_k)| + \|y_k + \delta_k\|_{\infty} \|\bar{r}_k\|_1 \\
    \leq\ & \kappa_H  \sqrt{\tfrac{\Delta l(x_k,\tau_k,g_k,d_k)}{\kappa_{\Delta l,d}}} \beta^{\bz{\sigma}} \tfrac{\sqrt{\theta_2}}{\sqrt{(k+1)^{\nu}}}  + \kappa_{y\delta} \omega_a \beta^{\bz{\sigma}} \Delta l(x_k,\bar\tau_k,\bar{g}_k,\bar{d}_k).
\end{aligned}
\end{equation*}
By the triangle inequality and \eqref{eq.res1_stoch},
\begin{equation*}
\begin{aligned}
    |g_k^T(\bar{d}_k - d_k)| \leq\ & |g_k^T(\tilde{d}_k - d_k)| + |g_k^T(\bar{d}_k - \tilde{d}_k)| \\
    \leq\ &\kappa_{g,d\tilde{d},\sqrt{\nu}}\|g_k - \bar{g}_k\|_2\sqrt{\Delta l(x_k,\tau_k,g_k,d_k)} \\
    & + \kappa_H\sqrt{\tfrac{\theta_2}{\kappa_{\Delta l,d}}} \beta^{\bz{\sigma}} \tfrac{\sqrt{\Delta l(x_k,\tau_k,g_k,d_k)}}{\sqrt{(k+1)^{\nu}}} + \kappa_{y\delta} \omega_a \beta^{\bz{\sigma}} \Delta l(x_k,\bar\tau_k,\bar{g}_k,\bar{d}_k),
\end{aligned}
\end{equation*}
where the result follows using the definition of $\kappa_{g,\bar{d}d,\sqrt{\nu}}$ and $\bar{\kappa}_{g,\bar{d}d}$. 

\emph{(Fifth inequality)} 
By \eqref{eq.res2_stoch}, \eqref{eq.res3_stoch},
\begin{equation*}
\begin{aligned}
    |g_k^Td_k - \bar{g}_k^T\bar{d}_k|  \leq\ & |g_k^Td_k - \bar{g}_k^T\tilde{d}_k| + |\bar{g}_k^T(\tilde{d}_k - \bar{d}_k)| \\
    \leq\ & \kappa_{\bar{g}g,\tilde{d}d,\sqrt{\nu}} \|g_k - \bar{g}_k\|_2 \sqrt{\Delta l(x_k,\tau_k,g_k,d_k)} + \kappa_L\|\bar{g}_k - g_k\|^2_2 \\
    &+ \kappa_{\bar{g},\bar{d}\tilde{d},\sqrt{\nu}}\beta^{\bz{\sigma}}\tfrac{\sqrt{\Delta l(x_k,\tau_k,g_k,d_k)}}{\sqrt{(k+1)^{\nu}}} + \tfrac{\sqrt{\theta_2}\beta^{\bz{\sigma}}}{\sqrt{(k+1)^{\nu}}}\|\bar{g}_k - g_k\|_2\\
    &+ \bar{\kappa}_{\bar{g},\bar{d}\tilde{d}} \beta^{\bz{\sigma}} \Delta l(x_k,\bar{\tau}_k,\bar{g}_k,\bar{d}_k).
\end{aligned}
\end{equation*}

\emph{(Sixth inequality)} 
By Lemma~\ref{lem.residual}, and \eqref{eq.system},  \eqref{eq.system_stochastic},
\begin{equation*}
    \|J_k(\bar{d}_k - \tilde{d}_k)\|_1 = \|\bar{r}_k\|_1 \leq \omega_a\beta^{\bz{\sigma}} \Delta l(x_k,\bar{\tau}_k,\bar{g}_k,\bar{d}_k),
\end{equation*}
where the result follows using the definition of $\bar\kappa_{J,\bar{d}\tilde{d}}$.
\end{proof}

The next lemma provides a useful upper bound on the difference between the deterministic and stochastic merit parameters.

\begin{lemma}\label{lem.tau_bound_stoch}
    \asb{Suppose Assumptions~~\ref{ass.H} and~\ref{ass.residual} and Condition~\ref{assum:stoch_redcond_old} hold.} For all $k\in\N{}$, $|(\bar\tau_k - \tau_k)g_k^Td_k|
    \leq \kappa_{\bar{\tau}}\beta^{\bz{\sigma}}\Delta l(x_k,\tau_k,g_k,d_k)$, 
where $\kappa_{\bar{\tau}} = 2\theta_3 \bar{\tau}_{-1}\kappa_{gd,\Delta l}$ and $\beta\in \left(0,\sfrac{1}{(2\theta_3)^{\bz{1/\sigma}}}\right]$. 
\end{lemma}
\begin{proof}
The proof of this statement is identical to the proof of Lemma \ref{lem.tau_bound_old}.
\end{proof}

Similar to Lemma~\ref{lem.delta_l_bound_stoch_old}, the next lemma bounds the stochastic model reduction function with respect to its deterministic counterpart (with additional terms).
\begin{lemma}\label{lem.delta_l_bound_stoch}
      \asb{Suppose Assumptions~\ref{ass.main},~\ref{ass.H} and~\ref{ass.residual} and Conditions~\ref{assum:stoch_redcond_old} and~\ref{ass.stoch_linear_system} hold.} For all $k \in \N{}$ and $\beta \in \left(0,\sfrac{1}{\left(2\bar\tau_{-1}\bar\kappa_{\bar{g},\bar{d}\tilde{d}}\right)^{\bz{1/\sigma}}}\right]$, 
   \begin{equation*}
   \begin{aligned}
   \Delta l(x_k,\bar\tau_k,\bar{g}_k,\bar{d}_k) \leq\ &(1 + \kappa_{\overline{\Delta l},\Delta l}\beta^{\bz{\sigma}})\Delta l(x_k,\tau_k,g_k,d_k) \\
   &+ 2\bar\tau_{-1}\left( \kappa_L\|\bar{g}_k - g_k\|^2_2 + \kappa_{\bar{g},\bar{d}\tilde{d},\sqrt{\nu}}\tfrac{\beta^{\bz{\sigma}}\sqrt{\Delta l(x_k,\tau_k,g_k,d_k)}}{\sqrt{(k+1)^{\nu}}} \right. \\
   &\left. + \kappa_{\bar{g}g,\tilde{d}d,\sqrt{\nu}} \|g_k - \bar{g}_k\|_2 \sqrt{\Delta l(x_k,\tau_k,g_k,d_k)} + \tfrac{\sqrt{\theta_2}\beta^{\bz{\sigma}}\|\bar{g}_k - g_k\|_2}{\sqrt{(k+1)^{\nu}}} \right),
   \end{aligned}
   \end{equation*}
   where $\kappa_{\overline{\Delta l},\Delta l} =  2(\kappa_{\bar\tau} + \bar\tau_{-1}\bar\kappa_{\bar{g},\bar{d}\tilde{d}}) \in \mathbb{R}_{>0}$. Additionally, under \asb{Condition~\ref{ass.stoch_g_sublin}}, for all $k \in \N{}$
   \begin{equation*}
   \begin{aligned}
       \mathbb{E}_k\left[\Delta l(x_k,\bar\tau_k,
       \bar{g}_k,\bar{d}_k)\right] \leq\ &(1 + \bar\kappa_{\overline{\Delta l},\Delta l}\beta^{\bz{\sigma}}) \Delta l(x_k,\tau_k,g_k,d_k) \\
       &+ \bar\kappa_{\overline{\Delta l},\Delta l,\sqrt{\nu}}\beta^{\bz{\sigma}}\tfrac{\sqrt{\Delta l(x_k,\tau_k,g_k,d_k)}}{\sqrt{(k+1)^{\nu}}} + \bar\kappa_{\overline{\Delta l},\Delta l,\nu} \beta^{\bz{2\sigma}}\tfrac{1}{(k+1)^{\nu}},
   \end{aligned}
   \end{equation*}
   where $\bar\kappa_{\overline{\Delta l},\Delta l} = \kappa_{\overline{\Delta l},\Delta l}   \in \mathbb{R}_{>0}$, $\bar\kappa_{\overline{\Delta l},\Delta l,\sqrt{\nu}} = 2\bar\tau_{-1}(\kappa_{\bar{g},\bar{d}\tilde{d},\sqrt{\nu}} + \kappa_{\bar{g}g,\tilde{d}d,\sqrt{\nu}}\sqrt{\theta_1}) \in \mathbb{R}_{>0}$, and $\bar\kappa_{\overline{\Delta l},\Delta l,\nu} = 2\bar\tau_{-1}(\kappa_L\theta_1 + \sqrt{\theta_1\theta_2}) \in \mathbb{R}_{>0}$.
\end{lemma}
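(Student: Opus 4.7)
The plan is to mirror the proof of Lemma~\ref{lem.delta_l_bound_stoch_old} but substituting the sublinear error bounds of Lemma~\ref{lem.gT(d_diff)_stoch} (valid under Assumption~\ref{ass.stochastic_all}) in place of the adaptive bounds of Lemma~\ref{lem.gT(d_diff)_stoch_old}. First, I would expand the difference of model reductions using definition \eqref{eq.model_reduction} and the identity $J_k d_k = -c_k$ from \eqref{eq.system_deterministic}:
\begin{equation*}
\begin{aligned}
    \Delta l(x_k,\bar\tau_k,\bar g_k,\bar d_k) &= -\bar\tau_k \bar g_k^T \bar d_k + \|c_k\|_1 - \|c_k + J_k \bar d_k\|_1 \\
    &\leq \Delta l(x_k,\tau_k,g_k,d_k) + |(\tau_k - \bar\tau_k)g_k^T d_k| + \bar\tau_{-1}|g_k^T d_k - \bar g_k^T \bar d_k|,
\end{aligned}
\end{equation*}
where the nonnegative term $\|c_k + J_k \bar d_k\|_1$ is dropped and the monotonicity $\bar\tau_k \leq \bar\tau_{-1}$ is used.

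Next I would apply Lemma~\ref{lem.tau_bound_stoch} to the first error term and the fifth inequality of Lemma~\ref{lem.gT(d_diff)_stoch} to the second. The subtle point is that the fifth inequality contains a copy of $\Delta l(x_k,\bar\tau_k,\bar g_k,\bar d_k)$ itself with coefficient $\bar\kappa_{\bar g,\bar d\tilde d}\beta^{\sigma/2}$. By the hypothesis $\beta \leq (2\bar\tau_{-1}\bar\kappa_{\bar g,\bar d\tilde d})^{-2/\sigma}$, multiplying by $\bar\tau_{-1}$ keeps this below $1/2$, so the term can be absorbed to the left, inverted via $1/(1-x) \leq 2$ for $x \leq 1/2$, and produces an overall factor of at most $2$ on the remaining terms. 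Collecting constants yields $\kappa_{\overline{\Delta l},\Delta l} = 2(\kappa_{\bar\tau} + \bar\tau_{-1}\bar\kappa_{\bar g,\bar d\tilde d})$, the four surviving error contributions being precisely those listed in the first displayed inequality.

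For the expectation bound, I would take $\mathbb{E}_k[\cdot]$ on both sides of the first inequality. The quantities $\Delta l(x_k,\tau_k,g_k,d_k)$, $\tau_k$, $g_k$, and $d_k$ are deterministic conditional on reaching $x_k$, so they pass through the expectation. The remaining stochastic quantities are handled by Assumption~\ref{ass.stoch_g_sublin}, which gives $\mathbb{E}_k[\|\bar g_k - g_k\|_2^2] \leq \theta_1\beta^\sigma/(k+1)^\nu$, and Lemma~\ref{lem.new}, which gives $\mathbb{E}_k[\|\bar g_k - g_k\|_2] \leq \sqrt{\theta_1}\beta^{\sigma/2}/(k+1)^{\nu/2}$. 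In particular, the cross term $\sqrt{\theta_2}\beta^{\sigma/2}\|\bar g_k - g_k\|_2/\sqrt{(k+1)^\nu}$ yields a contribution of order $\sqrt{\theta_1\theta_2}\beta^\sigma/(k+1)^\nu$ in expectation. Grouping terms by their $k$-dependence into three classes — those multiplying $\Delta l(x_k,\tau_k,g_k,d_k)$, those of the form $\beta^{\sigma/2}\sqrt{\Delta l(x_k,\tau_k,g_k,d_k)}/\sqrt{(k+1)^\nu}$, and those of the form $\beta^{\sigma}/(k+1)^\nu$ — recovers the three constants $\bar\kappa_{\overline{\Delta l},\Delta l}$, $\bar\kappa_{\overline{\Delta l},\Delta l,\sqrt\nu}$, and $\bar\kappa_{\overline{\Delta l},\Delta l,\nu}$ as stated.

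The proof is essentially bookkeeping; the only conceptual step is the self-absorption of $\Delta l(x_k,\bar\tau_k,\bar g_k,\bar d_k)$ from the right-hand side, which is resolved exactly as in Lemma~\ref{lem.delta_l_bound_stoch_old}. The main obstacle is tracking three distinct $k$-dependencies (constant, $(k+1)^{-\nu/2}$, $(k+1)^{-\nu}$) through the rearrangement without conflating coefficients; care is needed to avoid applying Jensen's inequality where it is not tight, since the linear term $\sqrt{\Delta l(x_k,\tau_k,g_k,d_k)}$ must remain outside the expectation of $\|\bar g_k - g_k\|_2$ to match the stated constants exactly.
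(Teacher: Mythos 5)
Your proposal is correct and follows essentially the same route as the paper: expand $\Delta l(x_k,\bar\tau_k,\bar g_k,\bar d_k)$ via \eqref{eq.model_reduction}, drop the nonnegative $\|c_k+J_k\bar d_k\|_1$ term, bound the two error terms by Lemma~\ref{lem.tau_bound_stoch} and the fifth inequality of Lemma~\ref{lem.gT(d_diff)_stoch}, absorb the self-referencing $\Delta l(x_k,\bar\tau_k,\bar g_k,\bar d_k)$ term using the restriction on $\beta$ (via $\tfrac{1+a}{1-b}\leq 1+2(a+b)$ for $b\leq\tfrac12$, which is what yields the stated $\kappa_{\overline{\Delta l},\Delta l}$ rather than a literal factor of $2$ on the leading term), and then take $\mathbb{E}_k$ using Assumption~\ref{ass.stoch_g_sublin} and Lemma~\ref{lem.new}. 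The constants you identify match the paper's exactly.
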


\begin{proof}
By \eqref{eq.model_reduction}, and Lemmas~\ref{lem.gT(d_diff)_stoch} and \ref{lem.tau_bound_stoch}, it follows that
\begin{equation*}
\begin{aligned}
    & \Delta l(x_k,\bar\tau_k,\bar{g}_k,\bar{d}_k) = -\bar\tau_k\bar{g}_k^T\bar{d}_k + \|c_k\|_1 - \|c_k + J_k\bar{d}_k\|_1 \\
    \leq\ &-\tau_kg_k^Td_k + \|c_k\|_1 + (\tau_k - \bar\tau_k)g_k^Td_k + \bar\tau_k(g_k^Td_k - \bar{g}_k^T\bar{d}_k) \\
    \leq\ &\Delta l(x_k,\tau_k,g_k,d_k) + |(\tau_k - \bar\tau_k)g_k^Td_k| + \bar\tau_{-1}|g_k^Td_k - \bar{g}_k^T\bar{d}_k| \\
    \leq\ & \Delta l(x_k,\tau_k,g_k,d_k) + \kappa_{\bar{\tau}}\beta^{\bz{\sigma}}\Delta l(x_k,\tau_k,g_k,d_k) \\
    &+ \bar\tau_{-1}\left(\kappa_{\bar{g}g,\tilde{d}d,\sqrt{\nu}} \|g_k - \bar{g}_k\|_2 \sqrt{\Delta l(x_k,\tau_k,g_k,d_k)} + \kappa_L\|\bar{g}_k - g_k\|^2_2 \right. \\
    &\left. + \kappa_{\bar{g},\bar{d}\tilde{d},\sqrt{\nu}}\tfrac{\beta^{\bz{\sigma}}\sqrt{\Delta l(x_k,\tau_k,g_k,d_k)}}{\sqrt{(k+1)^{\nu}}} + \tfrac{\sqrt{\theta_2}\beta^{\bz{\sigma}}\|\bar{g}_k - g_k\|_2}{\sqrt{(k+1)^{\nu}}} + \bar{\kappa}_{\bar{g},\bar{d}\tilde{d}} \beta^{\bz{\sigma}} \Delta l(x_k,\bar{\tau}_k,\bar{g}_k,\bar{d}_k)\right).
\end{aligned}
\end{equation*}
The first result follows by re-arranging the above, invoking the restriction on $\beta$, and the definition of $\kappa_{\overline{\Delta l},\Delta l}$.

Taking the conditional expectation, by Assumption~\ref{ass.stoch_g_sublin} and Lemma~\ref{lem.new}, 
\begin{equation*}
\begin{aligned}
\mathbb{E}_k\left[ \Delta l(x_k,\bar\tau_k,\bar{g}_k,\bar{d}_k) \right] \leq\ &\left(1 + 2(\kappa_{\bar\tau} + \bar\tau_{-1}\bar\kappa_{\bar{g},\bar{d}\tilde{d}})\beta^{\bz{\sigma}}\right)\Delta l(x_k,\tau_k,g_k,d_k) \\
&+ 2\bar\tau_{-1}\left( \tfrac{\kappa_L\theta_1\beta^{\bz{2\sigma}}}{(k+1)^{\nu}} + \kappa_{\bar{g},\bar{d}\tilde{d},\sqrt{\nu}}\tfrac{\beta^{\bz{\sigma}}\sqrt{\Delta l(x_k,\tau_k,g_k,d_k)}}{\sqrt{(k+1)^{\nu}}} \right. \\
&\left. + \kappa_{\bar{g}g,\tilde{d}d,\sqrt{\nu}}\sqrt{\theta_1} \tfrac{\beta^{\bz{\sigma}}\sqrt{\Delta l(x_k,\tau_k,g_k,d_k)}}{\sqrt{(k+1)^{\nu}}} + \tfrac{\sqrt{\theta_1\theta_2}\beta^{\bz{2\sigma}}}{(k+1)^{\nu}} \right).
\end{aligned}
\end{equation*}
Using the definitions of $\kappa_{\overline{\Delta l},\Delta l}$, $\bar\kappa_{\overline{\Delta l},\Delta l}$, $\bar\kappa_{\overline{\Delta l},\Delta l,\sqrt{\nu}}$, and $\bar\kappa_{\overline{\Delta l},\Delta l,\nu}$, completes the proof.
\end{proof}

Finally, we restate and prove the theoretical results stated in Section~\ref{sec.pred}. Specifically, we state and prove Lemma~\ref{lem:merit_bnd_stoch_mp}, Theorem~\ref{thm.main_stochastic_mp} and Corollary~\ref{cor.iter_complexity_raghu}.
\begin{lemma}\label{lem:merit_bnd_stoch}
    (Lemma~\ref{lem:merit_bnd_stoch_mp})  \asb{Suppose Assumptions~\ref{ass.main},~\ref{ass.H} and~\ref{ass.residual} and Conditions~\ref{ass.stoch_g_sublin},~\ref{ass.stoch_linear_system} and~\ref{assum:stoch_redcond_old} hold.} For all $k\in\mathbb{N}$,
    \begin{equation*}
    \begin{aligned}
        \mathbb{E}_k\left[\phi(x_{k+1},\bar\tau_{k+1}) - \phi(x_k,\bar\tau_k)\right] \leq & \mathbb{E}_k\left[(\bar\tau_{k+1} - \bar\tau_k)\right] f_{\inf} -  \beta (\underline{\alpha}\eta - \bar{\kappa}_\phi\beta  ) \Delta l(x_k,\tau_k,g_k,d_k) \\
         & + \beta^2 \tfrac{\bar{\kappa}_{\phi,\nu}}{(k+1)^{\nu}}.
    \end{aligned}
    \end{equation*}
    where $\bar{\kappa}_{\phi} = \kappa_{\phi} + \tfrac{\kappa_{\phi,\sqrt{\nu}}}{2} \in \mathbb{R}_{>0}$ and $\bar{\kappa}_{\phi,\nu} = \kappa_{\phi,\nu} + \tfrac{\kappa_{\phi,\sqrt{\nu}}}{2} \in \mathbb{R}_{>0}$, and 
    \begin{align*}
        \kappa_{\phi} &= \alpha_u\left( (1-\eta)\kappa_{\overline{\Delta l},\Delta l} + \kappa_{\bar\tau} + (\bar\tau_{-1}\bar\kappa_{g,\bar{d}d} + \bar\kappa_{J,\bar{d}\tilde{d}})(1 + \bar\kappa_{\overline{\Delta l},\Delta l}) \right) \in \mathbb{R}_{>0}, \\
        \kappa_{\phi,\nu} &= \alpha_u\left( 2\bar\tau_{-1}(1-\eta)(\kappa_L\theta_1 + \sqrt{\theta_1\theta_2}) + (\bar\tau_{-1}\bar\kappa_{g,\bar{d}d} + \bar\kappa_{J,\bar{d}\tilde{d}})\bar\kappa_{\overline{\Delta l},\Delta l,\nu} \right) \in \mathbb{R}_{>0},\\
        \kappa_{\phi,\sqrt{\nu}} &= \alpha_u\left( 2\bar\tau_{-1}(1-\eta)\left(\kappa_{\bar{g},\bar{d}\tilde{d},\sqrt{\nu}} + \sqrt{\theta_1}\kappa_{\bar{g}g,\tilde{d}d,\sqrt{\nu}}\right) \right.\\
        & \quad + \left. \bar\tau_{-1} \left( \sqrt{\theta_1}\kappa_{g,d\tilde{d},\sqrt{\nu}} + \kappa_{g,\bar{d}d,\sqrt{\nu}} \right) + (\bar\tau_{-1}\bar\kappa_{g,\bar{d}d} + \bar\kappa_{J,\bar{d}\tilde{d}})\bar\kappa_{\overline{\Delta l},\Delta l,\sqrt{\nu}} \right) \in \mathbb{R}_{>0}.
    \end{align*}
\end{lemma}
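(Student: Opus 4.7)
The plan is to follow the blueprint of Lemma~\ref{lem:merit_bnd_stoch_old} almost verbatim, replacing every invocation of the adaptive bounds (Assumptions~\ref{ass.stoch_g} and~\ref{ass.stoch_linear_system_old}) by their sublinear analogues (Assumptions~\ref{ass.stoch_g_sublin} and~\ref{ass.stoch_linear_system}), and then carefully tracking the ``new'' error contributions that scale with $1/(k+1)^{\nu}$ rather than with $\Delta l(x_k,\tau_k,g_k,d_k)$. First I would split
\begin{equation*}
  \phi(x_{k+1},\bar\tau_{k+1}) - \phi(x_k,\bar\tau_k) = (\bar\tau_{k+1}-\bar\tau_k) f_{k+1} + \bigl(\phi(x_{k+1},\bar\tau_k) - \phi(x_k,\bar\tau_k)\bigr),
\end{equation*}
use $f_{k+1}\geq f_{\inf}$ together with $\bar\tau_{k+1}\leq \bar\tau_k$ to handle the first piece, and apply Lemma~\ref{lemma.combo}(d), i.e.\ inequality~\eqref{eq.merit_red_det_stoch}, to the second.

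Next I would take the conditional expectation $\mathbb{E}_k[\cdot]$ of the resulting bound and process its four residual terms one by one. The $(1-\eta)\bar\alpha_k\beta^{(\sigma/2-1)}\Delta l(x_k,\bar\tau_k,\bar g_k,\bar d_k)$ term is bounded through the expected-value form of Lemma~\ref{lem.delta_l_bound_stoch}, which produces a piece proportional to $\beta^{\sigma/2}\Delta l(x_k,\tau_k,g_k,d_k)$ plus new tails $\beta^{\sigma/2}\sqrt{\Delta l}/(k+1)^{\nu/2}$ and $\beta^{\sigma}/(k+1)^{\nu}$. The term $\bar\alpha_k\bar\tau_k g_k^T(\bar d_k - d_k)$ is bounded by the fourth inequality of Lemma~\ref{lem.gT(d_diff)_stoch} and the conditional expectation bound of Lemma~\ref{lem.new}; the term $\bar\alpha_k(\bar\tau_k-\tau_k)g_k^T d_k$ is handled by Lemma~\ref{lem.tau_bound_stoch}; and $\bar\alpha_k\|J_k(\bar d_k - \tilde d_k)\|_1$ is handled by the sixth inequality of Lemma~\ref{lem.gT(d_diff)_stoch} together with the expected-value bound of Lemma~\ref{lem.delta_l_bound_stoch}. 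Combined with the step-size bounds $\underline{\alpha}\beta \leq \bar\alpha_k \leq \alpha_u\beta^{2-\sigma/2}$ from Lemma~\ref{lemma.combo}(c), every surviving term falls into one of three categories: a negative term $-\underline\alpha\beta\,\eta\,\Delta l(x_k,\tau_k,g_k,d_k)$; positive terms proportional to $\beta^2\Delta l(x_k,\tau_k,g_k,d_k)$ (collected into $\beta^2\kappa_\phi\Delta l$); and positive terms proportional to $\beta^2/(k+1)^{\nu}$ (collected into $\beta^2\kappa_{\phi,\nu}/(k+1)^\nu$).

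The crux and only new technical step relative to Lemma~\ref{lem:merit_bnd_stoch_old} is the treatment of the mixed ``cross'' terms of the form $\beta^2\sqrt{\Delta l(x_k,\tau_k,g_k,d_k)}/(k+1)^{\nu/2}$ that arise because the sublinear noise bound no longer carries a factor of $\sqrt{\Delta l}$. I will dispatch them via Young's inequality $\sqrt{ab}\leq \tfrac12(a+b)$, writing
\begin{equation*}
  \beta^2 \tfrac{\sqrt{\Delta l(x_k,\tau_k,g_k,d_k)}}{(k+1)^{\nu/2}} \leq \tfrac{\beta^2}{2}\Delta l(x_k,\tau_k,g_k,d_k) + \tfrac{\beta^2}{2}\tfrac{1}{(k+1)^{\nu}},
\end{equation*}
which is exactly what produces the ``$+\kappa_{\phi,\sqrt\nu}/2$'' halves that appear in both $\bar\kappa_\phi = \kappa_\phi + \kappa_{\phi,\sqrt\nu}/2$ and $\bar\kappa_{\phi,\nu} = \kappa_{\phi,\nu} + \kappa_{\phi,\sqrt\nu}/2$. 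Once these cross terms are split, I simply group the $\Delta l(x_k,\tau_k,g_k,d_k)$ coefficients into $\bar\kappa_\phi$ and the $1/(k+1)^\nu$ coefficients into $\bar\kappa_{\phi,\nu}$, yielding the stated inequality.

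The main obstacle I expect is purely bookkeeping: the six bounds in Lemma~\ref{lem.gT(d_diff)_stoch} each contribute several constants, and after combining with the expected-value version of Lemma~\ref{lem.delta_l_bound_stoch} (which itself contains $\Delta l$, $\sqrt{\Delta l}/(k+1)^{\nu/2}$, and $1/(k+1)^\nu$ contributions), one must verify that every cross term of the form $\beta^{1+\sigma/2}\sqrt{\Delta l}/(k+1)^{\nu/2}$ or $\beta^{1+\sigma}/(k+1)^\nu$ is dominated by $\beta^2\sqrt{\Delta l}/(k+1)^{\nu/2}$ or $\beta^2/(k+1)^\nu$ for $\beta\in(0,1]$ and $\sigma\in[2,4]$ (so that all ``higher-order-in-$\beta$'' pieces are absorbed into the two stated constants). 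No new analytical ideas are needed beyond those already appearing in Lemma~\ref{lem:merit_bnd_stoch_old} and Young's inequality.
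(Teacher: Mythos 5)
Your proposal is correct and follows essentially the same route as the paper's proof: the same decomposition of the merit-function change, the same four residual terms bounded by the same lemmas (Lemma~\ref{lemma.combo}(d), the expected-value form of Lemma~\ref{lem.delta_l_bound_stoch}, the fourth and sixth inequalities of Lemma~\ref{lem.gT(d_diff)_stoch}, and Lemma~\ref{lem.tau_bound_stoch}), the same step-size bounds, and the same final application of $\sqrt{ab}\leq\tfrac12(a+b)$ to split the $\beta^2\sqrt{\Delta l}/(k+1)^{\nu/2}$ cross terms, which is exactly how the paper obtains $\bar\kappa_\phi=\kappa_\phi+\kappa_{\phi,\sqrt\nu}/2$ and $\bar\kappa_{\phi,\nu}=\kappa_{\phi,\nu}+\kappa_{\phi,\sqrt\nu}/2$. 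No gaps; only the constant bookkeeping remains to be carried out.
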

\begin{proof} 

By Lemmas~\ref{lemma.combo}, \ref{lem.tau_bound_stoch}, \ref{lem.gT(d_diff)_stoch},  \ref{lem.delta_l_bound_stoch} and \ref{lem.stoch_stepsize_bound_old}, and \eqref{eq.merit}, it follows that for all $k \in \N{}$
    \begin{align*}
      &\mathbb{E}_k\left[\phi(x_{k+1},\bar\tau_{k+1}) - \phi(x_k,\bar\tau_k)\right] \\
      =\ & \mathbb{E}_k\left[(\bar\tau_{k+1} - \bar\tau_k)f_{k+1}\right] + \mathbb{E}_k\left[\phi(x_{k+1},\bar\tau_k) - \phi(x_k,\bar\tau_k)\right] \\
      \leq\ & \mathbb{E}_k\left[(\bar\tau_{k+1} - \bar\tau_k)\right] f_{\inf} \\
      &- \mathbb{E}_k\left[\bar\alpha_k\Delta l(x_k,\tau_k,g_k,d_k) - (1-\eta) \bar\alpha_k \beta^{(\bz{\sigma}-1)} \Delta l(x_k,\bar\tau_k,\bar{g}_k,\bar{d}_k) \right]\\
      &+ \mathbb{E}_k\left[\bar\alpha_k \bar\tau_k g_k^T (\bar{d}_k - d_k)\right] + \mathbb{E}_k\left[\bar\alpha_k (\bar\tau_k - \tau_k) g_k^T d_k\right] + \mathbb{E}_k\left[\bar\alpha_k\|J_k(\bar{d}_k - \tilde{d}_k)\|_1 \right]\\
      \leq\ & \mathbb{E}_k\left[(\bar\tau_{k+1} - \bar\tau_k)\right] f_{\inf} - \mathbb{E}_k\left[\bar\alpha_k \Delta l(x_k,\tau_k,g_k,d_k)\right] \\
      & + \mathbb{E}_k\left[(1-\eta) \bar\alpha_k \beta^{(\bz{\sigma}-1)} \left((1 + \kappa_{\overline{\Delta l},\Delta l}\beta^{\bz{\sigma}})\Delta l(x_k,\tau_k,g_k,d_k) \right. \right. \\
      & \quad + 2\bar\tau_{-1}\left( \kappa_L\|\bar{g}_k - g_k\|^2_2 + \kappa_{\bar{g},\bar{d}\tilde{d},\sqrt{\nu}}\tfrac{\beta^{\bz{\sigma}}\sqrt{\Delta l(x_k,\tau_k,g_k,d_k)}}{\sqrt{(k+1)^{\nu}}} \right.\\
      & \quad\quad \left.\left.\left.+ \kappa_{\bar{g}g,\tilde{d}d,\sqrt{\nu}} \|g_k - \bar{g}_k\|_2 \sqrt{\Delta l(x_k,\tau_k,g_k,d_k)} + \tfrac{\sqrt{\theta_2}\beta^{\bz{\sigma}}\|\bar{g}_k - g_k\|_2}{\sqrt{(k+1)^{\nu}}} \right)\right) \right]\\
      & + \alpha_u\beta^{(2-\bz{\sigma})}\bar\tau_{-1}\mathbb{E}_k\left[|g_k^T (\bar{d}_k - d_k)|\right] + \alpha_u\beta^{(2-\bz{\sigma})}\mathbb{E}_k\left[ |(\bar\tau_k - \tau_k) g_k^T d_k|\right] \\
      & + \alpha_u\beta^{(2-\bz{\sigma})}\mathbb{E}_k\left[\|J_k(\bar{d}_k - \tilde{d}_k)\|_1 \right]\\
      \leq\ & \mathbb{E}_k\left[(\bar\tau_{k+1} - \bar\tau_k)\right] f_{\inf} - \mathbb{E}_k\left[\eta\bar\alpha_k \Delta l(x_k,\tau_k,g_k,d_k)\right] \\
      & + (1-\eta)\kappa_{\overline{\Delta l},\Delta l}\alpha_u\beta^{(1+\bz{\sigma})} \Delta l(x_k,\tau_k,g_k,d_k) \\
      & + 2\bar\tau_{-1}(1-\eta)\alpha_u\beta\left( \kappa_L\tfrac{\theta_1\beta^{\bz{2\sigma}}}{(k+1)^{\nu}} + \kappa_{\bar{g},\bar{d}\tilde{d},\sqrt{\nu}}\tfrac{\beta^{\bz{\sigma}}\sqrt{\Delta l(x_k,\tau_k,g_k,d_k)}}{\sqrt{(k+1)^{\nu}}} \right.\\
      & \quad\quad \left. + \kappa_{\bar{g}g,\tilde{d}d,\sqrt{\nu}} \tfrac{\sqrt{\theta_1}\beta^{\bz{\sigma}}}{\sqrt{(k+1)^{\nu}}} \sqrt{\Delta l(x_k,\tau_k,g_k,d_k)} + \tfrac{\sqrt{\theta_1\theta_2}\beta^{\bz{2\sigma}}}{(k+1)^{\nu}} \right) \\
      & + \alpha_u\beta^{(2-\bz{\sigma})}\bar\tau_{-1}\left( \kappa_{g,d\tilde{d},\sqrt{\nu}}\tfrac{\sqrt{\theta_1}\beta^{\bz{\sigma}}\sqrt{\Delta l(x_k,\tau_k,g_k,d_k)}}{\sqrt{(k+1)^{\nu}}} + \kappa_{g,\bar{d}d,\sqrt{\nu}}\tfrac{\beta^{\bz{\sigma}}\sqrt{\Delta l(x_k,\tau_k,g_k,d_k)}}{\sqrt{(k+1)^{\nu}}} \right.\\
      & \quad\quad \left.+ \bar\kappa_{g,\bar{d}d}\beta^{\bz{\sigma}}\mathbb{E}_k\left[\Delta l(x_k,\bar\tau_k,\bar{g}_k,\bar{d}_k)\right] \right)
    \end{align*}  
Continuing from the above, by Lemmas~\ref{lem.delta_l_bound_stoch} and \ref{lem.stoch_stepsize_bound_old}, it follows that for all $k \in \N{}$
    \begin{align*}
    &\mathbb{E}_k\left[\phi(x_{k+1},\bar\tau_{k+1}) - \phi(x_k,\bar\tau_k)\right] \\
      \leq\ & \mathbb{E}_k\left[(\bar\tau_{k+1} - \bar\tau_k)\right] f_{\inf} \\
      & - \underline{\alpha}\beta\eta \Delta l(x_k,\tau_k,g_k,d_k) + (1-\eta)\kappa_{\overline{\Delta l},\Delta l}\alpha_u\beta^{(1 + \bz{\sigma})} \Delta l(x_k,\tau_k,g_k,d_k)\\
      & + 2\bar\tau_{-1}(1-\eta)\alpha_u\beta^{(1+\bz{\sigma})}\left( \left(\kappa_{\bar{g},\bar{d}\tilde{d},\sqrt{\nu}} + \sqrt{\theta_1}\kappa_{\bar{g}g,\tilde{d}d,\sqrt{\nu}}\right)\tfrac{\sqrt{\Delta l(x_k,\tau_k,g_k,d_k)}}{\sqrt{(k+1)^{\nu}}} \right.\\
      & \quad\quad\left.+ \left(\kappa_L\theta_1 + \sqrt{\theta_1\theta_2}\right)\tfrac{\beta^{\bz{\sigma}}}{(k+1)^{\nu}} \right)  \\
      & + \alpha_u\bar\tau_{-1}\beta^2 \left( \sqrt{\theta_1}\kappa_{g,d\tilde{d},\sqrt{\nu}} + \kappa_{g,\bar{d}d,\sqrt{\nu}} \right)\tfrac{\sqrt{\Delta l(x_k,\tau_k,g_k,d_k)}}{\sqrt{(k+1)^{\nu}}} + \alpha_u\kappa_{\bar\tau}\beta^2\Delta l(x_k,\tau_k,g_k,d_k) \\
      & + \alpha_u\beta^2 (\bar\tau_{-1}\bar\kappa_{g,\bar{d}d} + \bar\kappa_{J,\bar{d}\tilde{d}})\left( (1 + \bar\kappa_{\overline{\Delta l},\Delta l}\beta^{\bz{\sigma}}) \Delta l(x_k,\tau_k,g_k,d_k) \right.\\
      & \left. \quad\quad +  \bar\kappa_{\overline{\Delta l},\Delta l,\sqrt{\nu}}\beta^{\bz{\sigma}}\tfrac{\sqrt{\Delta l(x_k,\tau_k,g_k,d_k)}}{\sqrt{(k+1)^{\nu}}} + \bar\kappa_{\overline{\Delta l},\Delta l,\nu} \beta^{\bz{2\sigma}}\tfrac{1}{(k+1)^{\nu}} \right) \\
      \leq\ & \mathbb{E}_k\left[(\bar\tau_{k+1} - \bar\tau_k)\right] f_{\inf} - \underline{\alpha}\beta\eta \Delta l(x_k,\tau_k,g_k,d_k) \\
      &+ \alpha_u\left( (1-\eta)\kappa_{\overline{\Delta l},\Delta l} + \kappa_{\bar\tau} + (\bar\tau_{-1}\bar\kappa_{g,\bar{d}d} + \bar\kappa_{J,\bar{d}\tilde{d}})(1 + \bar\kappa_{\overline{\Delta l},\Delta l}) \right)\beta^{2}\Delta l(x_k,\tau_k,g_k,d_k) \\
      &+ \alpha_u\left( 2\bar\tau_{-1}(1-\eta)\left(\kappa_{\bar{g},\bar{d}\tilde{d},\sqrt{\nu}} + \sqrt{\theta_1}\kappa_{\bar{g}g,\tilde{d}d,\sqrt{\nu}}\right) + \bar\tau_{-1} \left( \sqrt{\theta_1}\kappa_{g,d\tilde{d},\sqrt{\nu}} + \kappa_{g,\bar{d}d,\sqrt{\nu}} \right) \right. \\
      &\left. + (\bar\tau_{-1}\bar\kappa_{g,\bar{d}d} + \bar\kappa_{J,\bar{d}\tilde{d}})\bar\kappa_{\overline{\Delta l},\Delta l,\sqrt{\nu}} \right)\beta^2 \tfrac{\sqrt{\Delta l(x_k,\tau_k,g_k,d_k)}}{\sqrt{(k+1)^{\nu}}} \\
      &+ \alpha_u\left( 2\bar\tau_{-1}(1-\eta)(\kappa_L\theta_1 + \sqrt{\theta_1\theta_2}) + (\bar\tau_{-1}\bar\kappa_{g,\bar{d}d} + \bar\kappa_{J,\bar{d}\tilde{d}})\bar\kappa_{\overline{\Delta l},\Delta l,\nu} \right)\beta^2 \tfrac{1}{(k+1)^{\nu}}\\
      =\ & \mathbb{E}_k\left[(\bar\tau_{k+1} - \bar\tau_k)\right] f_{\inf} - \underline{\alpha}\beta\eta \Delta l(x_k,\tau_k,g_k,d_k) + \kappa_{\phi} \beta^{2}\Delta l(x_k,\tau_k,g_k,d_k) \\
      & + \kappa_{\phi,\sqrt{\nu}}\beta^2\tfrac{\sqrt{\Delta l(x_k,\tau_k,g_k,d_k)}}{\sqrt{(k+1)^{\nu}}} + \kappa_{\phi,\nu}\beta^2\tfrac{1}{(k+1)^{\nu}} \\
      \leq\ & \mathbb{E}_k\left[(\bar\tau_{k+1} - \bar\tau_k)\right] f_{\inf} - \underline{\alpha}\beta\eta \Delta l(x_k,\tau_k,g_k,d_k) + \left(\kappa_{\phi} + \tfrac{\kappa_{\phi,\sqrt{\nu}}}{2} \right) \beta^{2}\Delta l(x_k,\tau_k,g_k,d_k) \\
      & +  \left(\kappa_{\phi,\nu} + \tfrac{\kappa_{\phi,\sqrt{\nu}}}{2}\right)\beta^2\tfrac{1}{(k+1)^{\nu}},
  \end{align*}
  where the result follows using the definitions of $\kappa_{\phi}$, $\kappa_{\phi,\sqrt{\nu}}$, $\kappa_{\phi,\nu}$, $\bar{\kappa}_{\phi}$ and $\bar{\kappa}_{\phi,\nu}$.
\end{proof}

We are now ready to prove the main theorem of this section.
\begin{theorem}\label{thm.main_stochastic}
   (Theorem~\ref{thm.main_stochastic_mp})  \asb{Suppose Assumptions~\ref{ass.main},~\ref{ass.H} and~\ref{ass.residual} and Conditions~\ref{ass.stoch_g_sublin},~\ref{ass.stoch_linear_system} and~\ref{assum:stoch_redcond_old} hold. By choosing $\beta \in \left(0, \min\left\{\sfrac{1}{(2\theta_3)^{\change{\tfrac{1}{\sigma}}}},\sfrac{(1-\gamma)\eta\underline{\alpha}}{\bar{\kappa}_{\phi}},\sfrac{1}{\left(2\bar\tau_{-1}\bar\kappa_{\bar{g},\bar{d}\tilde{d}}\right)^{\change{\tfrac{1}{\sigma}}}}\right\}\right]$ for any $\gamma\in (0,1)$ and $\nu \in \mathbb{R}_{>1}$,}
   \begin{equation*}
       \lim_{k\to\infty}\mathbb{E}\left[\sum_{j=0}^{k-1}\Delta l(x_j,\tau_j,g_j,d_j)\right] < \infty,
   \end{equation*}
   from which it follows that $\lim_{k\to\infty}\mathbb{E}\left[\Delta l(x_k,\tau_k,g_k,d_k)\right] = 0$.
\end{theorem}
\begin{proof}
    By Lemma~\ref{lem:merit_bnd_stoch} and $\beta \in \left(0, \tfrac{(1-\gamma)\eta\underline{\alpha}}{\bar{\kappa}_{\phi}}\right]$, it follows that 
    \bequation\label{eq.main_dec_stoch}
    \baligned
      &\mathbb{E}_k\left[\phi(x_{k+1},\bar\tau_{k+1}) - \phi(x_k,\bar\tau_k)\right] \\
      \leq\ & \mathbb{E}_k\left[(\bar\tau_{k+1} - \bar\tau_k)\right] f_{\inf} -  \beta \left(\underline{\alpha}\eta - \bar{\kappa}_{\phi}\beta\right)\Delta l(x_k,\tau_k,g_k,d_k) + \beta^2\tfrac{\bar{\kappa}_{\phi,\nu}}{(k+1)^{\nu}}\\
      \leq\ & \mathbb{E}_k\left[(\bar\tau_{k+1} - \bar\tau_k)\right] f_{\inf} - \underline{\alpha}\beta\gamma\eta \Delta l(x_k,\tau_k,g_k,d_k) + \beta^2\tfrac{\bar{\kappa}_{\phi,\nu}}{(k+1)^{\nu}}.
  \ealigned
    \eequation
    Applying a telescopic sum to \eqref{eq.main_dec_stoch} and taking the total expectation, it follows that
    \begin{equation*}
    \begin{aligned}
        -\infty <\ &\phi_{\inf} - \phi(x_0,\bar\tau_0) \leq \mathbb{E}[\phi(x_k,\bar\tau_k) - \phi(x_0,\bar\tau_0)] \\
        =\ &\mathbb{E}\left[ \sum_{j=0}^{k-1} \left(\phi(x_{j+1},\bar\tau_{j+1}) - \phi(x_j,\bar\tau_j)\right)\right] \\
        \leq\ &\mathbb{E}\left[\sum_{j=0}^{k-1}(\bar\tau_{j+1} - \bar\tau_j)f_{\inf} - \sum_{j=0}^{k-1}\underline{\alpha}\beta\gamma\eta\Delta l(x_j,\tau_j,g_j,d_j) + \sum_{j=0}^{k-1}\beta^2\tfrac{\bar{\kappa}_{\phi,\nu}}{(j+1)^{\nu}}\right] \\
        \leq\ &\bar\tau_{-1}|f_{\inf}| - \underline{\alpha}\beta\gamma\eta\mathbb{E}\left[\sum_{j=0}^{k-1}\Delta l(x_j,\tau_j,g_j,d_j)\right] + \beta^2\sum_{j=0}^
        {k-1}\tfrac{\bar{\kappa}_{\phi,\nu}}{(j+1)^{\nu}}.
    \end{aligned}
    \end{equation*}
    Finally, using the fact that $\sum_{j=0}^{k-1}\beta^2 \tfrac{\bar{\kappa}_{\phi,\nu}}{(j+1)^{\nu}} < \infty$ for any $\nu > 1$, we may complete the proof.
\end{proof}

\bcorollary\label{cor.stoch_raghu_appendix} 
Under the  conditions of Theorem~\ref{thm.main_stochastic}, Algorithm~\ref{alg.adaptiveSQP} yields a sequence of iterates $\{(x_k,y_k)\}$ for which
\begin{equation*}
    \lim_{k\to\infty} \mathbb{E}\left[\|d_k\|_2^2 \right] = 0, \quad \lim_{k\to\infty} \mathbb{E}\left[\|c_k\|_2 \right] = 0,\quad\text{and} \quad \lim_{k\to\infty} \mathbb{E}\left[\|g_k + J_k^T(y_k+\delta_k)\|_2 \right] = 0.
\end{equation*}
\ecorollary
\begin{proof} The proof of this corollary follows the exact same arguments as the proof of Corollary~\ref{cor.stoch} (Section~\ref{sec.adaptive}).
\end{proof}

The final result we show in this section is a complexity result for our proposed algorithm, i.e., the number of iterations and the total number of stochastic gradient evaluations required to achieved an $\epsilon$-accurate solution in expectation. Specifically, to measure the complexity of our algorithm, we consider the minimum number of iterations, $K_{\epsilon}$, and the minimum total number of stochastic gradient evaluations, $W_\epsilon$, required to achieve the following approximate stationary measure 
\bequation\label{eq.complexity_stochastic_old_ap}
   \mathbb{E}[\| g_k + J_k^T(y_k + \delta_k)\|_2] \leq \epsilon_L \quad \text{and} \quad  \mathbb{E}[\|c_k\|_1] \leq \epsilon_c,
\eequation
for $\epsilon_L \in (0,1)$ and $\epsilon_c \in (0,1)$.
\bcorollary \label{cor.iter_complexity_raghu_appendix}
(Corollary~\ref{cor.iter_complexity_raghu})
\asb{Under the conditions of Theorem~\ref{thm.main_stochastic}, Algorithm~\ref{alg.adaptiveSQP} generates an iterate $ (x_k,y_k) $ that satisfies
\eqref{eq.complexity_det} in at most $K_\epsilon =\mathcal{O}\left(\max \left\{ \epsilon_L^{-2}, \epsilon_c^{-1}\right\}\right)$ iterations and $W_\epsilon = \mathcal{O}\left(\left(\max \left\{ \epsilon_L^{-2}, \epsilon_c^{-1}\right\}\right)^{(\nu +1)}\right)$  stochastic gradient evaluations $(\nu \in \mathbb{R}_{>1})$. Moreover, if $\epsilon_L = \epsilon$ and $\epsilon_c = \epsilon^2$, then $K_\epsilon = \mathcal{O}\left(\epsilon^{-2}\right)$ and $W_\epsilon = \mathcal{O}(\epsilon^{-2(\nu +1)})$.}
\ecorollary
\begin{proof}
Using the same logic as the proof of Corollary~\ref{cor.final_results}, we know that if \eqref{eq.complexity_stochastic_old_ap} is violated, then for all $k\in\{0,\ldots,K_{\epsilon}-1\}$

\begin{equation}\label{eq.lb_Deltal_appendix}
    \E[\Delta l(x_k,\tau_k,g_k,d_k)] \geq \kappa_x \min\{ \epsilon_L^2, \epsilon_c\},
\end{equation}
where $\kappa_x = \min\left\{ \omega_1 , \tfrac{\tau_{\min}\omega_1\epsilon_d}{\kappa_H^2}\right\}\in\mathbb{R}_{>0}$. Re-arranging terms in \eqref{eq.main_dec_stoch}, and using \eqref{eq.merit} and \eqref{eq.lb_Deltal_appendix} to do a telescoping, it follows that
\begin{equation*}
\begin{aligned}
&\mathbb{E}[\bar\tau_{-1} (f(x_0) - f_{\inf}) + \|c_0\|_1] \geq \mathbb{E}[\bar\tau_0 (f(x_0) - f_{\inf}) + \|c_0\|_1] \\
\geq\ & \mathbb{E}\left[\bar\tau_0 f(x_0) + \|c_0\|_1 - \bar\tau_{K_{\epsilon}} f(x_{K_{\epsilon}}) - \|c_{K_{\epsilon}}\|_1 + (\bar\tau_{K_{\epsilon}} - \bar\tau_0) f_{\inf}\right]\\
=\ &\mathbb{E}\left[\sum_{k=0}^{K_{\epsilon}-1}\left(\bar\tau_k f(x_k) + \|c_k\|_1 - \bar\tau_{k+1} f(x_{k+1}) - \|c_{k+1}\|_1 + (\bar\tau_{k+1} - \bar\tau_k) f_{\inf}\right)\right]\\
=\ &\mathbb{E}\left[ \sum_{k=0}^{K_{\epsilon}-1}\left(\phi(x_{k},\bar\tau_k) - \phi(x_{k+1},\bar\tau_{k+1}) + (\bar\tau_{k+1} - \bar\tau_k) f_{\inf} \right)\right] \\
\geq \ &\mathbb{E}\left[ \sum_{k=0}^{K_{\epsilon}-1} \left( \underline{\alpha}\beta\gamma\eta \Delta l(x_k,\tau_k,g_k,d_k) - \beta^2\tfrac{\bar{\kappa}_{\phi,\nu}}{(k+1)^{\nu}} \right) \right] \\
\geq \ &\sum_{k=0}^{K_{\epsilon}-1} \left( \underline{\alpha}\beta\gamma\eta \kappa_x\min\{ \epsilon_L^2, \epsilon_c\} - \beta^2\tfrac{\bar{\kappa}_{\phi,\nu}}{(k+1)^{\nu}} \right) \\ \geq \ & \underline{\alpha}\beta\gamma\eta \kappa_x\min\{ \epsilon_L^2, \epsilon_c\}K_{\epsilon} - \beta^2\bar{\kappa}_{\phi,\nu}\sum_{k=0}^{\infty}\tfrac{1}{(k+1)^{\nu}}.
\end{aligned}
\end{equation*}
It further implies that $K_{\epsilon}$ is bounded as
\begin{equation}\label{eq:iter_comp}
    K_{\epsilon} \leq \tfrac{\bar\tau_{-1} (f(x_0) - f_{\inf}) + \|c_0\|_1 + \beta^2\kappa_{\phi,2}\sum_{k=0}^{\infty}\tfrac{1}{(k+1)^{\nu}}}{\underline{\alpha}\beta\gamma\eta \kappa_x\min\{ \epsilon_L^2, \epsilon_c\}} = \mathcal{O}(\min\{ \epsilon_L^2, \epsilon_c\})
\end{equation}
under the condition that $\nu \in \mathbb{R}_{>1}$. Next, we analyze the sampling complexity.
Suppose, we use $|S_k|$ samples to estimate the stochastic gradient $\bar{g}_k$, then, 
\begin{equation*}
    \mathbb{E}_k\left[\|\bar{g}_k - g_k\|_2^2\right] = \tfrac{\bz{\sigma_g^2}}{|S_k|}
\end{equation*}
where $\bz{\sigma_g^2}$ is the population variance. Therefore, the minimum number of samples required to satisfy  \asb{Condition}~\ref{ass.stoch_g_sublin} is
\begin{equation*}
    |S_k| = \tfrac{\bz{\sigma_g^2}(k+1)^{\nu}}{\theta_1 \beta^{\bz{2\sigma}}}.
\end{equation*}
By~\eqref{eq:iter_comp}, it follows that the total number of stochastic gradient evaluations required to satisfy  \eqref{eq.complexity_stochastic_old_ap} can be expressed as
\begin{equation*}
    W_\epsilon = \sum_{k=0}^{\bz{K_\epsilon}-1} |S_k| = \sum_{k=0}^{\bz{K_\epsilon}-1} \tfrac{\bz{\sigma_g^2}(k+1)^{\nu}}{\theta_1 \beta^{\bz{2\sigma}}} = \tfrac{\bz{\sigma_g^2}}{\theta_1 \beta^{\bz{2\sigma}}}\sum_{k=0}^{\bz{K_\epsilon}-1}(k+1)^{\nu}.
\end{equation*}
Using Faulhaber's formula, sum of $\nu$-th power of first $k$ positive integers is a function of polynomial $\nu + 1$. Therefore, there exists some constant $\lambda_k \in (0, \infty)$ such that $\sum_{k=0}^{\bz{K_\epsilon}-1}(k+1)^{\nu} \leq \lambda_k \bz{K_{\epsilon}^{\nu + 1}}$. Therefore,
\begin{equation*}
    W_\epsilon \leq \tfrac{\bz{\sigma_g^2} \lambda_k}{\theta_1 \beta^{\bz{2\sigma}}}K_\epsilon^{\nu + 1}
\end{equation*}
Substituting $K_\epsilon = \mathcal{O} (\epsilon^{-2})$ yields the desired result. 
\end{proof}

\vspace{1cm}
\section{Additional Numerical Results: Constrained Logistic Regression}\label{app.numerical}

In this section, we provide additional numerical results for constrained logistic problems. We consider all the data sets in Table~\ref{tab:data} from the \cite{chang2011libsvm} collection.


\begin{table}[ht]
\caption{\label{tab:data}Binary classification data set details. For more information see \cite{chang2011libsvm}. }
  \centering
  {\footnotesize
\begin{tabular}{lcc}\toprule
\textbf{data set}    & \textbf{dimension ($\pmb{n}$)} & \textbf{ datapoints ($\pmb{N}$)} \\ \midrule
\texttt{australian}      & $14$                           & $690$                                                                                \\ \hdashline
\texttt{ionosphere}      & $34$                           & $351$                                                                                \\ \hdashline
\texttt{mushrooms}       & $112$                          & $8,124$                                                                              \\ \hdashline
\texttt{sonar}           & $60$                           & $208$                                                                                \\ \hdashline
\texttt{splice}          & $60$                           & $1,000$                                                                              \\ 
\bottomrule                                                                     
\end{tabular}}
\end{table}

\begin{figure}[ht]
    \centering
    \begin{subfigure}[b]{0.32\textwidth}
    \includegraphics[width=\textwidth,clip=true,trim=30 180 50 200]{figs/logistic/australian_feas_it.pdf}
    \caption{Feasibility vs. Iterations}
    \end{subfigure}
    \begin{subfigure}[b]{0.32\textwidth}
    \includegraphics[width=\textwidth,clip=true,trim=30 180 50 200]{figs/logistic/australian_feas_ep.pdf}
    \caption{ Feasibility vs. Epochs} 
    \end{subfigure}
    \begin{subfigure}[b]{0.32\textwidth}
    \includegraphics[width=\textwidth,clip=true,trim=30 180 50 200]{figs/logistic/australian_feas_ls.pdf}
    \caption{ Feasibility vs. LS  Iters} 
    \end{subfigure}
  
    \begin{subfigure}[b]{0.32\textwidth}
    \includegraphics[width=\textwidth,clip=true,trim=30 180 50 200]{figs/logistic/australian_stat_it.pdf}
    \caption{Stationarity vs. Iterations}
    \end{subfigure}
    \begin{subfigure}[b]{0.32\textwidth}
    \includegraphics[width=\textwidth,clip=true,trim=30 180 50 200]{figs/logistic/australian_stat_ep.pdf}
    \caption{ Stationarity vs. Epochs}
    \end{subfigure}
    \begin{subfigure}[b]{0.32\textwidth}
    \includegraphics[width=\textwidth,clip=true,trim=30 180 50 200]{figs/logistic/australian_stat_ls.pdf}
    \caption{ Stationarity vs. LS  Iters} 
    \end{subfigure}

    \begin{subfigure}[b]{0.32\textwidth}
    \includegraphics[width=\textwidth,clip=true,trim=30 180 50 200]{figs/logistic/australian_alpha_it.pdf}
    \caption{ Step Size vs. Iterations} 
    \end{subfigure}
    \begin{subfigure}[b]{0.32\textwidth}
    \includegraphics[width=\textwidth,clip=true,trim=30 180 50 200]{figs/logistic/australian_bs_it.pdf}
    \caption{ Batch Size vs. Iterations} 
    \end{subfigure}
    \caption{\texttt{australian}: First \& Second Row: Feasibility \& stationarity errors versus iterations/epochs/linear system iterations for exact and inexact variants of Algorithm~\ref{alg.adaptiveSQP_practical} on \eqref{eq.logistic}. Last Row: Step sizes and Batch sizes versus iterations. }
\end{figure}

\begin{figure}[ht]
    \centering
    \begin{subfigure}[b]{0.32\textwidth}
    \includegraphics[width=\textwidth,clip=true,trim=30 180 50 200]{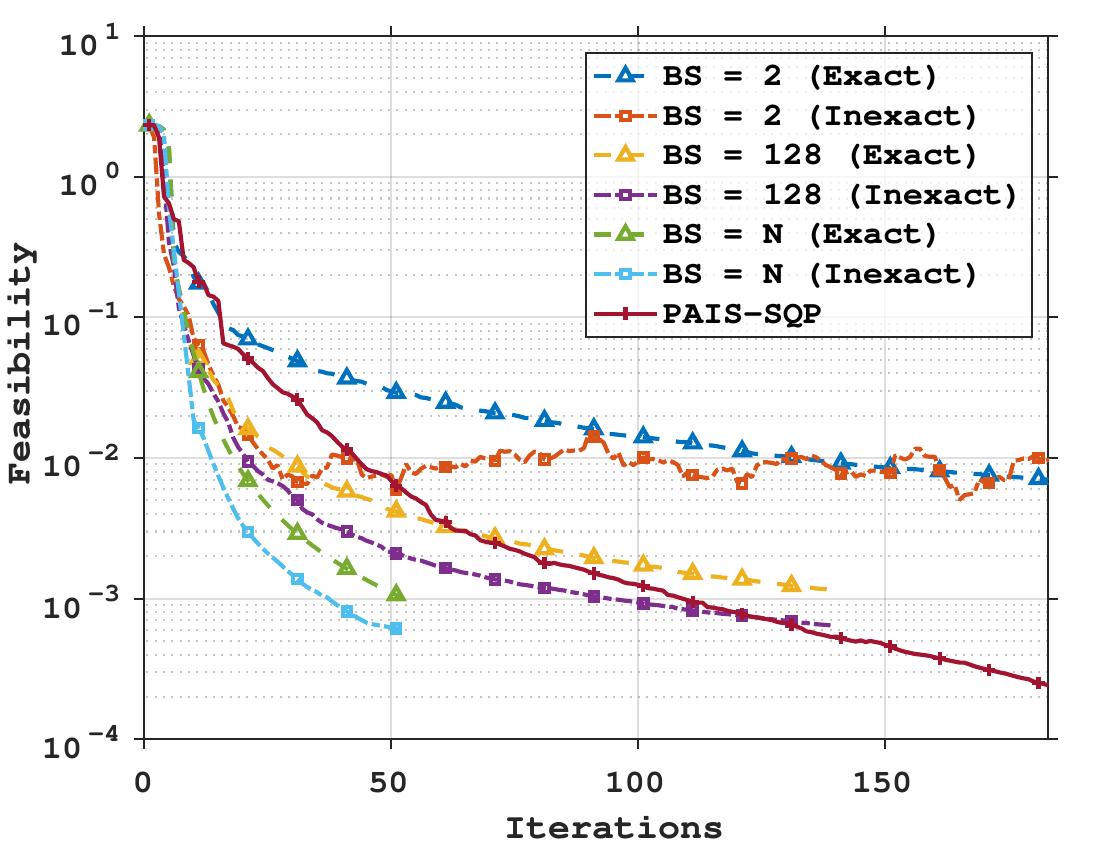}
    \caption{Feasibility vs. Iterations}
    \end{subfigure}
    \begin{subfigure}[b]{0.32\textwidth}
    \includegraphics[width=\textwidth,clip=true,trim=30 180 50 200]{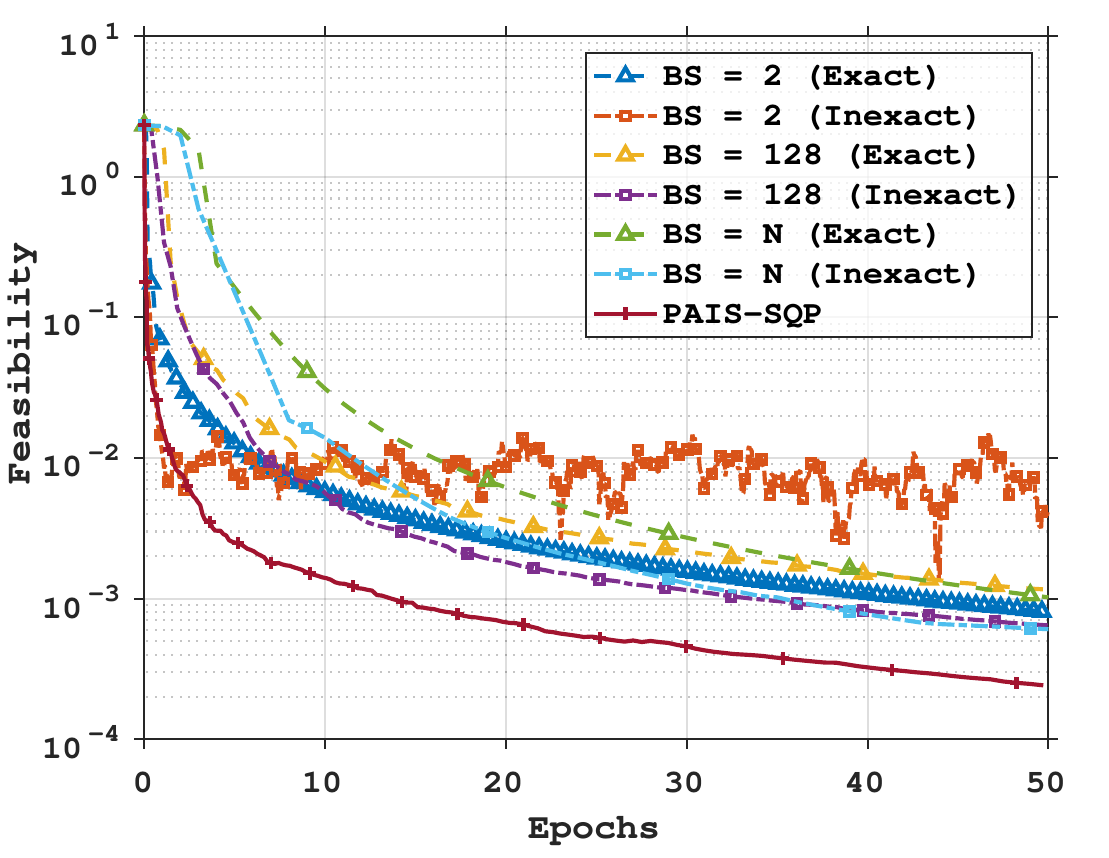}
    \caption{ Feasibility vs. Epochs} 
    \end{subfigure}
    \begin{subfigure}[b]{0.32\textwidth}
    \includegraphics[width=\textwidth,clip=true,trim=30 180 50 200]{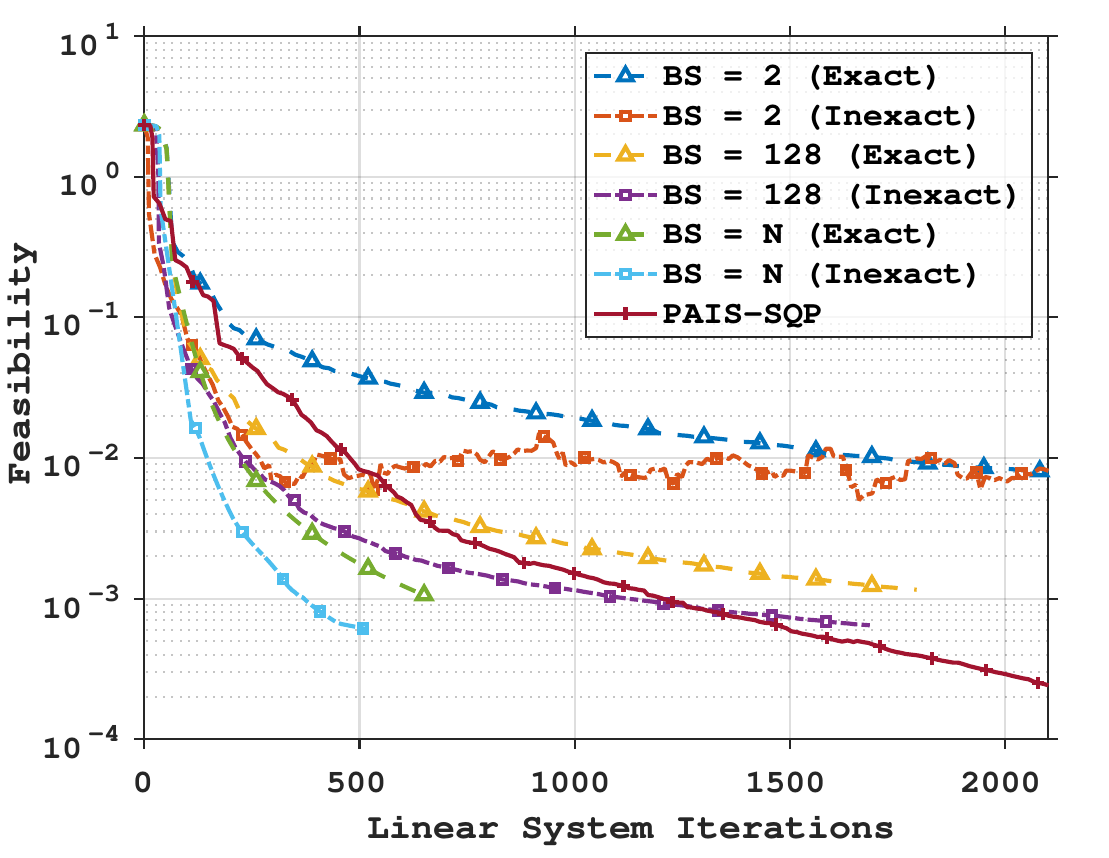}
    \caption{ Feasibility vs. LS  Iters} 
    \end{subfigure}
  
    \begin{subfigure}[b]{0.32\textwidth}
    \includegraphics[width=\textwidth,clip=true,trim=30 180 50 200]{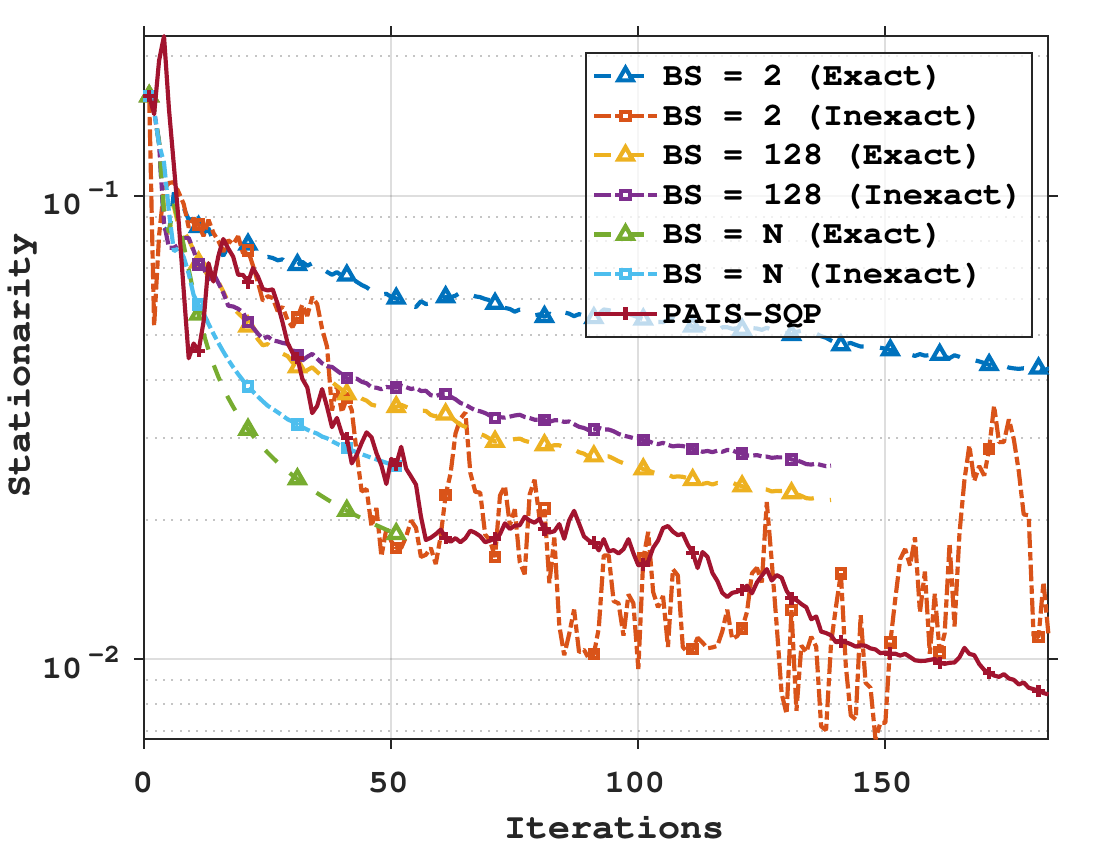}
    \caption{Stationarity vs. Iterations}
    \end{subfigure}
    \begin{subfigure}[b]{0.32\textwidth}
    \includegraphics[width=\textwidth,clip=true,trim=30 180 50 200]{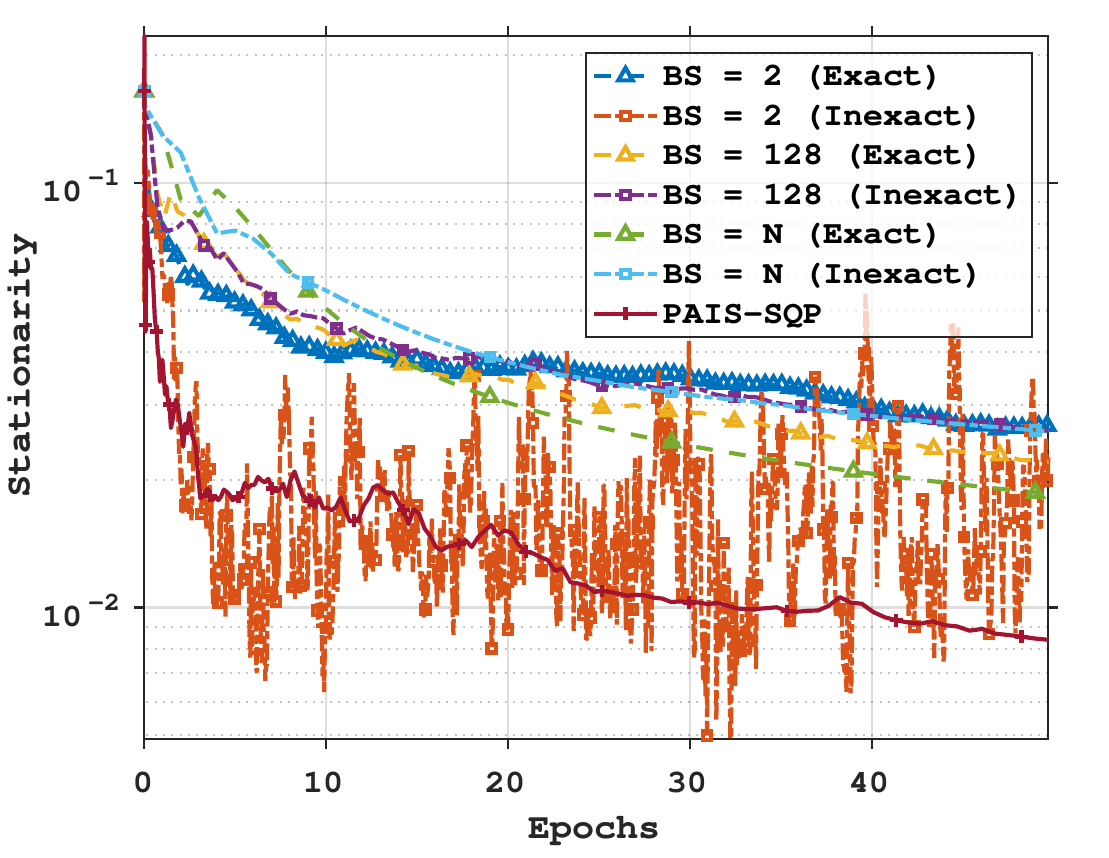}
    \caption{ Stationarity vs. Epochs}
    \end{subfigure}
    \begin{subfigure}[b]{0.32\textwidth}
    \includegraphics[width=\textwidth,clip=true,trim=30 180 50 200]{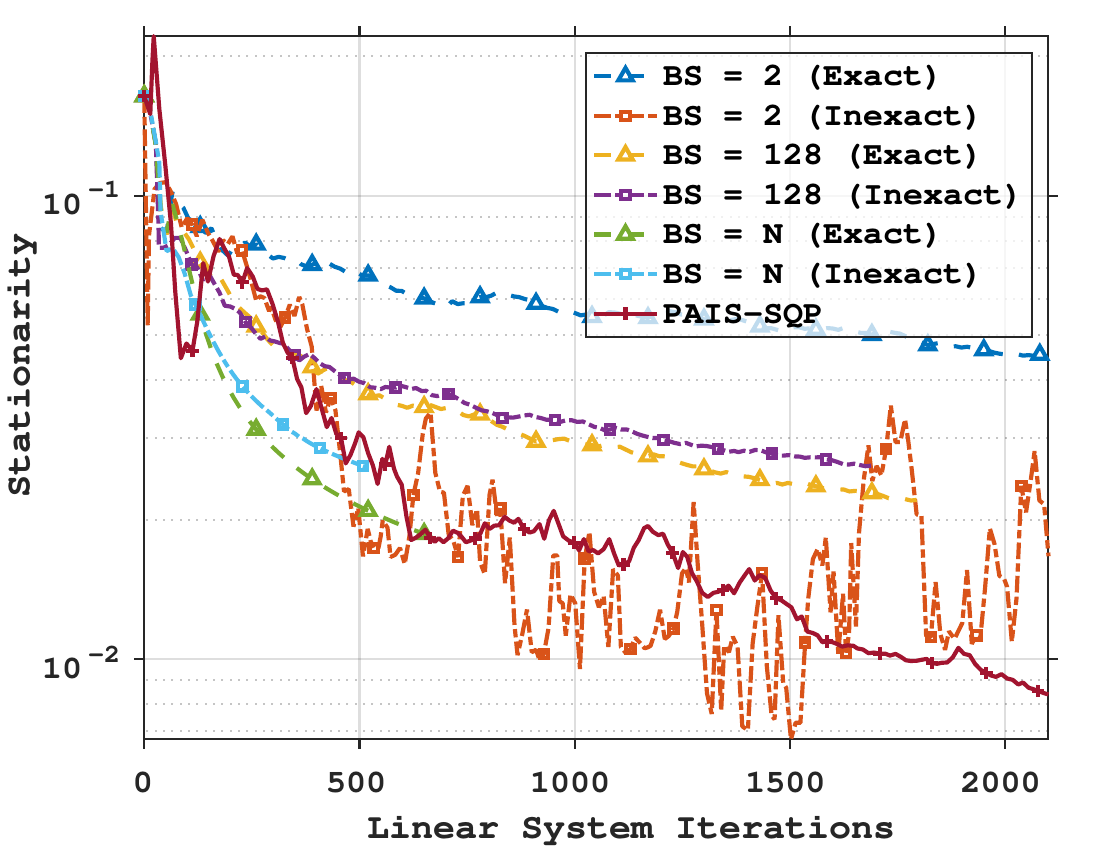}
    \caption{ Stationarity vs. LS  Iters} 
    \end{subfigure}
  
    \begin{subfigure}[b]{0.32\textwidth}
    \includegraphics[width=\textwidth,clip=true,trim=30 180 50 200]{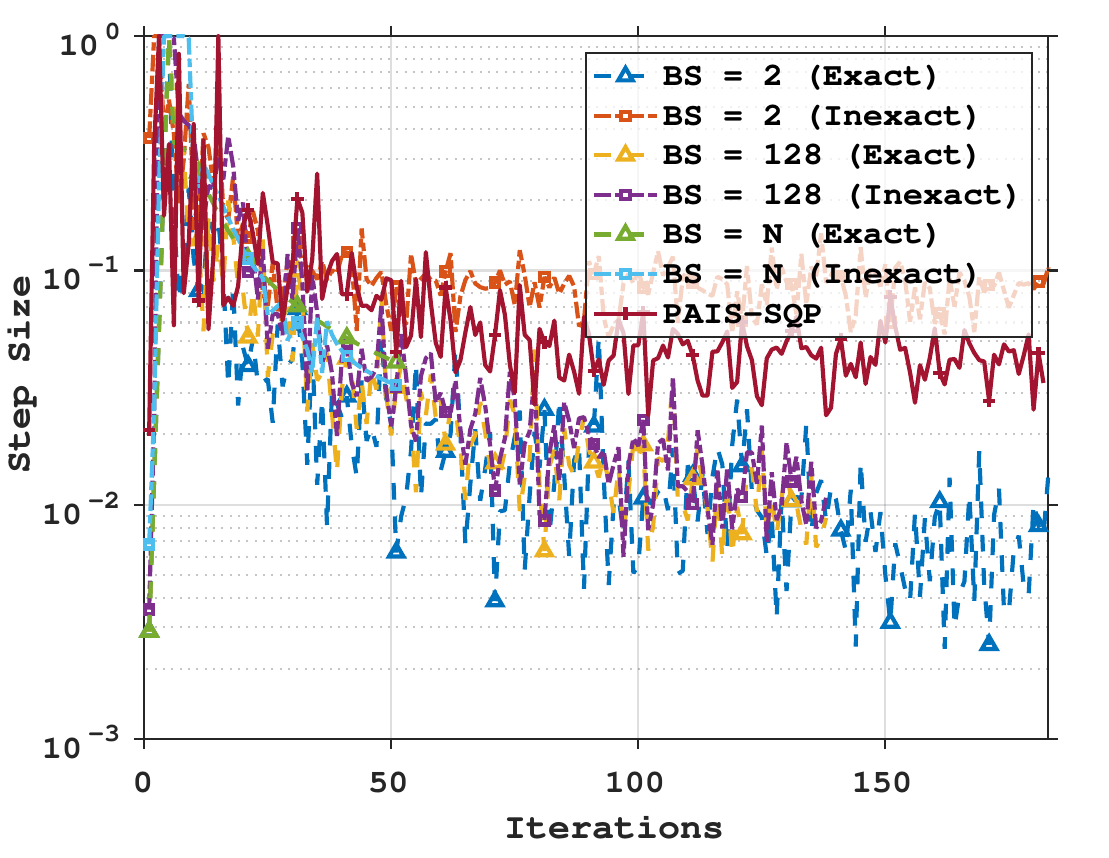}
    \caption{ Step Size vs. Iterations} 
    \end{subfigure}
    \begin{subfigure}[b]{0.32\textwidth}
    \includegraphics[width=\textwidth,clip=true,trim=30 180 50 200]{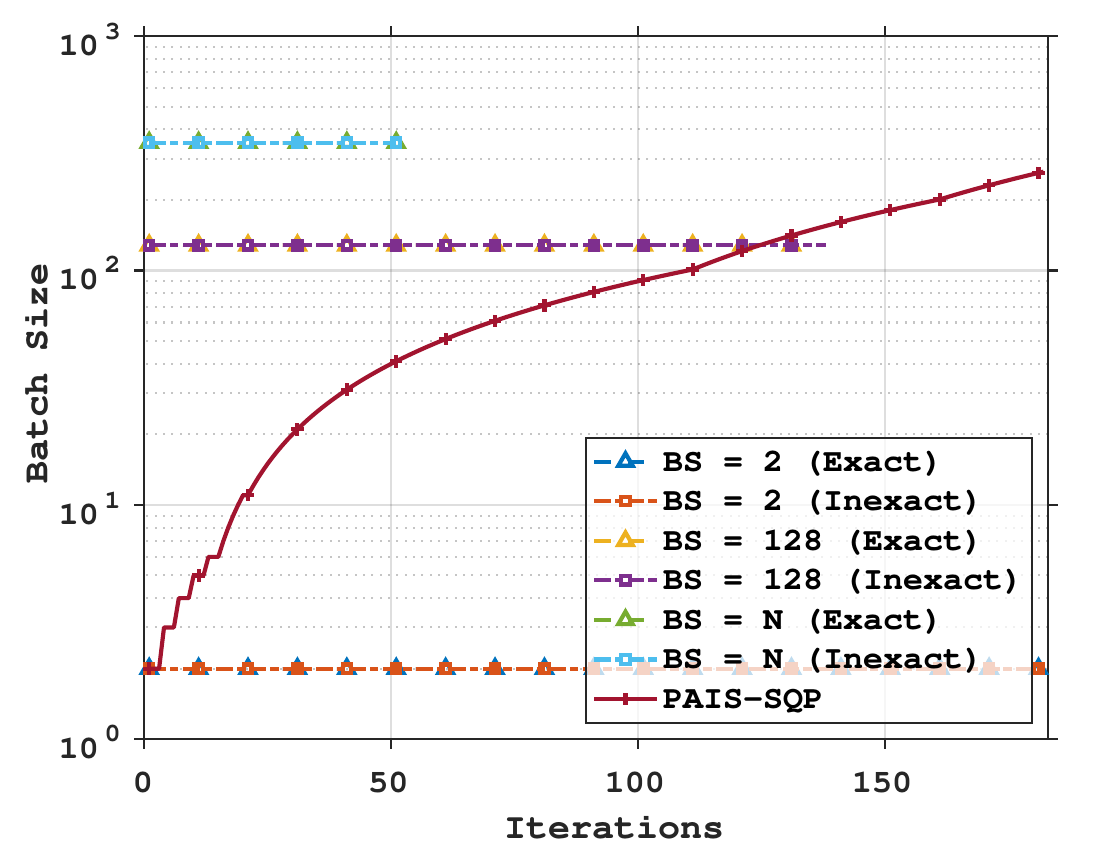}
    \caption{ Batch Size vs. Iterations} 
    \end{subfigure}
    \caption{\texttt{ionosphere}: First \& Second Row: Feasibility \& stationarity errors versus iterations/epochs/linear system iterations for exact and inexact variants of Algorithm~\ref{alg.adaptiveSQP_practical} on \eqref{eq.logistic}. Last Row: Step sizes and Batch sizes versus iterations. }
\end{figure}

\begin{figure}[ht]
    \centering
    \begin{subfigure}[b]{0.32\textwidth}
    \includegraphics[width=\textwidth,clip=true,trim=30 180 50 200]{figs/logistic/mushroom_feas_it.pdf}
    \caption{Feasibility vs. Iterations}
    \end{subfigure}
    \begin{subfigure}[b]{0.32\textwidth}
    \includegraphics[width=\textwidth,clip=true,trim=30 180 50 200]{figs/logistic/mushroom_feas_ep.pdf}
    \caption{ Feasibility vs. Epochs} 
    \end{subfigure}
    \begin{subfigure}[b]{0.32\textwidth}
    \includegraphics[width=\textwidth,clip=true,trim=30 180 50 200]{figs/logistic/mushroom_feas_ls.pdf}
    \caption{ Feasibility vs. LS  Iters} 
    \end{subfigure}
  
    \begin{subfigure}[b]{0.32\textwidth}
    \includegraphics[width=\textwidth,clip=true,trim=30 180 50 200]{figs/logistic/mushroom_stat_it.pdf}
    \caption{Stationarity vs. Iterations}
    \end{subfigure}
    \begin{subfigure}[b]{0.32\textwidth}
    \includegraphics[width=\textwidth,clip=true,trim=30 180 50 200]{figs/logistic/mushroom_stat_ep.pdf}
    \caption{ Stationarity vs. Epochs}
    \end{subfigure}
    \begin{subfigure}[b]{0.32\textwidth}
    \includegraphics[width=\textwidth,clip=true,trim=30 180 50 200]{figs/logistic/mushroom_stat_ls.pdf}
    \caption{ Stationarity vs. LS  Iters} 
    \end{subfigure}
  
    \begin{subfigure}[b]{0.32\textwidth}
    \includegraphics[width=\textwidth,clip=true,trim=30 180 50 200]{figs/logistic/mushroom_alpha_it.pdf}
    \caption{ Step Size vs. Iterations} 
    \end{subfigure}
    \begin{subfigure}[b]{0.32\textwidth}
    \includegraphics[width=\textwidth,clip=true,trim=30 180 50 200]{figs/logistic/mushroom_bs_it.pdf}
    \caption{ Batch Size vs. Iterations} 
    \end{subfigure}
    \caption{\texttt{mushroom}: First \& Second Row: Feasibility \& stationarity errors versus iterations/epochs/linear system iterations for exact and inexact variants of Algorithm~\ref{alg.adaptiveSQP_practical} on \eqref{eq.logistic}. Last Row: Step sizes and Batch sizes versus iterations. }
\end{figure}

\begin{figure}[ht]
    \centering
    \begin{subfigure}[b]{0.32\textwidth}
    \includegraphics[width=\textwidth,clip=true,trim=30 180 50 200]{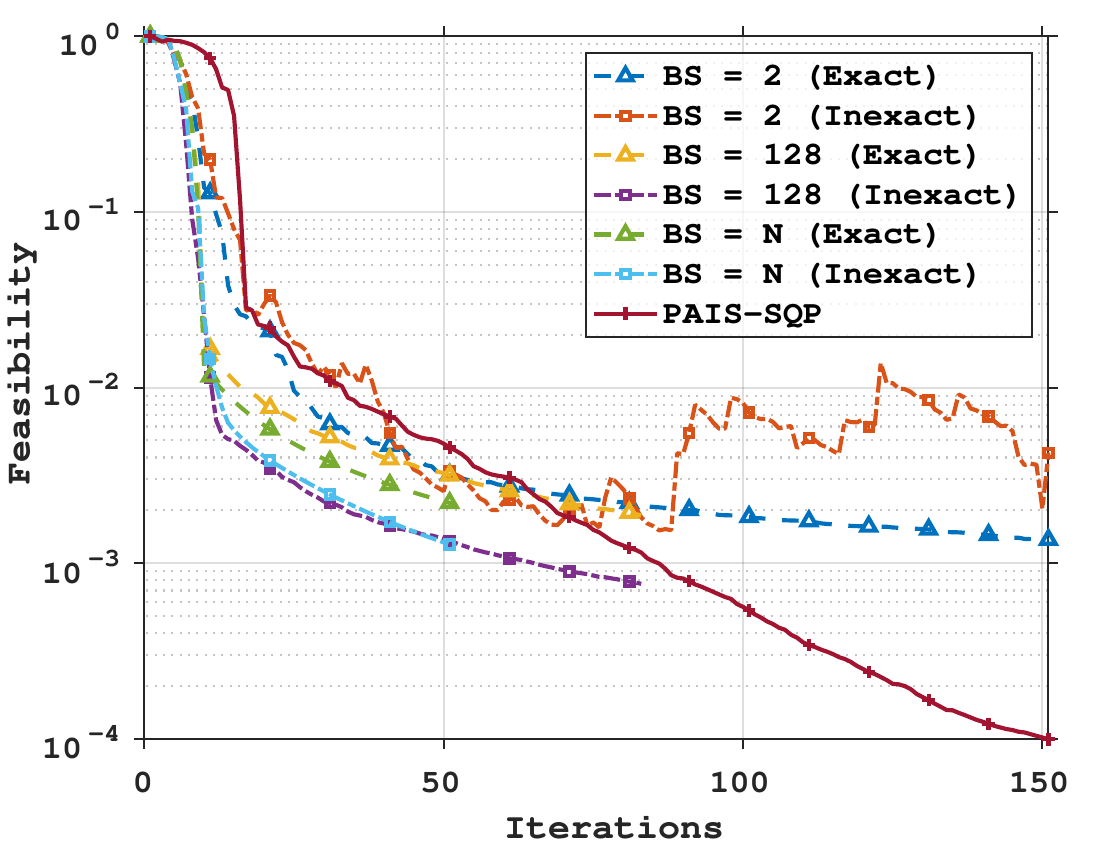}
    \caption{Feasibility vs. Iterations}
    \end{subfigure}
    \begin{subfigure}[b]{0.32\textwidth}
    \includegraphics[width=\textwidth,clip=true,trim=30 180 50 200]{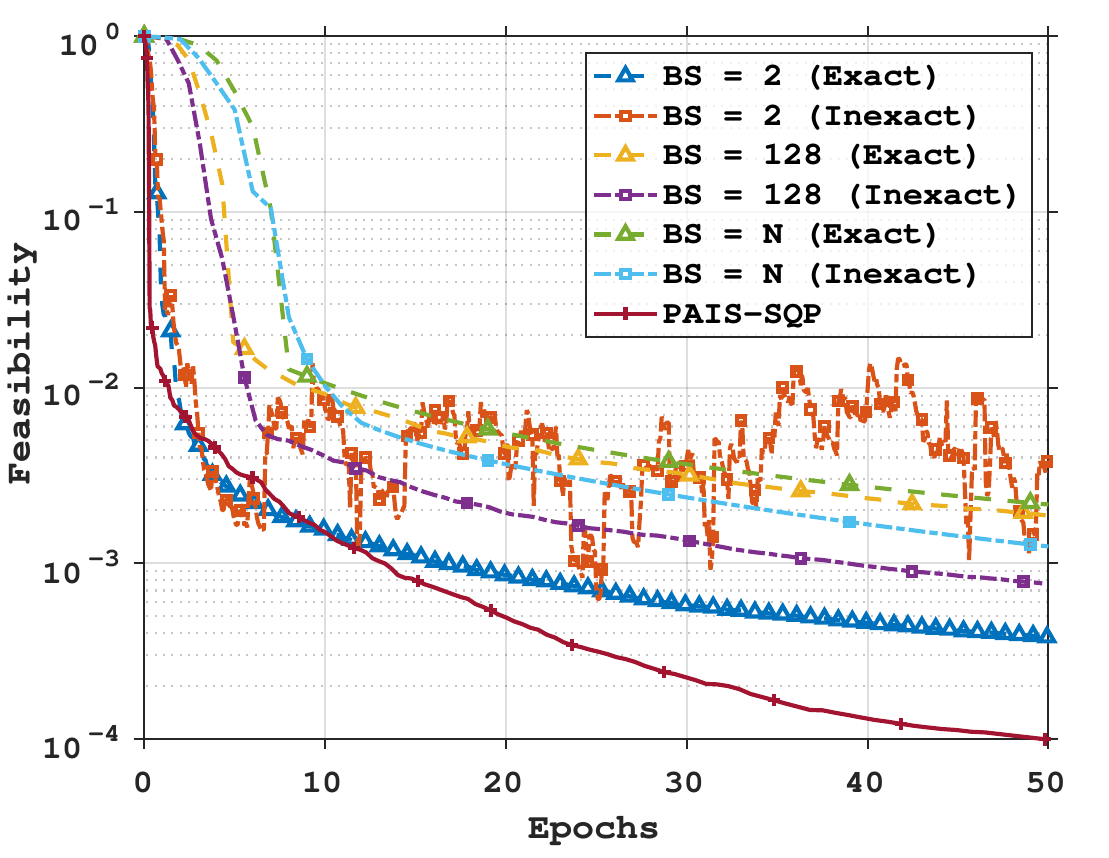}
    \caption{ Feasibility vs. Epochs} 
    \end{subfigure}
    \begin{subfigure}[b]{0.32\textwidth}
    \includegraphics[width=\textwidth,clip=true,trim=30 180 50 200]{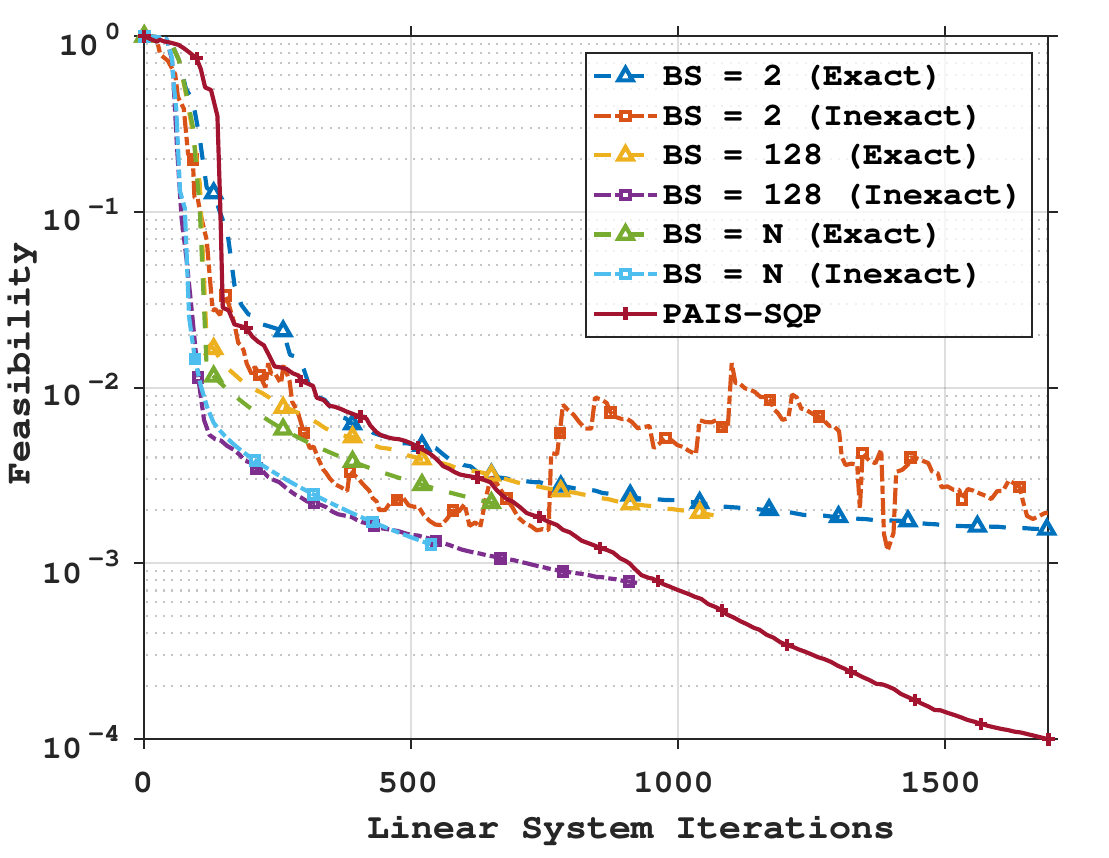}
    \caption{ Feasibility vs. LS  Iters} 
    \end{subfigure}
  
    \begin{subfigure}[b]{0.32\textwidth}
    \includegraphics[width=\textwidth,clip=true,trim=30 180 50 200]{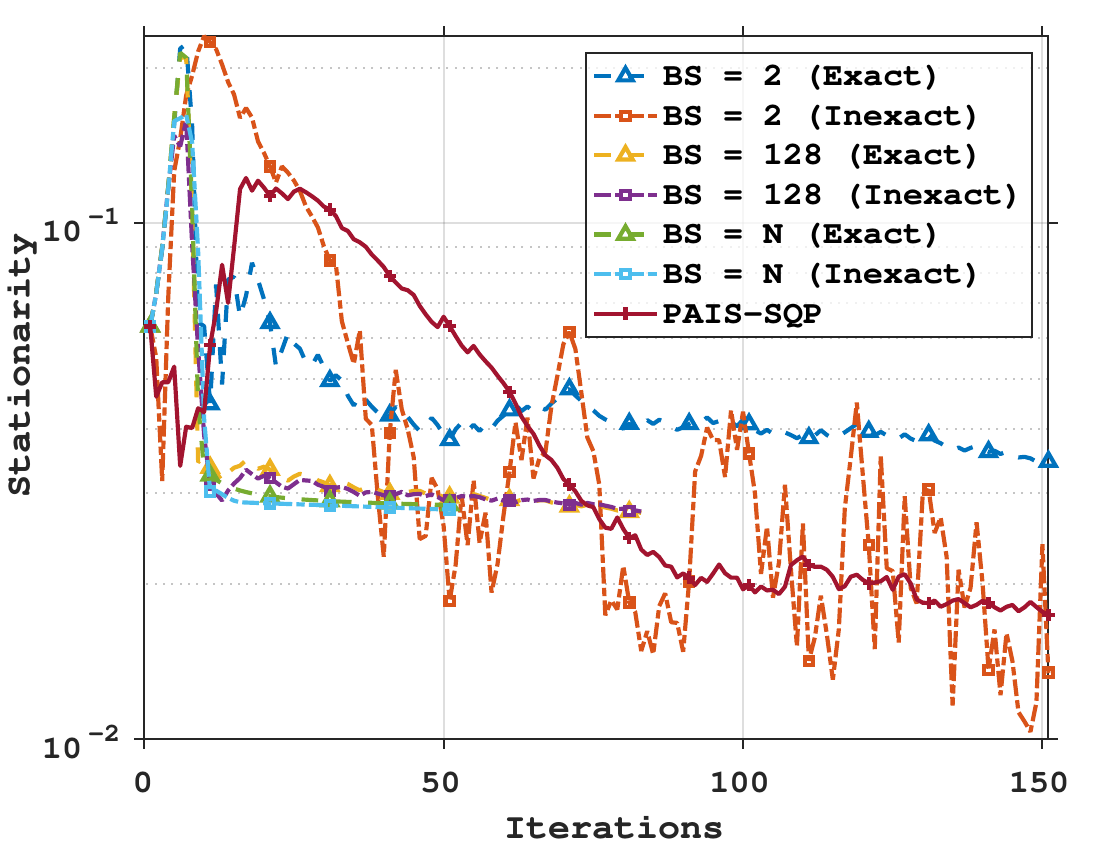}
    \caption{Stationarity vs. Iterations}
    \end{subfigure}
    \begin{subfigure}[b]{0.32\textwidth}
    \includegraphics[width=\textwidth,clip=true,trim=30 180 50 200]{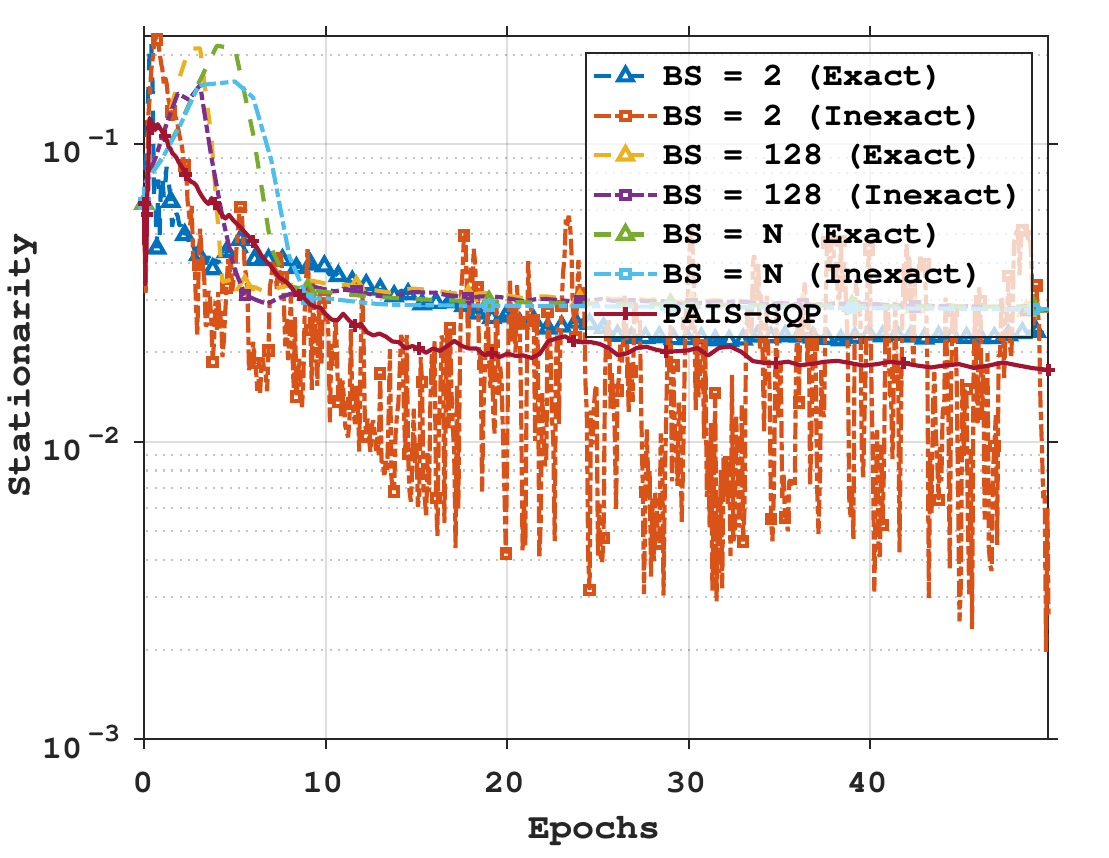}
    \caption{ Stationarity vs. Epochs}
    \end{subfigure}
    \begin{subfigure}[b]{0.32\textwidth}
    \includegraphics[width=\textwidth,clip=true,trim=30 180 50 200]{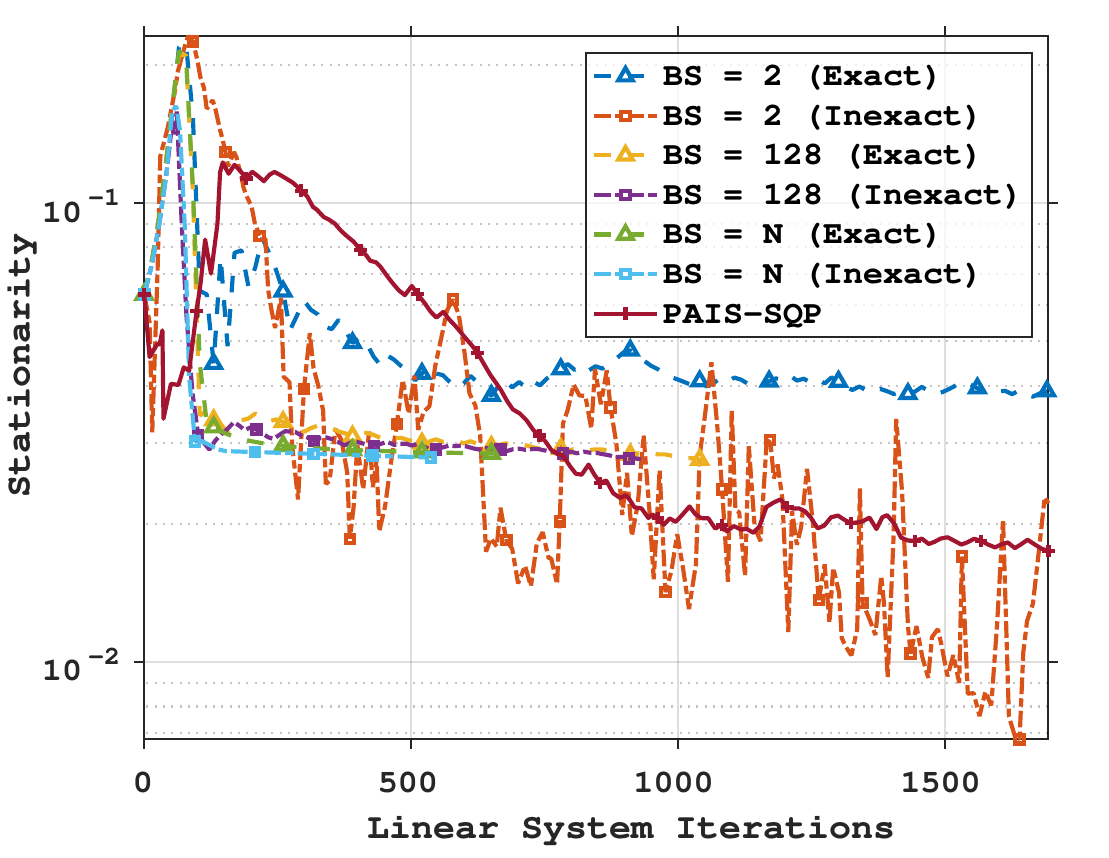}
    \caption{ Stationarity vs. LS  Iters} 
    \end{subfigure}
  
    \begin{subfigure}[b]{0.32\textwidth}
    \includegraphics[width=\textwidth,clip=true,trim=30 180 50 200]{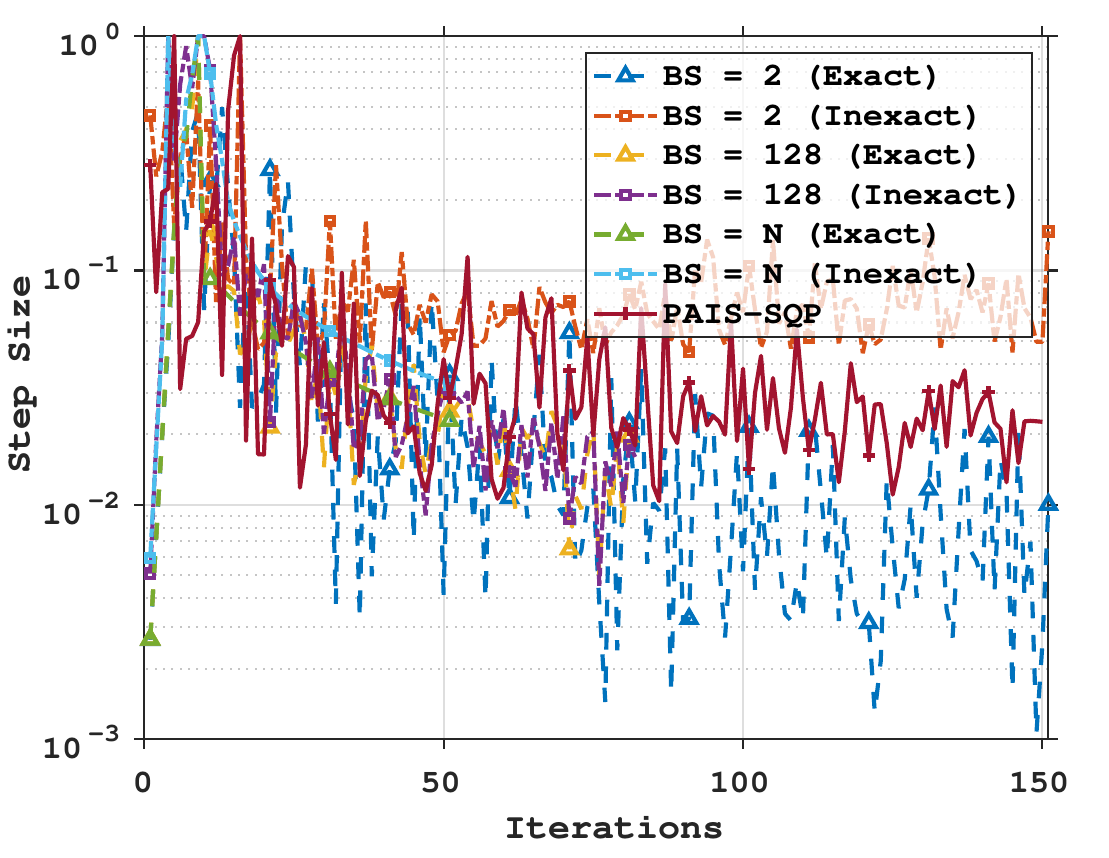}
    \caption{ Step Size vs. Iterations} 
    \end{subfigure}
    \begin{subfigure}[b]{0.32\textwidth}
    \includegraphics[width=\textwidth,clip=true,trim=30 180 50 200]{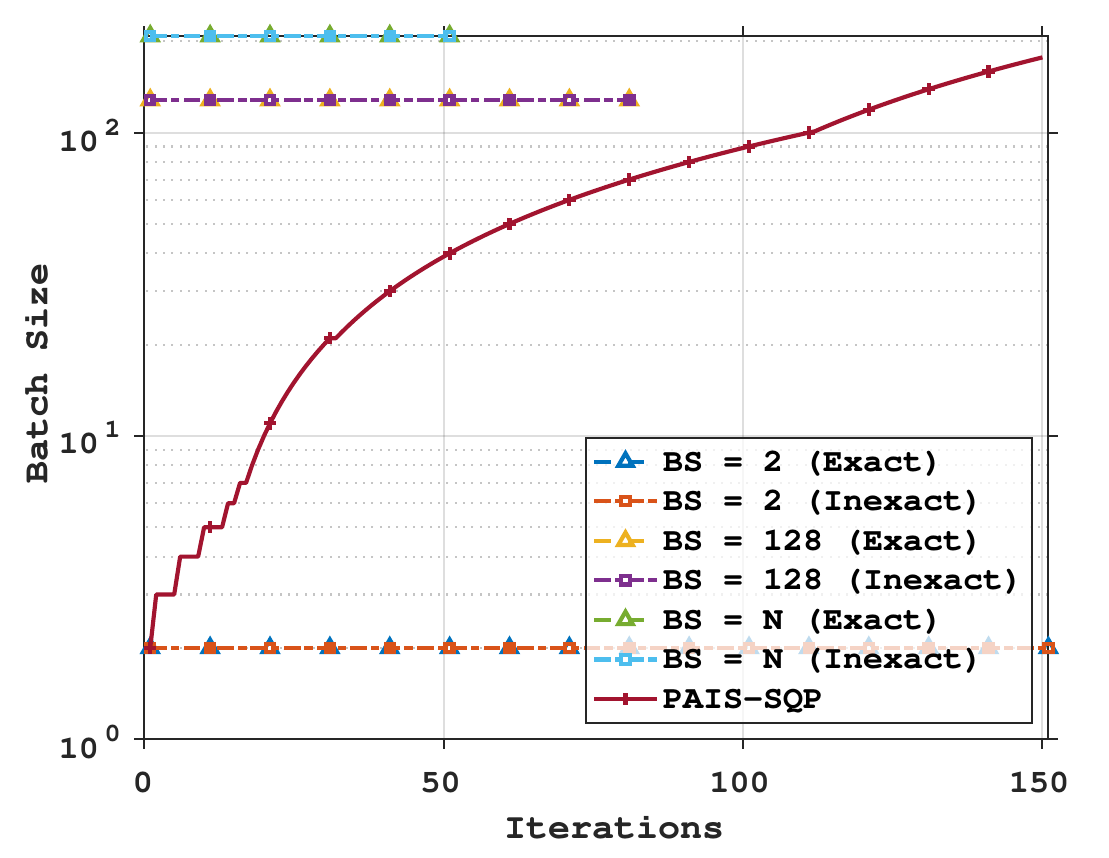}
    \caption{ Batch Size vs. Iterations} 
    \end{subfigure}
    \caption{\texttt{sonar}: First \& Second Row: Feasibility \& stationarity errors versus iterations/epochs/linear system iterations for exact and inexact variants of Algorithm~\ref{alg.adaptiveSQP_practical} on \eqref{eq.logistic}. Last Row: Step sizes and Batch sizes versus iterations. }
\end{figure}

\begin{figure}[ht]
    \centering
    \begin{subfigure}[b]{0.32\textwidth}
    \includegraphics[width=\textwidth,clip=true,trim=30 180 50 200]{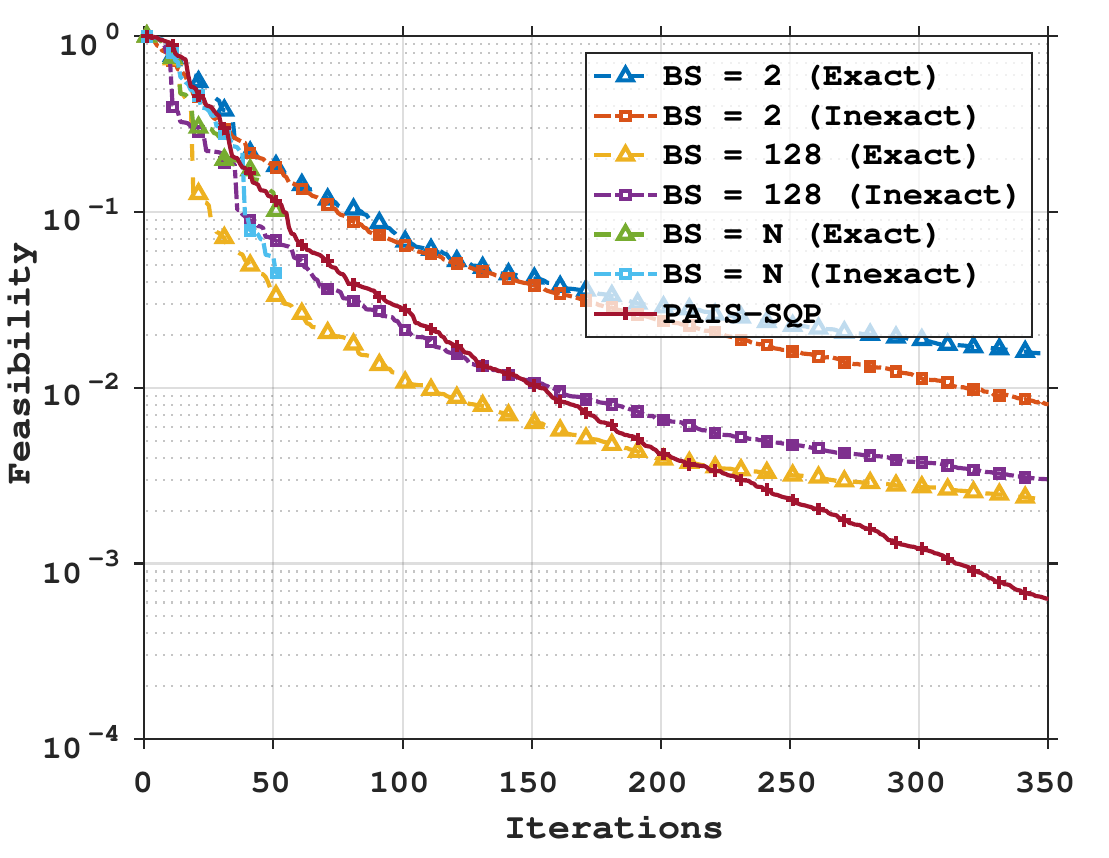}
    \caption{Feasibility vs. Iterations}
    \end{subfigure}
    \begin{subfigure}[b]{0.32\textwidth}
    \includegraphics[width=\textwidth,clip=true,trim=30 180 50 200]{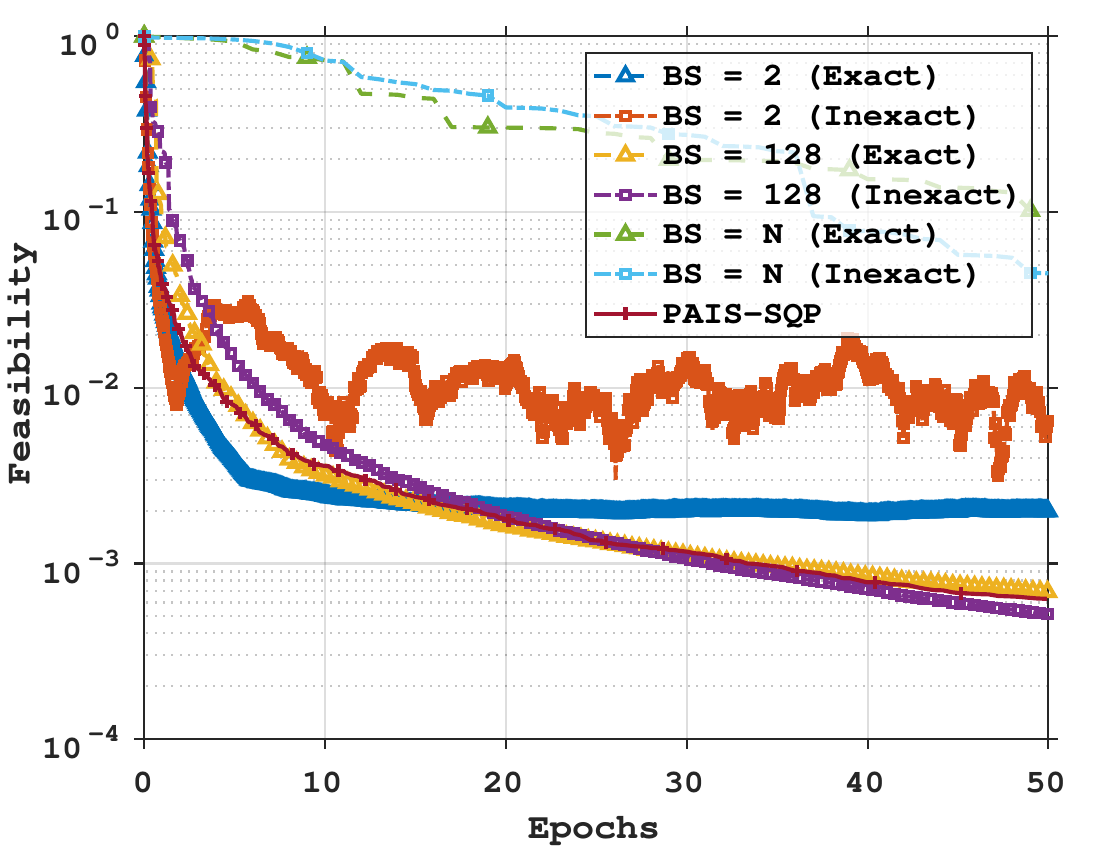}
    \caption{ Feasibility vs. Epochs} 
    \end{subfigure}
    \begin{subfigure}[b]{0.32\textwidth}
    \includegraphics[width=\textwidth,clip=true,trim=30 180 50 200]{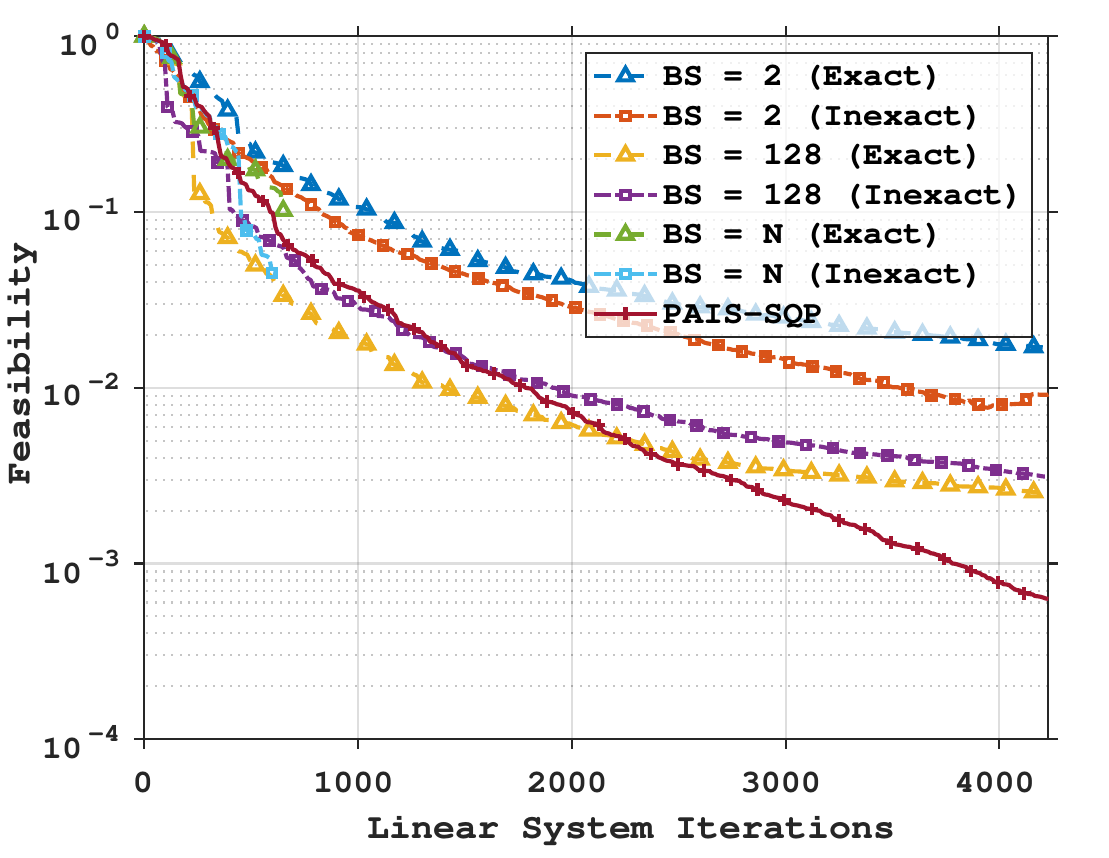}
    \caption{ Feasibility vs. LS  Iters} 
    \end{subfigure}
  
    \begin{subfigure}[b]{0.32\textwidth}
    \includegraphics[width=\textwidth,clip=true,trim=30 180 50 200]{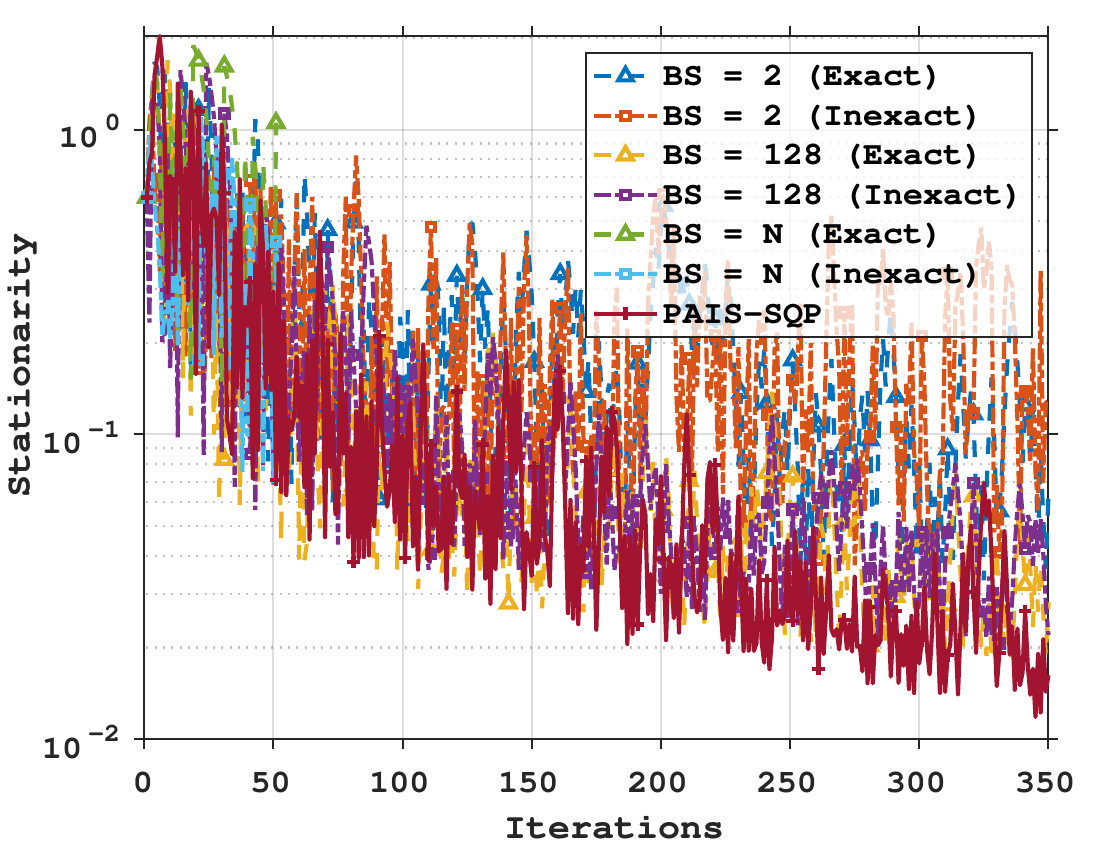}
    \caption{Stationarity vs. Iterations}
    \end{subfigure}
    \begin{subfigure}[b]{0.32\textwidth}
    \includegraphics[width=\textwidth,clip=true,trim=30 180 50 200]{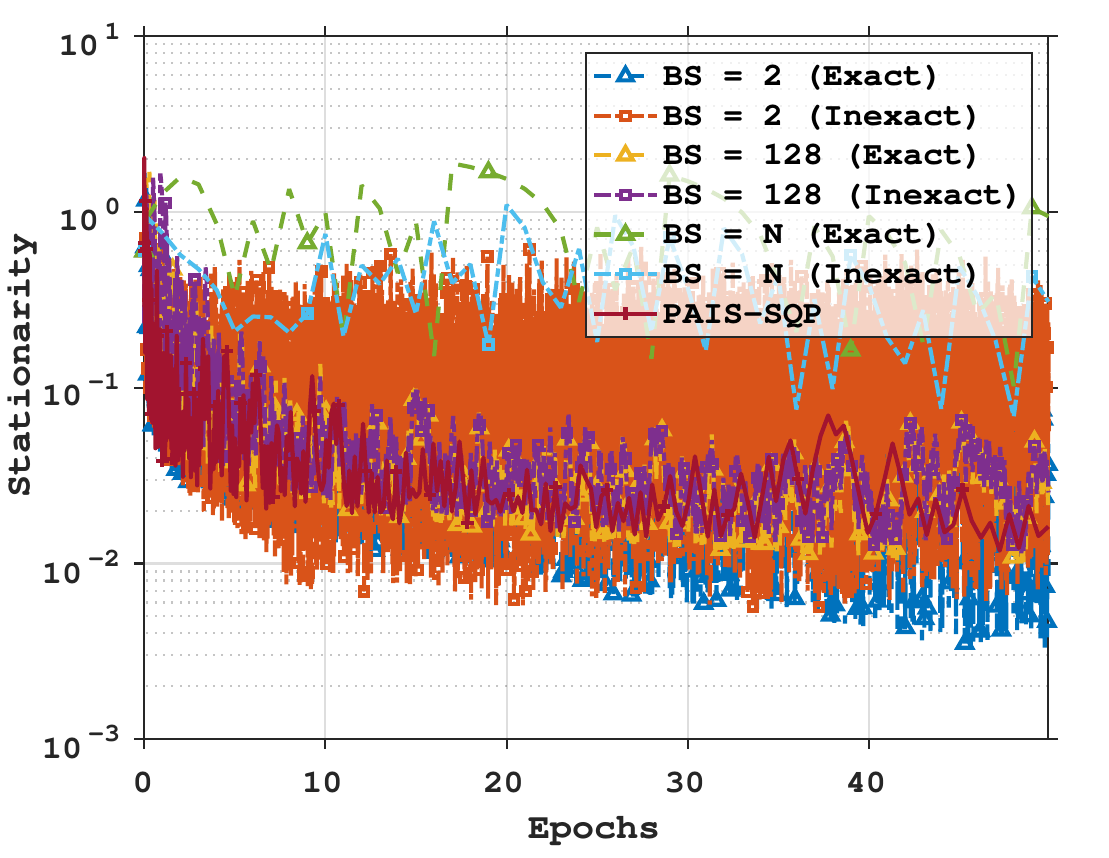}
    \caption{ Stationarity vs. Epochs}
    \end{subfigure}
    \begin{subfigure}[b]{0.32\textwidth}
    \includegraphics[width=\textwidth,clip=true,trim=30 180 50 200]{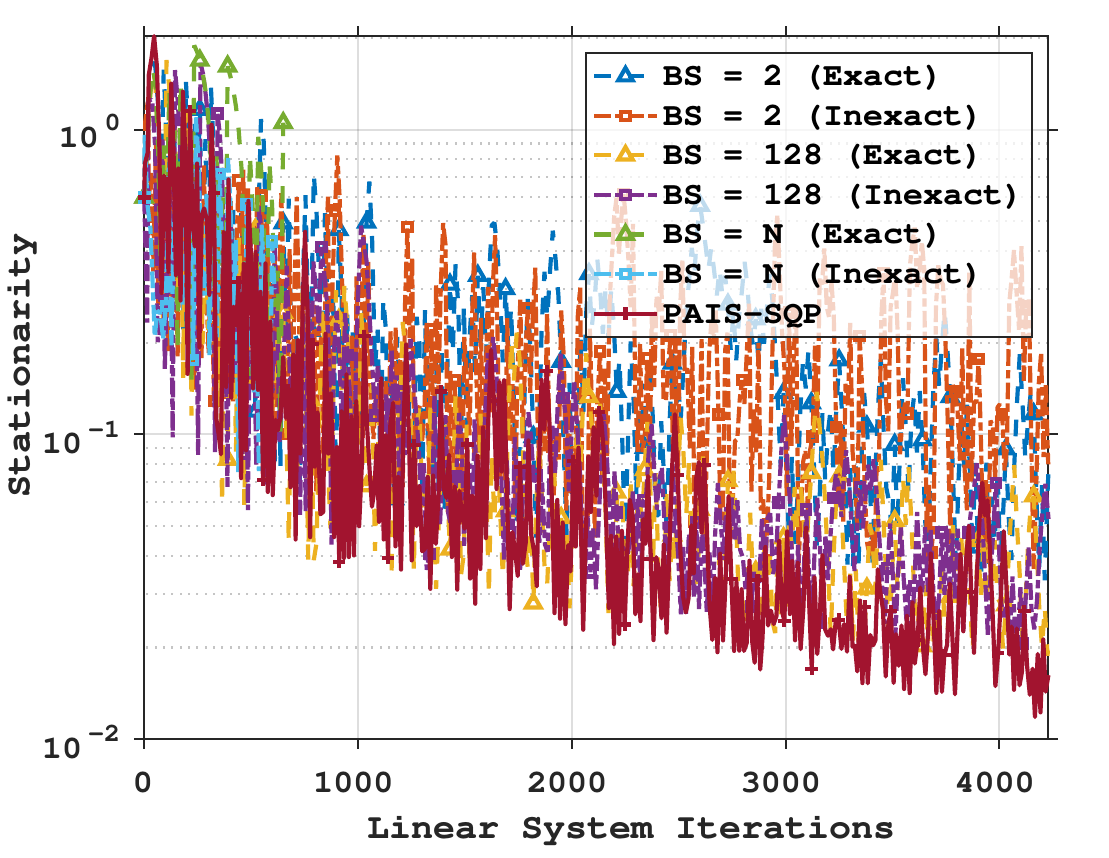}
    \caption{ Stationarity vs. LS  Iters} 
    \end{subfigure}
  
    \begin{subfigure}[b]{0.32\textwidth}
    \includegraphics[width=\textwidth,clip=true,trim=30 180 50 200]{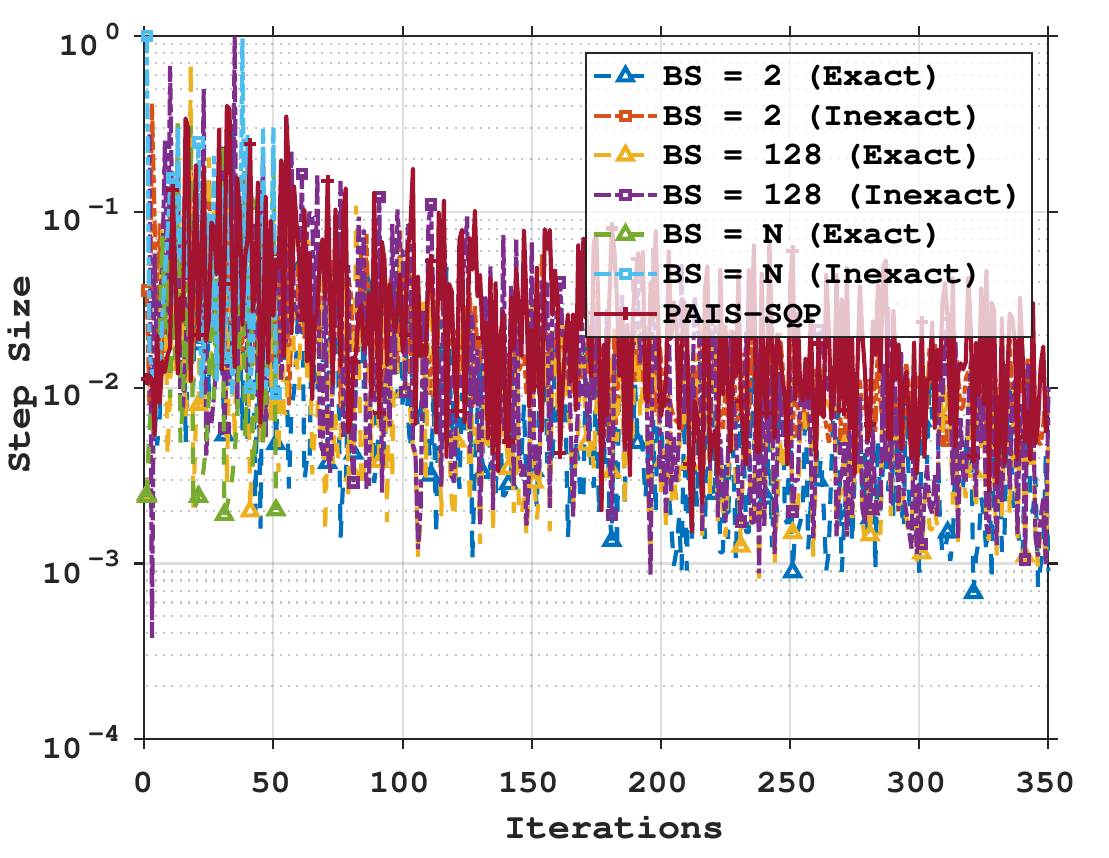}
    \caption{ Step Size vs. Iterations} 
    \end{subfigure}
    \begin{subfigure}[b]{0.32\textwidth}
    \includegraphics[width=\textwidth,clip=true,trim=30 180 50 200]{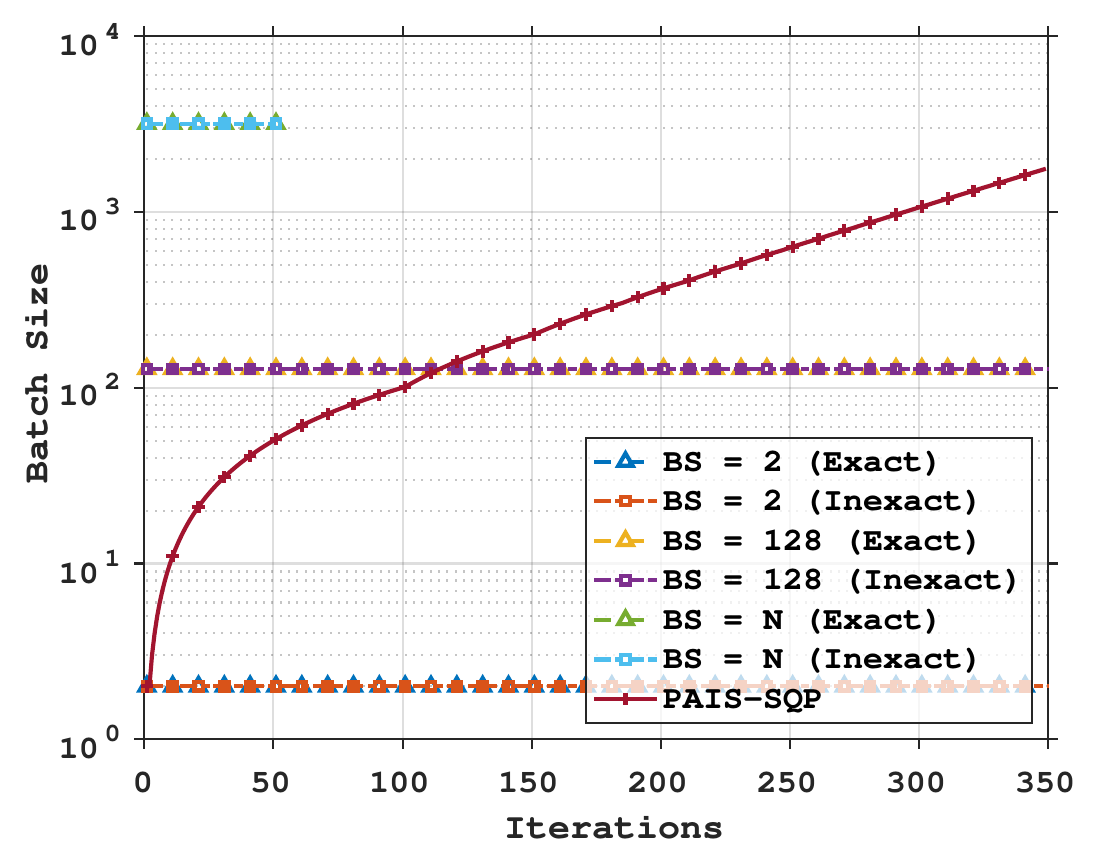}
    \caption{ Batch Size vs. Iterations} 
    \end{subfigure}
    \caption{\texttt{splice}: First \& Second Row: Feasibility \& stationarity errors versus iterations/epochs/linear system iterations for exact and inexact variants of Algorithm~\ref{alg.adaptiveSQP_practical} on \eqref{eq.logistic}. Last Row: Step sizes and Batch sizes versus iterations. }
\end{figure}

\end{document}